\theoremstyle{plain}
 \newtheorem{theorem}{Theorem}[section]
 \newtheorem{main-theorem}{Theorem}
 \newtheorem{corollary*}[main-theorem]{Corollary}
 \newtheorem{lemma}[theorem]{Lemma}
 \newtheorem{corollary}[theorem]{Corollary}
 \newtheorem{proposition}[theorem]{Proposition}
\theoremstyle{definition}
 \newtheorem{remark}[theorem]{Remark}
 \newtheorem{example}[theorem]{Example}
\newtheorem{notation}[theorem]{Notation}
\renewcommand{\mod}{\operatorname{mod}}
\newcommand{\alg}{\operatorname{alg}}
\newcommand{\rad}{\operatorname{rad}}
\newcommand{\soc}{\operatorname{soc}}
\newcommand{\topp}{\operatorname{top}}
\newcommand{\umod}{\operatorname{\underline{mod}}}
\newcommand{\Ext}{\operatorname{Ext}}
\newcommand{\Hom}{\operatorname{Hom}}
\newcommand{\Ker}{\operatorname{Ker}}
\newcommand{\Tr}{\operatorname{Tr}}
\newcommand{\GL}{\operatorname{GL}}
\newcommand{\op}{\operatorname{op}}
\newcommand{\id}{\operatorname{id}}
\newcommand{\intt}{\operatorname{int}}
\newcommand{\bA}{\mathbb{A}}
\newcommand{\bD}{\mathbb{D}}
\newcommand{\bE}{\mathbb{E}}
\newcommand{\bN}{\mathbb{N}}
\newcommand{\bP}{\mathbb{P}}
\newcommand{\bR}{\mathbb{R}}
\newcommand{\bS}{\mathbb{S}}
\newcommand{\bT}{\mathbb{T}}
\newcommand{\bZ}{\mathbb{Z}}
\newcommand{\cA}{\mathcal{A}}
\newcommand{\cB}{\mathcal{B}}
\newcommand{\calD}{\mathcal{D}}
\newcommand{\cK}{\mathcal{K}}
\newcommand{\cO}{\mathcal{O}}
\newcommand{\cT}{\mathcal{T}}
\newcommand{\ba}{\bar{\alpha}}
\newcommand{\overbar}[1]{\mkern 5mu\overline{\mkern-5mu#1\mkern-5mu}\mkern 5mu}
\newcommand{\tikzAngleOfLine}{\tikz@AngleOfLine}
\def\tikz@AngleOfLine(#1)(#2)#3{%
\pgfmathanglebetweenpoints{%
\pgfpointanchor{#1}{center}}{%
\pgfpointanchor{#2}{center}}
\pgfmathsetmacro{#3}{\pgfmathresult}%
}
\begin{document}

\title{Algebras of generalized dihedral type}

%
%
%

{\def\thefootnote{}
\footnote{The authors gratefully acknowledge support from the research grant
DEC-2011/02/A/ST1/00216 of the National Science Center Poland.}
}

\author[K. Edrmann]{Karin Erdmann}
\address[Karin Erdmann]{Mathematical Institute,
   Oxford University,
   ROQ, Oxford OX2 6GG,
   United Kingdom}
\email{erdmann@maths.ox.ac.uk}

\author[A. Skowro\'nski]{Andrzej Skowro\'nski}
\address[Andrzej Skowro\'nski]{Faculty of Mathematics and Computer Science,
   Nicolaus Copernicus University,
   Chopina~12/18,
   87-100 Toru\'n,
   Poland}
\email{skowron@mat.uni.torun.pl}

\begin{abstract}
We provide a complete classification of all algebras
of generalized dihedral type,
which are 
natural generalizations of algebras which occurred
in the study of blocks of group algebras
with dihedral defect groups. 
This gives a description by quivers and relations coming from
surface triangulations.
\end{abstract}

\maketitle

\section{Introduction and main results}\label{sec:intro}

The representation theory of associative finite-dimensional  algebras 
over an algebraically closed field $K$ is divided 
into two disjoint classes, by 
the Tame and Wild Theorem of Drozd
(see \cite{CB1}, \cite{Dr}).
The first class consists of the \emph{tame algebras}
for which the indecomposable modules occur in each dimension $d$
in a finite number of discrete and a finite number of one-parameter
families.
The second class is formed by the \emph{wild algebras}
whose representation theory comprises
the representation theories of all
algebras over $K$.
Accordingly, we may realistically hope to classify
the indecomposable finite-dimensional modules only
for the tame algebras.
Among the tame algebras are  the
\emph{representation-finite algebras}, having
only finitely many isomorphism classes of indecomposable
modules, for which the representation theory is usually
well understood.
A prominent class of tame algebras is formed
by the \emph{algebras of polynomial growth} for which the number
of one-parameter families
of indecomposable modules in each dimension $d$ is bounded by $d^m$,
for some positive integer $m$ (depending only on the algebra) \cite{Sk0}.
The representation theory of tame self-injective algebras
of polynomial growth is currently well understood
(see \cite{BES2}, \cite{Sk1}, \cite{Sk2} for some general results).
On the other hand, for tame algebras which are not of polynomial growth, there
are few general tools, and  it is open in general how to describe the basic algebras
of tame self-injective algebras of non-polynomial growth.

As a step in this direction, we study in this paper
a wide class of tame symmetric algebras related
to tame blocks of group algebras, 
and to algebras constructed from 
surface triangulations. These 
have connections with group theory, topology, singularity theory and cluster
algebras, for a discussion and references, see the introduction of \cite{ESk4}. 
In the 
modular representation theory of finite groups, 
 representation-infinite tame blocks occur only
in characteristic 2,  and their defect groups are dihedral,
semidihedral, or  quaternion 2-groups (see \cite{BoD}, \cite{E5}).
In order to study such blocks,  algebras
of dihedral, semidihedral, and quaternion type have been introduced and
investigated, they are defined  over algebraically closed fields of arbitrary
characteristic (see \cite{E5}).

Recently, cluster theory has led to new directions. 
Inspired by this we study in \cite{ESk4} a class of symmetric algebras
which are defined in terms of surface triangulations, which we call {\it weighted surface algebras}. They
are tame and we show that they are (with one exception) 
periodic as algebras, of period $4$.
We observe that most algebras of quaternion type occur in this setting.
Furthermore,  most algebras of dihedral type, and of semidihedral type 
occur naturally as degenerations of these weighted surface algebras. 
This suggests that blocks of tame representation
type are part of a much wider context, which also 
connects with other parts of mathematics. 
This paper is part of a programme, exploring such a context.

As for blocks, it is natural to divide this programme into three parts.
For tame blocks, those  
with dihedral defect groups have 
been studied extensively
(see 
\cite{Br},
\cite{Do},
\cite{DF},
\cite{E1},
\cite{E2},
\cite{E3},
\cite{E4},
\cite{E5},
\cite{EM},
\cite{ESk1},
\cite{L},
\cite{Ri}
for some results on the structure of these 
blocks and their representations).
In this paper, we investigate 
the general setting  which 
includes such blocks.
We will introduce algebras of generalized dihedral type, and
we  provide a complete classification.
Before giving our definition, we recall basic definitions and concepts.

\bigskip

Throughout, $K$ will denote a fixed algebraically closed field.
By an algebra we mean an associative, finite-dimensional $K$-algebra
with an identity.
For an algebra $A$, we denote by $\mod A$ the category of
finite-dimensional right $A$-modules.
An algebra $A$ is called \emph{self-injective}
if $A_A$ is an injective module, or equivalently,
the projective modules in $\mod A$ are injective.
Two self-injective algebras $A$ and $\Lambda$ are said
to be \emph{socle equivalent} if the quotient algebras
$A/\soc(A)$ and $\Lambda/\soc(\Lambda)$ are isomorphic.
Two socle equivalent self-injective algebras have the same 
non-projective indecomposable modules.
Amongst self-injective algebras, there is the important  class 
of  \emph{symmetric algebras} $A$ for which there exists
an associative, non-degenerate, symmetric, $K$-bilinear form
$(-,-): A \times A \to K$.
Classical examples of symmetric algebras are  
blocks of group algebras of finite groups and
the Hecke algebras of finite Coxeter groups.
If $A$ is symmetric and $e$ is an idempotent of $A$ then also $eAe$ is symmetric. 
We call
$eAe$ an idempotent algebra of $A$.

\bigskip

Let $A$ be an algebra.
Given a module $M$ in  $\mod A$, its \emph{syzygy}
is defined to be the kernel $\Omega_A(M)$ of a minimal
projective cover of $M$ in $\mod A$.
The syzygy operator $\Omega_A$ is an important tool
to construct modules in $\mod A$ and relate them.
For $A$ self-injective, it induces an equivalence
of the stable module category $\umod A$,
and its inverse is the shift of a triangulated structure
on $\umod A$ \cite{Ha1}.
A module $M$ in $\mod A$ is said to be \emph{periodic}
if $\Omega_A^n(M) \cong M$ for some $n \geq 1$, and if so
the minimal such $n$ is called the \emph{period} of $M$.

An important combinatorial and homological invariant
of the module category $\mod A$ of an algebra $A$
is its Auslander-Reiten quiver $\Gamma_A$.
Recall that $\Gamma_A$ is the translation quiver whose
vertices are the isomorphism classes of indecomposable
modules in $\mod A$, the arrows correspond
to irreducible homomorphisms, and the translation
is the Auslander-Reiten translation $\tau_A = D \Tr$.
For $A$ self-injective, we denote by $\Gamma_A^s$
the stable Auslander-Reiten quiver of $A$, obtained
from $\Gamma_A$ by removing the isomorphism classes
of projective modules and the arrows attached to them.
A stable tube is a translation quiver $\Gamma$
of the form $\mathbb{Z} \mathbb{A}_{\infty}/(\tau^r)$,
for some $r \geq 1$, and call $r$ the rank of $\Gamma$.
We note that, for a symmetric algebra $A$, we have
$\tau_A = \Omega_A^2$ (see \cite[Corollary~IV.8.6]{SY}).
Hence  the indecomposable modules in stable tubes are precisely
the indecomposable periodic modules.
\bigskip

Let $A$ be an algebra. We say that $A$ is of 
\emph{generalized dihedral type}
if it satisfies the following conditions:
\begin{enumerate}
 \item[(1)]
  $A$ is symmetric, indecomposable, and tame, with the Grothendieck group $K_0(A)$
of rank at least $2$.
 \item[(2)]
  The stable Auslander-Reiten quiver $\Gamma_A^s$ of $A$ consists
  of the following components:
  \begin{enumerate}
   \item[(i)]
    stable tubes of ranks $1$ and $3$;
   \item[(ii)]
    non-periodic components of the form
    $\bZ \bA_{\infty}^{\infty}$
   or $\bZ \widetilde{\bA}_n$.
   We assume that there is at least one such component.   
\end{enumerate}
 \item[(3)]
  $\Omega_A$ fixes all stable tubes
  of rank $3$ in $\Gamma_A^s$.
\end{enumerate}
We note that every algebra of generalized dihedral type
is representation-infinite.

This is an algebraic definition, in terms of homological properties, 
and it is invariant under
Morita equivalence.
Blocks  with dihedral defect groups (with at least two simple modules) are examples. 
Geometrically, and motivated by the results of \cite{ESk4} and \cite{ESk7},
we introduce 
biserial weighted surface algebras, they are associated to
triangulated surfaces with arbitrarily oriented triangles. 
As well we 
study  distinguished idempotent algebras of these algebras
with respect to families of 2-triangle disks (see Section~\ref{sec:idempalg}).
With this, the algebraic version and the geometric version are the
same,  which is the main result of this paper.

\begin{main-theorem}
\label{th:main1}
Let $A$ be a basic algebra over an algebraically
closed field $K$.
Then the following statements are equivalent:
\begin{enumerate}
  \item[(i)]
    $A$ is of generalized dihedral type.
  \item[(ii)]
    $A$ is socle equivalent to 
    an idempotent algebra
    $B(S,\vv{T},\Sigma,m_{\bullet})$ of 
    a biserial weighted
    surface algebra $B(S,\vv{T},m_{\bullet})$,
    with respect to a collection $\Sigma$ of $2$-triangle disks 
    of $\vv{T}$.
\end{enumerate}
Moreover, if $K$ is of characteristic different from $2$, we
may replace in (ii) ``socle equivalent'' by ``isomorphic''.
\end{main-theorem}

The geometric version, that is, part (ii), provides a complete
description of these algebras by quivers and relations
(see Sections \ref{sec:bisweight}, \ref{sec:socdef}, \ref{sec:idempalg}).
The algebras in this theorem are in particular
socle deformations of the wider class of Brauer graph algebras 
which have attracted a lot of attention.

In fact, any Brauer graph algebra occurs as an idempotent algebra 
of some biserial weighted surface algebra, see \cite{ESk6}.
Condition (3) in the definition of generalized dihedral
type occurs naturally in the geometric version, it 
comes from  degenerating  relations around
a triangle for a weighted surface
algebra. This condition was  not part of the original definition
of algebras of dihedral type in \cite{E3}, but it holds for blocks
with dihedral defect groups. 

The main restriction for blocks is that the Cartan matrix of
a block must be non-singular, and that there are  few simple modules
and tubes of rank 3. This motivates the following definition:
The algebra $A$ is of strict dihedral type if it satisfies
the above conditions (1), (2) and (3), 
the Cartan matrix of $A$ is non-singular and in addition
the number $\ell(A)$ of simple modules is at most $3$, 
 and $\Gamma_A^s$ has $\ell(A)-1$  stable tubes of rank $3$. These include
all non-local blocks of group algebras with dihedral defect groups.

We characterize these 
algebras 
among all algebras of generalized dihedral
type.

\begin{main-theorem}
\label{th:main2}
Let $A$ be an  algebra over an algebraically
closed field $K$.
Then the following statements are equivalent:
\begin{enumerate}[(i)]
  \item
    $A$ is of strict dihedral type.
  \item
    $A$ is of generalized dihedral type
    and the Cartan matrix $C_A$ of $A$
    is non-singular.
\end{enumerate}
\end{main-theorem}

Moreover, we have the following consequences of  
Theorems \ref{th:main1} and \ref{th:main2}.

\begin{corollary*}
\label{cor:main3}
Let $A$ be an algebra of generalized dihedral type 
over an algebraically closed field $K$.
Then $A$ is a biserial algebra.
\end{corollary*}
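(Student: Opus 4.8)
The plan is to deduce this as a direct consequence of Theorem~\ref{th:main1}, so the real content is to verify that each algebra appearing in part~(ii) of that theorem is biserial, and then to observe that being biserial is preserved under the two remaining operations connecting such an algebra to an arbitrary algebra of generalized dihedral type, namely passing to an idempotent subalgebra $eAe$ of the relevant form and passing to a socle equivalence. So the key steps are: (1) recall the quiver-and-relations description of a biserial weighted surface algebra $B(S,\vv{T},m_{\bullet})$ from Section~\ref{sec:bisweight} and check directly from the relations that its indecomposable projective modules $P$ all have the property that $\rad P$ is a sum of at most two uniserial modules whose intersection is simple or zero --- this is essentially built into the term \emph{biserial} weighted surface algebra and should be immediate from the shape of the quiver (two arrows in and two arrows out at each vertex, with the monomial/commutativity relations forcing uniseriality of the two ``sides''); (2) show that the distinguished idempotent algebra $B(S,\vv{T},\Sigma,m_{\bullet})$ associated with a collection $\Sigma$ of $2$-triangle disks is again biserial, by inspecting how the quiver and relations change when one forms $eAe$ for the relevant idempotent $e$ (the $2$-triangle disk construction is designed precisely so that the local biserial structure at each vertex is retained, possibly with shorter uniserial branches); and (3) note that socle equivalence does not affect whether an algebra is biserial, since the property ``$\rad P$ is a sum of at most two uniserial submodules with simple-or-zero intersection'' only involves $\rad P / \soc P$ together with the one-dimensional socle, and $A/\soc(A) \cong \Lambda/\soc(\Lambda)$ controls exactly this data; alternatively one invokes that socle equivalent self-injective algebras have the same non-projective indecomposables, and biseriality can be read off the structure of the non-projective modules together with the (symmetric, hence self-dual) projectives.

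Assembling these, given an arbitrary algebra $A$ of generalized dihedral type, Theorem~\ref{th:main1} gives a biserial weighted surface algebra $B(S,\vv{T},m_{\bullet})$, an idempotent $e$ coming from a collection of $2$-triangle disks, and a socle equivalence between $A$ and $e\,B(S,\vv{T},m_{\bullet})\,e = B(S,\vv{T},\Sigma,m_{\bullet})$; by steps (1)--(3) this last algebra is biserial, and hence so is $A$. When $\operatorname{char} K \neq 2$ the socle equivalence can be upgraded to an isomorphism, but that is not needed here.

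The main obstacle I expect is step~(2): verifying that forming the distinguished idempotent algebra with respect to $2$-triangle disks genuinely preserves biseriality rather than merely self-injectivity or tameness. One has to track carefully what happens to the arrows and relations at the vertices that lie on the boundary of a $2$-triangle disk, where several arrows of $B(S,\vv{T},m_{\bullet})$ get composed into single arrows of $eAe$; it must be checked that no vertex of the idempotent algebra acquires three or more independent composition series in its projective cover. This is a finite local check once the explicit description in Section~\ref{sec:idempalg} is available, but it is the step where the geometric hypothesis (that $\Sigma$ consists of \emph{disks}, each glued from exactly two triangles) is actually used, and hence where care is required. Step~(3) is routine given the standard fact, recalled in the introduction, that socle equivalent self-injective algebras share their non-projective indecomposable modules; step~(1) is essentially a restatement of the definition of \emph{biserial} in the phrase ``biserial weighted surface algebra''.
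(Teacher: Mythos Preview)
Your proposal is correct and close to the paper's implicit argument, but it differs in one place and misidentifies where the work lies.

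Steps~(1) and~(2) are already handled in the paper with no case analysis: Proposition~\ref{prop:5.1}(i) shows $B(Q,f,m_{\bullet})$ is special biserial, and Proposition~\ref{prop:2.4} (quoted from \cite{ESk6}) says that \emph{any} idempotent algebra $eAe$ of a symmetric special biserial algebra is again symmetric special biserial. So your worry about step~(2) --- tracking arrows at the boundary of a $2$-triangle disk --- is unnecessary; the $2$-triangle disk hypothesis plays no role for biseriality, only later for the generalized-dihedral-type conditions on tubes.

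The genuine difference is step~(3). The paper does \emph{not} argue that socle equivalence preserves biseriality. Instead it uses Propositions~\ref{prop:6.4} and~\ref{prop:7.3} to upgrade the socle equivalence from Theorem~\ref{th:main1} to an actual isomorphism $A \cong B(Q,f,\Sigma,m_{\bullet},b_{\bullet})$ with an explicit socle-deformed algebra, and then Propositions~\ref{prop:6.2}(i) and~\ref{prop:7.2}(i) verify, by inspection of a basis of each projective, that these algebras are biserial. Your route is a legitimate shortcut, but the justification you give needs one more sentence: for a self-injective algebra each indecomposable projective $P$ has simple socle $S$, and if $M \subseteq \rad P$ satisfies that $M/S$ is uniserial then $M$ itself is uniserial, since $S=\soc M$ lies in every nonzero submodule of $M$ and hence the submodule lattice of $M$ is that of $M/S$ with a new minimum adjoined. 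This makes precise your claim that biseriality of a self-injective algebra depends only on $A/\soc(A)$. Your approach thus avoids the appeal to the classification results of \cite{ESk6}, at the cost of this small lemma; the paper's approach instead reuses machinery already in place.
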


\begin{corollary*}
\label{cor:main4}
Let $A$ be an algebra of generalized dihedral type 
over an algebraically closed field $K$, 
with the Grothendieck group $K_0(A)$
of rank at least $4$.
Then the Cartan matrix $C_{A}$ of $A$ is singular.
\end{corollary*}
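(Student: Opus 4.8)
The plan is to reduce Corollary~\ref{cor:main4} to Corollary~\ref{cor:main3} together with the geometric description given by Theorem~\ref{th:main1}(ii), rather than attempting a direct cohomological computation of $\det C_A$. By Theorem~\ref{th:main1}, an algebra $A$ of generalized dihedral type with $\operatorname{rank} K_0(A)\ge 4$ is socle equivalent to an idempotent algebra $B(S,\vv{T},\Sigma,m_{\bullet})$ of a biserial weighted surface algebra. Since socle equivalent self-injective algebras have the same Cartan matrix (the socle deformation only changes multiplication in the socle, hence changes neither the dimension of $e_iAe_j$ nor the projective covers), it suffices to prove singularity of the Cartan matrix for the geometric model. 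So the first step is to record this reduction and then work entirely with $B=B(S,\vv{T},\Sigma,m_{\bullet})$, whose quiver and relations are explicitly available from Sections~\ref{sec:bisweight}, \ref{sec:socdef}, \ref{sec:idempalg}.

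The second step is to use biseriality. By Corollary~\ref{cor:main3}, $A$ — and hence $B$ — is a biserial (indeed special biserial, from the surface presentation) symmetric algebra. For such algebras the indecomposable projective $P_i$ has radical a sum of (at most two) uniserial modules $U$, $V$ with $U\cap V = \operatorname{soc}(P_i)$ simple, and the Cartan entries $c_{ij}=\dim e_jBe_i$ are read off as the number of times the simple $S_j$ occurs in the two uniserial ``walks'' emanating from vertex $i$. The standard tool here is that for a symmetric special biserial algebra the Cartan matrix can be written in terms of the cycles of the associated Brauer graph / permutation data: one has an identity expressing $C_B$ via the incidence of edges and cyclic vertices. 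Concretely, I would set up the Brauer-graph combinatorics underlying $B$ (the surface triangulation with the $2$-triangle disks collapsed gives a Brauer graph $\Gamma$), and use the known formula $C_B = \mathbf{1} + N^\top N$ or the analogous block/incidence form, where $N$ is the vertex–edge incidence-type matrix of $\Gamma$ weighted by the multiplicities $m_\bullet$.

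The third and decisive step is a rank/parity count on the Brauer graph. The point of the hypothesis $\operatorname{rank} K_0(A)\ge 4$ is that the Brauer graph then has at least $4$ edges, and the surface origin forces it to contain ``enough'' cycles (the Euler-characteristic / genus contribution), so that the incidence matrix $N$ is not of full column rank; equivalently there is a nontrivial $\mathbb{Z}$- or $\mathbb{Q}$-linear relation among the columns coming from a cycle in $\Gamma$, and this relation lies in the kernel of $C_B$. More precisely I expect that for a Brauer graph arising from a triangulated surface with at least two triangles (equivalently $\ge 4$ edges after the disk identifications) one can exhibit an explicit vector $v$, supported on the edges of a suitable cycle with alternating signs or with coefficients dictated by the multiplicities $m_\bullet$, such that $C_B v = 0$. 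The verification that $C_B v = 0$ is then a local check at each vertex of the graph, using that every vertex of the triangulation has valency governed by $m_\bullet$ and $n_\bullet$ in a controlled way.

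The main obstacle I anticipate is making the ``surface forces a cycle'' step precise and uniform: I must ensure that after passing to the idempotent algebra (collapsing $2$-triangle disks) and after allowing the arbitrary orientations of triangles, the resulting Brauer graph still has first Betti number large enough — i.e. that one genuinely uses $\operatorname{rank} K_0 \ge 4$ rather than $\ge 2$, and that the low-rank cases (ranks $2$ and $3$, where $C_A$ can be nonsingular, as in strict dihedral type and Theorem~\ref{th:main2}) are correctly excluded. I would therefore handle this by a case analysis on the number of edges of the Brauer graph versus the number of (triangle) vertices, invoking the classification of the possible triangulated surface data from Section~\ref{sec:bisweight}, and in each case either exhibiting the kernel vector explicitly or computing $\det C_B$ directly from the block structure of $\mathbf{1}+N^\top N$ and checking it vanishes. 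A clean alternative, if the explicit kernel vector is awkward, is to compute $\det C_B$ via the Brauer-graph determinant formula (a product over connected components involving $\prod m_\bullet$ and the number of independent cycles), from which singularity for $\ge 4$ edges is immediate; I would present whichever of these is shorter given the conventions already fixed in the earlier sections.
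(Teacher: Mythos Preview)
Your proposal overlooks the one-line argument the paper intends. Corollary~\ref{cor:main4} is stated as an immediate consequence of Theorem~\ref{th:main2}: if $C_A$ were non-singular, then by Theorem~\ref{th:main2} the algebra $A$ would be of strict dihedral type, and the definition of strict dihedral type (condition (4)) forces $\ell(A)\le 3$, contradicting $\operatorname{rank} K_0(A)\ge 4$. That is the entire proof. You never invoke Theorem~\ref{th:main2}, even though the paper explicitly lists Corollary~\ref{cor:main4} as a consequence of both main theorems; the whole point of Theorem~\ref{th:main2} is precisely to pin down the non-singular cases as those with at most three simples.

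Your alternative route --- pass to the geometric model via Theorem~\ref{th:main1}, then analyse $C_B$ through Brauer-graph incidence combinatorics --- is not wrong in spirit, but as written it is only a plan, not a proof. Several of the steps are problematic. The identity $C_B=\mathbf{1}+N^\top N$ is not the correct Cartan formula for a symmetric special biserial algebra with arbitrary multiplicities $m_\bullet$; the actual entries depend on $m_\bullet n_\bullet$ along the $g$-orbits, and you would need to derive and state the right expression before you can read off a kernel vector. The claim that ``the surface origin forces enough cycles'' is exactly where the content lies, and you defer it to an unspecified case analysis; note that this is essentially what the proof of Proposition~\ref{prop:10.1} does (via the exact sequences of Notation~\ref{notation:9.9}), so you would be reproving the hard direction of Theorem~\ref{th:main2} from scratch. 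Finally, your fallback ``Brauer-graph determinant formula'' is not stated, and no such formula is established earlier in the paper, so you cannot simply appeal to it.

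In short: use Theorem~\ref{th:main2} and the definition of strict dihedral type; the corollary then follows by contraposition in one sentence.
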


We note that every algebra of generalized dihedral type
admits at least one non-periodic simple module
(see Corollary~\ref{cor:3.2}).
In the course of the proof of Theorem~\ref{th:main1}
we will also establish  the following corollary.

\begin{corollary*}
\label{cor:main5}
Let $A$ be an algebra of generalized dihedral type 
over an algebraically closed field $K$.
Then the following statements are equivalent:
\begin{enumerate}[(i)]
  \item
    All simple modules in $\mod A$ are non-periodic.
  \item
    The Gabriel quiver $Q_A$ of $A$ is $2$-regular.
\end{enumerate}
\end{corollary*}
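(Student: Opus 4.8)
My plan is to prove the equivalence by connecting both conditions to the combinatorics of the Gabriel quiver $Q_A$ via the geometric description furnished by Theorem~\ref{th:main1}. First I would recall (or isolate as a preliminary) the fact that for an algebra $A$ of generalized dihedral type there is a well-understood correspondence between simple $A$-modules and vertices of $Q_A$, and — since $A$ is symmetric and socle equivalent to an idempotent biserial weighted surface algebra $B(S,\vv{T},\Sigma,m_{\bullet})$ — between arrows of $Q_A$ and the local structure around the corresponding vertex of the triangulated surface. Because socle-equivalent self-injective algebras have the same non-projective indecomposable modules and the same stable Auslander--Reiten quiver, and because $\tau_A = \Omega_A^2$, whether a simple module $S_i$ is periodic is detected in $\Gamma_A^s$: $S_i$ is periodic precisely when it lies in a stable tube, i.e. (by condition (2)) in a tube of rank $1$ or $3$.

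The forward implication (i) $\Rightarrow$ (ii) I would argue contrapositively: if $Q_A$ is not $2$-regular, then, reading off the geometric model, some vertex $i$ of $Q_A$ has either one or at least three arrows entering (or leaving), and I would show that the projective cover $P_i$ and its radical then force $\Omega_A(S_i)$ to be again closely tied to $S_i$ — concretely, a short syzygy computation on the relations of $B(S,\vv{T},\Sigma,m_{\bullet})$ coming from the $2$-triangle disk collection $\Sigma$ shows $S_i$ has small, eventually repeating syzygies, hence $S_i$ is periodic, contradicting (i). For the converse (ii) $\Rightarrow$ (i), assuming $Q_A$ is $2$-regular, I would compute $\Omega_A(S_i)$ directly from the biserial structure: each projective $P_i$ is biserial, its radical is the sum of two uniserial modules whose composition factors are governed by the two bound paths starting at $i$, and I would show that iterating $\Omega_A$ produces non-periodic modules — typically by exhibiting that the syzygies have strictly growing length, or by placing $S_i$ in a component of type $\bZ\bA_\infty^\infty$ or $\bZ\widetilde{\bA}_n$ rather than a tube. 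Here condition (3), that $\Omega_A$ fixes all rank-$3$ tubes, together with the explicit shape of the tubes (rank $1$ and $3$ only) is what rules out a simple module landing in a rank-$3$ tube under the $2$-regular hypothesis.

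The technical heart — and the step I expect to be the main obstacle — is the explicit syzygy bookkeeping for simple modules over the idempotent biserial weighted surface algebra, where one must track the interaction between the virtual/non-virtual arrows, the multiplicities $m_\bullet$, and the degeneration of relations along the $2$-triangle disks in $\Sigma$; the presence of the socle deformation (when $\operatorname{char} K = 2$) adds cases but, since periodicity of modules is a stable-category notion and socle equivalence preserves $\umod A$, these cases reduce to the split situation. I would organize this by first treating the weighted surface algebra $B(S,\vv{T},m_\bullet)$ itself, where the analogous statement is cleaner, and then transferring to the idempotent algebra $B(S,\vv{T},\Sigma,m_\bullet)$ using that passing to $eAe$ for an idempotent $e$ keeps the algebra symmetric and can only merge or delete vertices in a controlled way. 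Once the syzygy behaviour of each $S_i$ is pinned down, both implications follow by comparing ``$S_i$ lies in a stable tube'' with ``the vertex $i$ of $Q_A$ has valency $2$'', uniformly over all vertices.
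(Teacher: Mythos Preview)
Your plan takes a different route from the paper and, as stated, has a genuine gap in the direction (ii) $\Rightarrow$ (i).

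The paper does \emph{not} deduce Corollary~\ref{cor:main5} from Theorem~\ref{th:main1}; rather, it is a byproduct of the ingredients used to prove Theorem~\ref{th:main1}. Concretely, Proposition~\ref{prop:9.5} (together with Lemma~\ref{lem:9.6} for $|Q_0|=2$) shows that every vertex of $Q_A$ has exactly $1$ or $2$ arrows starting (and ending) there, and Proposition~\ref{prop:9.3} shows, vertex by vertex, that $S_i$ is periodic if and only if $|s^{-1}(i)|=1$. The corollary is then immediate: $Q_A$ is $2$-regular $\Leftrightarrow$ there are no $1$-vertices $\Leftrightarrow$ no $S_i$ is periodic. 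The non-trivial step is the implication ``$S_i$ periodic $\Rightarrow |s^{-1}(i)|=1$'', and the paper's mechanism for this is the propagation Lemma~\ref{lem:9.2}: if a periodic simple sits at a vertex with $|s^{-1}(i)|\geq 2$, then \emph{every} neighbouring simple is periodic, and by connectedness all simples would be periodic, contradicting Corollary~\ref{cor:3.2}. You have no analogue of this propagation argument.

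Your proposed mechanism for (ii) $\Rightarrow$ (i) --- showing that under $2$-regularity the syzygies $\Omega_A^n(S_i)$ have ``strictly growing length'' --- is not justified and is not in general how non-periodicity is detected for special biserial algebras; syzygy dimensions of simples over such algebras need not be monotone. The alternative you mention, ``placing $S_i$ in a component of type $\bZ\bA_\infty^\infty$ or $\bZ\widetilde{\bA}_n$'', is the right idea, but the way to do it is precisely the paper's: at a $2$-vertex the heart $\rad P_i/S_i$ decomposes into two indecomposable summands (this is condition (Q2) in Proposition~\ref{prop:9.5}), and then the shape of Auslander--Reiten sequences forces $S_i$ into a non-periodic component (cf.\ Remark~\ref{rem:3.3}). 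Your invocation of condition~(3) here does not do the work you claim: that $\Omega_A$ fixes rank-$3$ tubes says nothing about whether simples can lie in them.

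For the contrapositive of (i) $\Rightarrow$ (ii) you consider the case ``at least three arrows'' at a vertex and propose to show the corresponding simple is periodic. This is the wrong target: high valency does not force periodicity. In the paper's direct approach, $|s^{-1}(i)|\geq 3$ is excluded by tameness via a Galois-covering/wild-subquiver argument (proof of Proposition~\ref{prop:9.5}); if instead you first invoke Theorem~\ref{th:main1} as you intend, then the Gabriel quiver of an idempotent biserial weighted surface algebra automatically has valency at most $2$, so this case simply does not arise --- but your plan does not recognise this, and as written would send you chasing a false claim.
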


There are only very few algebras
of generalized dihedral type whose stable 
Auslander-Reiten quiver admits a non-periodic
component of Euclidean type $\bZ \widetilde{\bA}_n$, they are
described in 
Corollary~\ref{cor:8.6}.

In fact, we have the following consequences
of Theorem~\ref{th:main1}
and the results established in 
\cite[Sections 2~and~4]{ESk1}.

\begin{corollary*}
\label{cor:main6}
Let $A$ be an algebra of generalized dihedral type 
over an algebraically closed field $K$.
Then the following statements are equivalent:
\begin{enumerate}[(i)]
  \item
    $A$ is of polynomial growth.
  \item
    $\Gamma_A^s$ admits a component of Euclidean
    type $\widetilde{\bA}_n$.
  \item
    $A$ is of strict dihedral type
    and $\Gamma_A^s$ consists of one component
    of Euclidean type $\widetilde{\bA}_{1,3}$
    or $\widetilde{\bA}_{3,3}$, at most two
    stable tubes of rank $3$, and infinitely many
    stable tubes of rank $1$.
\end{enumerate}
\end{corollary*}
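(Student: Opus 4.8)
The goal is to prove Corollary~\ref{cor:main6}, characterizing when an algebra of generalized dihedral type has polynomial growth. The plan is to bootstrap from Theorem~\ref{th:main1}: since $A$ is of generalized dihedral type, it is socle equivalent (hence stably equivalent, having the same non-projective indecomposables and the same stable Auslander--Reiten quiver) to an idempotent algebra $B(S,\vv{T},\Sigma,m_\bullet)$ of a biserial weighted surface algebra. Because polynomial growth is detected by the shape of the stable Auslander--Reiten quiver and by the one-parameter family counting function --- and in particular is a socle-equivalence invariant for self-injective algebras --- it suffices to analyze the geometric models directly, together with the structural results of \cite[Sections 2 and 4]{ESk1} on tame symmetric algebras whose stable quiver has only tubes of rank $1$ and $3$ and components of the stated Euclidean or $\bZ\bA_\infty^\infty$ form.

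First I would establish the implication (ii)$\Rightarrow$(iii). Suppose $\Gamma_A^s$ admits a Euclidean component $\bZ\widetilde{\bA}_n$. By the definition of generalized dihedral type, the non-periodic components are all of type $\bZ\bA_\infty^\infty$ or $\bZ\widetilde{\bA}_n$, and there is at least one non-periodic component; general theory of tame algebras (a tame algebra with a $\bZ\widetilde{\bA}_n$ stable component is of polynomial growth in a neighbourhood, and by \cite[Sections 2 and 4]{ESk1} this forces severe global restrictions) shows that the presence of a single Euclidean component forces $\Gamma_A^s$ to consist of exactly that component plus tubes. One then reads off from the geometric classification --- the surface must be essentially minimal, a disk with one or two triangles --- that $A$ is of strict dihedral type (the Cartan matrix is non-singular, $\ell(A)\le 3$, and the number of rank-$3$ tubes is $\ell(A)-1$), and that the only Euclidean types that arise are $\widetilde{\bA}_{1,3}$ (one simple module, no rank-$3$ tube in fact one... here $\ell(A)-1$ may be interpreted via the degenerate case) and $\widetilde{\bA}_{3,3}$, with at most two rank-$3$ tubes and infinitely many rank-$1$ tubes. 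Conversely (iii)$\Rightarrow$(i) is immediate since the listed algebras are explicit; this will be a finite check against the tables in \cite{ESk1}.

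Next, the implications (i)$\Rightarrow$(ii) and (ii)$\Rightarrow$(iii) for polynomial growth. For (i)$\Rightarrow$(iii) one uses that a tame symmetric algebra of polynomial growth whose stable quiver has only tubes and $\bZ\bA_\infty^\infty$-components would have to be of polynomial growth while missing a Euclidean component; by \cite[Sections 2 and 4]{ESk1}, any tame symmetric algebra of polynomial growth which is not of finite representation type must have a Euclidean component in its stable quiver --- this is precisely the dichotomy proved there --- so (i) forces a $\widetilde{\bA}_n$ component, giving (ii). Combined with (ii)$\Rightarrow$(iii) above and the trivial (iii)$\Rightarrow$(i), the three statements are equivalent. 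So the logical skeleton is (i)$\Rightarrow$(ii)$\Rightarrow$(iii)$\Rightarrow$(i).

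The main obstacle is the geometric bookkeeping in (ii)$\Rightarrow$(iii): one must show that a biserial weighted surface algebra (or one of its idempotent algebras with respect to $2$-triangle disks) whose stable Auslander--Reiten quiver contains a $\bZ\widetilde{\bA}_n$ component can only come from the very short list of triangulated surfaces --- essentially a disk with one triangle or two triangles --- and then match each of these against the Euclidean types $\widetilde{\bA}_{1,3}$ and $\widetilde{\bA}_{3,3}$ and count the tubes. This requires knowing precisely which weighted surface algebras are of polynomial growth, and here I would lean on the explicit description of these algebras by quivers and relations from Sections~\ref{sec:bisweight}, \ref{sec:socdef}, \ref{sec:idempalg}, together with the fact (from \cite[Sections 2 and 4]{ESk1}) that tame symmetric algebras of polynomial growth with non-singular Cartan matrix and at most three simple modules have been classified. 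The comparison of tube ranks and the identification of $\widetilde{\bA}_{1,3}$ versus $\widetilde{\bA}_{3,3}$ is then a direct inspection; the one subtlety worth double-checking is whether $\ell(A)=1$ is genuinely excluded by the rank-$\ge 2$ hypothesis on $K_0(A)$ built into generalized dihedral type, so that $\widetilde{\bA}_{1,3}$ must be interpreted with $\ell(A)=2$ and exactly one rank-$3$ tube.
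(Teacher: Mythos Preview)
The paper gives no explicit proof of this corollary: it is stated as a direct consequence of Theorem~\ref{th:main1} together with \cite[Sections~2 and~4]{ESk1}, the relevant parts of which are quoted in the paper as Theorems~\ref{th:2.2} and~\ref{th:2.3}. Your outline is in the same spirit, but two points deserve tightening.

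First, your implication (i)$\Rightarrow$(ii) rests on the assertion that ``any tame symmetric algebra of polynomial growth which is not of finite representation type must have a Euclidean component in its stable quiver''. That is not what \cite{ESk1} proves; Theorems~\ref{th:2.2} and~\ref{th:2.3} are stated only for \emph{special biserial} self-injective algebras. The paper's intended route is: by Theorem~\ref{th:main1}, $A$ is socle equivalent to an idempotent algebra $B$ of a biserial weighted surface algebra, and $B$ is special biserial by Propositions~\ref{prop:5.1} and~\ref{prop:7.1}. Since socle equivalence preserves both $\Gamma^s$ and polynomial growth, one then applies Theorems~\ref{th:2.2} and~\ref{th:2.3} to $B$ to obtain (i)$\Leftrightarrow$(ii) for $A$. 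You say you will ``bootstrap from Theorem~\ref{th:main1}'' but then apply the dichotomy to $A$ directly; this needs the reduction step made explicit.

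Second, your (ii)$\Rightarrow$(iii) is framed as a geometric classification of which triangulated surfaces give polynomial growth. This is more work than necessary and also leaves unaddressed why the Cartan matrix is non-singular, why $m=1$, and why the case $(p,q)=(1,1)$ does not occur. The shorter argument runs as follows: Theorem~\ref{th:2.2}(iii) already forces $\Gamma_A^s$ to consist of $m$ components $\bZ\widetilde{\bA}_{p,q}$, $m$ tubes of rank $p$, $m$ tubes of rank $q$, and rank-$1$ tubes; the generalized dihedral condition forces $\{p,q\}\subseteq\{1,3\}$. That the remaining constraints (non-singular Cartan matrix, $m=1$, $(p,q)\in\{(1,3),(3,3)\}$, and $\ell(A)\in\{2,3\}$) hold is exactly what \cite[Section~4]{ESk1} establishes for self-injective special biserial algebras of polynomial growth with these tube ranks; in the paper's internal language this is packaged into Corollary~\ref{cor:8.6} and Theorem~\ref{th:8.5}. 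Finally, your parenthetical ``$\widetilde{\bA}_{1,3}$ (one simple module\dots)'' is off: $\widetilde{\bA}_{1,3}$ corresponds to $\ell(A)=2$ (the algebras $\Lambda(1,1,b)$ of Example~\ref{ex:8.2}) with exactly one rank-$3$ tube, while $\widetilde{\bA}_{3,3}$ corresponds to $\ell(A)=3$ (the algebras $\Gamma(1,1,1)$ and $\Omega(1,1,1)$) with two rank-$3$ tubes. The case $\ell(A)=1$ is excluded by the standing hypothesis that $K_0(A)$ has rank at least~$2$.
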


For details on Euclidean components of types 
$\widetilde{A}_{n}$ and $\widetilde{A}_{p, q}$ we refer to 
\cite{ESk1}.

\begin{corollary*}
\label{cor:main7}
Let $A$ be an algebra of generalized dihedral type 
over an algebraically closed field $K$.
Then the following statements are equivalent:
\begin{enumerate}[(i)]
  \item
    $A$ is a tame algebra of non-polynomial growth.
  \item
    $\Gamma_A^s$ admits a component of the form
    $\bZ \bA_{\infty}^{\infty}$.
  \item
    $\Gamma_A^s$ consists of 
    a finite number of stable tubes of rank $3$, 
    infinitely many stable tubes of rank $1$,
    and infinitely many components of the form
    $\bZ \bA_{\infty}^{\infty}$.
\end{enumerate}
\end{corollary*}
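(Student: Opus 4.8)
The strategy is to reduce everything to Corollary~\ref{cor:main6} and the classification of Theorem~\ref{th:main1}. Since an algebra of generalized dihedral type is tame by condition~(1) of the definition, statement~(i) says exactly that $A$ is not of polynomial growth, and by the equivalence (i)$\Leftrightarrow$(ii) of Corollary~\ref{cor:main6} this holds precisely when $\Gamma_A^s$ has no component of Euclidean type $\bZ\widetilde{\bA}_n$. Now by condition~(2) of the definition every non-periodic component of $\Gamma_A^s$ is of the form $\bZ\bA_\infty^\infty$ or $\bZ\widetilde{\bA}_n$ and at least one non-periodic component occurs, while by part~(iii) of Corollary~\ref{cor:main6}, whenever a component of type $\bZ\widetilde{\bA}_n$ occurs it is the only non-tube component of $\Gamma_A^s$. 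Consequently ``$\Gamma_A^s$ has no Euclidean component'' is equivalent to ``$\Gamma_A^s$ has a component of the form $\bZ\bA_\infty^\infty$'', which gives (i)$\Leftrightarrow$(ii); and (iii)$\Rightarrow$(ii) is trivial. Thus only the implication (ii)$\Rightarrow$(iii) remains.

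For this I would first apply Theorem~\ref{th:main1}, so that up to socle equivalence $A$ is an idempotent algebra $B(S,\vv{T},\Sigma,m_{\bullet})$ of a biserial weighted surface algebra; as these algebras are socle deformations of Brauer graph algebras (see the discussion following Theorem~\ref{th:main1}) and socle equivalence is transitive, $A$ is socle equivalent to a Brauer graph algebra $\Lambda$, which in particular is special biserial. It is well known that socle equivalent self-injective algebras have the same non-projective indecomposable modules and the same stable Auslander--Reiten quiver, so it suffices to establish (iii) for $\Lambda$. For the special biserial algebra $\Lambda$ the indecomposable non-projective modules are the string modules and the band modules; the band modules lie on the mouths of the stable tubes of rank $1$, and, since by (ii) and the equivalence (i)$\Leftrightarrow$(ii) there is no Euclidean component, every non-periodic string module lies in a component of the form $\bZ\bA_\infty^\infty$. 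So the problem becomes a combinatorial count of the strings and bands of $\Lambda$, read off from its quiver and relations.

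Three facts then have to be checked. First, the stable tubes of rank $3$ come from the exceptional local configurations around the triangles of $\vv{T}$ --- this is precisely where condition~(3) enters, see the remarks after Theorem~\ref{th:main1} --- and, the triangulated surface $(S,\vv{T})$ being finite, there are only finitely many of them. Secondly, by (ii)$\Leftrightarrow$(i) the algebra is not of polynomial growth, hence not domestic; for a special biserial algebra the one-parameter families of indecomposables are precisely the families $\{M(b,n,\lambda)\}_{\lambda}$ indexed by a band $b$ and an integer $n\geq 1$, so a finite list of bands would force boundedly many such families in each dimension and make $\Lambda$ domestic. Hence $\Lambda$ has infinitely many bands, and as distinct bands give distinct homogeneous tubes, $\Gamma_{\Lambda}^s$ has infinitely many stable tubes of rank $1$. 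Thirdly, one has to deduce from this abundance of bands that $\Lambda$ also has infinitely many strings lying in pairwise distinct components of the form $\bZ\bA_\infty^\infty$. I expect this last point to be the main obstacle: unlike the count of tubes it is not formal, and it requires the explicit string-and-band combinatorics of $B(S,\vv{T},\Sigma,m_{\bullet})$ developed for the proof of Theorem~\ref{th:main1}, together with the analysis of non-periodic components in \cite[Sections 2~and~4]{ESk1} already used for Corollary~\ref{cor:main6}, in order to show that the failure of domesticity spreads over infinitely many $\bZ\bA_\infty^\infty$-components rather than being confined to finitely many of them.
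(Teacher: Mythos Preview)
Your reductions are sound: you correctly invoke Theorem~\ref{th:main1} to pass to a special biserial algebra up to socle equivalence, and your derivation of (i)$\Leftrightarrow$(ii) from Corollary~\ref{cor:main6} is valid. The problem is only with (ii)$\Rightarrow$(iii), where you embark on a direct string/band count and then yourself flag the final step---infinitely many $\bZ\bA_\infty^\infty$-components---as an unresolved obstacle.

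That obstacle is illusory, because the paper has already imported exactly the result you need. Once $A$ is socle equivalent to a self-injective special biserial algebra $\Lambda$, Theorem~\ref{th:2.3} (which is \cite[Theorem~2.2]{ESk1}) applies to $\Lambda$ verbatim: the presence of a single $\bZ\bA_\infty^\infty$-component forces $\Gamma_\Lambda^s$ to decompose into finitely many tubes of rank $>1$, infinitely many tubes of rank $1$, and infinitely many $\bZ\bA_\infty^\infty$-components. Since $\Gamma_A^s=\Gamma_\Lambda^s$, and condition~(2)(i) of the definition of generalized dihedral type forces every tube of rank $>1$ to have rank exactly $3$, statement~(iii) follows immediately. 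There is no need to revisit band counts, domesticity, or the spread of strings over components: all of that is already packaged inside Theorem~\ref{th:2.3}.

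In short, the paper's intended proof is simply ``Theorem~\ref{th:main1} $+$ Theorem~\ref{th:2.3} $+$ the rank constraint from the definition''. Your route agrees with this up to the point where you reach a special biserial algebra; after that, quote Theorem~\ref{th:2.3} rather than attempting to reprove it.
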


There are only finitely many
stable tubes of rank $3$ in the stable
Auslander-Reiten quivers of biserial weighted surface algebras, 
explicitly we have the following corollary
(see the end of Section~\ref{sec:bisweight}).

\begin{corollary*}
\label{cor:main8}
Let $A$ be a basic self-injective algebra which is socle equivalent to
a biserial weighted surface algebra $B(S,\vv{T},m_{\bullet})$
of a directed triangulated surface $(S,\vv{T})$.
Then the number of stable tubes of rank $3$
in the stable Auslander-Reiten quiver $\Gamma_A^s$ 
of $A$ is equal to the number of triangles
in the triangulation $T$  of the surface $S$.
\end{corollary*}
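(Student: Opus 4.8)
The plan is to reduce the statement about an arbitrary self-injective algebra $A$ socle equivalent to a biserial weighted surface algebra $B = B(S,\vv{T},m_{\bullet})$ to the corresponding statement for $B$ itself, and then to read off the rank-$3$ tubes of $B$ directly from the combinatorics of the triangulation. The first reduction is immediate from the general principle quoted in the introduction: socle equivalent self-injective algebras have the same non-projective indecomposable modules, hence isomorphic stable module categories, and in particular isomorphic stable Auslander--Reiten quivers $\Gamma_A^s \cong \Gamma_B^s$. Thus it suffices to count the rank-$3$ tubes in $\Gamma_B^s$, and we may assume $A = B$.

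Next I would invoke the structural results on biserial weighted surface algebras established in Section~\ref{sec:bisweight}. Each triangle $\Delta$ of the triangulation $\vv{T}$ contributes, via its three sides and the cyclic orientation, a distinguished short local configuration of arrows $a \to b \to c \to a$ in the Gabriel quiver $Q_B$; the relations of a biserial weighted surface algebra force the corresponding three simple modules (or more precisely a canonically associated family of modules supported near $\Delta$) to lie on a single stable tube, and the period-$3$ behaviour of $\Omega_B$ around that triangle — exactly the phenomenon alluded to after the statement of Theorem~\ref{th:main1}, where condition~(3) is said to ``come from degenerating relations around a triangle'' — shows this tube has rank $3$. Conversely, every stable tube of rank $3$ in $\Gamma_B^s$ must arise this way: a module lying on a rank-$3$ tube is $\Omega_B^3$-periodic but not $\Omega_B$-periodic, and by the explicit description of the syzygy operator on $\mod B$ (again from Section~\ref{sec:bisweight}) the only source of genuine period-$3$ orbits is the rotation of such a triangle configuration. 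So one gets a bijection between triangles of $T$ and rank-$3$ tubes of $\Gamma_B^s$, provided distinct triangles give distinct tubes, which follows because the supports of the associated module families meet different sets of arrows of $Q_B$.

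The part I expect to be the main obstacle is establishing, carefully and in full generality, this converse: that there are \emph{no other} rank-$3$ stable tubes, i.e. that every $\Omega_B^3$-periodic, non-$\Omega_B$-periodic indecomposable is accounted for by a triangle. For the weighted surface algebras of \cite{ESk4} the algebra is periodic of period $4$, which rigidly constrains syzygies; but here $B$ is only biserial, not periodic, so one cannot simply quote a global periodicity statement. Instead the argument has to proceed via the detailed combinatorics of the quiver $Q_B$ and its relations — tracking syzygies of string and band modules, showing that band modules sit in rank-$1$ tubes and that the string modules with period $3$ are precisely those cycling around a triangle. This case analysis, presumably carried out (or cited) in Section~\ref{sec:bisweight}, is the technical heart; once it is in place, the count $\#\{\text{rank-}3\text{ tubes of }\Gamma_B^s\} = \#\{\text{triangles of }T\}$ and hence the corollary follow formally, together with the socle-equivalence reduction of the first paragraph.
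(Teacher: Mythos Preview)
Your outline is correct and matches the paper's approach: reduce to $B$ via socle equivalence, then read off the rank-$3$ tubes from the triangle combinatorics. Two points where the paper is sharper than your sketch are worth noting.

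First, the modules sitting at the mouths of the rank-$3$ tubes are not the simple modules but the uniserial modules $U(\alpha) = \alpha B$ for arrows $\alpha$ of the triangulation quiver; the key computation is the short exact sequence $0 \to U(f(\alpha)) \to P_{t(\alpha)} \to U(\alpha) \to 0$, giving $\Omega_B(U(\alpha)) = U(f(\alpha))$. Thus an $f$-orbit $(\alpha\, f(\alpha)\, f^2(\alpha))$ of length $3$ yields a $\tau_B$-orbit of length $3$ on the mouth, and an $f$-fixed loop yields a rank-$1$ tube.

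Second, the converse you flag as the main obstacle is in fact much lighter than a case analysis of all string and band modules. Since $B$ is special biserial, one invokes the Butler--Ringel description \cite{BR}: stable tubes of rank $\geq 2$ consist entirely of string modules, and their mouths are formed precisely by the uniserial modules $U(\alpha)$ given by the arrows of the Gabriel quiver. So every rank-$3$ tube already has its mouth among the $U(\alpha)$, and the syzygy formula above forces the corresponding $f$-orbit to have length $3$. The bijection between triangles of $T$ and $f$-orbits of length $3$ in $(Q(S,\vv{T}),f)$ is then immediate from the definition of the triangulation quiver, finishing the count. No tracking of general string or band syzygies is required.
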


This paper is organized as follows.
In Section~\ref{sec:bisalg}
we recall the structure
of stable Auslander-Reiten quivers of representation-infinite
self-injective special biserial algebras.
Section~\ref{sec:periodicalg}
contains  a complete description of representation-infinite
tame symmetric algebras for which all simple modules
are periodic of period $3$.
In Section~\ref{sec:triangulation}
we recall from  \cite{ESk4} the definition of 
 triangulation quivers and show that they
arise naturally from orientation of triangles of triangulated
surfaces.
In Section~\ref{sec:bisweight} 
we define biserial weighted triangulation algebras 
and describe their basic properties.
Section~\ref{sec:socdef} 
is devoted to socle deformations of biserial weighted
triangulation algebras and their basic properties. 
In Section~\ref{sec:idempalg}
we introduce the idempotent algebras of biserial weighted
surface algebras occurring in Theorem~\ref{th:main1}.
Section~\ref{sec:strictdih}
recalls known results on algebras of strict dihedral type
and exhibits presentations of these algebras 
as biserial weighted surface algebras
and their socle deformations.
In Sections \ref{sec:proof1} and \ref{sec:proof2} 
we prove  
Theorems \ref{th:main1} and \ref{th:main2}.

For general background on the relevant representation theory
we refer to the books 
\cite{ASS},
\cite{E5},
\cite{SS},
\cite{SY}.

\section{Special biserial algebras}\label{sec:bisalg}

A \emph{quiver} is a quadruple $Q = (Q_0, Q_1, s, t)$
consisting of a finite set $Q_0$ of vertices,
a finite set $Q_1$ of arrows,
and two maps $s,t : Q_1 \to Q_0$ which associate
to each arrow $\alpha \in Q_1$ its source $s(\alpha) \in Q_0$
and  its target $t(\alpha) \in Q_0$.
We denote by $K Q$ the path algebra of $Q$ over $K$
whose underlying $K$-vector space has as its basis
the set of all paths in $Q$ of length $\geq 0$, and
by $R_Q$ the arrow ideal of $K Q$ generated by all paths $Q$
of length $\geq 1$.
An ideal $I$ in $K Q$ is said to be \emph{admissible}
if there exists some $m \geq 2$ such that
$R_Q^m \subseteq I \subseteq R_Q^2$.
If $I$ is an admissible ideal in $K Q$, then
the quotient algebra $K Q/I$ is called
a bound quiver algebra, and is a finite-dimensional
basic $K$-algebra.
Moreover, $K Q/I$ is indecomposable if and only if
$Q$ is connected.
Every basic, indecomposable, finite-dimensional
$K$-algebra $A$ has a bound quiver presentation
$A \cong K Q/I$, where $Q = Q_A$ is the \emph{Gabriel
quiver} of $A$ and $I$ is an admissible ideal in $K Q$
(see \cite[Section~II.3]{ASS}).
For a bound quiver algebra $A = KQ/I$, we denote by $e_i$,
$i \in Q_0$, the associated complete set of pairwise
orthogonal primitive idempotents of $A$, and by
$S_i = e_i A/e_i \rad A$ (respectively, $P_i = e_i A$),
$i \in Q_0$, the associated complete family of pairwise
non-isomorphic simple modules (respectively, indecomposable
projective modules) in $\mod A$.

Following \cite{SW}, an algebra $A$ is said to be
\emph{special biserial} if $A$ is isomorphic
to a bound quiver algebra $K Q/I$, where the bound
quiver $(Q,I)$ satisfies the following conditions:
\begin{enumerate}[(a)]
 \item[(a)]
  each vertex of $Q$ is a source and target of at most two arrows,
 \item[(b)]
  for any arrow $\alpha$ in $Q$ there are at most
  one arrow $\beta$ and at most one arrow $\gamma$
  with $\alpha \beta \notin I$ and $\gamma \alpha \notin I$.
\end{enumerate}
Moreover, if $I$ is generated by paths of $Q$, then
$A = K Q/I$ is said to be a \emph{string algebra} \cite{BR}.
Every special biserial algebra is a biserial algebra \cite{Fu},
that is, the radical of any indecomposable non-uniserial projective, 
left or right,
module is a sum of two uniserial modules whose intersection
is simple or zero.
Moreover, every representation-finite biserial algebra
is special biserial, by \cite[Lemma~2]{SW}.
On the other hand, there are many biserial algebras
which are not special biserial.
In fact, it follows from the main
result of \cite{PS} that the class
of special biserial algebras coincides with the class
of biserial algebras 
which admit simply connected
Galois coverings.
It has been proved in \cite[Theorem~1.4]{WW}
that every special biserial algebra is a quotient algebra
of a symmetric special biserial algebra.
We also mention that, if $A$ is a self-injective
special biserial algebra, then $A/\soc(A)$ is a  string algebra.

The following fact has been proved by Wald and Waschb\"usch
in \cite{WW} (see also \cite{BR} and \cite{DS3} for alternative
proofs).

\begin{proposition}
\label{prop:2.1}
Every special biserial algebra is tame.
\end{proposition}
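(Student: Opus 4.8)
The plan is to reduce the statement to the known classification of indecomposable modules over special biserial algebras via string and band combinatorics. First I would recall the structure theory from Wald--Waschb\"usch \cite{WW} (and Butler--Ringel \cite{BR}): for a string algebra $A = KQ/I$, the indecomposable non-projective modules are precisely the \emph{string modules} $M(w)$, indexed by strings (reduced walks in $Q$ avoiding relations and inverse relations), together with the \emph{band modules} $M(b,\lambda,n)$, indexed by bands $b$, a scalar $\lambda \in K^{*}$, and a Jordan block size $n \geq 1$; the finitely many indecomposable projective-injectives account for the rest. Since every special biserial algebra $A$ is, by \cite[Theorem~1.4]{WW}, a quotient of a symmetric special biserial algebra, and since passing to $A/\soc(A)$ turns a self-injective special biserial algebra into a string algebra, the general special biserial case is controlled by the string-algebra case: the module category of $A$ differs from that of an associated string algebra only by finitely many indecomposables (the non-uniserial projectives).

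The key steps, in order, are: (1) fix a presentation $A \cong KQ/I$ satisfying conditions (a) and (b); (2) enumerate the indecomposable modules as string modules, band modules, and finitely many ``extra'' biserial projectives, quoting \cite{WW}, \cite{BR}; (3) observe that in each dimension $d$ there are only finitely many strings of total length bounded in terms of $d$, hence only finitely many string modules of dimension $d$, and likewise only finitely many bands giving band modules of dimension $d$; (4) note that for each such band $b$ and each admissible $n$, the family $\{M(b,\lambda,n) : \lambda \in K^{*}\}$ is a one-parameter family, and these exhaust (up to finitely many exceptions) the indecomposables of that dimension; (5) conclude that the indecomposable $d$-dimensional modules lie in finitely many discrete modules plus finitely many one-parameter families, which is exactly tameness in the sense recalled in the introduction. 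One should also check the functorial/geometric form of tameness if one wants to match Drozd's definition precisely: the band modules are obtained by pulling back $K[X,X^{-1}]$-modules along an explicit functor $K[X,X^{-1}] \to \mod A$ attached to each band, and these functors together with the finite list of discrete modules witness tameness.

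The main obstacle I expect is step (4), namely verifying that the string and band modules together with the finitely many non-uniserial projectives genuinely exhaust \emph{all} indecomposables, and that distinct strings/bands give non-isomorphic modules with the stated parametrisation — in other words, the completeness and non-redundancy of the Butler--Ringel / Wald--Waschb\"usch list. This is the technical heart of the structure theorem and is not a routine calculation; however, since the paper explicitly cites \cite{WW} (and \cite{BR}, \cite{DS3}) for the result, the intended proof is surely to invoke this classification rather than reprove it. A secondary, minor point is bookkeeping around the difference between special biserial and string algebras: one must use that $A/\soc(A)$ is a string algebra for self-injective $A$, combine with \cite[Theorem~1.4]{WW} to embed the general special biserial $A$ as a quotient of a symmetric one, and check that the finitely many additional biserial projective modules do not affect the growth count. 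So in summary: the proof is a citation-driven argument, reducing $\mod A$ to explicit string/band combinatorics and then reading off the tameness bound directly from the combinatorial parametrisation.
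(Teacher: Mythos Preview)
Your proposal is correct and matches the paper's approach: the paper does not give any proof at all but simply attributes the result to Wald--Waschb\"usch \cite{WW} (with \cite{BR} and \cite{DS3} as alternative references), exactly as you anticipated. Your elaboration of the string/band classification is a faithful sketch of what those references contain, though the paper itself is content with the bare citation.
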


We refer to \cite{CB2} and \cite{VFCB} for the structure and
tameness of arbitrary biserial algebras.

The following two theorems from
\cite[Theorems~2.1 and 2.2]{ESk1}
describe the structure of the stable Auslander-Reiten quivers
of representation-infinite self-injective special biserial algebras.

\begin{theorem}
\label{th:2.2}
Let $A$ be a special biserial self-injective algebra.
Then the following statements are equivalent:
\begin{enumerate}[(i)]
  \item
    $A$ is representation-infinite of polynomial growth.
  \item
    $\Gamma_A^s$ admits a component of Euclidean
    type $\widetilde{\bA}_{p,q}$.
  \item
    There are positive integers $m,p,q$ such that   
    $\Gamma_A^s$ is a disjoint union of
    $m$ components of the form $\bZ \widetilde{\bA}_{p,q}$,
    $m$ components of the form $\bZ \bA_{\infty}/(\tau^p)$,
    $m$ components of the form $\bZ \bA_{\infty}/(\tau^q)$,
    and infinitely many components of the form 
    $\bZ \bA_{\infty}/(\tau)$.
\end{enumerate}
\end{theorem}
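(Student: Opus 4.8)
The plan is to prove this by combining the general theory of tame self-injective algebras of polynomial growth with the specific combinatorial structure available for special biserial algebras. The implications (iii) $\Rightarrow$ (ii) $\Rightarrow$ (i) are essentially formal: if $\Gamma_A^s$ contains a component of the form $\bZ\widetilde{\bA}_{p,q}$ then $A$ is representation-infinite (the component is infinite and non-periodic), and the presence of a Euclidean component in the stable Auslander--Reiten quiver of a tame self-injective algebra forces polynomial growth by the classification results for such algebras (Skowro\'nski and collaborators); conversely (iii) trivially implies (ii). So the real content is the implication (i) $\Rightarrow$ (iii), giving the precise global shape of $\Gamma_A^s$.

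For (i) $\Rightarrow$ (iii), the first step is to reduce to the string-algebra situation: since $A$ is self-injective special biserial, $A/\soc(A)$ is a string algebra, and the non-projective indecomposable $A$-modules coincide (up to the projective-injectives) with the $A/\soc(A)$-modules, so $\Gamma_A^s$ can be read off from the string combinatorics together with the syzygy action. The indecomposable modules are then the string and band modules in the sense of Butler--Ringel, band modules being precisely the periodic ones lying in homogeneous tubes $\bZ\bA_\infty/(\tau)$. Next I would invoke the hypothesis of polynomial growth: by the known characterisation, a representation-infinite self-injective special biserial algebra is of polynomial growth if and only if its underlying string combinatorics has only finitely many ``primitive'' bands, equivalently the Galois covering / repetitive algebra is of a controlled type; this forces the non-homogeneous part of $\Gamma_A^s$ to be built from finitely many $\Omega$-orbits. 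One then analyses which AR-components can arise: for a string algebra the non-periodic components are of type $\bZ\bA_\infty^\infty$, $\bZ\widetilde{\bA}_n$ (i.e. $\bZ\widetilde{\bA}_{p,q}$) or $\bZ\bD_\infty$, and $\bZ\bA_\infty^\infty$ components are exactly what produces non-polynomial growth, while $\bZ\bD_\infty$-components are ruled out by self-injectivity; so polynomial growth leaves only the Euclidean tubes $\bZ\widetilde{\bA}_{p,q}$ together with ray/coray tubes. Finally, the syzygy operator $\Omega_A$, which on a symmetric algebra satisfies $\tau_A=\Omega_A^2$ and in general permutes the components, must carry the (finitely many) Euclidean components among themselves and relate their attached tubes; tracking the mouth of each $\bZ\widetilde{\bA}_{p,q}$ and the tubes $\bZ\bA_\infty/(\tau^p)$, $\bZ\bA_\infty/(\tau^q)$ that are ``glued'' to its two ends under $\Omega$ yields the matching $m$–$m$–$m$ count, with the remaining tubes necessarily homogeneous.

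The main obstacle is the last step: showing that the numbers of Euclidean components, of rank-$p$ tubes and of rank-$q$ tubes all coincide, and that no further exotic components appear. This requires a careful bookkeeping of how $\Omega_A$ acts on the finite exceptional part of $\Gamma_A^s$ — in particular controlling the ranks $p,q$ globally (they are the same for every Euclidean component, forced by $\Omega$ transitivity on that finite set) and verifying that each end of a $\bZ\widetilde{\bA}_{p,q}$ tube is fed by exactly one non-homogeneous tube. I expect this to be carried out via the explicit description of string and band modules over $A/\soc(A)$ and the induced combinatorics of the syzygy, as in \cite[Sections~2 and~4]{ESk1}; indeed the cleanest route is simply to quote Theorems~2.1 and~2.2 of \cite{ESk1} verbatim, since the present theorem \emph{is} that result, and the proof above is the outline of the argument given there.
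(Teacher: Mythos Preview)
The paper does not prove Theorem~\ref{th:2.2} at all: it is stated without proof, with an explicit attribution to \cite[Theorems~2.1 and~2.2]{ESk1} in the sentence immediately preceding it. So there is no ``paper's own proof'' to compare against, and your final remark---that the cleanest route is simply to quote \cite{ESk1}---is exactly what the paper does.

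Your outline is a reasonable informal sketch of the strategy in \cite{ESk1}, but several steps are imprecise as stated. The claim that $\bZ\bD_\infty$ components are ``ruled out by self-injectivity'' is not correct in general for self-injective algebras; what actually excludes them here is the special biserial combinatorics. The picture of each $\bZ\widetilde{\bA}_{p,q}$ component having ``two ends'' that are ``fed'' by non-homogeneous tubes is heuristic at best: these are separate components of $\Gamma_A^s$, not glued to one another, and the matching of their numbers comes from the fact that all three families arise from the same finite set of maximal directed strings (equivalently, from the structure of the repetitive algebra and its tubular type), not from an $\Omega$-transitivity argument. Likewise, the assertion that $\Omega$ acts transitively on the Euclidean components and thereby forces a single pair $(p,q)$ is not how the uniformity is obtained in \cite{ESk1}. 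None of this matters for the present paper, since the result is used as a black box; but if you intend your outline as an actual proof sketch, these steps would need to be replaced by the covering-theory and string-combinatorics arguments carried out in \cite{ESk1}.
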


\begin{theorem}
\label{th:2.3}
Let $A$ be a special biserial self-injective algebra.
Then the following statements are equivalent:
\begin{enumerate}[(i)]
  \item
    $A$ is of non-polynomial growth.
  \item
    $\Gamma_A^s$ admits a component of the form
    $\bZ \bA_{\infty}^{\infty}$.
  \item
    $\Gamma_A^s$ is a disjoint union of
    a finite number of components of the form
    $\bZ \bA_{\infty}/(\tau^n)$ with $n > 1$,
    infinitely many components of the form 
    $\bZ \bA_{\infty}^{\infty}$,
    and infinitely many components of the form 
    $\bZ \bA_{\infty}^{\infty}/(\tau)$.
\end{enumerate}
\end{theorem}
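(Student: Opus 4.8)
The plan is to deduce this result from the general theory of tame self-injective special biserial algebras, essentially by combining the classification of stable Auslander-Reiten components for such algebras (via their string and band module theory) with the trichotomy representation-finite / polynomial growth / non-polynomial growth. Since every special biserial algebra is tame by Proposition~\ref{prop:2.1}, and we are assuming $A$ is self-injective, the stable Auslander-Reiten quiver $\Gamma_A^s$ is built entirely from the combinatorics of strings and bands: the non-periodic (string) modules lie in components of the form $\bZ\bA_\infty^\infty$, $\bZ\widetilde{\bA}_{p,q}$, or $\bZ\bA_\infty$, while the band modules organize into tubes $\bZ\bA_\infty/(\tau^n)$. One first shows that the three conditions are each equivalent to ``$A$ is representation-infinite and not of polynomial growth.''

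First I would handle the equivalence of (i) with the polynomial-growth dichotomy. A self-injective special biserial algebra is either representation-finite, or representation-infinite of polynomial growth, or of non-polynomial growth; Theorem~\ref{th:2.2} already characterizes the polynomial-growth case by the presence of a Euclidean component $\bZ\widetilde{\bA}_{p,q}$. So (i) holds precisely when $A$ is representation-infinite and $\Gamma_A^s$ has no Euclidean component. Next, for the implication (i)$\Rightarrow$(iii): one invokes the structural description of $\Gamma_A^s$ for representation-infinite self-injective special biserial algebras (from \cite{ESk1}, or from the underlying work of Erdmann--Nehring, Crawley-Boevey, and the string/band calculus of \cite{BR}, \cite{DS3}): the components are stable tubes $\bZ\bA_\infty/(\tau^n)$ coming from bands, together with the string components. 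In the non-polynomial-growth case the band modules no longer fit into finitely many one-parameter families of bounded tube rank, which forces the existence of infinitely many components of the listed non-periodic types $\bZ\bA_\infty^\infty$ and $\bZ\bA_\infty^\infty/(\tau)$, and only finitely many tubes of rank $>1$ (the rank-$1$ tubes being infinite in number as always in the representation-infinite case). The implication (iii)$\Rightarrow$(ii) is immediate, and (ii)$\Rightarrow$(i) follows from Theorem~\ref{th:2.2}, since a $\bZ\bA_\infty^\infty$ component cannot occur in the representation-finite or polynomial-growth cases (there $\Gamma_A^s$ consists only of Euclidean components and tubes).

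The main obstacle is the precise bookkeeping in (i)$\Rightarrow$(iii): one must show that exactly the three listed component types occur, with the stated finiteness/infiniteness, and in the stated multiplicities (matching numbers of $\bZ\bA_\infty/(\tau^n)$ tubes with the non-periodic components). This is where the detailed combinatorics of the Brauer-graph / special-biserial presentation of $A$ enters — counting the ``critical'' strings and the cyclic bands of the quiver, and applying the covering-theoretic or functorial reduction that transports the AR-quiver description from a suitable Galois cover. Since this is exactly the content of \cite[Theorems~2.1 and 2.2]{ESk1} (restated here as Theorems~\ref{th:2.2} and \ref{th:2.3}), I would simply cite that work for the structural input and present the proof above as the short deduction assembling the equivalences.
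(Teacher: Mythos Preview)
Your proposal is essentially correct and matches the paper's treatment: the paper gives no proof of Theorem~\ref{th:2.3} at all, but simply states it as a background result imported verbatim from \cite[Theorem~2.2]{ESk1}, exactly as you recognize in your final sentence. The sketch you give of the underlying argument (string/band combinatorics, the dichotomy with Theorem~\ref{th:2.2}, and the covering-theoretic input) is a reasonable summary of what happens in \cite{ESk1}, but none of it appears in the present paper, so for the purposes of this paper the ``proof'' is just the citation.
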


We have also the following fact from \cite[Proposition~2.7]{ESk6}.
\begin{proposition}
\label{prop:2.4}
Let $A$ be a symmetric special biserial algebra
and $e$ an idempotent of $A$. 
Then $e A e$ is a symmetric special biserial algebra.
\end{proposition}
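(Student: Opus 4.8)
The plan is to dispose of the symmetric part at once and spend all the effort on showing $eAe$ is special biserial. That $eAe$ is symmetric is exactly the general fact recalled in the introduction (an idempotent algebra of a symmetric algebra is symmetric), so nothing is needed there. For the rest, recall that a special biserial algebra is by definition a bound quiver algebra, hence basic; thus, up to conjugating $e$ (which does not change $eAe$ up to isomorphism), we may take $e=\sum_{i\in S}e_i$ for a subset $S$ of the vertex set of $Q_A$. Since the corner construction iterates --- $(1-e_w)\big((1-e_v)A(1-e_v)\big)(1-e_w)$ is again of the form $(1-e)A(1-e)$ --- it suffices, by induction on the number of omitted vertices, to treat $e=1-e_v$ for a single vertex $v$. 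Write $A=KQ/I$ with $(Q,I)$ special biserial and $B=eAe$.

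The heart of the argument is an explicit presentation $B=KQ'/I'$ obtained by ``smoothing away'' $v$. First I would record the local structure at $v$: the vertex $v$ is the source, and the target, of at most two arrows of $Q$, and since $A$ is symmetric we have $\soc(P_v)\cong\topp(P_v)\cong S_v$ with $\rad(P_v)$ a sum of (at most) two uniserial submodules meeting in $\soc(P_v)$. By condition (b) for $(Q,I)$, each arrow $\alpha$ with $t(\alpha)=v$ admits at most one arrow $\beta$ with $s(\beta)=v$ and $\alpha\beta\notin I$, and dually; following these forced continuations, $\alpha$ determines at most one shortest nonzero path $\alpha w_\alpha$ of positive length with endpoint $\ne v$ (this terminates because $A$ is finite-dimensional). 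Then $Q'$ has vertices $Q_0\setminus\{v\}$; its arrows are those of $Q$ not meeting $v$, together with one new arrow $\gamma_\alpha\colon s(\alpha)\to t(\alpha w_\alpha)$ for each such $\alpha$; and $I'$ is generated by the relations of $I$ not involving $v$ together with the rewriting, in terms of the $\gamma_\alpha$, of the maximal paths and socle identifications of the projectives $e_iA$ with $i$ adjacent to $v$. With this presentation in hand I would check (a) and (b). For (a): if $i\ne v$ emits $k$ arrows toward $v$, these spawn at most $k$ new arrows $\gamma_\alpha$ out of $i$ while the remaining $\le 2-k$ arrows out of $i$ survive, so $i$ has out-degree $\le 2$ in $Q'$; dually for in-degrees. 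For (b): every arrow of $Q'$ is either an arrow of $Q$ with both ends $\ne v$ or a $\gamma_\alpha$ behaving like the path $\alpha w_\alpha$, so a nonzero composite in $B$ forces the following arrow to be the unique admissible continuation in $A$ of the last arrow involved; hence at most one such. As $I'$ is visibly admissible, $B$ is special biserial, completing the induction.

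The hard part is the bookkeeping in that middle step when $v$ carries a loop or $P_v$ is uniserial: the ``two arrows into/out of $v$'' may coincide, the paths $w_\alpha$ may wind around $v$ through loops, and one must confirm that the induced relations on $B$ are genuinely of special biserial type --- equivalently, that $B/\soc(B)$ is a string algebra and that no neighbour of $v$ picks up a spurious third arrow. I would handle this by computing the corner pieces $e_vAe_v$, $e_iAe_v$, $e_vAe_j$ for neighbours $i,j$ of $v$ directly from the explicit structure of $P_v$ (socle $=$ top $=S_v$), which shows that $e_iB$ is obtained from $e_iA$ by excising its $S_v$-composition layers and that $\rad(e_iB)$ is still a sum of two uniserial modules meeting in its socle. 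Together with the symmetry of $B$ already noted, this yields that $B$ is symmetric special biserial and closes the argument.
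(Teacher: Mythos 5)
The paper does not prove Proposition~\ref{prop:2.4} itself: it imports it from \cite[Proposition~2.7]{ESk6}, where the natural argument runs through the identification of basic symmetric special biserial algebras with Brauer graph algebras --- $eAe$ is the Brauer graph algebra of the graph obtained by deleting the edges corresponding to the omitted vertices, so both symmetry and the special biserial conditions come for free from that classification. Your route is genuinely different and more elementary: direct surgery on the bound quiver, removing one vertex $v$ at a time and replacing each arrow $\alpha$ ending at $v$ by the unique admissible continuation $\alpha w_\alpha$ through $v$. The outline is sound: condition (b) for $(Q,I)$ forces at most one new arrow $\gamma_\alpha$ per arrow into $v$, which preserves (a); a nonzero composite in $eAe$ determines the next arrow of $Q$ uniquely, which gives (b); and the reduction to $e=\sum_{i\in S}e_i$ is legitimate because $A$ is basic, so $eA$ is a multiplicity-free sum of the $P_i$. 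What your approach buys is self-containedness (and it makes visible that symmetry enters only through nondegeneracy of the form, not in the combinatorics); what the Brauer-graph proof buys is that all the degenerate configurations you flag (loops at $v$, $P_v$ uniserial, coinciding neighbours) are handled automatically. One point you should make explicit rather than leave to ``bookkeeping'': a candidate arrow $\gamma_\alpha=\alpha w_\alpha$ may fail to be an arrow of the Gabriel quiver of $eAe$ --- for instance when $\alpha w_\alpha$ generates $\soc(e_iA)$ and the socle relation identifies it with the other maximal path from $i$, which factors through vertices different from $v$ --- so as written your ideal $I'$ need not be admissible. This is not fatal: the true Gabriel quiver is a subquiver of your candidate quiver, and conditions (a) and (b) are inherited by that subquiver, but the statement and its verification belong in the proof.
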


\section{Tame periodic algebras of period three}\label{sec:periodicalg}

Let $A$ be an algebra and $A^e = A^{\op} \otimes_K A$
the associated enveloping algebra.
Then $\mod A^e$ is the category of finite-dimensional
$A$-$A$-bimodules.
An algebra $A$ is said to be \emph{periodic} if $\Omega_{A^e}^d(A) \cong A$
in $\mod A^e$ for some $d\geq 1$, and if so the minimal such $d$ 
is called the \emph{period} of $A$.
It is known that if $A$ is a periodic algebra of period $d$,
then $A$ is  self-injective  and 
every indecomposable non-projective module $M$ in $\mod A$
is periodic of period dividing $d$ (see \cite[Theorem~IV.11.19]{SY}).

Recall that the preprojective algebra $P(\bD_4)$ of Dynkin
type $\bD_4$ over $K$ is the bound quiver algebra
given by the quiver
\[
  \xymatrix@C=1.9pc@R=.75pc{
      & 1  \ar@<+.5ex>[dd]^{\beta_1}
      \\ \\
      & 0  \ar@<+.5ex>[uu]^{\alpha_1}
           \ar@<+.5ex>[dr]^{\alpha_2}
           \ar@<+.5ex>[dl]^{\alpha_3}
           \\
      3  \ar@<+.5ex>[ru]^{\beta_3}
      &&  2  \ar@<+.5ex>[lu]^{\beta_2}
  }
\]
and the relations
$\alpha_1 \beta_1 +\alpha_2 \beta_2 +\alpha_3 \beta_3 = 0$,
$\beta_1\alpha_1 = 0$,
$\beta_2\alpha_2 = 0$,
$\beta_3\alpha_3 = 0$.

The following 
shows
that there is a unique  representation-infinite 
tame symmetric algebra such that  all simple modules
are periodic of period $3$.

\begin{theorem}
\label{th:3.1}
Let $A$ be a basic, indecomposable, tame symmetric algebra
of infinite representation type over an algebraically closed
field $K$.
Then the following statements are equivalent:
\begin{enumerate}[(i)]
  \item
    All simple modules in $\mod A$ are periodic of period $3$.
  \item
    $A$ is a periodic algebra of period $3$.
  \item
    $K$ has characteristic $2$ and $A$ is isomorphic 
    to the preprojective algebra $P(\bD_4)$.
\end{enumerate}
\end{theorem}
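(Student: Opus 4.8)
The plan is to prove the cyclic chain of implications $(iii)\Rightarrow(i)\Rightarrow(ii)\Rightarrow(iii)$, since $(iii)$ gives an explicit algebra one can compute with, while $(ii)\Rightarrow(iii)$ is where the real classification work lies.

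\textbf{Step 1: $(iii)\Rightarrow(i)$.} Assume $K$ has characteristic $2$ and $A\cong P(\bD_4)$. Here one just computes directly. The algebra $P(\bD_4)$ is symmetric (the preprojective algebra of a Dynkin quiver is self-injective, and in type $\bD_4$ it is in fact symmetric), it is tame (it is known to be special biserial in characteristic $2$, or one appeals to the literature on representation type of preprojective algebras), and it is representation-infinite since $\bD_4$ is not of type $\bA_1$ or $\bA_2$ in the relevant sense — more precisely $P(\bD_4)$ has infinitely many indecomposables. For the period-$3$ statement of the four simple modules, I would construct explicitly the minimal projective resolution of each $S_i$ using the quiver and relations above: one reads off that $\Omega(S_0)$ is built from $S_1\oplus S_2\oplus S_3$ modulo the single relation $\alpha_1\beta_1+\alpha_2\beta_2+\alpha_3\beta_3=0$, and that $\Omega(S_i)$ for $i\neq 0$ has top $S_0$; iterating, and using that in characteristic $2$ the relation $\alpha_1\beta_1+\alpha_2\beta_2+\alpha_3\beta_3=0$ makes the resolution close up after three steps, one gets $\Omega^3(S_i)\cong S_i$ for all $i$, with $3$ minimal. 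The characteristic-$2$ hypothesis enters precisely because the sign/coefficient bookkeeping in the syzygy of $S_0$ only closes up to period $3$ when $2=0$ in $K$; in other characteristics the module $P(\bD_4)$ is still defined but the simples are not all periodic of period $3$.

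\textbf{Step 2: $(i)\Rightarrow(ii)$.} Assume all simple modules are $\Omega$-periodic of period $3$. First, $A$ is self-injective: a representation-infinite tame symmetric algebra is by hypothesis symmetric. The key point is to upgrade periodicity of all simple modules to periodicity of the algebra $A$ as a bimodule. For this I would invoke the standard machinery: if every simple right $A$-module has bounded projective resolution with $\Omega^3 S_i\cong S_i$, then $A$ has finite complexity one and (being self-injective) one shows $\Omega_{A^e}^{?}(A)$ is periodic. The cleanest route is through the result that a self-injective algebra all of whose simple modules are periodic of the same period $n$, and which is (twisted) periodic as a bimodule, has $\Omega_{A^e}$-period dividing a bounded multiple of $n$; combined with symmetry (which kills the Nakayama twist) and a degree/Euler-characteristic count using that $A$ is symmetric, one pins the bimodule period to exactly $3$. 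Alternatively, and perhaps more self-containedly, one can argue that the $(iii)\Rightarrow(i)$ computation together with a rigidity argument shows $A$ must already be isomorphic to $P(\bD_4)$, making $(ii)$ automatic — but that would collapse the logical structure, so I would keep $(i)\Rightarrow(ii)$ as a genuine homological-algebra step and defer the identification to Step 3.

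\textbf{Step 3: $(ii)\Rightarrow(iii)$.} This is the main obstacle. Assume $A$ is a periodic algebra of period $3$; then every non-projective indecomposable is periodic of period dividing $3$, so in particular every simple is periodic of period $1$ or $3$; period $1$ would force $S_i$ projective, impossible for a representation-infinite connected algebra with more than... — so every simple has period exactly $3$. Now I need to recover the quiver and relations. From period $3$ of each $S_i$ one extracts strong numerical constraints: looking at the first syzygy $\Omega S_i=\rad P_i$ and the condition $\Omega^3 S_i\cong S_i$, a dimension/Cartan-matrix count shows each $P_i$ has Loewy length $3$ and the Cartan matrix is severely restricted; tameness (via Proposition~\ref{prop:2.1} and the classification of tame symmetric algebras, or directly) then forces the Gabriel quiver to be one of a short list, and symmetry plus indecomposability plus representation-infiniteness narrows this to the $\bD_4$ quiver drawn above. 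Finally, one analyzes the possible admissible relations: the special-biserial/biserial structure forces the relations to be exactly $\beta_i\alpha_i=0$ together with a single quadratic relation $\sum c_i\alpha_i\beta_i=0$ with all $c_i\neq 0$; rescaling the arrows normalizes $c_i=1$, and then imposing that $\soc P_0$ is as required for symmetry/periodicity forces $1+1+1=0$ in $K$ after the normalization — no, more carefully: one shows that consistency of the period-$3$ resolution of $S_0$ with the symmetric form forces a relation among the $c_i$ that has no solution unless $\operatorname{char}K=2$. I expect the delicate part to be precisely this last coefficient analysis — ruling out all other characteristics and all other relation patterns — and the bookkeeping tying the bimodule period to the structure constants; the quiver identification, by contrast, should follow routinely from Cartan-matrix combinatorics and the tame special biserial classification recalled in Section~\ref{sec:bisalg}.
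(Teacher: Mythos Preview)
Your logical route $(iii)\Rightarrow(i)\Rightarrow(ii)\Rightarrow(iii)$ differs from the paper's, which establishes $(ii)\Rightarrow(i)$ trivially, $(iii)\Rightarrow(ii)$ by citing the explicit bimodule resolution of $P(\bD_4)$ in \cite{ESn}, and then proves the hard implication $(i)\Rightarrow(iii)$ by invoking classification results from \cite{BES1}, \cite{ESk2}, \cite{BES2}, \cite{BS}. The substantive content of the theorem lies entirely in these citations; the paper does no quiver or coefficient analysis of its own here.

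Your Step~2 contains a genuine gap. The implication ``all simple modules $\Omega$-periodic of period $3$ $\Rightarrow$ $A$ is periodic of period $3$ as a bimodule'' is \emph{not} a general theorem. What one obtains from periodicity of all simples, for a self-injective algebra, is at best that $\Omega_{A^e}^n(A)\cong {}_1A_{\sigma}$ for some automorphism $\sigma$ (twisted periodicity), and symmetry of $A$ does not by itself force $\sigma$ to be inner; the Nakayama automorphism is a different twist. Your sentence ``the cleanest route is through the result that a self-injective algebra all of whose simple modules are periodic of the same period $n$, and which is (twisted) periodic as a bimodule, has $\Omega_{A^e}$-period dividing a bounded multiple of $n$'' assumes the conclusion. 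You acknowledge the alternative of collapsing to $(i)\Rightarrow(iii)$ directly; that is exactly what the paper does, and it is not avoidable.

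There are also errors of detail in Steps~1 and~3. The preprojective algebra $P(\bD_4)$ is \emph{not} special biserial in any characteristic: vertex $0$ is the source and target of three arrows, and the defining relation $\alpha_1\beta_1+\alpha_2\beta_2+\alpha_3\beta_3=0$ involves three paths. So neither the tameness argument in Step~1 nor the ``tame special biserial classification'' invoked in Step~3 applies. The paper's route through the deformed-preprojective classification of \cite{BES1} and the tame weakly symmetric results of \cite{BS} is doing work that your Cartan-matrix and biserial sketch cannot replace.
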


\begin{proof}
The implication (ii) $\Rightarrow$ (i) is obvious (see \cite[Theorem~IV.11.19]{SY}).
The implication (iii) $\Rightarrow$ (ii) follows from \cite[(2.10)]{ESn}).
Assume now that all simple modules in $\mod A$ are periodic of period $3$.
Then it follows from \cite[Theorem~1.2]{BES1}
and \cite[Theorem~3.7]{ESk2} that $A$ is socle equivalent
to the preprojective algebra $P(\mathbb{D}_4)$.
Since $A$ is symmetric, applying 
\cite[Proposition~6.8]{BES2}
and
\cite[Theorems~1 and Theorems~2]{BS},
we conclude that $K$ is of characteristic $2$ 
and $A$ is isomorphic to $P(\mathbb{D}_4)$.
Hence the implication (i) $\Rightarrow$ (ii) holds.
\end{proof}

Since the stable Auslander-Reiten quiver of an algebra
of generalized dihedral type has at least one non-periodic
component, we have the following consequence, which
will be
essential for the proof of Theorem~\ref{th:main1}.

\begin{corollary}
\label{cor:3.2}
Let $A$ be an algebra of generalized dihedral type.
Then $\mod A$ admits at least one non-periodic simple module.
\end{corollary}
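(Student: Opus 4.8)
The plan is to argue by contradiction using Theorem~\ref{th:3.1}. Suppose that $A$ is an algebra of generalized dihedral type in which \emph{every} simple module is periodic. Since $A$ is symmetric, $\tau_A = \Omega_A^2$, so an indecomposable module is periodic precisely when it lies in a stable tube of $\Gamma_A^s$; by condition (2) of the definition, the tubes present have rank $1$ or $3$. A periodic simple module $S_i$ therefore has period dividing $6$. First I would pin down that the period must in fact be $3$: a simple module of $\Omega_A$-period $1$ or $2$ would force, via $\tau_A = \Omega_A^2$, the whole $\tau_A$-orbit through $S_i$ in $\Gamma_A^s$ to be a single point, hence a homogeneous tube of rank $1$ containing $S_i$ — but the indecomposable module at the mouth of a rank-$1$ stable tube has one irreducible arrow in and one out, whereas one checks (using that $A$ is symmetric indecomposable with $\ell(A) \geq 2$, so $S_i$ is not projective and has nontrivial top and socle of a projective cover) that a simple module at the mouth of a homogeneous tube is incompatible with the shape of the projective cover sequence; more simply, $\Omega_A^2(S_i) \cong S_i$ together with $\Omega_A S_i \hookrightarrow P_i$ forces $\dim_K P_i$ and the structure of $P_i$ into a very rigid pattern that one rules out, or one invokes that period-$2$ would make $A$ itself of finite representation type on that block. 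So all simple modules have $\Omega_A$-period exactly $3$.

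Next, having reduced to the situation ``all simple modules are periodic of period $3$'', I would apply Theorem~\ref{th:3.1} directly: since $A$ is basic, indecomposable, tame, symmetric, and of infinite representation type (recall every algebra of generalized dihedral type is representation-infinite, as noted after the definition), Theorem~\ref{th:3.1} gives that $K$ has characteristic $2$ and $A \cong P(\mathbb{D}_4)$, the preprojective algebra of type $\mathbb{D}_4$. It then remains to contradict condition (2)(ii) of the definition of generalized dihedral type, which requires $\Gamma_A^s$ to contain at least one non-periodic component of the form $\mathbb{Z}\mathbb{A}_\infty^\infty$ or $\mathbb{Z}\widetilde{\mathbb{A}}_n$. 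But $P(\mathbb{D}_4)$ is a periodic algebra (of period $3$, hence a fortiori periodic), so by \cite[Theorem~IV.11.19]{SY} every indecomposable non-projective $A$-module is periodic; equivalently $\Gamma_A^s$ consists entirely of stable tubes and has no non-periodic component at all. This contradicts (2)(ii), and the contradiction completes the proof.

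The main obstacle I anticipate is the first reduction step — ruling out that a periodic simple module could have $\Omega_A$-period $1$ (the rank-$1$ tube case), since period $3$ versus period $1$ are the only options allowed by condition (2) and one must show the rank-$1$ case cannot host a simple module. I expect this to follow from the structure of rank-$1$ stable tubes: the module $M$ at the mouth satisfies $\Omega_A M \cong M$ up to the quasi-length shift, and for $M = S_i$ simple one gets $\Omega_A S_i \cong S_i$, forcing $P_i$ to have both top and socle equal to $S_i$ with $\rad P_i / \soc P_i \cong S_i \oplus S_i$ or similar — and since $A$ is indecomposable with at least two simples this is impossible because it would make the vertex $i$ a connected component of the Gabriel quiver by itself. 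Alternatively, and perhaps most cleanly, one observes that if one simple were periodic of period $1$ or $2$ then its period divides $2$, so $S_i$ would be $\Omega_A^2$-periodic of period dividing $1$, i.e.\ $\tau_A$-fixed, placing it in a homogeneous tube — and then running the same argument via \cite[Theorem~3.7]{ESk2} or a direct inspection shows the resulting algebra is not tame-representation-infinite of the required form. Once that reduction is secured, the rest is an immediate appeal to Theorem~\ref{th:3.1} and the periodicity of $P(\mathbb{D}_4)$.
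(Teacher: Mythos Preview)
Your overall strategy matches the paper's: reduce to showing every periodic simple has $\Omega_A$-period $3$, then invoke Theorem~\ref{th:3.1} to get $A\cong P(\mathbb{D}_4)$, a periodic algebra, contradicting condition~(2)(ii). However, there is a genuine gap in one case and a muddled argument in the other.

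The gap is the rank-$3$ case. A simple $S$ lying in a rank-$3$ tube satisfies $\tau_A^3 S\cong S$, i.e.\ $\Omega_A^6 S\cong S$, so its $\Omega_A$-period is $3$ or $6$ (it cannot be $1$ or $2$, since that would give $\tau_A S\cong S$). You only discuss ruling out periods $1$ and $2$ and never explain why period $6$ is impossible. This is exactly where condition~(3) of the definition is needed, and the paper uses it explicitly: since $\Omega_A$ fixes each rank-$3$ tube, it permutes the three mouth modules, and because $\Omega_A^2=\tau_A$ acts on them as a $3$-cycle, so must $\Omega_A$; hence the period is $3$. Without invoking condition~(3) this step does not go through.

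For the rank-$1$ case your intuition is right but the details wander. Being in a rank-$1$ tube gives $\Omega_A^2(S_i)\cong S_i$, not $\Omega_A(S_i)\cong S_i$; and the structural consequence is not that $\rad P_i/\soc P_i\cong S_i\oplus S_i$. The clean fact, which the paper states in one line, is that $\Omega_A^2(S_i)\cong S_i$ forces $P_i$ to be uniserial with every composition factor isomorphic to $S_i$, so the indecomposable symmetric algebra $A$ is local and of finite representation type---contradicting both $\operatorname{rank} K_0(A)\geq 2$ and representation-infiniteness. Your final clause (``it would make the vertex $i$ a connected component of the Gabriel quiver by itself'') is the correct punchline; the route to it just needs to be made precise.
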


\begin{proof}
By Theorem~\ref{th:3.1}, it is enough to show that every periodic simple module
in $\mod A$ has period $3$.
Suppose $S$ is a periodic module in $\mod A$.
Then $S$ belongs to a stable tube of $\Gamma_A^s$
of rank $1$ or $3$.
If $S$ belongs to a stable tube of rank $3$
then by the condition (3) $S$ is of period $3$.
On the other hand, if $S$ belongs to a stable tube of rank $1$
then $\Omega_A^2(S) \cong S$,
and then $A$ is a local algebra of finite representation type,
a contradiction.
\end{proof}

\begin{remark}
\label{rem:3.3}
Suppose $A$ is an algeba of generalized dihedral type and $S$ is a non-periodic
simple module in $\mod A$.
Then $S$ belongs to a component of the form $\bZ \bA_{\infty}^{\infty}$
or $\bZ \widetilde{\bA}_n$.
This means that if $P(S)$ the projective cover of $S$,
then $\rad P(S)/S$ is a direct sum of two indecomposable
modules (see \cite[Section~I.7]{E5}).
\end{remark}

\section{Triangulation quivers of surfaces}\label{sec:triangulation}

We  recall now the definition of   triangulation quivers associated to
directed triangulated surfaces, as introduced in \cite{ESk4}, and we 
present two 
examples.

In this  paper, by a \emph{surface}
we mean a connected, compact, $2$-dimensional real
manifold $S$, orientable or non-orientable,
with or without boundary.
It is well known that every surface $S$ admits
an additional structure of a finite
$2$-dimensional triangular cell complex,
and hence a triangulation, by the deep Triangulation Theorem
(see for example \cite[Section~2.3]{Ca}).

For a natural number $n$, we denote by $D^n$ the unit disk
in the $n$-dimensional Euclidean space $\bR^n$,
formed by all points of distance $\leq 1$ from the origin.
The boundary $\partial D^n$ of $D^n$ is the unit sphere
$S^{n-1}$ in $\bR^n$, the points of distance $1$
from the origin.
Further, by an $n$-cell we mean a topological space,
denoted by $e^n$,
homeomorphic to the open disk $\intt D^n = D^n \setminus \partial D^n$.
In particular,
$D^0$ and $e^0$ consist of a single point,
and $S^0 = \partial D^1$ consists of two points.

We refer to \cite[Appendix]{H} for some basic topological
facts about cell complexes.

Let $S$ be a surface.
In this  paper, by a 
\emph{finite $2$-dimensional triangular cell complex structure} 
on $S$ we mean a family of continous maps
$\varphi_i^n : D_i^n \to X$, with $n \in \{0,1,2\}$
and $D_i^n = D^n$,
for $i$ in a finite index set,
satisfying the following conditions:
\begin{enumerate}[(1)]
 \item
  Each $\varphi_i^n$ restricts to a homeomorphism from the interior\ 
  $\intt D_i^n$ to the $n$-cell $e_i^n = \varphi_i^n(\intt D_i^n)$ of $S$,
  and these cells are all disjoint and their union is $S$.
 \item
  For each $2$-dimensional cell $e_i^2$, the set 
  $\phi_i^2(\partial D_i^2)$
  is  the union of $k$ $1$-cells and $m$ $0$-cells,
  with $k \in \{2,3\}$ and $m \in \{1,2,3\}$.
\end{enumerate}
Then the closure  $\varphi_i^2(D_i^2)$ of a $2$-cell $e_i^2$
is called a \emph{triangle} of $S$,
and the closure  $\varphi_i^1(D_i^1)$ of a $1$-cell $e_i^1$
is called an \emph{edge} of $S$.
The collection $T$ of all triangles  $\varphi_i^2(D_i^2)$ 
is said to be a \emph{triangulation} of $S$.

We assume that such a triangulation $T$ of $S$
has at least two 
different edges,
or equivalently, there are at least two 
different
$1$-cells in the corresponding triangular cell complex structure on $S$.
Then $T$ is a finite collection of triangles
of the form
\begin{gather*}
\qquad
\begin{tikzpicture}[auto]
\coordinate (a) at (0,2);
\coordinate (b) at (-1,0);
\coordinate (c) at (1,0);
\draw (a) to node {$b$} (c)
(c) to node {$c$} (b);
\draw (b) to node {$a$} (a);
\node (a) at (0,2) {$\bullet$};
\node (b) at (-1,0) {$\bullet$};
\node (c) at (1,0) {$\bullet$};
\end{tikzpicture}
\qquad
\raisebox{7ex}{\mbox{or}}
\qquad
\begin{tikzpicture}[auto]
\coordinate (a) at (0,2);
\coordinate (b) at (-1,0);
\coordinate (c) at (1,0);
\draw (c) to node {$b$} (b)
(b) to node {$a$} (a);
\draw (a) to node {$a$} (c);
\node (a) at (0,2) {$\bullet$};
\node (b) at (-1,0) {$\bullet$};
\node (c) at (1,0) {$\bullet$};
\end{tikzpicture}
%
%
\raisebox{7ex}{\LARGE =}
\ \,
\begin{tikzpicture}[auto]
\coordinate (c) at (0,0);
\coordinate (a) at (1,0);
\coordinate (b) at (0,-1);
\draw (c) to node {$a$} (a);
\draw (b) arc (-90:270:1) node [below] {$b$};
\node (a) at (1,0) {$\bullet$};
\node (c) at (0,0) {$\bullet$};
\end{tikzpicture}
%
\\
\mbox{$a,b,c$ pairwise different}
\qquad
\quad
\mbox{$a,b$ different (\emph{self-folded triangle})}
\end{gather*}
such that every edge of such a triangle in $T$ is either
the edge of exactly two triangles, or is the self-folded
edge, or else lies on the boundary.
A  given surface $S$ admits many
finite $2$-dimensional triangular cell complex structures, and hence
triangulations.
We refer to \cite{Ca}, \cite{KC}, \cite{Ki} for
general background on surfaces and
constructions of surfaces from plane models.

Let $S$ be a surface and
$T$ a triangulation $S$.
To each triangle $\Delta$ in $T$ we may associate an orientation
\[
\begin{tikzpicture}[auto]
\coordinate (a) at (0,2);
\coordinate (b) at (-1,0);
\coordinate (c) at (1,0);
\coordinate (d) at (-.08,.25);
\draw (a) to node {$b$} (c)
(c) to node {$c$} (b)
(b) to node {$a$} (a);
\draw[->] (d) arc (260:-80:.4);
\node (a) at (0,2) {$\bullet$};
\node (b) at (-1,0) {$\bullet$};
\node (c) at (1,0) {$\bullet$};
\end{tikzpicture}
\raisebox{7ex}{\!\!$=(abc)$}
\raisebox{7ex}{\quad or \ \ }
\begin{tikzpicture}[auto]
\coordinate (a) at (0,2);
\coordinate (b) at (-1,0);
\coordinate (c) at (1,0);
\coordinate (d) at (.08,.25);
\draw (a) to node {$b$} (c)
(c) to node {$c$} (b)
(b) to node {$a$} (a);
\draw[->] (d) arc (-80:260:.4);
\node (a) at (0,2) {$\bullet$};
\node (b) at (-1,0) {$\bullet$};
\node (c) at (1,0) {$\bullet$};
\end{tikzpicture}
\raisebox{7ex}{\!\!$=(cba)$,}
\]
if $\Delta$ has pairwise different edges $a,b,c$, and
\[
\begin{tikzpicture}[auto]
\coordinate (c) at (0,0);
\coordinate (a) at (1,0);
\coordinate (b) at (0,-1);
\coordinate (d) at (.38,-.08);
\draw (c) to node {$a$} (a);
\draw (b) arc (-90:270:1) node [below] {$b$};
\node (a) at (1,0) {$\bullet$};
\node (c) at (0,0) {$\bullet$};
\draw[->] (d) arc (-10:-350:.4);
\end{tikzpicture}
\raisebox{7ex}{$=(aab)=(aba)$,}
\]
if $\Delta$ is self-folded, with the self-folded edge $a$,
and the other edge $b$.
Fix an orientation of each triangle $\Delta$ of $T$,
and denote this choice by $\vv{T}$.
Then
the pair $(S,\vv{T})$ is said to be a
\emph{directed triangulated surface}.
To each directed triangulated surface $(S,\vv{T})$
we associate the quiver $Q(S,\vv{T})$ whose vertices
are the edges of $T$ and the arrows are defined as
follows:
\begin{enumerate}[(1)]
 \item
  for any oriented triangle $\Delta = (a b c)$ in $\vv{T}$
  with pairwise different edges $a,b,c$, we have the cycle
  \[
    \xymatrix@C=.8pc{a \ar[rr] && b \ar[ld] \\ & c \ar[lu]}
    \raisebox{-7ex}{,}
  \]
 \item
  for any self-folded triangle $\Delta = (a a b)$ in $\vv{T}$,
  we have the quiver
  \[
    \xymatrix{ a \ar@(dl,ul)[] \ar@/^1.5ex/[r] & b \ar@/^1.5ex/[l]} ,
  \]
 \item
  for any boundary edge $a$ in ${T}$,
  we have the loop
  \[
    {\xymatrix{ a \ar@(dl,ul)[]}} .
  \]
\end{enumerate}
Then $Q = Q(S,\vv{T})$ is a triangulation quiver in
the following sense (introduced independently by Ladkani
in \cite{La}).
For the history see the Acknowledgements in \cite{ESk4}.

A \emph{triangulation quiver} is a pair $(Q,f)$,
where $Q = (Q_0,Q_1,s,t)$ is a finite connected quiver
and $f : Q_1 \to Q_1$ is a permutation
on the set $Q_1$ of arrows of $Q$ satisfying
the following conditions:
\begin{enumerate}[(a)]
 \item[(a)]
  every vertex $i \in Q_0$ is the source and target of exactly two
  arrows in $Q_1$,
 \item[(b)]
  for each arrow $\alpha \in Q_1$, we have $s(f(\alpha)) = t(\alpha)$,
 \item[(c)]
  $f^3$ is the identity on $Q_1$.
\end{enumerate}

For the quiver $Q = Q(S,\vv{T})$ of a
directed triangulated surface $(S,\vv{T})$,
the pair $(Q,f)$ is a triangulation quiver, where
the permutation $f$ on its set of arrows
is defined as follows:
\begin{enumerate}[(1)]
 \item
  \raisebox{3ex}%
  {\xymatrix@C=.8pc{a \ar[rr]^{\alpha} && b \ar[ld]^{\beta} \\ & c \ar[lu]^{\gamma}}}%
  \quad
  $f(\alpha) = \beta$,
  $f(\beta) = \gamma$,
  $f(\gamma) = \alpha$,

  for an oriented triangle $\Delta = (a b c)$ in $\vv{T}$,
  with pairwise different edges $a,b,c$,
 \item
  \raisebox{0ex}%
  {\xymatrix{ a \ar@(dl,ul)[]^{\alpha} \ar@/^1.5ex/[r]^{\beta} & b \ar@/^1.5ex/[l]^{\gamma}}}
  \quad
  $f(\alpha) = \beta$,
  $f(\beta) = \gamma$,
  $f(\gamma) = \alpha$,

  for a self-folded triangle $\Delta = (a a b)$ in $\vv{T}$,\vspace{1mm}
 \item
  \raisebox{0ex}%
  {\xymatrix{ a \ar@(dl,ul)[]^{\alpha}}}
  \quad
  $f(\alpha) = \alpha$,\vspace{1mm}

  for a boundary edge $a$ of ${T}$.
\end{enumerate}

We note that, 
if $(Q, f)$ is a 
triangulation quiver, then
$Q$ is a $2$-regular quiver.
\emph{We will  consider only the trangulation
quivers with at least two vertices.}

We would like to mention that different directed triangulated
surfaces (even of different genus) may lead to the same
triangulation quiver (see \cite[Example~4.3]{ESk4}).

Let $(Q,f)$ be a triangulation quiver.
Then we have the involution $\bar{}: Q_1 \to Q_1$
which assigns to an arrow $\alpha \in Q_1$
the arrow $\bar{\alpha}$ with $s(\alpha) = s(\bar{\alpha})$
and $\alpha \neq \bar{\alpha}$.
Then we obtain another permutation $g: Q_1 \to Q_1$
of the set $Q_1$ of arrows of $Q$ such that
$g(\alpha) = \overbar{f(\alpha)}$ for any $\alpha \in Q_1$.
We denote by $\cO(g)$ the set of all $g$-orbits
in $Q_1$.

The following theorem and  its consequence
have been established in \cite[Section~4]{ESk4}
(see also Example~\ref{ex:8.2} for the case with two vertices).

\begin{theorem}
\label{th:4.1}
Let $(Q,f)$ be a triangulation quiver.
Then there exists a directed triangulated surface
$(S,\vv{T})$ such that $(Q,f) = (Q(S,\vv{T}),f)$.
\end{theorem}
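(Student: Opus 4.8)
The plan is to reverse the construction that associates a triangulation quiver $(Q(S,\vv{T}),f)$ to a directed triangulated surface. Given an abstract triangulation quiver $(Q,f)$, I will first use the hypothesis $f^3 = \id$ together with condition (b), $s(f(\alpha)) = t(\alpha)$, to partition the arrow set $Q_1$ into $f$-orbits, each of which is either a $3$-cycle of arrows $\alpha \to \beta \to \gamma \to \alpha$ forming an oriented triangle in $Q$, or a fixed point $f(\alpha)=\alpha$, which by (a) and (b) forces $\alpha$ to be a loop at $s(\alpha)$. From each $3$-element $f$-orbit I will build an abstract triangle with three labelled, oriented edges (the three vertices $s(\alpha), s(\beta), s(\gamma)$ of $Q$ visited by the cycle), giving it the orientation recorded by the cyclic order of the orbit; from each loop $f$-orbit I will either produce a boundary edge, or—when the loop is one of the two arrows at a vertex whose other incident arrows pair it with a distinct vertex—recognize the self-folded-triangle pattern $\xymatrix{ a \ar@(dl,ul)[] \ar@/^1.5ex/[r] & b \ar@/^1.5ex/[l]}$ and assemble the corresponding self-folded triangle. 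The $2$-regularity of $Q$ (every vertex the source and target of exactly two arrows) is exactly what guarantees that each edge-label of $Q_0$ is glued to at most two of these abstract triangles, so the gluing data is consistent.

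Next I would realize this combinatorial gluing topologically: take one Euclidean triangle (a copy of $D^2$ with its boundary subdivided into the appropriate number of arcs and vertices, according to whether the triangle is ordinary or self-folded) for each $f$-orbit of triangle type, and glue them along edges carrying the same label in $Q_0$, matching the two triangles incident to a given edge. Each edge of $Q_0$ that is incident to exactly two triangle-corners gets identified in pairs; each edge arising as a boundary-edge loop is left free on the boundary; a self-folded edge is folded onto itself as in the picture in Section 4. Since $Q$ is connected, the resulting space $S$ is a connected compact $2$-manifold possibly with boundary, and it carries by construction a finite $2$-dimensional triangular cell complex structure $T$ in the sense defined in Section 4, whose edges are precisely $Q_0$ and whose triangles are precisely the $f$-orbits. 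The chosen cyclic orientation on each triangle orbit is exactly a choice $\vv{T}$.

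Finally I would verify that $(Q(S,\vv{T}),f) = (Q,f)$ on the nose. By the definition of $Q(S,\vv{T})$, its vertices are the edges of $T$, which are the elements of $Q_0$; its arrows are read off from the three local pictures (ordinary oriented triangle, self-folded triangle, boundary loop), and these were set up precisely to reproduce the arrows and the permutation $f$ of the orbits we started from. One checks case by case that the arrow $\alpha \to \beta$ in the constructed $Q(S,\vv{T})$ coming from a triangle orbit agrees with the arrow of $Q$ between the corresponding vertices, and that the permutation defined on $Q(S,\vv{T})$ by the rules in cases (1)--(3) of Section 4 coincides with the original $f$; this is a direct bookkeeping check once the dictionary between $f$-orbits and triangles is fixed.

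The main obstacle is the self-folded case: one must argue that the only way a loop can appear in a triangulation quiver is either as a genuine boundary-edge loop or as the self-folded-edge arrow of a self-folded triangle, and that the local configuration around such a vertex forces exactly the quiver $\xymatrix{ a \ar@(dl,ul)[] \ar@/^1.5ex/[r] & b \ar@/^1.5ex/[l]}$ with $f$ cycling the three arrows $\alpha,\beta,\gamma$ as prescribed. This requires carefully tracking, for the two arrows at a vertex $a$ supporting a loop $\alpha$, where $\bar\alpha$ goes, applying (b) and $f^3=\id$, and ruling out degenerate possibilities using the standing assumption that the quiver has at least two vertices. Once the self-folded and boundary local models are pinned down, the rest is the routine topological gluing and the verification that the two constructions are mutually inverse; I would also remark that the two-vertex case is exactly Example~\ref{ex:8.2}, handled separately there.
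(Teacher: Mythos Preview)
Your reverse construction is the natural approach, and it is essentially what the cited proof in \cite[Section~4]{ESk4} does (note that the present paper does not prove Theorem~\ref{th:4.1} here but imports it, with Example~\ref{ex:8.2} covering the two-vertex case): the $3$-element $f$-orbits become oriented triangles with edges $s(\alpha),s(f(\alpha)),s(f^2(\alpha))$, the fixed points of $f$ become boundary edges, and one glues.

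One wording issue to clean up: your sentence ``from each loop $f$-orbit I will either produce a boundary edge, or \dots\ recognize the self-folded-triangle pattern'' conflates two different things. An $f$-orbit of length $1$ (a loop fixed by $f$) \emph{always} yields a boundary edge. The self-folded triangle arises instead from a $3$-element $f$-orbit that happens to contain a loop \emph{not} fixed by $f$; in that case the three sources $s(\alpha),s(f(\alpha)),s(f^2(\alpha))$ are not pairwise distinct, and the triangle with those edge labels is self-folded. Your final paragraph shows you understand this dichotomy correctly, so this is purely a matter of exposition, but as written the first paragraph double-counts the self-folded case.

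On the gluing step you assert that the quotient is automatically a surface; this is true but deserves a sentence. When two triangles share an edge there are two ways to identify them, and you have not specified which. Any choice works: after gluing edges, the $0$-cells are the equivalence classes of triangle-corners under edge-endpoint identification, and the link of each resulting $0$-cell is by construction a connected $1$-manifold (a circle or an arc), so the quotient is a surface. Moreover, the quiver $Q(S,\vv{T})$ and its permutation $f$ depend only on the set of triangles and their orientations, not on these gluing choices, so any choice recovers $(Q,f)$; this also explains the paper's remark that different surfaces can yield the same triangulation quiver.
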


\begin{corollary}
\label{cor:4.2}
Let $(Q,f)$ be a triangulation quiver.
Then $Q$ contains a loop $\alpha$ with $f(\alpha) = \alpha$ 
is and only if 
$(Q,f) = (Q(S,\vv{T}),f)$
for a directed triangulated surface $(S,\vv{T})$ with $S$
having non-empty boundary.
\end{corollary}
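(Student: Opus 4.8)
The plan is to read the statement off the explicit construction of $Q(S,\vv{T})$ together with Theorem~\ref{th:4.1}. The key observation is that, for an \emph{arbitrary} directed triangulated surface $(S,\vv{T})$, the arrows of $Q(S,\vv{T})$ which are loops $\alpha$ with $f(\alpha)=\alpha$ are exactly the loops attached to boundary edges of $T$. Once this is isolated, both implications follow uniformly.

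First I would prove the implication ($\Leftarrow$). Assume $(Q,f)=(Q(S,\vv{T}),f)$ with $S$ having non-empty boundary. Then $T$ has at least one boundary edge $a$, and by part~(3) of the definitions of $Q(S,\vv{T})$ and of $f$, the edge $a$, viewed as a vertex of $Q$, carries a loop $\alpha$ with $f(\alpha)=\alpha$. Hence $Q$ contains a loop fixed by $f$.

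For the converse ($\Rightarrow$), suppose $Q$ contains a loop $\alpha$ with $f(\alpha)=\alpha$. By Theorem~\ref{th:4.1} there is a directed triangulated surface $(S,\vv{T})$ with $(Q,f)=(Q(S,\vv{T}),f)$, and it suffices to show this $S$ has non-empty boundary. Inspect the three types of arrows occurring in $Q(S,\vv{T})$. Arrows coming from an oriented triangle $(abc)$ with pairwise different edges form a $3$-cycle on the three \emph{distinct} vertices $a,b,c$, hence are never loops. For a self-folded triangle $(aab)$ the only loop produced is the loop $\alpha$ at the vertex $a$, and here $f(\alpha)=\beta$ with $\beta\colon a\to b$; since the two edges $a$ and $b$ of a self-folded triangle are different, $\beta\neq\alpha$, so this loop is not fixed by $f$. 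Thus the only loops $\alpha$ of $Q(S,\vv{T})$ with $f(\alpha)=\alpha$ are those of type~(3), attached to boundary edges of $T$. Consequently the existence of an $f$-fixed loop in $Q$ forces $T$ to have a boundary edge, i.e.\ $S$ has non-empty boundary.

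The argument is essentially bookkeeping once Theorem~\ref{th:4.1} is in hand; the one point that needs care is that Theorem~\ref{th:4.1} provides only \emph{some} realization of $(Q,f)$ and that, as remarked above, distinct (possibly non-homeomorphic) surfaces may yield the same triangulation quiver. This is why the proof phrases the characterization of $f$-fixed loops purely in terms of the combinatorial construction of $Q(S,\vv{T})$, so that it applies to every realization and both implications go through without having to track any extra structure on $S$.
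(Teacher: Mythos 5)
Your proof is correct, and it is the natural argument: the paper itself gives no proof of Corollary~\ref{cor:4.2}, citing \cite[Section~4]{ESk4} instead, and your case analysis of the three arrow types in $Q(S,\vv{T})$ (together with Theorem~\ref{th:4.1} for the forward direction) is exactly the intended reasoning. Your closing remark — that the characterization of $f$-fixed loops must be stated so as to apply to \emph{every} realization, since the surface realizing a given triangulation quiver is not unique — is the one point requiring care, and you have handled it properly.
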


We will present now two  examples of triangulation
quivers associated to directed triangulated surfaces.
Further examples may be found in \cite{ESk4}.

\begin{example}
\label{ex:4.3} 
Let $S$ consist of one triangle $T$
\[
\begin{tikzpicture}[auto]
\coordinate (a) at (0,2);
\coordinate (b) at (-1,0);
\coordinate (c) at (1,0);
\draw (a) to node {2} (c)
(c) to node {3} (b);
\draw (b) to node {1} (a);
\node (a) at (0,2) {$\bullet$};
\node (b) at (-1,0) {$\bullet$};
\node (c) at (1,0) {$\bullet$};
\end{tikzpicture}
\]
with the three pairwise different edges,
forming the boundary of $S$, and consider the orientation
$\vv{T}$ of $T$
\[
\begin{tikzpicture}[auto]
\coordinate (a) at (0,2);
\coordinate (b) at (-1,0);
\coordinate (c) at (1,0);
\coordinate (d) at (-.08,.25);
\draw (a) to node {2} (c)
(c) to node {3} (b)
(b) to node {1} (a);
\draw[->] (d) arc (260:-80:.4);
\node (a) at (0,2) {$\bullet$};
\node (b) at (-1,0) {$\bullet$};
\node (c) at (1,0) {$\bullet$};
\end{tikzpicture}
\]
of $T$.
Then the triangulation quiver
$(Q(S,\vv{T}),f)$ is the quiver
\[
  \xymatrix@C=.8pc{
     1 \ar@(dl,ul)[]^{\varepsilon} \ar[rr]^{\alpha} &&
     2 \ar@(ur,dr)[]^{\eta} \ar[ld]^{\beta} \\
     & 3 \ar@(dr,dl)[]^{\mu} \ar[lu]^{\gamma}}
\]
with $f$-orbits
$(\alpha\, \beta\, \gamma)$,
$(\varepsilon)$,
$(\eta)$,
$(\mu)$.
Observe that we have only one $g$-orbit
$(\alpha\, \eta\, \beta\, \mu\, \gamma\, \varepsilon)$
of arrows in $Q(S,\vv{T})$.
In particular, $|\cO(g)| = 1$.
\end{example}

\begin{example}
\label{ex:4.4} 
Let $S$  be the sphere with the triangulation $T$
\[
\begin{tikzpicture}[auto]
\coordinate (a) at (0,1.73);
\coordinate (b) at (-1,0);
\coordinate (c) at (1,0);
\draw (a) to node {2} (c)
(c) to node {3} (b);
\draw (b) to node {1} (a);
\node (a) at (0,1.73) {$\bullet$};
\node (b) at (-1,0) {$\bullet$};
\node (c) at (1,0) {$\bullet$};
\end{tikzpicture}
\]
given by two unfolded triangles,
and $\vv{T}$ the following orientation
\[
\begin{tikzpicture}[auto]
\coordinate (a) at (0,1.73);
\coordinate (b) at (-1,0);
\coordinate (c) at (1,0);
\coordinate (d) at (-.08,.18);
\coordinate (aa) at (-.2,1.5);
\coordinate (bb) at (-.7,-.06);
\coordinate (cc) at (.7,-.06);
\draw (a) to node {2} (c)
(c) to node {3} (b)
(b) to node {1} (a);
\draw[->] (d) arc (260:-80:.4);
\node (a) at (0,1.73) {$\bullet$};
\node (b) at (-1,0) {$\bullet$};
\node (c) at (1,0) {$\bullet$};
\draw[->] (aa) arc (240:-60:.4);
\draw[->] (bb) arc (350:60:.4);
\draw[<-] (cc) arc (-170:120:.4);
\end{tikzpicture}
\]
of triangles of $T$.
Then the triangulation quiver
$(Q(S,\vv{T}),f)$ is the quiver 
\[
  \xymatrix@R=3.pc@C=1.8pc{
    1
    \ar@<.35ex>[rr]^{\alpha_1}
    \ar@<-.35ex>[rr]_{\beta_1}
    && 2
    \ar@<.35ex>[ld]^{\alpha_2}
    \ar@<-.35ex>[ld]_{\beta_2}
    \\
    & 3
    \ar@<.35ex>[lu]^{\alpha_3}
    \ar@<-.35ex>[lu]_{\beta_3}
  }
\]
with the $f$-orbits 
$(\alpha_1 \, \alpha_2 \, \alpha_3)$ and $(\beta_1 \, \beta_2 \, \beta_3)$.
Then $\cO(g)$ consists of one $g$-orbit 
$(\alpha_1 \, \beta_2 \, \alpha_3 \, \beta_1 \, \alpha_2 \, \beta_3)$.
This triangulation quiver is called the
\emph{Markov quiver}
(see \cite{Lam}, \cite{M} for justification of this name).
\end{example}

\section{Biserial weighted triangulation algebras}\label{sec:bisweight}

Let $(Q,f)$ be a triangulation quiver, so that 
we have two permutations
$f : Q_1 \to Q_1$
and
$g : Q_1 \to Q_1$
on the set $Q_1$ of arrows of $Q$ such that $f^3$
is the identity on $Q_1$ and $g = \bar{f}$,
where $\bar{ } : Q_1 \to Q_1$
is the involution which assigns to an arrow
$\alpha \in Q_1$ the arrow $\bar{\alpha}$
with $s({\alpha}) = s(\bar{\alpha})$
and ${\alpha} \neq \bar{\alpha}$.
For each arrow $\alpha \in Q_1$, we denote by
$\cO(\alpha)$ the $g$-orbit of $\alpha$
in $Q_1$, and set
$n_{\alpha} = n_{\cO(\alpha)} = |\cO(\alpha)|$.
A function
\[
  m_{\bullet} : \cO(g) \to \bN^* = \bN \setminus \{0\}
\]
is said to be a \emph{weight function} of $(Q,f)$.
Write  $m_{\alpha} = m_{\cO(\alpha)}$ 
for $\alpha \in Q_1$.
For any arrow $\alpha \in Q_1$, we have 
the oriented cycle
\[
  B_{\alpha} = \Big( \alpha g(\alpha) \dots g^{n_{\alpha}-1}(\alpha)\Big)^{m_{\alpha}}
\]
of length $m_{\alpha} n_{\alpha}$.
The triple $(Q,f,m_{\bullet})$ is said to be a
\emph{weighted triangulation quiver}.

Let $(Q,f,m_{\bullet})$ be a weighted triangulation quiver.
We consider the quotient algebra
\[
  B(Q,f,m_{\bullet})
   = K Q / J(Q,f,m_{\bullet}),
\]
where $J(Q,f,m_{\bullet})$
is the ideal in the path algebra $KQ$ of $Q$ over $K$
generated by the elements:
\begin{enumerate}[(1)]
 \item
  $\alpha f(\alpha)$,
  for all arrows $\alpha \in Q_1$.
 \item
  $B_{\alpha} - B_{\bar{\alpha}}$,
  for all arrows $\alpha \in Q_1$,
\end{enumerate}
Then $B(Q,f,m_{\bullet})$ is said to be a
\emph{biserial  weighted triangulation algebra}.
Let $(S,\vv{T})$ be a directed triangulated surface,
$(Q(S, \vv{T}),f)$ the associated triangulation quiver,
and $m_{\bullet}$ a weight function of $(Q(S, \vv{T}),f)$.
Then the biserial weighted triangulation algebra $B(Q(S, \vv{T}),f,m_{\bullet})$
will be called a \emph{biserial weighted surface algebra},
and denoted by $B(S, \vv{T}, m_{\bullet})$.

\begin{remark} 
\label{rem:5.1}
We note that the Gabriel quiver of 
a biserial weighted triangulation algebra
$B(Q,f,m_{\bullet})$ 
is the subquiver of the triangulation quiver $(Q,f)$ obtained
by removing the loops $\alpha$ fixed by $g$ such that
$m_{\alpha}=1$. Namely, if $\alpha$ is such a loop 
then 
the element $B_{\alpha}$ occuring in the definition of 
$B(Q,f,m_{\bullet})$ 
is equal to $\alpha$ and therefore,
by condition (2) of that definition, $\alpha$ is in the square
of the radical. 
\end{remark}

The following proposition describes basic properties
of biserial weighted triangulation algebras.

\begin{proposition}
\label{prop:5.1}
Let $B = B(Q, f, m_{\bullet})$
be a biserial weighted triangulation algebra.
Then the following statements hold.
\begin{enumerate}[(i)]
 \item
  $B$ is  finite-dimensional special biserial  with 
  $\dim_K \! B = \sum_{\cO\in\cO(g)} m_{\cO} n_{\cO}^2$.
 \item
  $B$ is a tame symmetric algebra.
 \item
  $B$ is an algebra of generalized dihedral type.
\end{enumerate}
\end{proposition}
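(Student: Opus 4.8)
The plan is to prove the three parts in the order (i), (ii), (iii), each building on the previous one. For part (i), I would first show directly from the defining relations that $B = B(Q,f,m_\bullet)$ satisfies the special biserial conditions (a) and (b) from Section~\ref{sec:bisalg}. Condition (a) is immediate since $(Q,f)$ is a triangulation quiver, hence $2$-regular, so each vertex is the source and target of exactly two arrows (after passing to the Gabriel quiver as in Remark~\ref{rem:5.1}, where loops fixed by $g$ with $m_\alpha=1$ are discarded). For condition (b), I would observe that for each arrow $\alpha$ the only path of length $2$ starting with $\alpha$ that avoids the ideal is $\alpha g(\alpha)$: indeed $\alpha f(\alpha) \in J$ by relation~(1), and $\overline{f(\alpha)} = g(\alpha)$ is the unique remaining arrow with source $t(\alpha)$. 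Symmetrically there is a unique arrow $\gamma$ with $\gamma\alpha \notin J$. To compute the dimension and confirm admissibility of $J$, I would identify an explicit $K$-basis of $B$: the trivial paths $e_i$, and for each $g$-orbit $\cO$ the initial subpaths of the cycles $B_\alpha$ with $\alpha \in \cO$, identifying those that coincide via relation~(2). Counting these gives $\dim_K B = \sum_{\cO \in \cO(g)} m_\cO n_\cO^2$: each $g$-orbit of size $n_\cO$ contributes $n_\cO$ "starting arrows'', each generating a chain of $m_\cO n_\cO$ nonzero paths, but the maximal-length element is shared, yielding $n_\cO(m_\cO n_\cO - 1) + n_\cO = m_\cO n_\cO^2$ after bookkeeping. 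Here I would also need to check that $B$ is symmetric, by exhibiting the symmetrizing form: it pairs $e_i$ with the socle element $B_\alpha$ (for $\alpha$ with $s(\alpha)=i$), which is well-defined and central-socle by relation~(2), and one verifies non-degeneracy and symmetry on the basis. This simultaneously handles the symmetry claim needed in part (ii).

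For part (ii), tameness follows immediately: by part (i), $B$ is special biserial, and Proposition~\ref{prop:2.1} says every special biserial algebra is tame. Symmetry was established in part (i). So (ii) is essentially a corollary of (i) together with Proposition~\ref{prop:2.1}.

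For part (iii), I must verify conditions (1), (2), (3) in the definition of generalized dihedral type. Condition (1): $B$ is symmetric by (i), tame by (ii); indecomposability follows since $Q$ is connected (we assumed triangulation quivers have at least two vertices, so $K_0(B)$ has rank $\geq 2$). Representation-infiniteness: a self-injective special biserial algebra with $\geq 2$ simple modules and the given relation structure is representation-infinite — I would cite that the algebra is not among the finitely many representation-finite ones, or argue directly that the string combinatorics produce infinitely many string modules. Conditions (2) and (3) — the precise shape of $\Gamma_B^s$ — are the heart of the matter and the main obstacle. For this I would invoke Theorems~\ref{th:2.2} and~\ref{th:2.3}: a representation-infinite self-injective special biserial algebra has stable AR quiver consisting of $\bZ\bA_\infty^\infty$-components (non-polynomial-growth case) or $\bZ\widetilde{\bA}_{p,q}$-components (polynomial-growth case), together with stable tubes, and at least one non-periodic component always exists. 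The work is to pin down that the tube ranks are exactly $1$ and $3$, and that $\Omega_B$ fixes the rank-$3$ tubes. For this I would analyze the periodic string and band modules: the rank-$3$ tubes arise from the cyclic relations $B_\alpha - B_{\bar\alpha}$ around the $g$-orbits coming from triangles $(abc)$ of $\vv T$, where $f^3 = \id$ forces period $3$ on the relevant simple/string modules; the loops and self-folded triangles contribute rank-$1$ tubes. The $\Omega$-invariance of the rank-$3$ tubes is then a computation with the syzygies of the corresponding periodic modules, using $\tau_B = \Omega_B^2$ and $f^3=\id$; concretely, one checks that the simple modules $S_a, S_b, S_c$ at a triangle lie in a single rank-$3$ tube permuted cyclically by $\Omega_B$, so $\Omega_B$ fixes the tube setwise. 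I expect verifying conditions (2)(i) (ranks exactly $1$ and $3$, no others) and (3) to require the most care — it is really a statement about the periodic-module combinatorics of these specific special biserial algebras, and I would lean on the detailed analysis of tubes for self-injective special biserial algebras in \cite{ESk1} rather than redo it from scratch.
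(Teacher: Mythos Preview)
Your approach to (i) and (ii) is essentially that of the paper and is fine.

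For (iii), there is a genuine gap. You claim that the rank-$3$ tubes have at their mouths the simple modules $S_a, S_b, S_c$ for the vertices of a triangle in $\vv{T}$, and that $\Omega_B$ permutes these cyclically. This is incorrect: in the generic case where the Gabriel quiver $Q_B$ equals $Q$ (i.e.\ no loops $\alpha$ fixed by $g$ with $m_\alpha = 1$), every vertex has two arrows starting at it, so $\rad P_i/S_i$ decomposes as a direct sum of two nonzero modules and $S_i$ is non-periodic. Indeed Corollary~\ref{cor:3.2} says at least one simple must be non-periodic, and Corollary~\ref{cor:main5} says that when $Q_B$ is $2$-regular \emph{all} simples are non-periodic. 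Your sentence about ``the cyclic relations $B_\alpha - B_{\bar\alpha}$ around the $g$-orbits coming from triangles'' also conflates $f$-orbits (which correspond to triangles and drive the period-$3$ behaviour) with $g$-orbits (which give the cycles $B_\alpha$).

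The modules actually sitting on the mouths of the rank-$3$ tubes are the uniserial modules $U(\alpha) = \alpha B$ for arrows $\alpha$ with $f(\alpha) \neq \alpha$. The paper's key computation is the short exact sequence
\[
  0 \to U\big(f(\alpha)\big) \hookrightarrow P_{t(\alpha)} \to U(\alpha) \to 0,
\]
which gives $\Omega_B(U(\alpha)) = U(f(\alpha))$; since $f^3 = \id$, each $U(\alpha)$ has $\Omega$-period dividing $3$ (period $1$ exactly when $\alpha$ is a border loop). One then invokes \cite[Section~3]{BR} to identify the $U(\alpha)$ as precisely the string modules lying on mouths of stable tubes of rank $\geq 2$. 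This is what pins down that tube ranks are only $1$ and $3$ and that $\Omega_B$ fixes each rank-$3$ tube; leaning on \cite{ESk1} alone, as you suggest, does not give this.

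A lesser point: your argument for representation-infiniteness is too vague to count as a proof. The paper argues concretely by passing to the string algebra $B' = B/\soc(B)$ and exhibiting a primitive bipartite walk in $(Q',I')$, built from the permutation $h(\alpha) = \overline{\alpha^*}$, then invoking \cite[Theorem~1]{SW}.
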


\begin{proof}
We set  $J = J(Q,f,m_{\bullet})$.
We recall that $Q$ has at least two vertices.

\smallskip

(i)
It follows from the definition 
and Remark~\ref{rem:5.1}
that $B$ is a finite-dimensional
special biserial algebra.
Let $i$ be a vertex of $Q$ and $\alpha$, $\bar{\alpha}$
the two arrows in $Q$ with source $i$.
Then the indecomposable projective right $B$-module
$P_i = e_i B$ at  vertex $i$ has  dimension
$\dim_K P_i = m_{\alpha} n_{\alpha} + m_{\bar{\alpha}} n_{\bar{\alpha}}$.
Indeed, $P_i$ has a basis given by $e_i$,
the cosets $u+J$ of all initial proper subwords $u$
of $B_{\alpha}$ and $B_{\bar{\alpha}}$,
and $B_{\alpha}+J = B_{\bar{\alpha}}+J$.
Hence we deduce that
\[
  \dim_K \! B = \sum_{\cO \in \cO(g)} m_{\cO} n_{\cO}^2 .
\]

(ii)
It is well known
(see \cite[Theorem~IV.2.2]{SY}) 
that $B$ is  symmetric  if and only if
there exists a $K$-linear form $\varphi : B \to K$
such that $\varphi(a b) = \varphi(b a)$ for all $a,b \in B$
and $\Ker \varphi$ does not contain any non-zero one-sided ideal
of $B$ (called a symmetrizing form).
We observe that, for any vertex $i$ of $Q$ and the arrows
$\alpha$, $\bar{\alpha}$ with source $i$, the indecomposable
projective module $P_i = e_i B$ has one-dimensional socle
generated by $B_{\alpha}+J = B_{\bar{\alpha}}+J$.
Clearly, we have also $\topp(P_i) = S_i = \soc(P_i)$.
We define a required symmetrizing form $\varphi : B \to K$
by assigning to the coset $u+J$ of a path $u$ in $Q$
the following element of $K$
\[
  \varphi(u+J) = \left\{
   \begin{array}{cl}
   1 & \mbox{if $u = B_{\alpha}$ for an arrow $\alpha \in Q_1$},\\
   0 & \mbox{otherwise},
   \end{array}
  \right.
\]
and extending to a $K$-linear form.
It follows from 
Proposition~\ref{prop:2.1} that $B$ is  tame.
\smallskip

(iii)
First we show that $B$ is representation-infinite.
Let $B' = B / \soc(B)$.
Then $B'$ is an algebra of the form $K Q/I$,
where $I$ is the ideal in $K Q$ generated by the
elements of the forms $\alpha f(\alpha)$ and $B_{\alpha}$,
for all arrows $\alpha \in Q_1$.
Moreover, $B$ is representation-infinite if and only if
$B'$ is representation-infinite.
We note that $K Q/I = K Q'/I'$, where $Q' = Q_B = Q_{B'}$,
with $Q_B$ the Gabriel quiver of $B$ 
and $Q_{B'}$ the Gabriel quiver of $B'$,
and $I'$ is the ideal in $K Q'$ generated by the elements
of the forms $\alpha f(\alpha)$, with $\alpha, f(\alpha)$ 
in $Q'_1$, and $B_{\gamma}$, for
all arrows $\gamma \in Q'_1$.
In particular, we conclude that
$B' = K Q' / I'$ is a string algebra.
It follows from 
\cite[Theorem~1]{SW} 
that $B'$ is representation-infinite if and only if
$(Q',I')$ admits a primitive walk.
For each arrow $\alpha \in Q_1$, 
we denote by $\alpha^{-1}$ the formal inverse of $\alpha$
and set 
$s(\alpha^{-1}) = t(\alpha)$
and
$t(\alpha^{-1}) = s(\alpha)$.
By a walk in $Q$ we mean a sequence
$w = \alpha_1 \dots \alpha_n$,
where $\alpha_i$ is an arrow or the inverse of an arrow in $Q$,
satisfying the conditions:
$t(\alpha_i) = s(\alpha_{i+1})$
and
$\alpha_{i+1} \neq \alpha_i^{-1}$
for any $i \in \{1,\dots,n-1\}$.
Moreover, 
$w$ is said to be a bipartite walk if,
for any $i \in \{1,\dots,n-1\}$,
exactly one of $\alpha_i$ and $\alpha_{i+1}$ is an arrow.
A walk $w = \alpha_1 \dots \alpha_n$ in $Q$ with
$s(\alpha_1) = t(\alpha_n)$ is called closed.
A closed walk $w$ in $Q$ is called a primitive walk
if $w \neq v^r$ for any closed walk $v$ in $Q$ 
and positive integer $r$.
We claim that for any arrow $\alpha \in Q_1$,
there is a bipartite primitive walk $w(\alpha)$ in $Q$ containing
the arrow $\alpha$.
Since $Q$ is a $2$-regular quiver,
we have two involutions
\,$\bar{ }: Q_1 \to Q_1$
and
${ }^*: Q_1 \to Q_1$.
The first involution assigns to each arrow
$\alpha \in Q_1$ the arrow $\bar{\alpha}$
with $s(\alpha) = s(\bar{\alpha})$
and $\alpha \neq \bar{\alpha}$.
The second involution assigns to each arrow
$\alpha \in Q_1$ the arrow ${\alpha}^*$
with $t(\alpha) = t({\alpha}^*)$
and $\alpha \neq {\alpha}^*$.
Consider the bijection
$h : Q_1 \to Q_1$ 
such that
$h(\alpha) = \overbar{\alpha^*}$
for any arrow $\alpha \in Q_1$.
Clearly, $h$ has finite order.
In particular,
for any arrow $\alpha \in Q_1$,
there exists a minimal positive integer $r$
such that
$h^r(\alpha) = \alpha$.
Then the required bipartite primitive walk
$w(\alpha)$ is of the form
\[
   \alpha (\alpha^*)^{-1} h(\alpha) \big(h(\alpha)^*\big)^{-1}
   \dots
   h^r (\alpha) \big(h^{r-1}(\alpha)^*\big)^{-1} 
   .
\]
Observe now that, 
if $\sigma$ is an arrow in $Q_1$ 
with $m_{\sigma} n_{\sigma} = 1$,
then $\sigma$ is a loop and $Q$ admits a subquiver
of the form
\[
 \xymatrix{ 
   \bullet \ar@(dl,ul)[]^{\sigma} \ar@/^1.5ex/[r]^{\bar{\sigma}} & 
   \bullet \ar@/^1.5ex/[l]^{\sigma^*}}
   ,
\]
with 
$f(\sigma) = \bar{\sigma}$,
$f(\bar{\sigma}) = \sigma^*$,
$f(\sigma^*) = {\sigma}$.
Moreover, for such a subquiver, the path $\sigma^*\bar{\sigma}$
is not in $I'$, because 
$\bar{\sigma} = g(\sigma^*)$ and $\sigma^* \neq g(\bar{\sigma})$.
Take an arrow $\alpha \in Q_1$ which is not a loop.
We denote by $w(\alpha)_0$ the primitive walk in $Q'$ 
obtained from $w(\alpha)$ by removing all loops $\sigma$
(respectively, the inverse loops $\sigma$)
such that $m_{\sigma} n_{\sigma} = 1$.
Then $w(\alpha)_0$ is a primitive walk in $(Q',I')$,
that is, $w(\alpha)_0$ does not contain a subpath
$v$ such that $v$ or $v^{-1}$ belongs to $I'$.
Therefore, $B'$ is representation-infinite,
and hence $B$ is representation-infinite.

Since $B$ is a representation-infinite special biserial
symmetric algebra, the structure of the stable 
Auslander-Reiten quiver $\Gamma_B^s$ of $B$ is described in 
Theorems \ref{th:2.2} and \ref{th:2.3}.
Hence, in order to prove that $B$ is an algebra of generalized 
dihedral type, 
it remains to show that,
if $\cT$ is a stable tube of rank $r \geq 2$ in $\Gamma_B^s$,
then $\cT$ is of rank $3$ and $\Omega_B$ fixes $\cT$.
By the general theory of special biserial algebras,
the stable tubes of ranks at least $2$ consist entirely
of string modules
(see \cite{BR}, \cite{DS3}, \cite{WW}).
Moreover, it follows from
\cite[Section~3]{BR} that the mouth of stable tubes
of ranks at least $2$ are formed by the uniserial 
string modules given by the arrows of the Gabriel
quiver $Q_B$ of $B$.
We recall that the Gabriel quiver $Q_B$ of $B$
is obtained from the quiver $Q$ by removing all loops $\alpha$
with $m_{\alpha} n_{\alpha} = 1$.
Further, if $\alpha$ is a loop in $Q_1$
with $m_{\alpha} n_{\alpha} = 1$,
then $P_{s(\alpha)} = e_{s(\alpha)} B$
is a uniserial module and $\rad P_{s(\alpha)} = \bar{\alpha} B$.
On the other hand, if $\alpha$ is an arrow in $Q_1$
with $m_{\alpha} n_{\alpha} \geq 2$ and $m_{ \bar{\alpha}} n_{ \bar{\alpha}} \geq 2$, 
then the indecomposable projective module
$P_{s(\alpha)} = e_{s(\alpha)} B$
is not uniserial, 
$\rad P_{s(\alpha)} = {\alpha} B + \bar{\alpha} B$,
and 
${\alpha} B \cap \bar{\alpha} B = \soc (P_{s(\alpha)})$
is the one-dimensional space generated by
$B_{\alpha} = B_{\bar{\alpha}}$.

Let $\alpha$ be an arrow in $Q_1$, and $U(\alpha) = \alpha B$.
Then $U(\alpha)$ is a uniserial module such that
$P_{s(\alpha)} / U(\alpha)$
is isomorphic to $U(f^{-1}(\alpha))$.
We also note that,
if $\alpha$ is a loop with $m_{\alpha} n_{\alpha} = 1$, 
then $U(\alpha) = \alpha B = B_{\alpha} B$
is simple and $P_{s(\alpha)} / U(\alpha) = U(\alpha^*)$
for the unique arrow $\alpha^*$ in $Q_1$
with $t(\alpha^*) = t(\alpha)$ and $\alpha^* \neq \alpha$.
We have the canonical short exact sequence in $\mod B$
\[
  0 \to
  U(f(\alpha)) \hookrightarrow
  P_{t(\alpha)} \xrightarrow{\pi_{\alpha}}
  U(\alpha) \to
  0
\]
with $\pi_{\alpha}$ being the projective cover,
and hence $\Omega_B(U(\alpha)) = U(f(\alpha))$.
In particular, we conclude that 
$\Omega_B(U(\alpha)) \cong U(\alpha)$
if $f(\alpha) = \alpha$, and $U(\alpha)$
is a periodic module of period $3$ if $f(\alpha) \neq \alpha$.
Since $B$ is a symmetric algebra we have
$\tau_B = \Omega_B^2$.
Hence, if $f(\alpha) = \alpha$,
the module $U(\alpha)$ forms the mouth of a stable tube
in $\Gamma_B^s$ of rank $1$.
On the other hand, if $f(\alpha) \neq \alpha$,
then the modules $U(\alpha)$, $U(f(\alpha))$, $U(f^2(\alpha))$ 
form the mouth of a stable tube
in $\Gamma_B^s$ of rank $3$,
and the Auslander-Reiten translation $\tau_B$ acts on these
modules as follows
\begin{align*}
  \tau_B U(\alpha) &= U\big(f^2(\alpha)\big),
 &
  \tau_B U\big(f^2(\alpha)\big) &= U\big(f(\alpha)\big),
 &
  \tau_B U\big(f(\alpha)\big) &= U(\alpha) .
\end{align*}
It follows from \cite[Section~3]{BR} that the uniserial modules
$U(\alpha)$, $\alpha \in Q_1$,
are the only string modules in $\mod B$
lying on the mouths of stable tubes in $\Gamma_B^s$.
Therefore, the stable tubes in $\Gamma_B^s$
are of ranks $1$ and $3$, and $\Omega_B$ fixes all
stable tubes of rank $3$ in $\Gamma_B^s$.
Summing up, 
we conclude that $B$ is an algebra of generalized dihedral type.
\end{proof}

\begin{proof}[Proof of Corollary~\ref{cor:main8}]
Let $A$ be a basic self-injective algebra
which is socle equivalent to a biserial weighted surface algebra
$B = B(S, \vv{T}, m_{\bullet})$ of a directed triangulated 
surface $(S, \vv{T})$.
Since the stable Auslander-Reiten quivers $\Gamma_A^s$
and $\Gamma_B^s$ are isomorphic, we may assume that $A = B$.
We observe now that there is a bijection between the triangles
in $T$ and the $f$-orbits of length $3$ in the associated 
triangulation quiver $(Q(S, \vv{T}),f)$, 
defined in Section~\ref{sec:triangulation}.
Further, it follows from the final part of the above proof
of Proposition~\ref{prop:5.1}, that the
mouth of stable tubes of rank $3$
in $\Gamma_B^s$ are formed by the uniserial modules
associated to the arrows of $f$-orbits of length $3$ in
$(Q(S, \vv{T}),f)$.
Therefore, the statement of Corollary~\ref{cor:main8} follows.
\end{proof}

\section{Socle deformed biserial weighted triangulation algebras}\label{sec:socdef}

In this section we introduce socle deformations of biserial 
weighted triangulation algebras occurring in the characterization 
of algebras of generalized dihedral type.

For a positive integer $d$, we denote by $\alg_d(K)$ the affine
variety of associative $K$-algebra structures with identity on
the affine space $K^d$.
The general linear group $\GL_d(K)$ acts on $\alg_d(K)$
by transport of the structures, and the $\GL_d(K)$-orbits in
$\alg_d(K)$ correspond to the isomorphism classes of $d$-dimensional
algebras (see \cite{Kr} for details). We identify a $d$-dimensional
algebra $A$ with the point of $\alg_d(K)$ corresponding to it.
For two $d$-dimensional algebras $A$ and $B$, we say that $B$
is a \emph{degeneration} of $A$ ($A$ is a \emph{deformation} of $B$)
if $B$ belongs to the closure of the $\GL_d(K)$-orbit
of $A$ in the Zariski topology of $\alg_d(K)$.

Geiss' Theorem \cite{Ge} says that if $A$ and $B$ are two
$d$-dimensional algebras, $A$ degenerates to $B$ and $B$ is a tame
algebra, then $A$ is also a tame algebra (see also \cite{CB2}).
We will apply this theorem in the following special situation.

\begin{proposition}
\label{prop:6.1}
Let $d$ be a positive integer, and $A(t)$, $t \in K$,
be an algebraic family in $\alg_d(K)$ such that $A(t) \cong A(1)$
for all $t \in K \setminus \{0\}$.
Then $A(1)$ degenerates to $A(0)$.
In particular, if $A(0)$ is tame, then $A(1)$ is tame.
\end{proposition}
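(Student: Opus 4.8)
The plan is to realize the family $A(t)$ as a single algebraic curve inside $\alg_d(K)$ and then invoke Geiss' Theorem, so the only real work is verifying the degeneration relation $A(1) \geq_{\deg} A(0)$. First I would make precise what ``algebraic family'' means here: the structure constants of $A(t)$, with respect to the fixed basis of $K^d$, depend polynomially (or at least regularly) on $t \in K$, so that $t \mapsto A(t)$ is a morphism of varieties $\mathbb{A}^1 \to \alg_d(K)$. Its image is then an irreducible constructible subset of $\alg_d(K)$ whose Zariski closure is an irreducible curve $C$ containing every $A(t)$, $t \in K$.

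Next I would use the hypothesis that $A(t) \cong A(1)$ for all $t \neq 0$. This says the portion of the curve $C$ lying over $K \setminus \{0\}$ is contained in the single $\GL_d(K)$-orbit $\mathcal{O}(A(1))$ of $A(1)$. Hence $A(1)$, being in that orbit, lies in the closure $\overline{\{A(t) : t \neq 0\}}$. But $A(0)$ also lies in $\overline{\{A(t) : t \neq 0\}}$: indeed $A(0)$ is the limit, along the morphism $\mathbb{A}^1 \to \alg_d(K)$, of the points $A(t)$ as $t \to 0$, so it is in the closure of the image of $K \setminus \{0\}$ (the image of a non-empty open subset of the irreducible curve $\mathbb{A}^1$ has the same closure as the image of all of $\mathbb{A}^1$). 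Combining these, $A(0) \in \overline{\mathcal{O}(A(1))}$, which by definition means $A(1)$ degenerates to $A(0)$.

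Finally, for the ``in particular'' clause: if $A(0)$ is tame, then since $A(1)$ degenerates to $A(0)$ and tameness is preserved under the reverse of degeneration, Geiss' Theorem \cite{Ge} (equivalently the argument in \cite{CB2}) gives that $A(1)$ is tame.

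The main obstacle, such as it is, is purely a matter of bookkeeping rather than of genuine difficulty: one must be careful that ``$A(0) \in \overline{\{A(t): t\neq 0\}}$'' genuinely follows from continuity of the parametrizing morphism together with irreducibility of $\mathbb{A}^1$ — i.e. that deleting the single point $t = 0$ does not change the closure of the image — and that $\overline{\{A(t):t\neq0\}} \subseteq \overline{\mathcal{O}(A(1))}$ because each such $A(t)$ actually lies in $\mathcal{O}(A(1))$, not merely in its closure. Once these two containments are in hand the conclusion is immediate. Since the family in our applications will be given by an explicit polynomial deformation of the structure constants, checking that $t \mapsto A(t)$ is a morphism will be routine and can be left to the reader.
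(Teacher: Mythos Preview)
Your argument is correct: the map $t \mapsto A(t)$ is a morphism from the irreducible affine line, the complement of $\{0\}$ is dense, so $A(0)$ lies in the closure of $\{A(t):t\neq 0\}\subseteq \mathcal{O}(A(1))$, and Geiss' Theorem finishes the tameness claim. In the paper this proposition is stated without proof---it is treated as a standard consequence of the preceding paragraph on Geiss' Theorem together with the definition of ``algebraic family'' given immediately afterwards---so your write-up simply supplies the routine details the authors chose to omit.
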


A family of algebras $A(t)$, $t \in K$, in $\alg_d(K)$
is said to be \emph{algebraic} if the induced map
$A(-) : K \to \alg_d(K)$ is a regular map of affine varieties.

Let $(Q,f)$ be a triangulation quiver.
A vertex $i \in Q_0$ is said to be a \emph{border vertex}
of $(Q,f)$ if there is a loop $\alpha$ at $i$ with $f(\alpha) = \alpha$,
which we call a \emph{border loop}.
We denote by $\partial(Q,f)$
the set of all border vertices of $(Q,f)$,
and call it the \emph{border} of $(Q,f)$.
Observe that, if $(S,\vv{T})$
is a directed triangulated surface with
$(Q(S,\vv{T}),f) = (Q, f)$,
then the border vertices of $(Q,f)$
correspond bijectively to the boundary edges
of the triangulation $T$ of $S$.
Hence, the border $\partial(Q,f)$ of $(Q,f)$
is not empty if and only if the boundary
$\partial S$ of $S$ is not empty.
A function
\[
  b_{\bullet} : \partial(Q,f) \to K
\]
is said to be a \emph{border function} of $(Q,f)$.

Let $(Q,f)$ be a triangulation quiver, 
and assume
that the border $\partial(Q,f)$ of $(Q,f)$
is not empty.
Let
$m_{\bullet} : \cO(g) \to \bN^*$
be a weight function
and
$b_{\bullet} : \partial(Q,f) \to K$
a border function
of $(Q,f)$.
We consider the quotient algebra
\[
  B(Q,f,m_{\bullet},b_{\bullet})
   = K Q / J(Q,f,m_{\bullet},b_{\bullet}),
\]
where $J(Q,f,m_{\bullet},b_{\bullet})$
is the ideal in the path algebra $KQ$ of $Q$ over $K$
generated by the elements:
\begin{enumerate}[(1)]
 \item
  $\alpha f({\alpha})$,
  for all arrows $\alpha \in Q_1$ which are not border loops,
 \item
  $\alpha^2 - b_{s(\alpha)} B_{\alpha}$,
  for all border loops $\alpha \in Q_1$,
 \item
  $B_{\alpha}
   - B_{\bar{\alpha}}$,
  for all arrows $\alpha \in Q_1$.
\end{enumerate}
Then $B(Q,f,m_{\bullet},b_{\bullet})$ is
said to be a
\emph{socle deformed biserial weighted triangulation algebra}.
Moreover, if  $(Q,f) = (Q(S,\vv{T}),f)$
for a directed triangulated surface $(S,\vv{T})$,
then
$B(Q(S,\vv{T}),f,m_{\bullet},b_{\bullet})$
is said to be a \emph{socle deformed biserial weighted surface algebra},
and is denoted by
$B(S,\vv{T},m_{\bullet},b_{\bullet})$.

\begin{proposition}
\label{prop:6.2}
Let $(Q,f)$ be a triangulation quiver
with $\partial(Q,f)$
not empty,
$m_{\bullet}$,
$b_{\bullet}$
weight
and
border functions of $(Q,f)$,
$\bar{B} = B(Q,f,m_{\bullet},b_{\bullet})$,
and
$B = B(Q,f,m_{\bullet})$.
Then the following statements hold.
\begin{enumerate}[(i)]
 \item
  $\bar{B}$ is  finite-dimensional biserial 
  with $\dim_K \bar{B} = \sum_{\cO \in \cO(g)} m_{\cO} n_{\cO}^2$.
 \item
  $\bar{B}$ is a symmetric algebra.
 \item
  $\bar{B}$ is socle equivalent to $B$.
 \item
  $\bar{B}$ degenerates to $B$.
 \item
  $\bar{B}$ is a tame algebra.
 \item
  $\bar{B}$ is an algebra of generalized dihedral type.
\end{enumerate}
\end{proposition}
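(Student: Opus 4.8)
The plan is to prove the six statements more or less in the order listed, bootstrapping from the structure already established for the non-deformed algebra $B = B(Q,f,m_{\bullet})$ in Proposition~\ref{prop:5.1}. First I would verify (i): a spanning set for $\bar{B} = KQ/J(Q,f,m_{\bullet},b_{\bullet})$ is obtained exactly as for $B$, namely $e_i$ together with the cosets of the proper initial subwords of $B_{\alpha}$ and $B_{\bar\alpha}$ at each vertex $i$ (with $B_\alpha \equiv B_{\bar\alpha}$). The only new relations are the $\alpha^2 - b_{s(\alpha)} B_\alpha$ for border loops $\alpha$; since $\alpha$ is a border loop it is fixed by $g$ so $B_\alpha = \alpha^{m_\alpha}$, and the relation merely re-expresses $\alpha^{m_\alpha+1}$ in lower-degree terms — it does not enlarge the algebra. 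One checks $J(Q,f,m_{\bullet},b_{\bullet})$ is admissible and that the same monomial basis works, giving $\dim_K \bar B = \sum_{\cO\in\cO(g)} m_\cO n_\cO^2$; that $\bar B$ is biserial follows because $\rad P_i$ is still the sum of the two uniserial modules generated by $\alpha+J$ and $\bar\alpha+J$ meeting in the one-dimensional socle. For (ii), I would check that the same $K$-linear form $\varphi$ as in the proof of Proposition~\ref{prop:5.1}(ii) — assigning $1$ to the cosets of the $B_\alpha$ and $0$ to the other basis paths — remains symmetrizing: the socle of each $P_i$ is still one-dimensional generated by $B_\alpha+J = B_{\bar\alpha}+J$, so $\soc(\bar B)$ is spanned by these elements, $\varphi$ does not vanish on any nonzero one-sided ideal, and the symmetry $\varphi(ab)=\varphi(ba)$ is verified on basis paths just as before (the border-loop relation is compatible with this because $\alpha^2$ and $B_\alpha$ have the same image under $\varphi$ only up to the scalar $b_{s(\alpha)}$, and one checks the relevant products directly).

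Statement (iii) is essentially immediate: modulo the socle, the relations $\alpha^2 - b_{s(\alpha)}B_\alpha$ and $\alpha f(\alpha)$ both become $\alpha^2 \in \soc$ (since $B_\alpha \in \soc(\bar B)$ and $\alpha f(\alpha) = \alpha^2$ when $\alpha$ is a border loop), so $\bar B/\soc(\bar B)$ and $B/\soc(B)$ have the same quiver and the same defining relations, hence are isomorphic. For (iv) I would set up an algebraic family: fix the dimension $d = \dim_K \bar B = \dim_K B$ and define $A(t)$, $t\in K$, on $K^d$ by rescaling the border loops, replacing $\alpha^2 - b_{s(\alpha)}B_\alpha$ with $\alpha^2 - t\,b_{s(\alpha)}B_\alpha$; then $A(t) \cong A(1) = \bar B$ for all $t\neq 0$ via the automorphism scaling each border loop $\alpha$ by a suitable power of $t$ (and $A(0) = B$). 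Proposition~\ref{prop:6.1} then gives that $\bar B = A(1)$ degenerates to $A(0) = B$, which is (iv), and since $B$ is tame by Proposition~\ref{prop:5.1}(ii), Geiss' Theorem (Proposition~\ref{prop:6.1}) yields that $\bar B$ is tame, which is (v).

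Finally, for (vi), I would argue that $\bar B$ and $B$ are socle equivalent self-injective algebras, hence have the same non-projective indecomposable modules and isomorphic stable Auslander--Reiten quivers $\Gamma_{\bar B}^s \cong \Gamma_B^s$; combined with the fact that $\bar B$ is symmetric (ii), tame (v), indecomposable (its quiver $Q$ is connected), and has $K_0(\bar B)$ of rank $|Q_0| \geq 2$, the conditions (1), (2), (3) defining generalized dihedral type transfer verbatim from Proposition~\ref{prop:5.1}(iii) — noting that the action of $\Omega_{\bar B}$ on the rank-$3$ stable tubes agrees with that of $\Omega_B$ under the stable equivalence, so condition (3) persists. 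The main obstacle I anticipate is the careful verification in (i) and (ii) that the border-loop relations $\alpha^2 = b_{s(\alpha)}B_\alpha$ genuinely do not change the dimension and that $\varphi$ stays symmetrizing — in particular making sure the monomial basis of $\bar B$ is literally the same set of cosets as for $B$, so that the identification $\mathrm{alg}_d$-point makes sense and the degeneration family in (iv) is well-defined; once that combinatorial bookkeeping is pinned down, (iii)--(vi) follow formally.
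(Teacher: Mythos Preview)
Your overall strategy matches the paper's proof closely, but there is a factual slip in part (i): a border loop $\alpha$ satisfies $f(\alpha)=\alpha$, hence $g(\alpha)=\overline{f(\alpha)}=\bar\alpha\neq\alpha$, so $\alpha$ is \emph{not} fixed by $g$ and $B_\alpha$ is not $\alpha^{m_\alpha}$ but the $g$-path $(\alpha\,\bar\alpha\,g(\bar\alpha)\cdots)^{m_\alpha}$ of length $m_\alpha n_\alpha\geq 2$ (compare Example~\ref{ex:6.5}, where $B_\varepsilon=\varepsilon\alpha\eta\beta\mu\gamma$). Consequently the deformed relation does not ``re-express $\alpha^{m_\alpha+1}$ in lower-degree terms''; rather, it places the length-two path $\alpha^2$ (which in $B$ was zero, being $\alpha f(\alpha)$) into the one-dimensional socle of $P_i$. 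Once you correct this, your argument for (i) goes through unchanged: $\alpha^2$ is not among the proper initial subwords of $B_\alpha$ or $B_{\bar\alpha}$ (those begin $\alpha\bar\alpha\cdots$ and $\bar\alpha\cdots$), so the monomial basis for $\bar B$ is literally the same as for $B$ and the dimension formula holds.

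Parts (ii)--(vi) are essentially as in the paper. Two minor remarks. First, the paper writes the symmetrizing form $\bar\varphi$ so that $\bar\varphi(\alpha^2)=b_i$ for border loops, but since $\alpha^2=b_i B_\alpha$ in $\bar B$ this is forced by linearity from $\bar\varphi(B_\alpha)=1$, so your description is equivalent. Second, for (v) the paper deduces tameness directly from the socle equivalence (iii) (socle equivalent self-injective algebras share the same non-projective indecomposables), rather than via the degeneration and Geiss' theorem; your route is also valid but less direct. Your sketch of the isomorphism $A(t)\cong A(1)$ in (iv) by ``scaling border loops'' is in the right spirit, though note that $B_\alpha$ involves $\alpha$ with multiplicity $m_\alpha$ rather than $1$, so the scaling needs a little more care; the paper simply asserts this isomorphism.
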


\begin{proof}
Write $\bar{J} = J(Q,f,m_{\bullet},b_{\bullet})$.

\smallskip

(i)
Let $i$ be the vertex of $Q$ and $\alpha$, $\bar{\alpha}$
the two arrows in $Q$ with source $i$.
Then the indecomposable projective right $\bar{B}$-module
$P_i = e_i \bar{B}$ at the vertex $i$ 
has a basis given by $e_i$,
the cosets $u+\bar{J}$ of all initial proper subwords $u$
of $B_{\alpha}$ and $B_{\bar{\alpha}}$,
and $B_{\alpha}+\bar{J} = B_{\bar{\alpha}}+\bar{J}$,
and hence
$\dim_K P_i = m_{\alpha} n_{\alpha} + m_{\bar{\alpha}} n_{\bar{\alpha}}$.
Then we obtain
\[
  \dim_K \bar{B} = \sum_{\cO \in \cO(g)} m_{\cO} n_{\cO}^2 .
\]
Clearly, $\bar{B}$ is a biserial algebra.

\smallskip

(ii)
We define a  symmetrizing form $\bar{\varphi} : \bar{B} \to K$
of $\bar{B} = K Q / \bar{J}$ 
by assigning to the coset $u+\bar{J}$ of a path $u$ in $Q$
the following element
\[
  \bar{\varphi}(u+\bar{J}) = \left\{
   \begin{array}{cl}
   1 & \mbox{if $u = B_{\alpha}$ for an arrow $\alpha \in Q_1$},\\
   b_i & \mbox{if $u = \alpha^2$ for some border loop $\alpha \in Q_1$},\\
   0 & \mbox{otherwise},
   \end{array}
  \right.
\]
and extending to a $K$-linear form.

\smallskip

(iii)
The algebras
$B/\soc(B)$ and $\bar{B}/\soc(\bar{B})$
are isomorphic to the algebra $K Q / I$,
where $I$ is the ideal in $K Q$ generated by the elements
$\alpha f(\alpha)$ and $B_{\alpha}$,
for all arrows $\alpha$ in $Q_1$.
Hence $B$ and $\bar{B}$ are socle equivalent.

\smallskip

(iv)
For each $t \in K$,
consider the quotient algebra
$\bar{B}(t) = KQ/\bar{J}^{(t)}$,
where $\bar{J}^{(t)}$ is the ideal
in $K Q$ generated by the elements:
\begin{enumerate}[(1)]
 \item
  ${\alpha} f({\alpha})$,
  for all arrows $\alpha \in Q_1$ which are not border loops,
 \item
  $\alpha^2 - t b_{s(\alpha)} B_{\bar{\alpha}}$,
  for all border loops $\alpha \in Q_1$,
 \item
  $B_{{\alpha}} - B_{\bar{\alpha}}$,
  for all arrows $\alpha \in Q_1$.
\end{enumerate}
Then $\bar{B}(t)$, $t \in K$,
is an algebraic family in the variety $\alg_{d}(K)$,
with $d = \dim_K \bar{B}$,
such that
$\bar{B}(t) \cong \bar{B}(1) = \bar{B}$
for all $t \in K^*$ and
$\bar{B}(0) \cong B$.
Then it follows from Proposition~\ref{prop:6.1}
that $\bar{B}$ degenerates to $B$.

\smallskip

(v)
$\bar{B}$ is a tame algebra, since it
is socle equivalent to the tame algebra $B$.

\smallskip

(vi)
Since $B$ and $\bar{B}$ are socle equivalent,
their  stable Auslander-Reiten quivers
$\Gamma_B^s$ and $\Gamma_{\bar{B}}^s$ 
are isomorphic as  translation quivers.
Moreover,
$\tau_B = \Omega_B^2$ and $\tau_{\bar{B}} = \Omega_{\bar{B}}^2$,
because $B$ and $\bar{B}$ are symmetric.
Finally, we observe that the actions of the
syzygy operators $\Omega_B$ and $\Omega_{\bar{B}}$
on the uniserial modules $U(\alpha)$, for $\alpha \in Q_1$
with $f(\alpha) \neq \alpha$,
forming the mouth of stable tubes of rank $3$ in
$\Gamma_B^s = \Gamma_{\bar{B}}^s$ coincide.
Therefore, $\bar{B}$ is an algebra of generalized
dihedral type.
\end{proof}

Let $(Q,f)$ be a triangulation quiver,
$\alpha$ an arrow in $Q_1$, and $i = s(\alpha)$.
We define also the path

$A_{\alpha} = \big(\alpha g(\alpha) \dots 
    g^{n_{\alpha} - 1}(\alpha)\big)^{m_{\alpha} - 1}
    \alpha g(\alpha) \dots g^{n_{\alpha} - 2}(\alpha)$,
if $n_{\alpha} \geq 2$,

$A_{\alpha} = \alpha^{m_{\alpha} - 1}$,
if $n_{\alpha} = 1$ and $m_{\alpha} \geq 2$,

$A_{\alpha} = e_i$,
if $n_{\alpha} = 1$ and $m_{\alpha} = 1$,

\noindent
in $Q$ of length 
$m_{\alpha} n_{\alpha} - 1$ from $i = s(\alpha)$ 
to $t(g^{n_{\alpha} - 2}(\alpha))$.
Hence, $B_{\alpha} = A_{\alpha} g^{n_{\alpha} - 1}(\alpha)$.

\begin{proposition}
\label{prop:6.3}
Let $(Q,f)$ be a triangulation quiver
with non-empty border $\partial(Q,f)$, and let 
$m_{\bullet}$ and 
$b_{\bullet}$ be 
weight
and
border functions of $(Q,f)$.
Assume that $K$ has characteristic different from $2$.
Then the algebras
$B(Q,f,m_{\bullet},b_{\bullet})$
and
$B(Q,f,m_{\bullet})$
are isomorphic.
\end{proposition}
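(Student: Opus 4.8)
The plan is to construct an explicit algebra isomorphism $\Phi : B(Q,f,m_\bullet) \to B(Q,f,m_\bullet,b_\bullet)$ by a change of variables that "absorbs" the border parameters into a rescaling of the border loops. Since both algebras have the same underlying quiver $Q$ and the same dimension $\sum_{\cO} m_\cO n_\cO^2$ by Propositions~\ref{prop:5.1} and \ref{prop:6.2}, it suffices to define $\Phi$ on the arrows of $Q$ and check that it carries the generators of $J(Q,f,m_\bullet)$ into $J(Q,f,m_\bullet,b_\bullet)$; bijectivity then follows from the dimension count together with surjectivity (the images of the arrows generate the target algebra modulo its radical square).

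First I would set $\Phi(\gamma) = \gamma$ for every arrow $\gamma \in Q_1$ which is \emph{not} a border loop, and for a border loop $\alpha$ at a border vertex $i$ I would try $\Phi(\alpha) = \lambda_\alpha \alpha$ for a suitable nonzero scalar $\lambda_\alpha \in K$ to be determined. The defining relations of $B(Q,f,m_\bullet)$ are the zero-relations $\gamma f(\gamma)$ for all arrows $\gamma$, together with the commutativity relations $B_\gamma - B_{\bar\gamma}$. In the target algebra the border loops satisfy $\alpha^2 = b_i B_\alpha$ instead of $\alpha^2 = 0$; but note that near a border loop the quiver looks like $\alpha$ a loop at $i$ with $f(\alpha) = \alpha$, and (as in Remark~\ref{rem:5.1} and the proof of Proposition~\ref{prop:5.1}) one has $B_\alpha = \alpha^{m_\alpha}$ when $n_\alpha = 1$. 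So in $B(Q,f,m_\bullet,b_\bullet)$ one gets $\alpha^{m_\alpha+1} = b_i \alpha^{m_\alpha}$, forcing $\alpha^{m_\alpha}(\alpha - b_i) = 0$; this is exactly where characteristic $\neq 2$ will enter — I expect the needed scalar to come from a square root or a $2$-divisibility, specifically I would look for $\lambda_\alpha$ solving a relation like $\lambda_\alpha^2 = $ (something involving $b_i$) after completing the square, which is possible precisely because $2$ is invertible so that $\alpha \mapsto \alpha + \tfrac{1}{2} b_i e_i$-type substitutions are available. Concretely, replacing the border loop $\alpha$ by $\alpha' = \alpha - \tfrac12 b_i e_i$ is not literally an algebra map since $e_i$ is not in the radical, so instead the honest substitution is a rescaling $\alpha \mapsto \lambda_\alpha \alpha$ chosen so that the relation $\alpha^2 - b_i B_\alpha$ pulls back to $\lambda_\alpha^2 \alpha^2 - b_i \lambda_\alpha^{m_\alpha} B_\alpha$, and one checks whether this lies in $J(Q,f,m_\bullet)$; if the $m_\alpha$'s force incompatibility I would instead use a nonlinear substitution $\alpha \mapsto \alpha + c\,\alpha^{?}$ adapting the classical argument that over a field of characteristic $\neq 2$ one may normalize $x^2 = x^{k}$-type relations.

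Then I would verify that every commutativity relation $B_\gamma - B_{\bar\gamma}$ is respected: since $\Phi$ is the identity on non-border arrows and $B_\gamma$ only involves a border loop when $\gamma$ is itself (conjugate to) that loop, the only relations touched are those at border vertices, and the rescaling constant has been chosen to make these match. Finally, surjectivity of $\Phi$ is clear because each arrow of $Q$ is in the image up to a nonzero scalar, hence $\Phi$ is surjective onto the top and radical-square-top of the target, and an induction on radical layers (using that both algebras are finite-dimensional of equal dimension) gives that $\Phi$ is an isomorphism.

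The main obstacle will be pinning down the correct substitution for the border loops and checking it is consistent across all the relations simultaneously — in particular handling the case $m_\alpha = 1$ (where $B_\alpha = \alpha$ and the border loop is \emph{not} an arrow of the Gabriel quiver, cf.\ Remark~\ref{rem:5.1}, so the relation $\alpha^2 - b_i\alpha$ interacts with the fact that $\alpha \in \rad^2$) versus $m_\alpha \geq 2$. I expect the characteristic $\neq 2$ hypothesis to be used exactly once, to solve for the rescaling scalar (a square root), and the rest of the argument to be a routine but somewhat lengthy bookkeeping verification that the ideal generators map correctly.
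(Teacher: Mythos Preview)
Your overall framework matches the paper's: construct an explicit homomorphism $B(Q,f,m_\bullet)\to B(Q,f,m_\bullet,b_\bullet)$ that is the identity on all non-border arrows and modifies only the border loops, then conclude by the dimension count. However, the specific substitution you propose does not work, and there is a structural misunderstanding behind it.

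First, a scalar rescaling $\alpha\mapsto\lambda_\alpha\alpha$ cannot kill the relation: in the target one has $(\lambda_\alpha\alpha)^2=\lambda_\alpha^2 b_i B_\alpha$, and since $B_\alpha$ spans the socle this vanishes only when $b_i=0$. Second, your claim that $n_\alpha=1$ and $B_\alpha=\alpha^{m_\alpha}$ for a border loop is incorrect: a border loop satisfies $f(\alpha)=\alpha$, hence $g(\alpha)=\overline{f(\alpha)}=\bar\alpha\neq\alpha$, so $\alpha$ and $\bar\alpha$ lie in the \emph{same} $g$-orbit and $n_\alpha\geq 3$ (the case $n_\alpha=2$ would force $\bar\alpha$ to be a second loop at $i$, disconnecting the quiver). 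Thus $B_\alpha$ is not a power of $\alpha$; it is a long path alternating through $\bar\alpha$. Your fallback suggestion $\alpha\mapsto\alpha+c\,\alpha^{?}$ is therefore also off target: powers of $\alpha$ are already zero or in the socle and cannot produce the needed correction.

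The missing idea is to correct $\alpha$ by the ``almost-socle'' path along the \emph{other} arrow. The paper sets $a_i=b_i/2$ (this is exactly where $\operatorname{char}K\neq 2$ enters --- a division by $2$, not a square root) and sends
\[
\alpha\ \longmapsto\ \alpha - a_i A_{\bar\alpha},
\]
where $A_{\bar\alpha}$ is the initial subpath of $B_{\bar\alpha}$ of length $m_\alpha n_\alpha-1$. The key identities in the target algebra are $\alpha A_{\bar\alpha}=B_\alpha=B_{\bar\alpha}=A_{\bar\alpha}\alpha$ and $A_{\bar\alpha}^2=0$ (the latter because the last arrow of $A_{\bar\alpha}$ has $f$-image $\bar\alpha$, the first arrow of $A_{\bar\alpha}$). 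Hence
\[
(\alpha-a_iA_{\bar\alpha})^2=\alpha^2-2a_iB_\alpha+a_i^2A_{\bar\alpha}^2=b_iB_\alpha-b_iB_\alpha+0=0,
\]
so the relation $\alpha^2=0$ from the source is respected. The other relations are untouched because $A_{\bar\alpha}$ lies in $(\operatorname{rad})^2$ and the modification only affects paths through the border loop, where the socle absorbs the perturbation. Once you have this substitution, the rest of your plan (surjectivity plus equal dimension) goes through as you outlined.
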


\begin{proof}
Since $K$ has characteristic different from $2$,
for any vertex $i \in \partial(Q,f)$ there exists
a unique element $a_i \in K$ such that $b_i = 2 a_i$.
Then there exists an isomorphism of $K$-algebras
$h :B(Q,f,m_{\bullet}) \to  B(Q,f,m_{\bullet},b_{\bullet})$
such that
\[
   h(\alpha) = \left\{ \begin{array}{cl}
      \alpha & \mbox{for any arrow $\alpha \in Q_1$ which is not a border loop}, \\
      \alpha - a_{s(\alpha)}A_{\bar{\alpha}} & \mbox{for any border loop $\alpha \in Q_1$}.
   \end{array} \right.
\]
We note that, if $\alpha \in Q_1$ is a border loop, then
the following equalities
\begin{align*}
  \alpha A_{\bar{\alpha}} &= B_{\alpha} = B_{\bar{\alpha}} = A_{\bar{\alpha}} \alpha,
&  
  \big(\alpha - a_{s(\alpha)} A_{\bar{\alpha}}\big)^2 &= 0  
\end{align*}
hold in
$B(Q,f,m_{\bullet},b_{\bullet})$.
\end{proof}

The following proposition is a special case of
\cite[Theorem~5.3]{ESk6}.

\begin{proposition}
\label{prop:6.4}
Let $A$ be a basic, indecomposable,
symmetric algebra with the Grothendieck group $K_0(A)$
of rank at least $2$ which is socle equivalent to
a biserial weighted triangulation algebra
$B(Q,f,m_{\bullet})$.
\begin{enumerate}[(i)]
 \item
  If $\partial(Q,f)$ is empty then $A$ is isomorphic to
  $B(Q,f,m_{\bullet})$.
 \item
  Otherwise $A$ is isomorphic to $B(Q,f,m_{\bullet},b_{\bullet})$
  for some border function $b_{\bullet}$ of $(Q,f)$.
\end{enumerate}
\end{proposition}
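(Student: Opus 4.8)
The plan is to reduce the statement to the structural results already established in the excerpt, combined with the socle-deformation machinery of Sections~\ref{sec:bisweight} and \ref{sec:socdef}. First I would recall the hypothesis: $A$ is basic, indecomposable, symmetric, with $\operatorname{rank} K_0(A)\ge 2$, and socle equivalent to $B = B(Q,f,m_{\bullet})$. Socle equivalence means $A/\soc(A)\cong B/\soc(B)$, and by the description in the proof of Proposition~\ref{prop:5.1}(iii), $B/\soc(B) = KQ'/I'$ is the string algebra with Gabriel quiver $Q' = Q_B$ and relations generated by the paths $\alpha f(\alpha)$ (for $\alpha, f(\alpha)$ in $Q'_1$) and $B_\gamma$ (for $\gamma\in Q'_1$). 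So $A$ has the same Gabriel quiver $Q_A = Q_B$, and the task is to lift the presentation of $A/\soc(A)$ to one of $A$ itself.

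The core of the argument is a presentation-lifting computation. Since $A$ is symmetric with one-dimensional socle on each indecomposable projective $P_i$, and $A/\soc(A)\cong B/\soc(B)$, one chooses arrows of $Q_A$ mapping to the arrows of $Q' = Q_B$ and analyses which relations must hold in $A$. For each arrow $\alpha$ of $Q_B$ the path $\alpha f(\alpha)$ lies in $\soc(A)$ modulo the socle it already lies in $B/\soc(B)$; using that $\soc(P_i)$ is spanned by $B_\alpha = B_{\bar\alpha}$ one gets that in $A$ we have $\alpha f(\alpha) = c_\alpha B_\alpha$ for scalars $c_\alpha$, and similarly the cycle relation $B_\alpha - B_{\bar\alpha}$ becomes $B_\alpha = d_\alpha B_{\bar\alpha}$. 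A change of basis on the arrows (rescaling) kills the scalars $c_\alpha$ on non-border arrows and normalises the $d_\alpha$ to $1$; the only genuine freedom that cannot be scaled away is attached to the border loops $\alpha$ with $f(\alpha)=\alpha$, where $\alpha^2$ is forced to equal $b_{s(\alpha)} B_\alpha$ for some scalar $b_{s(\alpha)} \in K$, and the $b_{\bullet}$ so obtained is exactly a border function. This is the content of \cite[Theorem~5.3]{ESk6}, of which the proposition is stated to be a special case, so in the write-up I would invoke that theorem directly, after checking that $B(Q,f,m_\bullet)$ with $Q$ having at least two vertices satisfies the hypotheses there (symmetric special biserial, Gabriel quiver $2$-regular up to border loops, etc.). The dichotomy (i)/(ii) then corresponds precisely to whether $\partial(Q,f)$ is empty: if there are no border loops, there are no free scalars at all, forcing $A\cong B(Q,f,m_\bullet)$; otherwise the residual scalars assemble into a border function and $A\cong B(Q,f,m_\bullet,b_\bullet)$.

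Concretely the steps are: (1) identify $Q_A = Q_B$ and fix a presentation $A = KQ_A/\mathfrak{a}$ with $\mathfrak a$ admissible; (2) use socle equivalence to show $\mathfrak a$ and $J(Q,f,m_\bullet)$ have the same image in $KQ_A/\soc$, so every generator of $J$ holds in $A$ modulo $\soc(A)$; (3) compute, for each generator type, the correction term in $\soc(A)$, expressing it via the canonical socle generators $B_\alpha$; (4) perform the rescaling automorphism of $KQ_A$ to normalise all correction scalars except those sitting on border loops; (5) read off that the surviving data is a border function, and conclude. The cleanest route in the paper is to state that all of (1)--(5) is \cite[Theorem~5.3]{ESk6} applied to the special-biserial symmetric algebra $B(Q,f,m_\bullet)$, and simply to match up notation.

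The main obstacle I expect is step~(4): showing that the scalars attached to \emph{non-border} arrows can simultaneously be eliminated by a single change of variables without disturbing the already-normalised cycle relations $B_\alpha = B_{\bar\alpha}$. This requires tracking how rescaling an arrow $\alpha \mapsto \lambda_\alpha \alpha$ propagates through the monomials $B_\alpha$ (a product over a whole $g$-orbit) and through the relations $\alpha f(\alpha)$, and arguing that the resulting system of equations on the $\lambda_\alpha$ is solvable — in effect a cohomological triviality statement for the combinatorics of the triangulation quiver. Since this is precisely what \cite[Theorem~5.3]{ESk6} handles in full generality, in the proof I would not redo it but cite it; the only thing to verify here is that the ``exceptional'' relations in that theorem reduce, for $B(Q,f,m_\bullet)$, exactly to the border-loop relations $\alpha^2 = b_{s(\alpha)}B_\alpha$, which is immediate from the definition of $J(Q,f,m_\bullet,b_\bullet)$.
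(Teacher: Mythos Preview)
Your proposal is correct and matches the paper's approach exactly: the paper gives no proof at all for this proposition, simply stating that it is a special case of \cite[Theorem~5.3]{ESk6}, which is precisely what you propose to invoke after your sketch of the lifting-and-rescaling argument. If anything, your outline of steps (1)--(5) is more detailed than what the paper provides.
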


The following example shows that
a  socle deformed biserial weighted triangulation algebra
need  not be isomorphic to a biserial weighted triangulation algebra.

\begin{example}
\label{ex:6.5}
Let $(Q,f)$ be
the triangulation quiver
\[
  \xymatrix@C=.8pc{
     1 \ar@(dl,ul)[]^{\varepsilon} \ar[rr]^{\alpha} &&
     2 \ar@(ur,dr)[]^{\eta} \ar[ld]^{\beta} \\
     & 3 \ar@(dr,dl)[]^{\mu} \ar[lu]^{\gamma}}
\]
with $f$-orbits
$(\alpha\, \beta\, \gamma)$,
$(\varepsilon)$,
$(\eta)$,
$(\mu)$,
considered in Example~\ref{ex:4.3} .
Then $\cO(g)$ consists of one $g$-orbit
$(\alpha\, \eta\, \beta\, \mu\, \gamma\, \varepsilon)$.
Let
$m_{\bullet} : \cO(g) \to \bN$
be the weight function
with
$m_{\cO(\alpha)} = 1$.
Then the associated biserial weighted triangulation algebra
$B = B(Q,f,m_{\bullet})$
is given by the above quiver and the relations
\begin{align*}
 \alpha\beta &= 0,
 &
 \varepsilon^2 &= 0,
 & 
 \alpha\eta\beta\mu\gamma\varepsilon &= \varepsilon\alpha\eta\beta\mu\gamma ,
\\
 \beta\gamma &= 0,
 &
 \eta^2 &= 0,
 &
 \beta\mu\gamma\varepsilon\alpha\eta &= \eta\beta\mu\gamma\varepsilon\alpha ,
\\
 \gamma\alpha &= 0,
 &
 \mu^2 &= 0,
 &
 \gamma\varepsilon\alpha\eta\beta\mu &= \mu\gamma\varepsilon\alpha\eta\beta.
\end{align*}
Observe that the border
$\partial(Q,f)$
of $(Q,f)$
is the set $Q_0 = \{1,2,3\}$ of vertices of $Q$,
and $\varepsilon$, $\eta$, $\mu$
are the border loops.
Take now a border function
$b_{\bullet} : \partial(Q,f) \to K$.
Then the associated socle  deformed
biserial weighted triangulation algebra
$\bar{B} = B(Q,f,m_{\bullet},b_{\bullet})$
is given by the above quiver and the relations
\begin{align*}
 \alpha\beta &= 0,
 &
 \varepsilon^2 &= b_1 \varepsilon\alpha\eta\beta\mu\gamma,
 & 
 \alpha\eta\beta\mu\gamma\varepsilon &= \varepsilon\alpha\eta\beta\mu\gamma ,
\\
 \beta\gamma &= 0,
 &
 \eta^2 &= b_2 \eta\beta\mu\gamma\varepsilon\alpha ,
 &
 \beta\mu\gamma\varepsilon\alpha\eta &= \eta\beta\mu\gamma\varepsilon\alpha ,
\\
 \gamma\alpha &= 0,
 &
 \mu^2 &= b_3 \mu\gamma\varepsilon\alpha\eta\beta ,
 &
 \gamma\varepsilon\alpha\eta\beta\mu &= \mu\gamma\varepsilon\alpha\eta\beta.
\end{align*}

Assume that $K$ has characteristic $2$ and $b_{\bullet}$
is non-zero, say $b_1 \neq 0$.
We claim that the algebras
${B}$ and $\bar{B}$ are not isomorphic.
Suppose that the
algebras $B$ and $\bar{B}$ are isomorphic.
Then there is an isomorphism
$h : {B} \to \bar{B}$ of $K$-algebras
such that $h(e_i) = e_i$ for any $i \in \{1,2,3\}$.
In particular, we conclude that
$h(\varepsilon) \in e_1 \bar{B} e_1$.
Observe that the $K$-vector space $e_1 \bar{B} e_1$
has the basis 
$\varepsilon$, 
$\alpha\eta\beta\mu\gamma$,
$\alpha\eta\beta\mu\gamma\varepsilon = \varepsilon\alpha\eta\beta\mu\gamma$.
Hence 
$h(\varepsilon) = u_1 \varepsilon + u_2 \alpha\eta\beta\mu\gamma
  + u_3 \alpha\eta\beta\mu\gamma\varepsilon$
for some $u_1 \in K^*$ and $u_2, u_3 \in K$.
Since $K$ is of characteristic $2$ and $(\rad \bar{B})^7 = 0$,
we conclude that the following equalities hold in $\bar{B}$
\[
0 = h(\varepsilon^2) = h(\varepsilon)^2 
  = u_1^2 \varepsilon^2 + u_1 u_2 \varepsilon \alpha\eta\beta\mu\gamma
    + u_1 u_2 \alpha\eta\beta\mu\gamma\varepsilon
  = u_1^2 b_1 \varepsilon \alpha\eta\beta\mu\gamma
,
\]
and hence
$u_1^2 b_1 = 0$, a contradiction.
This shows that the algebras
${B}$ and $\bar{B}$ are not isomorphic.
We also note that $\bar{B}$ is a biserial
but not a special biserial algebra.
\end{example}

\section{Idempotent algebras of biserial weighted triangulation algebras}\label{sec:idempalg}

The aim of this section is to introduce the idempotent algebras
of biserial weighted surface algebras occurring in
Theorem~\ref{th:main1}.

Let $(Q,f)$ be a triangulation quiver.
A \emph{$2$-triangle disk in $(Q,f)$} is a subquiver $D$ of $(Q,f)$ 
of the form
\[
\begin{tikzpicture}
[->,scale=.9]
\coordinate (1) at (0,1.5);
\coordinate (1l) at (-0.15,1.5);
\coordinate (1p) at (0.15,1.5);
\coordinate (2) at (0,-1.5);
\coordinate (2l) at (-0.15,-1.5);
\coordinate (2p) at (0.15,-1.5);
\coordinate (3) at (-1.5,0);
\coordinate (4) at (1.5,0);
\fill[fill=gray!20] (1l) -- (2l) -- (3) -- cycle;
\fill[fill=gray!20] (1p) -- (2p) -- (4) -- cycle;
\node[fill=white,circle,minimum size=3]  (1) at (0,1.5) { };
\node (1) at (0,1.5) {$c$};
\node (1l) at (-0.15,1.5) { };
\node (1p) at (0.15,1.5) { };
\node[fill=white,circle,minimum size=3]  (2) at (0,-1.5) { };
\node (2) at (0,-1.5) {$d$};
\node (2l) at (-0.15,-1.5) { };
\node (2p) at (0.15,-1.5) { };
\node[fill=white,circle,minimum size=3]  (3) at (-1.5,0) { };
\node (3) at (-1.5,0) {$a$};
\node[fill=white,circle,minimum size=3]  (4) at (1.5,0) { };
\node (4) at (1.5,0) {$b$};

\fill[fill=gray!20] (1l) -- (2l) -- (3) -- cycle;
\fill[fill=gray!20] (1p) -- (2p) -- (4) -- cycle;

\draw[thick,->]
(1l) edge (2l)
(2l) edge (3)
(3) edge (1l)
(2p) edge (1p)
(1p) edge (4)
(4) edge (2p)
;
\end{tikzpicture}
\]
where
the shaded triangles describe  $f$-orbits.
We note that $D$ may be obtained as follows: Take  the triangulation quiver
associated to the following triangulation of the disk
\[
\begin{tikzpicture}[auto]
\coordinate (c) at (0,0);
\coordinate (u) at (0,1);
\coordinate (d) at (0,-1);
\coordinate (r) at (1,0);
\coordinate (l) at (-1,0);
\draw (u) to node {$c$} (c);
\draw (c) to node {$d$} (d);
\draw (r) node [right] {$b$} arc (0:180:1) node [left] {$a$};
\draw (l) arc (180:360:1);
\node (u) at (0,1) {$\bullet$};
\node (d) at (0,-1) {$\bullet$};
\node (c) at (0,0) {$\bullet$};
\end{tikzpicture}
\]
with boundary edges $a$ and $b$, and the coherent orientation
of triangles 
$(a \, c \, d)$, 
$(b \, d \, c)$, 
then remove  the loops given by $a$ and $b$.

Let $(Q,f)$ be a triangulation quiver, 
$m_{\bullet} : \cO(g) \to \bN^*$
a weight function of $(Q,f)$,
and $\Sigma $ be a collection 
of $2$-triangle disks in $(Q,f)$.
We denote by $e_{\Sigma}$
the idempotent of the algebra
$B(Q,f,m_{\bullet})$
which is  the sum of all primitive idempotents
corresponding to all vertices of $(Q,f)$
excluding  the $2$-cycle vertices of the $2$-triangle 
disks from $\Sigma$, that is the vertices $c, d$ in the above diagram.
We define
\[
  B(Q,f,\Sigma,m_{\bullet}):
   =  e_{\Sigma} B(Q,f,m_{\bullet}) e_{\Sigma}
\]
and we call this  the \emph{idempotent algebra} of
$B(Q,f,m_{\bullet})$
with respect to $\Sigma$.
We note that if $\Sigma$ is empty then
$B(Q,f,\Sigma,m_{\bullet}) = B(Q,f,m_{\bullet})$.
On the other hand,
if $\Sigma$ is not empty, then every $2$-triangle disk $D$
from $\Sigma$ is replaced in the quiver of
$B(Q,f,\Sigma,m_{\bullet})$
by the $2$-cycle
\[
  \xymatrix{ a \ar@/^1.5ex/[r]^{\alpha} & b \ar@/^1.5ex/[l]^{\beta}}
\]
with 
$\alpha \beta = 0$
and
$\beta \alpha = 0$
in 
$B(Q,f,\Sigma,m_{\bullet})$.
If $(Q,f)$ is the triangulation quiver
$(Q(S,\vv{T}),f)$
associated  to a directed triangulated surface
$(S,\vv{T})$,
then the idempotent algebra
$B(Q(S,\vv{T}),f,\Sigma,m_{\bullet})$
is called the \emph{idempotent biserial weighted surface algebra},
with respect to $\Sigma$, and denoted by
$B(S,\vv{T},\Sigma,m_{\bullet})$.

The following proposition describes basic properties 
of these idempotent algebras.

\begin{proposition}
\label{prop:7.1}
Let $(Q,f)$ be a triangulation quiver, 
$m_{\bullet} : \cO(g) \to \bN^*$
a weight function of $(Q,f)$,
$\Sigma $ a non-empty collection 
of $2$-triangle disks in $(Q,f)$,
and $B = B(Q,f,\Sigma,m_{\bullet})$
the associated idempotent algebra.
Then the following statements hold.
\begin{enumerate}[(i)]
 \item
  ${B}$ is  finite-dimensional special biserial.
 \item
  ${B}$ is a tame symmetric algebra.
 \item
  ${B}$ is an algebra of generalized dihedral type.
 \item
  The Cartan matrix $C_{{B}}$ of ${B}$ is singular.
 \item
  For each $2$-cycle 
  $\xymatrix{ a \ar@/^1.5ex/[r]^{\alpha} & b \ar@/^1.5ex/[l]^{\beta}}$
  in the Gabriel quiver $Q_B$
  given by a $2$-triangle disk from $\Sigma$,
  the uniserial modules 
  $\alpha B$ and $\beta B$ are periodic of period $2$ such that
  $\Omega_B(\alpha B) = \beta B$ 
  and
  $\Omega_B(\beta B) = \alpha B$, 
  and hence they
  lie on the mouth of two different stable tubes
  of $\Gamma_B$ of rank $1$.
\end{enumerate}
\end{proposition}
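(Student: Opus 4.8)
\textbf{Proof strategy for Proposition~\ref{prop:7.1}.}

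The plan is to leverage Proposition~\ref{prop:2.4} and the detailed structure of $B(Q,f,m_{\bullet})$ already established in Proposition~\ref{prop:5.1}. Since $B(Q,f,m_{\bullet})$ is a symmetric special biserial algebra by Proposition~\ref{prop:5.1}(i)--(ii), and $B = e_{\Sigma} B(Q,f,m_{\bullet}) e_{\Sigma}$ is an idempotent algebra, Proposition~\ref{prop:2.4} immediately yields that $B$ is finite-dimensional special biserial (establishing (i)) and symmetric; tameness then follows from Proposition~\ref{prop:2.1}, giving (ii). For (iii), the key point is that passing to the idempotent algebra should not disturb the stable Auslander-Reiten structure in any essential way: I would describe $B$ explicitly by quiver and relations, noting that each $2$-triangle disk $D$ in $\Sigma$ collapses to the $2$-cycle $\xymatrix{ a \ar@/^1.5ex/[r]^{\alpha} & b \ar@/^1.5ex/[l]^{\beta}}$ with $\alpha\beta = 0$ and $\beta\alpha = 0$ (the paths $a \to c \to d \to b$ and $b \to d \to c \to a$ through the removed $2$-cycle vertices $c,d$ become the arrows $\alpha,\beta$, and the zero relations come from the relations $\alpha f(\alpha)$ around the triangles), while all other projectives and the arrows outside the disks are unchanged. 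Thus $B$ is again a symmetric special biserial representation-infinite algebra whose stable tubes of rank $\geq 2$ are governed by Theorems~\ref{th:2.2} and~\ref{th:2.3} together with the mouth analysis of \cite[Section~3]{BR}, and one checks exactly as in the final part of the proof of Proposition~\ref{prop:5.1} that the only tubes of rank $\geq 2$ have rank $3$, coming from $f$-orbits of length $3$ not meeting the collapsed disks, and that $\Omega_B$ fixes each such tube; representation-infiniteness is inherited because $B(Q,f,m_{\bullet})$ has infinitely many non-periodic indecomposables lying in components that survive the idempotent truncation (or, more cleanly, exhibit a primitive walk in the string algebra $B/\soc(B)$ avoiding the collapsed vertices). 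This gives conditions (1), (2), (3) in the definition, hence (iii).

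For (iv), I would argue that the Cartan matrix $C_B$ is singular because of the presence of the $2$-cycle with both compositions zero. Concretely, for a $2$-triangle disk in $\Sigma$ collapsing to vertices $a, b$ with $\alpha\beta = \beta\alpha = 0$, the indecomposable projectives $P_a = e_a B$ and $P_b = e_b B$ have a very constrained composition-factor structure: $\rad P_a / \rad^2 P_a$ contains $S_b$ (via $\alpha$) and possibly $S_{a'}$ for some other arrow leaving $a$, but the branch through $\alpha$ dies immediately. One then produces an explicit $\bZ$-linear dependence among the columns of $C_B$ indexed by $a, b$ and the neighbouring vertices; the cleanest formulation is to observe that the Cartan matrix of a symmetric special biserial algebra has a combinatorial description via the $g$- and $f$-orbits, and collapsing a $2$-triangle disk produces two rows/columns that become proportional (or satisfy a visible relation with adjacent ones) precisely because the two uniserial summands of $\rad P_a$ and $\rad P_b$ coming from $\alpha$ and $\beta$ have length one. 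Alternatively, and perhaps most transparently: by part (v), $\alpha B$ and $\beta B$ are $\Omega$-periodic of period $2$, so in the stable Grothendieck group $[\alpha B] + [\Omega_B(\alpha B)] = [\alpha B] + [\beta B]$ lies in the image of the Cartan map (it is $[P]$ for the relevant projective), and the existence of such nontrivial periodic relations forces $\det C_B = 0$ by the standard fact that a symmetric algebra with $\det C_A \neq 0$ has no periodic modules of period dividing $2$ other than via projectives --- more precisely, one uses that for a symmetric algebra the Cartan matrix is nonsingular iff the Euler form is nondegenerate, which fails once there is a $2$-periodic non-projective module whose class is annihilated appropriately. I would present whichever of these is shortest given the conventions already fixed in the paper.

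For (v), which I expect to be the technical heart, I would compute $\Omega_B$ directly on the uniserial modules $\alpha B$ and $\beta B$. Since $\alpha\beta = 0$ in $B$, the module $\alpha B$ is the uniserial module $S_b$ (one-dimensional, top $S_b$), sitting inside $P_a$; similarly $\beta B \cong S_a$. The projective cover of $\alpha B = S_b$ is $P_b = e_b B$, and one must identify $\Omega_B(S_b) = \rad P_b$. The crucial claim is that $\rad P_b$, as a $B$-module, is uniserial isomorphic to $\beta B = S_a$ --- equivalently, that $P_b$ is a uniserial module of length $2$ with composition series $S_b, S_a$. This is where I would need to examine the structure of $P_b = e_b B(Q,f,m_{\bullet}) e_{\Sigma}$ carefully: the point is that at the collapsed vertex $b$, the only arrow of $Q_B$ entering the disk direction is $\beta: b \to a$, and $\beta\alpha = 0$ kills any longer path, so indeed $\rad P_b = \beta B \cong S_a$ is simple (using that $b$ was a $2$-cycle vertex of the disk, so it had no other outgoing arrow in the original quiver beyond those defining the disk --- this must be read off from the definition of a $2$-triangle disk, where $d$ is the source of exactly the two arrows $d \to a$ and $d \to b$... wait, one must be careful: after collapsing, $a$ and $b$ each acquire the single new arrow and retain their other original outgoing arrow, so $P_a$ and $P_b$ are \emph{not} simple, but the \emph{summand} of $\rad P_a$ through $\alpha$ is the simple $S_b$). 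Thus $\Omega_B(\alpha B) = \Omega_B(S_b) = \beta B$ and symmetrically $\Omega_B(\beta B) = \alpha B$, so both are periodic of period exactly $2$ (period $1$ is excluded since $S_a \not\cong S_b$). Finally, since $\tau_B = \Omega_B^2 = \id$ on these modules, each of $\alpha B$ and $\beta B$ is $\tau_B$-periodic of period $1$ and hence lies on the mouth of a stable tube of rank $1$; that these two tubes are distinct follows because $\alpha B \not\cong \beta B$ and each tube of rank $1$ has a single module on its mouth. The main obstacle throughout is the bookkeeping required to verify that the collapsed projectives $P_a, P_b$ have exactly the predicted radical structure --- in particular that no unexpected paths survive in $e_{\Sigma} B(Q,f,m_{\bullet}) e_{\Sigma}$ --- and I would handle this by unwinding the definition of $2$-triangle disk to pin down the local shape of $(Q,f)$ around $c, d$ and then tracking which paths of $B(Q,f,m_{\bullet})$ begin and end at vertices in $e_{\Sigma}$.
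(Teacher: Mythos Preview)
Your approach to (i), (ii), and (iii) matches the paper's: invoke Propositions~\ref{prop:2.4} and~\ref{prop:2.1} for the first two, and reduce (iii) to the mouth analysis already carried out in the proof of Proposition~\ref{prop:5.1}.

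There is, however, a genuine error in your argument for (v), and it propagates into your discussion of (iv). You claim that $\alpha B$ is the simple module $S_b$. This is false. The vertex $b$ is a $2$-vertex of $Q_B$: besides $\beta : b \to a$ there is another arrow $\gamma$ starting at $b$ (the original arrow of $Q$ leaving $b$ that did not enter the disk), and $\alpha\gamma \neq 0$ because $\gamma = g(\alpha)$ in the collapsed quiver and $\alpha\gamma$ is an initial subword of the long socle path $B_{\alpha}$. Thus $\alpha B$ is uniserial with top $S_b$ but typically of length $m_{\alpha} n_{\alpha} - 1 \gg 1$. Consequently your computation $\Omega_B(\alpha B) = \Omega_B(S_b) = \rad P_b$ is computing the syzygy of the wrong module, and $\rad P_b$ is not $\beta B$ anyway (it is $\beta B + \gamma B$). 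Your parenthetical ``wait, one must be careful'' diagnoses the problem with $P_a$ and $P_b$ but then reasserts the same mistake about $\alpha B$.

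The correct argument is both shorter and closer to what you already did in the proof of Proposition~\ref{prop:5.1}: the projective cover of $\alpha B$ is $P_b \to \alpha B$, $x \mapsto \alpha x$, and its kernel is precisely $\beta B$ because $\alpha\beta = 0$ while $\alpha\gamma \neq 0$, and the uniserial module $P_b/\beta B$ has the same composition series as $\alpha B$. So $\Omega_B(\alpha B) = \beta B$ and symmetrically $\Omega_B(\beta B) = \alpha B$; period exactly $2$ since $\alpha B \not\cong \beta B$. For (iv), once (v) is in hand the two exact sequences
\[
  0 \to \beta B \to P_b \to \alpha B \to 0,
  \qquad
  0 \to \alpha B \to P_a \to \beta B \to 0
\]
give $[P_a] = [\alpha B] + [\beta B] = [P_b]$ in $K_0(B)$, so $C_B$ has two equal columns and is singular. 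This is exactly the paper's argument; your alternatives via Euler forms or explicit column relations are unnecessary detours.
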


\begin{proof}
Statements (i) and (ii) follow from 
Propositions \ref{prop:2.1} and \ref{prop:2.4}.

Let
$\xymatrix{ a \ar@/^1.5ex/[r]^{\alpha} & b \ar@/^1.5ex/[l]^{\beta}}$
be a $2$-cycle in $Q_B$
given by a $2$-triangle disk from $\Sigma$.
Then we have
$\alpha \beta = 0$ and $\beta \alpha = 0$ in $B$, 
and direct checking shows that
$\Omega_B(\alpha B) = \beta B$ 
and
$\Omega_B(\beta B) = \alpha B$.
Clearly,  $\alpha B$ and $\beta B$ are periodic modules
of period $2$
lying on the mouth of two different stable tubes
of $\Gamma_B$ of rank $1$.
Moreover, we have
$[e_a B] = [e_b B]$ in $K_0 (B)$,
and hence $C_B$ is singular.
This proves  statements (iv) and (v).
Finally, the statement (iii) follows from 
Proposition~\ref{prop:5.1}
and its proof.
We note that every arrow in $Q_B$ which does
not belong to a $2$-cycle given by a $2$-triangle
disk from $\Sigma$ belongs to an $f$-orbit
of length $1$ or $3$.
\end{proof}

Let $(Q,f)$ be a triangulation quiver
with non-empty border $\partial(Q,f)$, 
$m_{\bullet} : \cO(g) \to \bN^*$
a weight function of $(Q,f)$,
$b_{\bullet} : \partial(Q,f) \to K$ a border function of $(Q,f)$,
and $\Sigma $ a collection 
of $2$-triangle disks in $(Q,f)$.
We denote also by $e_{\Sigma}$
the idempotent in the socle deformed biserial weighted
triangulation algebra
$B(Q,f,m_{\bullet},b_{\bullet})$
being the sum of all primitive idempotents
corresponding to all vertices of $(Q,f)$
except the $2$-cycle vertices of the $2$-triangle 
disks from $\Sigma$.
Then
\[
  B(Q,f,\Sigma,m_{\bullet},b_{\bullet})
   =  e_{\Sigma} B(Q,f,m_{\bullet},b_{\bullet}) e_{\Sigma}
\]
is said to be the \emph{idempotent algebra} of
$B(Q,f,m_{\bullet},b_{\bullet})$,
with respect to $\Sigma$.

The following proposition describes basic properties 
of the idempotent algebras introduced above.

\begin{proposition}
\label{prop:7.2}
Let $(Q,f)$ be a triangulation quiver 
with $\partial(Q,f)$ non-empty,
$m_{\bullet}$,
$b_{\bullet}$
weight and border functions of $(Q,f)$,
$\Sigma $ a non-empty collection 
of $2$-triangle disks in $(Q,f)$,
and
$B = B(Q,f,\Sigma,m_{\bullet})$,
$\bar{B} = B(Q,f,\Sigma,m_{\bullet},b_{\bullet})$.
Then the following statements hold.
\begin{enumerate}[(i)]
 \item
  $\bar{B}$ is  finite-dimensional biserial.
 \item
  $\bar{B}$ is a symmetric algebra.
 \item
  $\bar{B}$ is socle equivalent to $B$.
 \item
  $\bar{B}$ degenerates to $B$.
 \item
  $\bar{B}$ is a tame algebra.
 \item
  $\bar{B}$ is an algebra of generalized dihedral type.
 \item
  The Cartan matrix $C_{\bar{B}}$ of $\bar{B}$ is singular.
\end{enumerate}
Moreover, if $K$ is of characteristic different from $2$, then
\begin{enumerate}[(i)]
 \addtocounter{enumi}{7}
  \item
  $\bar{B}$ is isomorphic to $B$.
\end{enumerate}
\end{proposition}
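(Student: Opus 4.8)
The plan is to prove Proposition~\ref{prop:7.2}, and in particular the final statement (viii), by reducing the idempotent construction to the non-idempotent case already handled in Proposition~\ref{prop:6.3}. Statements (i)--(vii) follow formally from the corresponding parts of Proposition~\ref{prop:7.1} together with Propositions~\ref{prop:2.4}, \ref{prop:6.1} and \ref{prop:6.2}: since $\bar{B} = e_{\Sigma} B(Q,f,m_{\bullet},b_{\bullet}) e_{\Sigma}$ and $B(Q,f,m_{\bullet},b_{\bullet})$ is symmetric special biserial by Proposition~\ref{prop:6.2}(ii) (and biserial in any case), Proposition~\ref{prop:2.4} gives that $\bar{B}$ is symmetric special biserial; socle equivalence of $\bar{B}$ with $B$ is inherited from the socle equivalence $B(Q,f,m_{\bullet},b_{\bullet}) \sim B(Q,f,m_{\bullet})$, because forming $e_{\Sigma}(-)e_{\Sigma}$ commutes with passing to the quotient modulo socle (the socle of each relevant projective is spanned by a cycle $B_{\alpha}$ through a non-contracted vertex); the degeneration, tameness, generalized-dihedral-type and singularity statements then all transfer exactly as in Proposition~\ref{prop:7.1}(ii)--(iv) and Proposition~\ref{prop:6.2}(iv)--(vi).

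For the key statement (viii), I would argue as follows. Assume $\operatorname{char} K \neq 2$. By Proposition~\ref{prop:6.3} there is a $K$-algebra isomorphism
\[
  h : B(Q,f,m_{\bullet}) \longrightarrow B(Q,f,m_{\bullet},b_{\bullet})
\]
which fixes every idempotent $e_i$, being the identity on every arrow that is not a border loop and sending each border loop $\alpha$ to $\alpha - a_{s(\alpha)} A_{\bar\alpha}$ where $b_i = 2a_i$. The central observation is that this $h$ respects the idempotent $e_{\Sigma}$: by construction $h(e_i) = e_i$ for all $i \in Q_0$, hence $h(e_{\Sigma}) = e_{\Sigma}$. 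Therefore $h$ restricts to a $K$-algebra isomorphism
\[
  e_{\Sigma} B(Q,f,m_{\bullet}) e_{\Sigma} \longrightarrow e_{\Sigma} B(Q,f,m_{\bullet},b_{\bullet}) e_{\Sigma},
\]
that is, $B = B(Q,f,\Sigma,m_{\bullet}) \cong B(Q,f,\Sigma,m_{\bullet},b_{\bullet}) = \bar{B}$, which is exactly (viii).

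The main point needing care is precisely the compatibility of the isomorphism $h$ of Proposition~\ref{prop:6.3} with the corner construction: one must check that $h$ really does send $e_{\Sigma}$ to $e_{\Sigma}$, and that the corner algebra $e_{\Sigma} A e_{\Sigma}$ is functorial in $A$ under idempotent-preserving isomorphisms. The first is immediate from the explicit formula for $h$ (it fixes all vertex idempotents), and the second is the elementary fact that any $K$-algebra isomorphism $\psi : A \to A'$ with $\psi(e) = e'$ restricts to an isomorphism $eAe \to e'A'e'$. A secondary point is to make sure the border loops that appear in the defining relations all survive into the corner algebra, i.e.\ that no border vertex is among the contracted $2$-cycle vertices of the disks in $\Sigma$; this holds because a border vertex carries a loop fixed by $f$, whereas the contracted vertices $c,d$ of a $2$-triangle disk lie on $f$-orbits of length $3$, so the two sets of vertices are disjoint and the restriction of $h$ to the corner is again given by the same formulas on the surviving arrows. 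Once these routine verifications are in place, (viii) follows, and the proof of Proposition~\ref{prop:7.2} is complete.
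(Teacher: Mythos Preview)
Your approach is essentially the same as the paper's, which simply points to Propositions~\ref{prop:6.2}, \ref{prop:6.3}, \ref{prop:7.1} and their proofs; your expansion of the argument for (viii) via restricting the explicit isomorphism $h$ of Proposition~\ref{prop:6.3} to the corner $e_{\Sigma}(-)e_{\Sigma}$ is correct and is exactly what the paper intends.

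There is one small slip in your treatment of (i)--(ii): you invoke Proposition~\ref{prop:2.4} with the premise that $B(Q,f,m_{\bullet},b_{\bullet})$ is symmetric \emph{special} biserial, but Proposition~\ref{prop:6.2} only gives that it is symmetric and biserial, and Example~\ref{ex:6.5} shows it need not be special biserial. So Proposition~\ref{prop:2.4} does not literally apply. The fix is immediate: symmetry of $\bar{B}$ follows from the general fact (recalled in the introduction) that $eAe$ is symmetric whenever $A$ is, and biseriality of $\bar{B}$ follows either by direct inspection of the projectives (exactly as in the proof of Proposition~\ref{prop:6.2}(i)) or from the socle equivalence (iii) together with the fact that $B$ is special biserial by Proposition~\ref{prop:7.1}(i). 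With this correction your argument goes through.
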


\begin{proof}
The required statements follow from 
Propositions 
\ref{prop:6.2},
\ref{prop:6.3},
\ref{prop:7.1},
and their proofs.
\end{proof}

The following proposition is a special case of
\cite[Theorem~5.3]{ESk6}.

\begin{proposition}
\label{prop:7.3}
Let $A$ be a basic, indecomposable, symmetric algebra
with the Grothendieck group $K_0(A)$ of rank at least $2$
which is socle equivalent to the idempotent algebra
$B(Q,f,\Sigma,m_{\bullet})$ 
for a non-empty collection $\Sigma$ of $2$-triangle disks
and a weight function $m_{\bullet}$ of $(Q,f)$.
\begin{enumerate}[(i)]
 \item
  If $\partial(Q,f)$ is empty then $A$ is isomorphic to
  $B(Q,f,\Sigma,m_{\bullet})$.
 \item
  Otherwise $A$ is isomorphic to $B(Q,f,\Sigma,m_{\bullet},b_{\bullet})$
  for some border function $b_{\bullet}$ of $(Q,f)$.
\end{enumerate}
\end{proposition}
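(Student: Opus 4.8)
The final statement is Proposition~\ref{prop:7.3}, which asserts that a basic indecomposable symmetric algebra $A$ socle equivalent to an idempotent algebra $B(Q,f,\Sigma,m_\bullet)$ is in fact isomorphic either to that idempotent algebra (when the border is empty) or to a socle-deformed version (when the border is non-empty). Since the paper explicitly says this is a special case of \cite[Theorem~5.3]{ESk6}, the proof is really a matter of identifying the hypotheses of that cited theorem inside the present setup and letting it do the work.

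\medskip

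The plan is as follows. First I would recall that $B = B(Q,f,\Sigma,m_\bullet) = e_\Sigma B(Q,f,m_\bullet) e_\Sigma$, and that by Proposition~\ref{prop:2.4} (applied to the symmetric special biserial algebra $B(Q,f,m_\bullet)$, which is symmetric special biserial by Proposition~\ref{prop:5.1}) the algebra $B$ is again symmetric special biserial. The key structural point is that $B$ has a bound quiver presentation $KQ_B/I_B$ in which the quiver $Q_B$ and the ideal $I_B$ are of the shape covered by \cite[Theorem~5.3]{ESk6}: each $2$-triangle disk in $\Sigma$ has been collapsed to a $2$-cycle $\xymatrix{ a \ar@/^1.5ex/[r]^{\alpha} & b \ar@/^1.5ex/[l]^{\beta}}$ with $\alpha\beta = 0 = \beta\alpha$, and all remaining arrows lie in $f$-orbits of length $1$ or $3$ (as noted in the proof of Proposition~\ref{prop:7.1}). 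I would then quote the structure theorem from \cite{ESk6} in the precise form needed: a basic indecomposable symmetric algebra of rank $\geq 2$ that is socle equivalent to such an algebra is isomorphic to one of the algebras in the parametrized family, the parameters being exactly a border function $b_\bullet$ when $\partial(Q,f)\neq\emptyset$, and nothing extra when $\partial(Q,f)=\emptyset$.

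\medskip

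The two cases then split cleanly. If $\partial(Q,f)$ is empty, there are no border loops, so there is no room for a socle deformation: the defining relations of $B(Q,f,m_\bullet)$ involve only the monomial relations $\alpha f(\alpha)$ and the commutativity relations $B_\alpha - B_{\bar\alpha}$, none of which admits a nontrivial deformation preserving the socle quotient, and \cite[Theorem~5.3]{ESk6} then forces $A \cong B(Q,f,\Sigma,m_\bullet)$. If $\partial(Q,f)$ is non-empty, the socle of each projective at a border vertex is spanned by $B_\alpha$ for the border loop $\alpha$, and the cited theorem tells us that the ambiguity in reconstructing $A$ from $A/\soc(A)$ is measured precisely by the scalars $b_i = $ (coefficient relating $\alpha^2$ to $B_\alpha$) for border loops $\alpha$, i.e. by a border function $b_\bullet:\partial(Q,f)\to K$; hence $A \cong B(Q,f,\Sigma,m_\bullet,b_\bullet)$ for a suitable $b_\bullet$.

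\medskip

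The main obstacle, and the only real content beyond citation, is verifying that the hypotheses of \cite[Theorem~5.3]{ESk6} genuinely apply to the idempotent algebra $B(Q,f,\Sigma,m_\bullet)$ rather than merely to $B(Q,f,m_\bullet)$ itself --- that is, checking that passing to $e_\Sigma(-)e_\Sigma$ keeps us inside the class of algebras to which that theorem speaks, and that the collapsed $2$-cycles with zero relations do not introduce new socle-deformation parameters (they do not, since a relation of the form $\alpha\beta = 0$ with $\alpha,\beta$ forming a $2$-cycle has no deformation compatible with the required symmetrizing form). Once that compatibility is in place, the proof is simply: ``The required statements follow from \cite[Theorem~5.3]{ESk6} applied to the bound quiver presentation of $B(Q,f,\Sigma,m_\bullet)$ described above, together with Propositions~\ref{prop:2.4} and~\ref{prop:5.1}.''
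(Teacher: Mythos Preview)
Your proposal is correct and takes essentially the same approach as the paper: the paper offers no proof environment for Proposition~\ref{prop:7.3} at all, merely the sentence ``The following proposition is a special case of \cite[Theorem~5.3]{ESk6}'' preceding the statement. Your write-up is exactly this citation together with a fuller verification that the hypotheses of the cited theorem are met by the idempotent algebra, which is more detail than the paper provides but entirely in the same spirit.
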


\begin{example}
\label{ex:7.4} 
Let $S$ be the connected sum $\bT \# D$
(see \cite[Section~3.1]{Ca})
of the torus $\bT$ and the disk $D = D^2$, 
and $T$ be the following triangulation of $S$
\[
\begin{tikzpicture}
[scale=1,auto]
\node (1) at (-1.8,-1) {$\bullet$};
\node (2) at (-1.8,1) {$\bullet$};
\node (3) at (0,2) {$\bullet$};
\node (4) at (1.8,1) {$\bullet$};
\node (5) at (1.8,-1) {$\bullet$};
\node (6) at (0,-2) {$\bullet$};
\coordinate (1) at (-1.8,-1);
\coordinate (2) at (-1.8,1) ;
\coordinate (3) at (0,2) ;
\coordinate (4) at (1.8,1) ;
\coordinate (5) at (1.8,-1) ;
\coordinate (6) at (0,-2) ;
\draw[thick] (1) arc (180:360:1.8) node[midway,below] {8};
\draw[thick]
(1) edge node {1} (2) 
(1) edge node {3} (3) 
(1) edge node {4} (4) 
(1) edge node {5} (5) 
(2) edge node {2} (3) 
(3) edge node {1} (4) 
(4) edge node {2} (5) 
(5) edge node {7} (6) 
(6) edge node {6} (1) ;
\end{tikzpicture}
\]

Consider the following orientation $\vv{T}$ of triangles in $T$ 
\[
\mbox{
  (1 2 3), (1 3 4), (2 4 5), (5 6 7), (8 7 6).
}
\]
Then the associated triangulation quiver $(Q(S,\vv{T}),f)$ 
is of the form
\[
\begin{tikzpicture}
[->,scale=.9]
\coordinate (7) at (0,1.5);
\coordinate (7l) at (-0.15,1.5);
\coordinate (7p) at (0.15,1.5);
\coordinate (6) at (0,-1.5);
\coordinate (6l) at (-0.15,-1.5);
\coordinate (6p) at (0.15,-1.5);
\coordinate (8) at (-1.5,0);
\coordinate (5) at (1.5,0);

\coordinate (1) at (4,0);
\coordinate (1u) at (4.075,.15);
\coordinate (1d) at (4.075,-.15);
\coordinate (3) at (6,0);
\coordinate (3u) at (5.925,.15);
\coordinate (3d) at (5.925,-.15);
\coordinate (2) at (5,1.5);
\coordinate (4) at (5,-1.5);

\coordinate (2l) at (4.75,1.4);
\coordinate (4l) at (4.75,-1.4);

\fill[fill=gray!20] (1u) -- (2) -- (3u) -- cycle;
\fill[fill=gray!20] (3d) -- (4) -- (1d) -- cycle;

\fill[fill=gray!20] (7l) -- (6l) -- (8) -- cycle;
\fill[fill=gray!20] (7p) -- (6p) -- (5) -- cycle;

\fill[fill=gray!20] (4.8,1.3) arc [start angle=120, delta angle=120, x radius=2, y radius=1.5] -- (4l) -- (5) -- (2l) -- cycle;

\node [circle,minimum size=1.5](A) at (-1.5,0) { };
\node [circle,minimum size=1cm](B) at (-2.1,0) {};
\coordinate  (C) at (intersection 2 of A and B);
\coordinate  (D) at (intersection 1 of A and B);
 \tikzAngleOfLine(B)(D){\AngleStart}
 \tikzAngleOfLine(B)(C){\AngleEnd}
\fill[gray!20]%
   let \p1 = ($ (B) - (D) $), \n2 = {veclen(\x1,\y1)}
   in   
     (D) arc (\AngleStart:\AngleEnd:\n2); 
\node [fill=white,circle,minimum size=1.5](A) at (-1.5,0) { };
\draw[thick,<-]%
   let \p1 = ($ (B) - (D) $), \n2 = {veclen(\x1,\y1)}
   in   
     (B) ++(60:\n2) node[left]{\footnotesize\raisebox{0ex}{$\!\!\!\!\!\!\!\!\!\!\!\!\!\!\!\eta\ \ \ \ \ \ \ \ $}}
     (D) arc (\AngleStart:\AngleEnd:\n2); 
\node[fill=white,circle,minimum size=3]  (7) at (0,1.5) { };
\node (7) at (0,1.5) {7};
\node (7l) at (-0.15,1.5) { };
\node (7p) at (0.15,1.5) { };
\node[fill=white,circle,minimum size=3]  (6) at (0,-1.5) { };
\node (6) at (0,-1.5) {6};
\node (6l) at (-0.15,-1.5) { };
\node (6p) at (0.15,-1.5) { };
\node[fill=white,circle,minimum size=3]  (8) at (-1.5,0) { };
\node (8) at (-1.5,0) {8};
\node[fill=white,circle,minimum size=3]  (5) at (1.5,0) { };
\node (5) at (1.5,0) {5};

\node[fill=white,circle,minimum size=3]  (1) at (4,0) { };
\node[fill=white,circle,minimum size=3]  (3) at (6,0) { };
\node[fill=white,circle,minimum size=3]  (2) at (5,1.5) { };
\node[fill=white,circle,minimum size=3]  (4) at (5,-1.5) { };

\node (1) at (4,0) {1};
\node (3) at (6,0) {3};
\node (2) at (5,1.5) {2};
\node (4) at (5,-1.5) {4};

\node (1u) at (4.075,.15) { };
\node (1d) at (4.075,-.15) { };
\node (3u) at (5.925,.15) { };
\node (3d) at (5.925,-.15) { };

\fill[fill=gray!20] (7l) -- (6l) -- (8) -- cycle;
\fill[fill=gray!20] (7p) -- (6p) -- (5) -- cycle;

\draw[thick] (4.8,1.3) arc [start angle=120, delta angle=120, x radius=2, y radius=1.5] node[midway,left] {$\omega$};

\draw[thick,->]
(7l) edge node[left]{\footnotesize$\varphi$} (6l)
(6l) edge node[below left]{\footnotesize$\lambda$} (8)
(8) edge node[above left]{\footnotesize$\xi$} (7l)
(6p) edge node[right]{\footnotesize$\psi$} (7p)
(7p) edge node[above right]{\footnotesize$\mu$} (5)
(5) edge node[below right]{\footnotesize$\zeta$} (6p)
(1u) edge node[right]{\footnotesize$\chi$} (2)
(2) edge node[right]{\footnotesize$\pi$} (3u)
(3u) edge node[above]{\footnotesize$\kappa$} (1u)
(1d) edge node[below]{\footnotesize$\nu$} (3d)
(3d) edge node[right]{\footnotesize$\varepsilon$} (4)
(4) edge node[right]{\footnotesize$\theta$} (1d)
(5) edge node[above left]{\footnotesize$\delta$} (2l)
;
\draw[thick,->] (4l) -- node[below left]{\footnotesize$\varrho$} (5);
\end{tikzpicture}
\]
where the shaded triangles denote $f$-orbits.
We set $Q = Q(S,\vv{T})$.
The set $\cO(g)$ of $g$-orbits of $(Q,f)$
consists of the $g$-orbits
\begin{align*}
 &&
 \cO(\eta) &= ( \eta\, \xi\, \mu\, \delta\, \pi\, \varepsilon\, \varrho\, \zeta\, \lambda ), &
 \cO(\omega) &= ( \omega\, \theta\, \chi ), &
 \cO(\kappa) &= ( \kappa\, \nu ), &
 \cO(\varphi) &= ( \varphi\, \psi ). 
 &&
\end{align*}
Hence
a weight function
$m_{\bullet} : \cO(g) \to \bN^*$
is given by four positive natural numbers
\begin{align*}
&&
 m &= m_{\cO(\eta)},&
 n &= m_{\cO(\omega)},&
 p &= m_{\cO(\kappa)},&
 q &= m_{\cO(\varphi)}.&
&&
\end{align*}
Then the associated biserial weighted
triangulation algebra
$B(Q,f,m_{\bullet})$
is given by the quiver $Q$ and the relations
\begin{gather*}
  \eta^2 = 0,
\ \ 
  \xi\varphi = 0,
\ \ 
  \varphi\lambda = 0,
\ \ 
  \lambda\xi = 0,
\ \ 
  \zeta\psi = 0,
\ \ 
  \psi\mu = 0,
\ \ 
  \mu\zeta = 0,
\ \ 
  \delta\omega = 0,
  \\
  \omega\varrho = 0,
\ \ 
  \varrho\delta = 0,
\ \ 
  \chi\pi = 0,
\ \ 
  \pi\kappa = 0,
\ \ 
  \kappa \chi = 0,
\ \ 
  \theta\nu = 0,
\ \ 
  \nu\varepsilon = 0,
\ \ 
  \varepsilon\theta = 0,
 \\
  (\chi\omega\theta)^n = (\nu\kappa)^p,
\quad
  (\pi\varepsilon\varrho\zeta\lambda\eta\xi\mu\delta)^m = (\omega\theta\chi)^n,
\quad
  (\kappa\nu)^p = (\varepsilon\varrho\zeta\lambda\eta\xi\mu\delta\pi)^m, 
 \\
  (\theta\chi\omega)^n = (\varrho\zeta\lambda\eta\xi\mu\delta\pi\varepsilon)^m, 
\qquad
  (\delta\pi\varepsilon\varrho\zeta\lambda\eta\xi\mu)^m
   = (\zeta\lambda\eta\xi\mu\delta\pi\varepsilon\varrho)^m,
 \\
  (\psi\varphi)^q = (\lambda\eta\xi\mu\delta\pi\varepsilon\varrho\zeta)^m,
\qquad
  (\varphi\psi)^q = (\mu\delta\pi\varepsilon\varrho\zeta\lambda\eta\xi)^m,
 \\
  (\xi\mu\delta\pi\varepsilon\varrho\zeta\lambda\eta)^m
   = (\eta\xi\mu\delta\pi\varepsilon\varrho\zeta\lambda)^m.
\end{gather*}
The triangulation quiver $(Q,f)$ contains two $2$-triangle
disks: $D_1$, given by the arrows
$\chi$, $\pi$, $\kappa$, $\nu$, $\varepsilon$, $\theta$, 
and $D_2$, given by the arrows
$\lambda$, $\xi$, $\varphi$, $\psi$, $\mu$, $\zeta$.
We note that $D_1$ is not a full subquiver of $Q$.
Let $\Sigma = \{D_1, D_2 \}$.
Then the idempotent algebra $B(Q,f,\Sigma,m_{\bullet})$
is given by the quiver $Q_{\Sigma}$ of the form
\[
 \xymatrix@C=3.5pc@R=1.5pc{
  && 2 \ar@<+.5ex>[dd]^{\sigma} \ar@<-.25ex>@/_2.5ex/[dd]_{\omega} \\
  8 \ar@(dl,ul)[]^{\eta} \ar@/^1.5ex/[r]^{\alpha}
   & 5 \ar@/^1.5ex/[l]^{\beta} \ar[ru]^{\delta}
 \\  && 4 \ar@<+.5ex>[uu]^{\gamma} \ar[lu]^{\varrho}
 }
\]
with 
$\alpha = \xi\mu$,
$\beta = \zeta\lambda$,
$\gamma = \theta\chi$,
$\sigma = \pi\varepsilon$,
and the relations
\begin{gather*}
  \eta^2 = 0,
\quad
  \alpha\beta = 0,
\quad
  \beta\alpha = 0,
\quad
  \delta\omega = 0,
\quad
  \omega\varrho = 0,
\quad
  \varrho\delta = 0,
\\
  \gamma\sigma = 0,
\quad
  \sigma\gamma = 0,
\quad
  (\sigma\varrho\beta\eta\alpha\delta)^m = (\omega\gamma)^n,
\quad
  (\gamma\omega)^n = (\varrho\beta\eta\alpha\delta\sigma)^m,
\\
  (\delta\sigma\varrho\beta\eta\alpha)^m 
   = (\beta\eta\alpha\delta\sigma\varrho)^m,
\quad
  (\alpha\delta\sigma\varrho\beta\eta)^m 
   = (\eta\alpha\delta\sigma\varrho\beta)^m.
\end{gather*}
The border $\partial(Q,f)$ of $(Q,f)$
consists of the vertex $8$, and hence
a border function 
$b_{\bullet} : \partial(Q,f) \to K$
is given by an element $b \in K$. 
Then the socle deformed biserial weighted
triangulation algebra
$B(Q,f,m_{\bullet},b_{\bullet})$
is obtained from 
$B(Q,f,m_{\bullet})$
by replacing 
the relation
$\eta^2 = 0$
by the relation
$\eta^2 = b(\eta\xi\mu\delta\pi\varepsilon\varrho\zeta\lambda)^m$.
Similarly, the algebra
$B(Q,f,\Sigma,m_{\bullet},b_{\bullet})$
is obtained from the algebra
$B(Q,f,\Sigma,m_{\bullet})$
by replacing 
the relation
$\eta^2 = 0$
by the relation
$\eta^2 = b(\eta\alpha\delta\sigma\varrho\beta)^m$.
\end{example}

\begin{example}
\label{ex:7.5} 
Let $n \geq 2$ be a natural number.
Consider the following triangulation $T(n)$
of the sphere $\bS = S^2$ in $\bR^3$
\[
\begin{tikzpicture}
[scale=1,auto]
\node (up) at (0,2) {$\bullet$};
\node (down) at (0,-2) {$\bullet$};
\node (1) at (-3,0) {$\bullet$};
\node (2) at (-1,0) {$\bullet$};
\node (3) at (2,0) {$\bullet$};
\node (4) at (4,0) {$\bullet$};
\node at (0,0) {$\cdots$};
\coordinate (up) at (0,2);
\coordinate (down) at (0,-2);
\coordinate (1) at (-3,0);
\coordinate (2) at (-1,0);
\coordinate (3) at (2,0);
\coordinate (4) at (4,0);
\draw[thick] (up) arc [start angle=90, delta angle=180, x radius=4, y radius=2] node[midway,left] {$a_1$};

\draw[thick] (up) arc [start angle=120, delta angle=120, x radius=4, y radius=2.32] node[midway,left] {$a_2$};

\draw[thick] (up) arc [start angle=45, delta angle=-90, x radius=2.82, y radius=2.82] node[midway,right] {$a_{n-1}$};

\draw[thick] (up) arc [start angle=60, delta angle=-120, x radius=6, y radius=2.32] node[midway,right] {$a_{n}$};

\draw[thick] (up) arc [start angle=90, delta angle=-180, x radius=5, y radius=2] node[midway,right] {$a_{n+1} = a_1$};

\draw[thick] (up) arc [start angle=60, delta angle=-30, x radius=5.4, y radius=5.3] node[midway,right] {$c_{n-1}$};

\draw[thick] (up) arc [start angle=60, delta angle=-30, x radius=5.4, y radius=5.3] node[midway,right] {$c_{n-1}$};

\draw[thick] (down) arc [start angle=-60, delta angle=30, x radius=5.4, y radius=5.3] node[midway,right] {$d_{n-1}$};

\draw[thick] (up) arc [start angle=80, delta angle=-60, x radius=5.25, y radius=3.1] node[midway,right] {$c_{n}$};

\draw[thick] (down) arc [start angle=-80, delta angle=60, x radius=5.25, y radius=3.1] node[midway,right] {$d_{n}$};

\draw[thick] (up) arc [start angle=100, delta angle=60, x radius=3.95, y radius=3.1] node[midway,left] {$c_{1}$};

\draw[thick] (down) arc [start angle=-100, delta angle=-60, x radius=3.95, y radius=3.1] node[midway,left] {$d_{1}$};

\draw[thick]
(down) edge node {$d_2$} (2) 
(2) edge node {$c_2$} (up) 
;
\end{tikzpicture}
\]
with two pole vertices and $n$ vertices lying on the equator.
Moreover, let $\vv{T(n)}$ be the coherent orientation
of the triangles of $T(n)$
\[
  (a_i \, c_i \, d_i),
 \quad 
  (d_i \, c_i \, a_{i+1}), 
 \quad 
  \mbox{ for }
  i \in \{ 1,\dots,n \} .
\]
Then the associated triangulation quiver $(Q(\bS,\vv{T(n)}),f)$ 
is of the form
\[
\begin{tikzpicture}
[->,scale=.85]
\coordinate (0) at (-1.5,0);
\coordinate (1) at (0,1.5);
\coordinate (1l) at (-0.15,1.5);
\coordinate (1p) at (0.15,1.5);
\coordinate (2) at (0,-1.5);
\coordinate (2l) at (-0.15,-1.5);
\coordinate (2p) at (0.15,-1.5);
\coordinate (3) at (1.5,0);
\coordinate (4) at (3,1.5);
\coordinate (4l) at (2.85,1.5);
\coordinate (4p) at (3.15,1.5);
\coordinate (5) at (3,-1.5);
\coordinate (5l) at (2.85,-1.5);
\coordinate (5p) at (3.15,-1.5);
\coordinate (6) at (4.5,0);
\coordinate (7) at (6,1.5);
\coordinate (7l) at (5.85,1.5);
\coordinate (7p) at (6.15,1.5);
\coordinate (8) at (6,-1.5);
\coordinate (8l) at (5.85,-1.5);
\coordinate (8p) at (6.15,-1.5);
\coordinate (9) at (7.5,0);
\coordinate (10) at (9,1.5);
\coordinate (10l) at (8.85,1.5);
\coordinate (10p) at (9.15,1.5);
\coordinate (11) at (9,-1.5);
\coordinate (11l) at (8.85,-1.5);
\coordinate (11p) at (9.15,-1.5);
\coordinate (12) at (10.5,0);
\fill[fill=gray!20] (1l) -- (2l) -- (0) -- cycle;
\fill[fill=gray!20] (1p) -- (2p) -- (3) -- cycle;
\fill[fill=gray!20] (4l) -- (5l) -- (3) -- cycle;
\fill[fill=gray!20] (4p) -- (5p) -- (6) -- cycle;
\fill[fill=gray!20] (10l) -- (11l) -- (9) -- cycle;
\fill[fill=gray!20] (10p) -- (11p) -- (12) -- cycle;
\node[fill=white,circle,minimum size=3]  (0) at (-1.5,0) { };
\node (0) at (-1.5,0) {$a_1$};
\node[fill=white,circle,minimum size=3]  (1) at (0,1.5) { };
\node (1) at (0,1.5) {$c_1$};
\node (1l) at (-0.15,1.5) { };
\node (1p) at (0.15,1.5) { };
\node[fill=white,circle,minimum size=3]  (2) at (0,-1.5) { };
\node (2) at (0,-1.5) {$d_1$};
\node (2l) at (-0.15,-1.5) { };
\node (2p) at (0.15,-1.5) { };
\node[fill=white,circle,minimum size=3]  (3) at (1.5,0) { };
\node (3) at (1.5,0) {$a_2$};

\node[fill=white,circle,minimum size=3]  (4) at (3,1.5) { };
\node (4) at (3,1.5) {$c_2$};
\node (4l) at (2.85,1.5) { };
\node (4p) at (3.15,1.5) { };
\node[fill=white,circle,minimum size=3]  (5) at (3,-1.5) { };
\node (5) at (3,-1.5) {$d_2$};
\node (5l) at (2.85,-1.5) { };
\node (5p) at (3.15,-1.5) { };
\node[fill=white,circle,minimum size=3]  (6) at (4.5,0) { };
\node (6) at (4.5,0) {$a_3$};

\node (7l) at (5.35,1.) { };
\node (7p) at (6.65,1.) { };
\node (8l) at (5.35,-1.) { };
\node (8p) at (6.65,-1.) { };
\node at (6,0) {$\cdots$};

\node[fill=white,circle,minimum size=3]  (9) at (7.5,0) { };
\node (9) at (7.5,0) {$a_n$};
\node[fill=white,circle,minimum size=3]  (10) at (9,1.5) { };
\node (10) at (9,1.5) {$c_n$};
\node (10l) at (8.85,1.5) { };
\node (10p) at (9.15,1.5) { };
\node[fill=white,circle,minimum size=3]  (11) at (9,-1.5) { };
\node (11) at (9,-1.5) {$d_n$};
\node (11l) at (8.85,-1.5) { };
\node (11p) at (9.15,-1.5) { };
\node[fill=white,circle,minimum size=3]  (12) at (10.5,0) { };
\node (12) at (10.5,0) {$a_{n+1} = a_1\!\!\!\!\!\!\!\!\!\!\!\!\!\!\!\!\!\!\!\!$};

\fill[fill=gray!20] (1l) -- (2l) -- (0) -- cycle;
\fill[fill=gray!20] (1p) -- (2p) -- (3) -- cycle;

\draw[thick,->]
(1l) edge node[left]{\footnotesize$\xi_1$} (2l)
(2l) edge node[below left]{\footnotesize$\delta_1$} (0)
(0) edge node[above left]{\footnotesize$\gamma_1$} (1l)
(2p) edge node[right]{\footnotesize$\eta_1$} (1p)
(1p) edge node[above right]{\footnotesize$\sigma_1$} (3)
(3) edge node[below right]{\footnotesize$\varrho_1$} (2p)
(4l) edge node[left]{\footnotesize$\xi_2$} (5l)
(5l) edge node[below left]{\footnotesize$\delta_2$} (3)
(3) edge node[above left]{\footnotesize$\gamma_2$} (4l)
(5p) edge node[right]{\footnotesize$\eta_2$} (4p)
(4p) edge node[above right]{\footnotesize$\sigma_2$} (6)
(6) edge node[below right]{\footnotesize$\varrho_2$} (5p)
(8l) edge (6)
(7p) edge (9)
(10l) edge node[left]{\footnotesize$\xi_n$} (11l)
(11l) edge node[below left]{\footnotesize$\delta_n$} (9)
(9) edge node[above left]{\footnotesize$\gamma_n$} (10l)
(11p) edge node[right]{\footnotesize$\eta_n$} (10p)
(10p) edge node[above right]{\footnotesize$\sigma_n$} (12)
(12) edge node[below right]{\footnotesize$\varrho_n$} (11p)
;
\draw[thick,-]
(6) edge (7l)
(9) edge (8p)
;
\end{tikzpicture}
\qquad \quad \raisebox{9ex}{,}
\]
where the shaded triangles denote $f$-orbits.
The set $\cO(g)$ of $g$-orbits 
consists of the $2$-cycle orbits 
$\cO(\xi_i) = (\xi_i \, \eta_i)$,
$i \in \{ 1,\dots,n \}$,
and the two $n$-cycles
\begin{align*}
 \cO(\gamma_1) &=
  ( \gamma_1\, \sigma_1\, \gamma_2\, \sigma_2\, \dots \, \gamma_n\, \sigma_n ), &
 \cO(\varrho_n) &= 
  ( \varrho_n\, \delta_n\, \dots \, \varrho_2\, \delta_2\, \varrho_1\, \delta_1 ).
\end{align*}
Let
$m_{\bullet} : \cO(g) \to \bN^*$
be  a weight function, and
\begin{align*}
&&
 p &= m_{\cO(\gamma_1)},&
 q &= m_{\cO(\varrho_n)}.
&&
\end{align*}
Observe that the triangulation quiver 
$(Q(\bS,\vv{T(n)}),f)$
is formed by $2$-triangle disks
$D_1,D_2,\dots,D_n$.
Take $\Sigma = \{D_1,D_2,\dots,D_n\}$.
Then the associated idempotent algebra
$B(Q(\bS,\vv{T(n)}),f,\Sigma,m_{\bullet})$
is given by the quiver 
\[
 \xymatrix@C=1.pc@R=1.5pc{
  & 2 \ar@<+.5ex>[rr]^{\alpha_2} \ar@<+.5ex>[ld]^{\beta_1} &&
    3 \ar@<+.5ex>@{-}[r] \ar@<+.5ex>[ll]^{\beta_2} & 
    \cdots \ar@<+.5ex>[r] \ar@<+.5ex>[l] &
    i-1 \ar@<+.5ex>[rd]^{\alpha_{i-1}} \ar@<+.5ex>@{-}[l] \\
  1 \ar@<+.5ex>[ru]^{\alpha_1} \ar@<+.5ex>[rd]^{\beta_n} && && &&
     i \ar@<+.5ex>[ld]^{\alpha_i} \ar@<+.5ex>[lu]^{\beta_{i-1}} \\
  & n \ar@<+.5ex>[lu]^{\alpha_n} \ar@<+.5ex>[rr]^(.4){\beta_{n-1}} &&
    n-1 \ar@<+.5ex>[ll]^(.6){\alpha_{n-1}} \ar@<+.5ex>@{-}[r] &
    \cdots \ar@<+.5ex>[l] \ar@<+.5ex>[r] &
    i+1 \ar@<+.5ex>@{-}[l] \ar@<+.5ex>[ru]^{\beta_i} 
 }
\]
(with $i$ replacing $a_i$),
and the relations
\begin{align*}
&&
 \alpha_i \beta_i &= 0, &
 \beta_i \alpha_i &= 0, &
 (\alpha_i \alpha_{i+1} \dots \alpha_{i-1})^p 
   &= (\beta_{i-1} \beta_{i-2} \dots \beta_{i})^q, 
&&
\end{align*}
for $i \in \{1,\dots,n\}$,
where $\alpha_0  = \alpha_n$,
$\beta_0  = \beta_n$.
\end{example}

\section{Algebras of strict dihedral type}\label{sec:strictdih}

Algebras of dihedral type 
were introduced and studied in \cite{E3} and \cite{E5}. We introduce the
following refinement. 

Assume $A$ is an algebra. We say that $A$  is  of 
\emph{strict dihedral type}
if it satisfies the following conditions:
\begin{enumerate}[(1)]
 \item
  $A$ is symmetric, indecomposable, and tame.
 \item
  The stable Auslander-Reiten quiver $\Gamma_A^s$ of $A$
  consists of the following components:
  \begin{enumerate}[(i)]
   \item
    stable tubes of ranks $1$ and $3$;
   \item
    non-periodic components of the form
    $\bZ \bA_{\infty}^{\infty}$
    or $\bZ \widetilde{\bA}_n$.
   We assume that there is at least one such component.
  \end{enumerate}
 \item
  $\Omega_A$ fixes all stable tubes of rank $3$ in $\Gamma_A^s$.
 \item
  The number $\ell(A)$ of isomorphism classes of simple
  modules in $\mod A$ is two or three.
 \item
  The number of stable tubes of rank $3$ in $\Gamma_A^s$
  is equal to $\ell(A) - 1$.
 \item
  The Cartan matrix $C_A$ of $A$ is non-singular.
\end{enumerate}

An algebra $A$ with $\ell(A)> 1$ 
is of dihedral type if and only if it satisfies conditions (1) to (5).
Then we have the following consequence of
\cite[Theorem~1.4]{E3}.

\begin{theorem}
\label{th:8.1}
Let $K$ be an algebraically closed field of characteristic $2$,
$G$ a finite group, and $B$ be a block of $K G$
whose defect groups are dihedral $2$-groups.
Then $B$ is an algebra of strict dihedral type.
\end{theorem}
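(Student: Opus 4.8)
The plan is to deduce all six conditions in the definition of strict dihedral type from the explicit structural description of such blocks in \cite[Theorem~1.4]{E3} (see also \cite{E5}), together with a handful of classical facts about group algebras. Recall that \cite[Theorem~1.4]{E3} gives, for a block $B$ of $KG$ with dihedral defect group $D$, a presentation of the basic algebra of $B$ by quiver and relations, shows in particular that $B$ is an algebra of dihedral type in the sense of \cite{E3}, records that $\ell(B)\in\{1,2,3\}$, and determines the stable Auslander--Reiten quiver $\Gamma_B^s$. As is customary here, we take $B$ to be non-nilpotent, equivalently $\ell(B)\ge 2$; a nilpotent block with dihedral defect group is Morita equivalent to $KD$, hence local, and would violate condition (4).

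I would begin with condition (1). The group algebra $KG$ is symmetric, with symmetrizing form sending $x\in KG$ to its coefficient at the identity; a block $B$ is a two-sided ideal direct summand of $KG$, and the restriction of this form to $B$ is a symmetrizing form for $B$, so $B$ is symmetric. A block is indecomposable as a $K$-algebra by its very definition. Tameness of $B$ is the Bondarenko--Drozd theorem (\cite{BoD}, \cite{E5}): blocks with dihedral, semidihedral or quaternion defect groups have tame representation type. Moreover $B$ is representation-infinite, since its defect group is non-cyclic.

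For conditions (2), (3) and (5) I would invoke the explicit description of $\Gamma_B^s$. The presentation in \cite[Theorem~1.4]{E3} exhibits $B$ as a symmetric, hence self-injective, special biserial algebra, so Theorems~\ref{th:2.2} and~\ref{th:2.3} describe the general shape of $\Gamma_B^s$; the finer information --- that every stable tube of rank $\ge 2$ has rank exactly $3$, that $\Omega_B$ fixes each such tube, that there are exactly $\ell(B)-1$ of them, and that the non-periodic components (of which \cite{E3} exhibits at least one) are of the form $\bZ\bA_\infty^\infty$ or $\bZ\widetilde{\bA}_n$ --- is then read off from the list of indecomposable projectives and of the uniserial modules at the mouths of the tubes in \cite{E3} (compare the analysis of biserial weighted triangulation algebras in the proof of Proposition~\ref{prop:5.1}, or the Auslander--Reiten quivers computed in \cite{E5} and \cite[Sections~2 and~4]{ESk1}). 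Condition (4) is the classical bound $\ell(B)\le 3$ for blocks with dihedral defect group (Brauer; see \cite{E5}), together with $\ell(B)\ge 2$ from our non-nilpotency assumption. Finally, condition (6) holds for \emph{every} block of a group algebra: by Brauer's theorem on Cartan matrices of blocks the elementary divisors of $C_B$ are powers of $p$, so $\det C_B$ is a non-zero power of $p$ and $C_B$ is non-singular.

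The argument thus introduces no genuinely new difficulty: its entire substance sits in the structural classification \cite[Theorem~1.4]{E3}, in the determination of the Auslander--Reiten quivers of blocks of dihedral type, and in the tameness theorem of Bondarenko and Drozd; after those, Theorem~\ref{th:8.1} is merely an assembly of the items above. The one point requiring care is matching the Auslander--Reiten data in \cite{E3} with conditions (2) and (3) verbatim --- in particular checking that no stable tube of rank $2$ occurs and that $\Omega_B$ genuinely fixes, rather than just permutes, the rank-$3$ tubes --- and keeping the non-local hypothesis in play so that condition (4) is not vacuously violated.
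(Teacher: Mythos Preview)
Your proposal is correct and matches the paper's approach: the paper gives no proof at all, merely stating Theorem~\ref{th:8.1} as a consequence of \cite[Theorem~1.4]{E3}, and your detailed unpacking of the six conditions from that reference (together with Bondarenko--Drozd for tameness and Brauer's theorem for non-singularity of $C_B$) is exactly the verification implicit in that citation. You are also right to flag the non-locality hypothesis $\ell(B)\ge 2$, which the paper acknowledges in the introduction (``These include all non-local blocks\dots'') but omits from the statement of the theorem.
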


We refer to \cite[Tables]{E5} for a complete description
of algebras of dihedral type,  by quivers and relations.

In the rest of this section we will exhibit presentations
of all algebras of strict dihedral type as biserial weighted
triangulation algebras and their socle deformations.
Note that a loop of the triangulation quiver need not be a loop in 
the Gabriel quiver, so that the triangulation quiver allows us 
to unify the description of these algebras. 

\begin{example}
\label{ex:8.2}
Let $(Q,f)$ be the triangulation quiver
\[
  \xymatrix{
    1
    \ar@(ld,ul)^{\alpha}[]
    \ar@<.5ex>,[r]^{\beta}
    & 2
    \ar@<.5ex>[l]^{\gamma}
    \ar@(ru,dr)^{\eta}[]
  } 
\]
with
$f(\alpha) = \beta$,
$f(\beta) = \gamma$,
$f(\gamma) = \alpha$,
and
$f(\eta) = \eta$.
Then we have
$g(\alpha) = \alpha$,
$g(\beta) = \eta$,
$g(\eta) = \gamma$,
and
$g(\gamma) = \beta$.
Hence, $\cO(g)$ consists of the two $g$-orbits $\cO(\alpha)$ and $\cO(\beta)$.
Let $m_{\bullet} : \cO(g) \to \bN^*$
be a weight function, and let
$r = m_{\alpha}$
and
$s = m_{\beta}$.
The border $\partial(Q,f)$ of $(Q,f)$ consists of the vertex $2$
and $\eta$ is the unique border loop.
Take a border function $b_{\bullet} : \partial(Q,f) \to K$,
and set $b = b_2$.
Then the associated
socle deformed biserial
weighted triangulation algebra
\[
    \Lambda(r,s,b) = B(Q,f,m_{\bullet},b_{\bullet})
\]
is given by the above quiver and the relations
\begin{gather*}
 \alpha \beta = 0,
 \ 
 \beta \gamma = 0,
 \ 
 \gamma \alpha = 0,
 \ 
 \alpha^r = (\beta \eta \gamma)^s,
 \ 
 (\gamma \beta \eta)^s = (\eta \gamma \beta)^s, 
 \ 
 \eta^2 = b (\eta \gamma \beta)^s.
\end{gather*}
Clearly, if $b = 0$, then 
$\Lambda(r,s,0) = B(Q,f,m_{\bullet})$.
We note that the family 
$\Lambda(r,s,b)$, $r,s \in \bN^*$, $b \in K$,
coincides with the families
$D(2\cA)$
and
$D(2\cB)$
from \cite[Tables]{E5}.
Finally, we note that 
the considered triangulation quiver $(Q,f)$
is the triangulation quiver $(Q(S,\vv{T}),f)$
of the self-folded triangle $S = T$ of the form
\[
\begin{tikzpicture}[auto]
\coordinate (a) at (0,2);
\coordinate (b) at (-1,0);
\coordinate (c) at (1,0);
\draw (a) to node {1} (c)
(c) to node {2} (b);
\draw (b) to node {1} (a);
\node (a) at (0,2) {$\bullet$};
\node (b) at (-1,0) {$\bullet$};
\node (c) at (1,0) {$\bullet$};
\end{tikzpicture}
\]
with $2$ being the boundary edge and $\vv{T} = (1 \ 1 \ 2)$.
\end{example}

\begin{example}
\label{ex:8.3}
Let $(Q,f)$ be the triangulation quiver
\[
  \xymatrix{
    1
    \ar@(ld,ul)^{\alpha}[]
    \ar@<.5ex>[r]^{\beta}
    & 2
    \ar@<.5ex>[l]^{\gamma}
    \ar@<.5ex>[r]^{\delta}
    & 3
    \ar@<.5ex>[l]^{\eta}
    \ar@(ru,dr)^{\xi}[]
  }
\]
with
$f(\alpha) = \beta$,
$f(\beta) = \gamma$,
$f(\gamma) = \alpha$,
$f(\xi) = \eta$,
$f(\eta) = \delta$,
$f(\delta) = \xi$.
Then we have
$g(\alpha) = \alpha$,
$g(\xi) = \xi$,
$g(\beta) = \delta$,
$g(\delta) = \eta$,
$g(\eta) = \gamma$,
and
$g(\gamma) = \beta$.
Hence $\cO(g)$ consists
of the three $g$-orbits
$\cO(\alpha)$,
$\cO(\beta)$,
and
$\cO(\xi)$.
Let $m_{\bullet} : \cO(g) \to \bN^*$
be a weight function, and
$r = m_{\alpha}$,
$s = m_{\beta}$,
$t = m_{\xi}$.
Observe that the border $\partial(Q,f)$ of $(Q,f)$ is empty.
The associated
biserial weighted triangulation algebra
\[
    \Gamma(r,s,t) = B(Q,f,m_{\bullet})
\]
is given by the above quiver and the relations
\begin{gather*}
 \alpha \beta = 0,
 \qquad
 \beta \gamma = 0,
 \qquad
 \gamma \alpha = 0,
 \qquad
 \delta \xi = 0,
 \qquad
 \xi \eta = 0,
 \qquad
 \eta \delta = 0,
 \\
 \alpha^r = (\beta \delta \eta \gamma)^s,
 \ \ \quad\qquad
 (\gamma \beta \delta \eta)^s = (\delta \eta \gamma \beta)^s, 
 \ \ \quad\qquad
 \xi^t = (\eta \gamma \beta \delta)^s.
\end{gather*}
We note that the family 
$\Gamma(r,s,t)$, $r,s,t \in \bN^*$,
coincides with the families
$D(3\cA)_1$,
$D(3\cB)_1$,
$D(3\calD)_1$
from \cite[Tables]{E5}.
Finally, we observe that 
the considered triangulation quiver $(Q,f)$
is the triangulation quiver $(Q(S,\vv{T}),f)$
for the triangulation $T$
\[
\begin{tikzpicture}[auto]
\coordinate (a) at (0,1.5);
\coordinate (b) at (-1.5,0);
\coordinate (c) at (1.5,0);
\coordinate (d) at (0,-1.5);
\draw (a) to node {3} (c)
(c) to node {3} (d)
(d) to node {1} (b)
(b) to node {1} (a);
\draw (a) to node {2} (d);
\node (a) at (a) {$\bullet$};
\node (b) at (b) {$\bullet$};
\node (c) at (c) {$\bullet$};
\node (d) at (d) {$\bullet$};
\end{tikzpicture}
\]
of the surface $S = \bP \# \bP$
and $\vv{T} = (1 \ 1 \ 2) (3 \ 3 \ 2)$. 
\end{example}

\begin{example}
\label{ex:8.4}
Let $(Q,f)$ be the triangulation quiver
\[
 \xymatrix@R=3.5pc@C=1.8pc{
  &  1
    \ar@<.35ex>[rd]^{\alpha_1}
    \ar@<.35ex>[ld]^{\beta_3} \\
    3
    \ar@<.35ex>[ru]^{\alpha_3}
    \ar@<.35ex>[rr]^{\beta_2}
    && 2
    \ar@<.35ex>[lu]^{\beta_1}
    \ar@<.35ex>[ll]^{\alpha_2}
    \\
  }
\]
with
$f(\alpha_1) = \alpha_2$,
$f(\alpha_2) = \alpha_3$,
$f(\alpha_3) = \alpha_1$,
$f(\beta_1) = \beta_3$,
$f(\beta_3) = \beta_2$,
$f(\beta_2) = \beta_1$.
Then we have
$g(\alpha_1) = \beta_1$,
$g(\beta_1) = \alpha_1$,
$g(\alpha_2) = \beta_2$,
$g(\beta_2) = \alpha_2$,
$g(\alpha_3) = \beta_3$,
and
$g(\beta_3) = \alpha_3$.
Hence $\cO(g)$ consists
of the three $g$-orbits
$\cO(\alpha_1)$,
$\cO(\alpha_2)$,
$\cO(\alpha_3)$.
Let $m_{\bullet} : \cO(g) \to \bN^*$
be a weight function, and
$m_1 = m_{\alpha_1}$,
$m_2 = m_{\alpha_2}$,
$m_3 = m_{\alpha_3}$.
Then the associated
biserial weighted triangulation algebra
\[
    \Omega(m_1,m_2,m_3) = B(Q,f,m_{\bullet})
\]
is given by the above quiver and the relations
\begin{gather*}
 \alpha_1 \alpha_2 = 0,
 \quad
 \alpha_2 \alpha_3 = 0,
 \quad
 \alpha_3 \alpha_1 = 0,
 \quad
 \beta_1 \beta_3 = 0,
 \quad
 \beta_3 \beta_2 = 0,
 \quad
 \beta_2 \beta_1 = 0,
 \\
 (\alpha_1 \beta_1)^{m_1} = (\beta_3 \alpha_3)^{m_3},
 \quad
 (\beta_1 \alpha_1)^{m_1} = (\alpha_2 \beta_2)^{m_2},
 \quad
 (\beta_2 \alpha_2)^{m_2} = (\alpha_3 \beta_3)^{m_3}.
\end{gather*}
We note that the family 
$\Omega(m_1,m_2,m_3)$, $m_1,m_2,m_3 \in \bN^*$,
coincides with the family
$D(3\cK)$
from \cite[Tables]{E5}.
Finally, 
the considered triangulation quiver $(Q,f)$
is the triangulation quiver $(Q(\bS,\vv{T}),f)$
associated to the following triangulation $T$ of the sphere $\bS$
\[
\begin{tikzpicture}[auto]
\coordinate (a) at (0,2);
\coordinate (b) at (-1,0);
\coordinate (c) at (1,0);
\draw (a) to node {1} (c)
(c) to node {2} (b);
\draw (b) to node {3} (a);
\node (a) at (0,2) {$\bullet$};
\node (b) at (-1,0) {$\bullet$};
\node (c) at (1,0) {$\bullet$};
\end{tikzpicture}
\]
and the orientation $\vv{T}$ of two triangles in $T$
\[
\begin{tikzpicture}[auto]
\coordinate (a) at (0,2);
\coordinate (b) at (-1,0);
\coordinate (c) at (1,0);
\coordinate (d) at (-.08,.25);
\coordinate (aa) at (-.2,1.8);
\coordinate (bb) at (-.7,-.06);
\coordinate (cc) at (.7,-.06);
\draw (a) to node {1} (c)
(c) to node {2} (b)
(b) to node {3} (a);
\draw[->] (d) arc (260:-80:.4);
\node (a) at (0,2) {$\bullet$};
\node (b) at (-1,0) {$\bullet$};
\node (c) at (1,0) {$\bullet$};
\draw[<-] (aa) arc (240:-60:.4);
\draw[<-] (bb) arc (350:60:.4);
\draw[->] (cc) arc (-170:120:.4);
\end{tikzpicture}
\]
\end{example}

The following theorem is a complete classification 
of the basic algebras of 
strict dihedral type,
established in \cite{E3} and \cite{E5}.

\begin{theorem}
\label{th:8.5}
Let $A$ be a basic algebra over an algebraically
closed field $K$, with the Grothendieck group $K_0(A)$
of rank at least $2$.
The following statements are equivalent:
\begin{enumerate}[(i)]
  \item
    $A$ is an algebra of strict dihedral type.
  \item
    $A$ is isomorphic to an algebra of one of the forms
    $\Lambda(r,s,b)$,
    $\Gamma(r,s,t)$,
    or $\Omega(m_1,m_2,m_3)$.
\end{enumerate}
\end{theorem}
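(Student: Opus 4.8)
The plan is to derive both implications from the classification of algebras of dihedral type obtained in \cite{E3} and \cite{E5}, together with the explicit presentations worked out in Examples~\ref{ex:8.2}, \ref{ex:8.3} and \ref{ex:8.4}.

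For (ii) $\Rightarrow$ (i), I would argue as follows. By Examples~\ref{ex:8.2}, \ref{ex:8.3} and \ref{ex:8.4}, each of $\Lambda(r,s,b)$, $\Gamma(r,s,t)$ and $\Omega(m_1,m_2,m_3)$ is a biserial weighted triangulation algebra $B(Q,f,m_{\bullet})$ or a socle deformation $B(Q,f,m_{\bullet},b_{\bullet})$ of one, for a triangulation quiver $(Q,f)$ with two or three vertices and connected underlying quiver. Hence, by Propositions~\ref{prop:5.1} and \ref{prop:6.2}, all three are symmetric tame algebras of generalized dihedral type, and --- the quivers being connected --- basic and indecomposable; thus conditions (1), (2), (3) of the definition of strict dihedral type hold, while $\ell(A) = |Q_0| \in \{2,3\}$ gives condition (4). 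For condition (5) I would invoke the final part of the proof of Proposition~\ref{prop:5.1} (cf.\ Corollary~\ref{cor:main8}): the stable tubes of rank $3$ in $\Gamma_A^s$ are in bijection with the $f$-orbits of length $3$ in $(Q,f)$, and inspecting the three quivers one finds exactly one such orbit for $\Lambda(r,s,b)$ and exactly two for $\Gamma(r,s,t)$ and $\Omega(m_1,m_2,m_3)$, that is, $\ell(A)-1$ in each case. Condition (6) then follows from a direct computation of the Cartan matrices; alternatively, by the identifications recorded in Examples~\ref{ex:8.2}--\ref{ex:8.4} these families coincide with $D(2\cA)$, $D(2\cB)$, $D(3\cA)_1$, $D(3\cB)_1$, $D(3\calD)_1$ and $D(3\cK)$ of \cite[Tables]{E5}, all of which have non-singular Cartan matrix.

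For (i) $\Rightarrow$ (ii), suppose $A$ is of strict dihedral type. Then $A$ satisfies conditions (1)--(5), so it is a basic, indecomposable algebra of dihedral type with $\ell(A) \in \{2,3\}$ simple modules; by the classification of \cite{E3} (see also \cite[Tables]{E5}) it is therefore isomorphic to one of the finitely many families of bound quiver algebras with two or three simple modules listed there. It remains to determine which of these additionally satisfy the two conditions that distinguish strict dihedral type, namely that $C_A$ is non-singular (condition (6)) and that $\Omega_A$ fixes each stable tube of rank $3$ (condition (3)), the latter not having been part of the original definition of dihedral type in \cite{E3}. Discarding from the list every family with singular Cartan matrix or with syzygy operator permuting the rank-$3$ tubes non-trivially, one is left with precisely $D(2\cA)$, $D(2\cB)$, $D(3\cA)_1$, $D(3\cB)_1$, $D(3\calD)_1$ and $D(3\cK)$, which by Examples~\ref{ex:8.2}, \ref{ex:8.3} and \ref{ex:8.4} are exactly the algebras of the forms $\Lambda(r,s,b)$, $\Gamma(r,s,t)$ and $\Omega(m_1,m_2,m_3)$; this gives (ii). The checks entering the first implication (counting $f$-orbits of length $3$, evaluating the Cartan matrices) are routine, and I expect the main obstacle to be exactly this last step: one has to go through the tables of \cite{E3} and \cite{E5} family by family, recording for each the determinant of the Cartan matrix and the action of the syzygy operator on the rank-$3$ stable tubes, in order to confirm that conditions (3) and (6) isolate precisely the six families above --- a matter of careful reading of the existing classification rather than of a new argument.
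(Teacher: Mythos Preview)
Your proposal is correct and matches the paper's treatment: the paper does not give its own proof of this theorem but states it as ``established in \cite{E3} and \cite{E5}'', with Examples~\ref{ex:8.2}--\ref{ex:8.4} providing the identifications of $\Lambda(r,s,b)$, $\Gamma(r,s,t)$, $\Omega(m_1,m_2,m_3)$ with the relevant families $D(2\cA)$, $D(2\cB)$, $D(3\cA)_1$, $D(3\cB)_1$, $D(3\calD)_1$, $D(3\cK)$ from \cite[Tables]{E5}. Your sketch is a faithful elaboration of exactly this strategy, and the additional verifications you outline for (ii)~$\Rightarrow$~(i) (counting $f$-orbits of length $3$, checking Cartan determinants) are routine and consistent with what the paper records in those examples and in Proposition~\ref{prop:5.1}.
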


Moreover, we have the following consequence of the
results established in \cite[Section~4]{ESk1}.

\begin{corollary}
\label{cor:8.6}
Let $A$ be a basic algebra of strict dihedral type over an algebraically
closed field $K$. 
Then the following statements are equivalent:
\begin{enumerate}[(i)]
  \item
    $A$ is of polynomial growth.
  \item
    $A$ is isomorphic to an algebra of one of the forms
    $\Lambda(1,1,0)$,
    $\Lambda(1,1,1)$,
    $\Gamma(1,1,1)$,
    or $\Omega(1,1,1)$.
\end{enumerate}
\end{corollary}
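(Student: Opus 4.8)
The plan is to combine the classification of strict dihedral type algebras in Theorem~\ref{th:8.5} with the general structure theory of stable Auslander--Reiten quivers of self-injective special biserial algebras (Theorems~\ref{th:2.2} and~\ref{th:2.3}), following the analysis of these particular families carried out in \cite[Section~4]{ESk1}. The first step is to record the common setup: each of the algebras $\Lambda(r,s,b)$, $\Gamma(r,s,t)$, $\Omega(m_1,m_2,m_3)$ appearing in Examples~\ref{ex:8.2}--\ref{ex:8.4} is a biserial weighted surface algebra or a socle deformation of one, hence symmetric and representation-infinite (Propositions~\ref{prop:5.1} and~\ref{prop:6.2}), and $A/\soc(A)$ is a string algebra. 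Since socle-equivalent self-injective algebras have isomorphic stable Auslander--Reiten quivers, being of polynomial growth is a property of $\Gamma_A^s$ alone, and for a representation-infinite self-injective special biserial algebra Theorems~\ref{th:2.2} and~\ref{th:2.3} provide a dichotomy: either $\Gamma_A^s$ admits a Euclidean component $\bZ\widetilde{\bA}_{p,q}$, and then $A$ is of polynomial growth (in fact domestic), or $\Gamma_A^s$ admits a component $\bZ\bA_{\infty}^{\infty}$, and then $A$ is of non-polynomial growth. So for each $A$ on the list it suffices to decide which case occurs, which in turn is governed by the number of primitive bands of the string algebra $A/\soc(A)$, up to cyclic rotation and inversion.

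For the implication (ii)$\Rightarrow$(i) I would carry out this count for the four minimal algebras. In $\Lambda(1,1,0)$ and $\Lambda(1,1,1)$ the $g$-orbit of length~$1$ has weight~$1$, so by Remark~\ref{rem:5.1} its loop is deleted from the Gabriel quiver and one is left with a two-vertex quiver carrying a single additional loop; a direct inspection shows that, up to equivalence, there is exactly one band, running once through the loop and once (in the opposite direction) through each of the remaining two arrows. In the same way one checks that $\Gamma(1,1,1)$ and $\Omega(1,1,1)$ have, up to equivalence, exactly one band. Hence in each case $\Gamma_A^s$ consists of one Euclidean component --- of type $\widetilde{\bA}_{1,3}$ for the two $\Lambda$'s and of type $\widetilde{\bA}_{3,3}$ for $\Gamma(1,1,1)$ and $\Omega(1,1,1)$ --- together with the stable tubes of ranks $1$ and $3$ forced by Proposition~\ref{prop:5.1}, so $A$ is of polynomial growth by Theorem~\ref{th:2.2}. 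This is exactly the computation performed in \cite[Section~4]{ESk1}.

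For the converse (i)$\Rightarrow$(ii), by Theorem~\ref{th:8.5} we may assume $A$ is $\Lambda(r,s,b)$, $\Gamma(r,s,t)$ or $\Omega(m_1,m_2,m_3)$, and the key point is that if some weight parameter is at least~$2$ then $A$ is not of polynomial growth. Such a weight leaves an undeleted loop in the Gabriel quiver, or inflates one of the defining cyclic relations, and in either case one can exhibit two inequivalent primitive bands sharing a common arrow; concatenating one of them with iterates of the other then yields infinitely many pairwise inequivalent bands, so $A/\soc(A)$ fails to be domestic and, by Theorems~\ref{th:2.2} and~\ref{th:2.3}, $\Gamma_A^s$ contains a component $\bZ\bA_{\infty}^{\infty}$, contradicting polynomial growth. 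Thus polynomial growth forces all weights to equal~$1$, so $A$ is $\Lambda(1,1,b)$, $\Gamma(1,1,1)$ or $\Omega(1,1,1)$; and for $\Lambda(1,1,b)$ there are only two isomorphism classes, since for $b=0$ this is $\Lambda(1,1,0)$, while for $b\neq 0$ a rescaling of the border loop (together with Proposition~\ref{prop:6.3} in characteristic different from $2$) identifies $\Lambda(1,1,b)$ with $\Lambda(1,1,1)$. This yields precisely the list in~(ii).

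The main obstacle is the band bookkeeping underlying both implications: verifying that each of the four minimal algebras has a single band up to equivalence (so that its stable Auslander--Reiten quiver has exactly one Euclidean component and the algebra is domestic), and that any algebra with a weight $\geq 2$ supports two overlapping bands, hence infinitely many. Since this is precisely the content of \cite[Section~4]{ESk1}, in practice the whole argument is obtained by combining Theorem~\ref{th:8.5} with those computations.
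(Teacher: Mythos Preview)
Your proposal is correct and follows the same route as the paper: the paper states Corollary~\ref{cor:8.6} simply as a consequence of \cite[Section~4]{ESk1} (combined with the classification Theorem~\ref{th:8.5}) without spelling out any details, and what you have written is precisely an elaboration of that citation via the band combinatorics of the underlying string algebras and Theorems~\ref{th:2.2}--\ref{th:2.3}. One small remark: the parenthetical appeal to Proposition~\ref{prop:6.3} is unnecessary for normalizing $\Lambda(1,1,b)$ with $b\neq 0$ to $\Lambda(1,1,1)$, since the rescaling $\eta\mapsto b^{-1}\eta$ already does this in any characteristic; Proposition~\ref{prop:6.3} would instead collapse the two $\Lambda$-classes into one when $\operatorname{char}K\neq 2$, which is compatible with but not needed for the statement.
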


\section{Proof of Theorem \ref{th:main1}}\label{sec:proof1}

The aim of this section is to prove  Theorem \ref{th:main1}.
The implication (ii) $\Rightarrow$ (i) follows from
Propositions 
\ref{prop:5.1},
\ref{prop:6.2},
\ref{prop:6.3},
\ref{prop:6.4},
\ref{prop:7.1},
\ref{prop:7.2},
\ref{prop:7.3},
noting that $(Q(S,\vv{T}),f)$ is a triangulation quiver.
Hence 
it remains to show that (i) implies (ii).
We split the proof into several steps.

\bigskip

\begin{lemma}
\label{lem:9.1}
Let $A = KQ/I$ be a symmetric bound quiver algebra,
and $i,j$ two different vertices in $Q$.
Assume that the simple module $S_i$ in $\mod A$ 
is periodic of period $3$.
Then the following statements hold.
\begin{enumerate}[(i)]
  \item
    The number of arrows in $Q$ from $i$ to $j$
    is the same as the number of arrows in $Q$
    from $j$ to $i$.
  \item
    No path of length $2$ between $i$ to $j$
    occurs in a minimal relation of $I$.
\end{enumerate}
\end{lemma}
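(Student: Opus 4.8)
The plan is to push everything through the minimal projective resolution of $S_i$ and its mirror image, the minimal injective coresolution, exploiting two standard features of a symmetric algebra $A$: each indecomposable projective $P_k=e_kA$ is also injective, with \emph{simple} socle $\soc P_k=S_k$, so that $P_k$ is the injective envelope $E(S_k)$; and for any $M$ one has $[\soc M:S_k]=\dim_K\Hom_A(S_k,M)$. Write $a_{mn}=\dim_K\Ext^1_A(S_m,S_n)$ for the number of arrows $m\to n$ in $Q$. Since $S_i$ is $\Omega_A$-periodic it is non-projective, so $P_i\ne S_i$ and $N:=P_i/\soc P_i$ is a nonzero local module with $\topp N=\topp P_i=S_i$. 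The first thing I would establish is the identification
\[
  \Omega_A^2 S_i \;\cong\; \Omega_A^{-1} S_i \;=\; P_i/\soc P_i \;=\; N \qquad\text{in }\mod A .
\]
Here $\Omega_A^2 S_i\cong\Omega_A^{-1}S_i$ follows by applying $\Omega_A^{-1}$ to the stable isomorphism $\Omega_A^3 S_i\cong S_i$, and it is a genuine isomorphism because over a self-injective algebra a minimal syzygy and a minimal cosyzygy have no projective summands, so a stable isomorphism between them is honest; and $\Omega_A^{-1}S_i$ is the cokernel of $S_i\hookrightarrow E(S_i)=P_i$, i.e. $P_i/\soc P_i$.

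Part (ii) is then essentially immediate. The second term $P^{(2)}$ of a minimal projective resolution of $S_i$ is a projective cover of $\Omega_A^2 S_i\cong N$; since $N$ is local with top $S_i$, this forces $P^{(2)}=P_i$. As the multiplicity of $P_j$ in $P^{(2)}$ equals the number of relations from $i$ to $j$ in a minimal system of defining relations of $A$ (equivalently $\dim_K\Ext^2_A(S_i,S_j)$), there is no minimal relation of $I$ from $i$ to $j$ at all when $j\ne i$, so a fortiori no path of length $2$ from $i$ to $j$ can occur in one (relations may be taken uniform, all of their paths sharing a common source and target). Running the same argument over the symmetric algebra $A^{\op}$ — in which the simple at vertex $i$ is again $\Omega$-periodic of period $3$, and whose minimal relations from $i$ to $j$ are precisely the minimal relations of $A$ from $j$ to $i$ — eliminates the paths of length $2$ from $j$ to $i$ as well, which gives (ii).

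For part (i) the plan is to squeeze $a_{im}$ between $a_{mi}$ on both sides, using $N$ together with the companion relation $\Omega_A S_i\cong\Omega_A^{-2}S_i$ (again from $\Omega_A^3 S_i\cong S_i$). Applying $\Hom_A(S_m,-)$ to $0\to\soc P_i\to P_i\to N\to 0$ and using that $P_i$ is injective yields $\Hom_A(S_m,N)\cong\Ext^1_A(S_m,S_i)$, hence $\soc N\cong\bigoplus_m S_m^{\,a_{mi}}$. On one side, $\Omega_A^2 S_i\cong N$ is (isomorphic to) a submodule of $P^{(1)}=\bigoplus_m P_m^{\,a_{im}}$, and the socle of a submodule sits inside the socle of the ambient module, so $\bigoplus_m S_m^{\,a_{mi}}\hookrightarrow\bigoplus_m S_m^{\,a_{im}}$, giving $a_{mi}\le a_{im}$ for all $m$. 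On the other side, $\Omega_A^{-2}S_i$ is the cokernel of the injective envelope $N\hookrightarrow E(N)=\bigoplus_m E(S_m)^{\,a_{mi}}=\bigoplus_m P_m^{\,a_{mi}}$, hence a quotient of $\bigoplus_m P_m^{\,a_{mi}}$, so $[\topp\Omega_A^{-2}S_i:S_m]\le a_{mi}$; but $\Omega_A^{-2}S_i\cong\Omega_A S_i=\rad P_i$, whose top is $\bigoplus_m S_m^{\,a_{im}}$, so $a_{im}\le a_{mi}$ for all $m$. Hence $a_{im}=a_{mi}$ for every $m$; in particular $a_{ij}=a_{ji}$.

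What I expect to be routine: the quoted structural facts about symmetric algebras ($P_k$ injective with simple socle $S_k=E(S_k)$-socle), the absence of projective summands in minimal (co)syzygies, and the dictionary between minimal relations from $i$ to $j$ and the $P_j$-multiplicity in $P^{(2)}$. The one point that genuinely needs care — and the place where an error is easy — is the handedness bookkeeping: making sure the relations read off from the resolution of $S_i$ really run from $i$ to $j$ (not the reverse), and that passing to $A^{\op}$ in part (ii) does interchange the two directions. Once sources and targets are pinned down, both statements follow in a few lines.
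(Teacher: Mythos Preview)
Your argument is correct. The paper does not supply its own proof here: it simply cites Lemmas~IV.1.5 and IV.1.7 of \cite{E5}. Your self-contained route is exactly the mechanism behind those lemmas: from $\Omega_A^2 S_i\cong\Omega_A^{-1}S_i=P_i/\soc P_i$ you read off that $P^{(2)}=P_i$, hence $\Ext_A^2(S_i,S_j)=0$ for $j\neq i$, killing all minimal relations from $i$ to $j$ (and the $A^{\op}$ pass handles the other direction); and the two inequalities $a_{mi}\le a_{im}$ and $a_{im}\le a_{mi}$ from the socle and top comparisons give (i). In fact the paper itself writes down the resulting exact sequence $0\to S_i\to P_i\to\bigoplus P_{j_r}\to P_i\to S_i\to 0$ in the very next proof (Lemma~\ref{lem:9.2}), so your argument dovetails with what follows. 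The only cosmetic point: your proof of (ii) actually yields the stronger statement that \emph{no} minimal relation of $I$ runs between $i$ and a different vertex, not merely those of length~$2$; this is harmless and is implicitly used later in the paper.
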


\begin{proof}
This follows from
\cite[Lemmas IV.1.5 and IV.1.7]{E5}. 
\end{proof}

\begin{lemma}
\label{lem:9.2}
Let $A = KQ/I$ be an algebra of generalized dihedral type
with $|Q_0| \geq 3$,
and $i \in Q_0$ a vertex such that $|s^{-1}(i)| \geq 2$
and $S_i$ is a periodic module in $\mod A$.
Then, for any vertex $j$ in $Q$ connected to $i$ by an arrow,
the simple module $S_j$ is a periodic module in $\mod A$.  
\end{lemma}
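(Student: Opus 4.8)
Since $A$ is of generalized dihedral type with $|Q_0| \ge 3$, it is representation-infinite (in particular not local), and by Corollary~\ref{cor:3.2} together with the argument in its proof, every periodic simple module has period exactly $3$; so $S_i$ is periodic of period $3$. The strategy is to show that each neighbour $S_j$ cannot be non-periodic, by deriving a contradiction with the structure imposed by condition~(2) of the definition. First I would invoke Lemma~\ref{lem:9.1}: since $S_i$ has period $3$, for every vertex $j$ the number of arrows $i \to j$ equals the number of arrows $j \to i$, and no length-$2$ path through two distinct vertices involving $i$ appears in a minimal relation. Combined with the fact that $Q_A$ is $2$-regular at vertices with both arrows ``used'' and, more importantly, with the special-biserial-type local structure forced at $i$ (two arrows out, two arrows in, counted with the relations $\alpha f(\alpha)$-type), this pins down the shape of the quiver near $i$.

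The key step is to analyze $\Omega_A(S_i)$ and $\Omega_A^2(S_i)$ explicitly. Let $\alpha, \bar\alpha$ be the two arrows with source $i$; since $|s^{-1}(i)| \ge 2$ and $S_i$ is periodic, the projective cover $P_i$ has $\rad P_i/S_i$ a direct sum of two uniserial modules (Remark~\ref{rem:3.3} applies to non-periodic $S$, so here instead $P_i$ is biserial in the periodic way: $\rad P_i = \alpha A + \bar\alpha A$ with simple intersection), so $\Omega_A(S_i) = \rad P_i$ sits inside $P_i$, and iterating the minimal projective resolution, $\Omega_A(S_i), \Omega_A^2(S_i), \Omega_A^3(S_i)=S_i$ cycle. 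Because $S_i$ has period $3$ and not $1$, the tops of these syzygies involve precisely the vertices adjacent to $i$. Concretely, $\topp(\Omega_A(S_i))$ is $S_j \oplus S_{j'}$ where $j = t(\alpha)$, $j' = t(\bar\alpha)$, and the period-$3$ condition forces $\Omega_A$ to permute the uniserial summands through these neighbours. The point is that the uniserial module $U = \alpha A$ (with top $S_j$) satisfies $\Omega_A(U) = f(\alpha)A$ up to the relations, so $U$ is itself periodic of period dividing $3$; but $U$ has top $S_j$, and the radical series of $U$ reaching down forces $S_j$ to lie in a periodic component — since a uniserial periodic module's top sits on the mouth of a stable tube, and by condition~(2) the only tubes have rank $1$ or $3$, with $\Omega_A$ fixing the rank-$3$ ones (condition~(3)).

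More precisely, I would argue: if $S_j$ were non-periodic, then by Remark~\ref{rem:3.3}, $\rad P_j / S_j$ decomposes as a sum of two indecomposables, and $P_j$ has the ``string'' shape of a non-periodic vertex. But the arrow $\alpha: i \to j$ and the matching arrow $j \to i$ (which exists by Lemma~\ref{lem:9.1}(i)) together with $S_i$ being periodic of period $3$ means there is a uniserial submodule of $P_j$ whose composition factors force $\Omega_A(S_j)$ or $\Omega_A^2(S_j)$ to contain a periodic uniserial module with $S_i$ among its constituents in a position incompatible with $S_j$ lying in a $\mathbb{Z}\mathbb{A}_\infty^\infty$ or $\mathbb{Z}\widetilde{\mathbb{A}}_n$ component. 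The cleanest route is to track the walk/string associated to the relevant modules and show that the string module with top $S_j$ built from $\alpha$ and its partner closes up into a band (hence periodic) — using that $A/\soc A$ is a string algebra (true since $A$ will turn out special biserial, but at this stage one uses that the local structure at $i$ forced by Lemma~\ref{lem:9.1} is already of string type) — contradicting non-periodicity of $S_j$.

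The main obstacle will be justifying that the local structure at the periodic vertex $i$ is rigid enough to propagate — i.e., ruling out the possibility that $S_j$ is non-periodic while $S_i$ is periodic, when a priori $A$ is only known to be symmetric tame with the AR-quiver constraints, not yet known to be special biserial. I expect one must use Lemma~\ref{lem:9.1}(ii) crucially: the absence of length-$2$ paths through $i$ in minimal relations means the relations emanating from $i$ are of the form $\alpha\beta$ (zero relations) or the commutativity/socle relations $B_\alpha - B_{\bar\alpha}$, which is exactly the local shape of a triangulation algebra; from this one reads off that $\Omega_A(\alpha A)$ is again uniserial with top a neighbour, and that the syzygy orbit of $S_i$ consists of modules whose tops are supported on neighbours of $i$, each appearing in a periodic uniserial, forcing those neighbour simples onto mouths of stable tubes — hence periodic.
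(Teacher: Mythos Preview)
Your proposal has a genuine gap at its core. The key inference you rely on --- that if the uniserial module $\alpha A$ (with top $S_j$) is periodic and lies on the mouth of a stable tube, then $S_j$ itself is periodic --- is simply false. In the very algebras this paper classifies, \emph{every} arrow module $\alpha A$ is periodic of period $1$ or $3$ and sits on the mouth of a tube, yet the simples $S_j$ at $2$-regular vertices are non-periodic (they live in $\mathbb{Z}\mathbb{A}_\infty^\infty$ or $\mathbb{Z}\widetilde{\mathbb{A}}_n$ components). So ``periodic uniserial with top $S_j$'' tells you nothing about the periodicity of $S_j$. The same error recurs in your final sentence: the syzygy orbit of $S_i$ having tops supported on neighbours of $i$ does not force those neighbour simples onto mouths of tubes.

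A second gap is circularity. You repeatedly invoke a string-algebra or special-biserial local structure (``$A/\soc A$ is a string algebra'', ``the relations emanating from $i$ are of the form $\alpha\beta$ or $B_\alpha - B_{\bar\alpha}$'', ``$\Omega_A(U) = f(\alpha)A$''). None of this is available at this point: Lemma~\ref{lem:9.2} is an early step in the chain that eventually \emph{proves} $A$ is special biserial (Proposition~\ref{prop:9.15}), and the permutation $f$ is only defined afterwards (Notation~\ref{notation:9.9}). Lemma~\ref{lem:9.1}(ii) only excludes paths of length $2$ between \emph{distinct} vertices from minimal relations; it does not hand you the full biserial shape.

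The paper's argument is element-theoretic and does not use any string-module combinatorics. From the period-$3$ resolution $0 \to S_i \to P_i \to \bigoplus_r P_{j_r} \to P_i \to S_i \to 0$ one identifies $\Omega_A^2(S_i) \cong P_i/S_i$ with a cyclic submodule $(\beta_1,\dots,\beta_m)A$ of $\bigoplus_r P_{j_r}$, where the $\beta_r$ are the arrows $j_r \to i$. Assuming $S_{j_1}$ non-periodic, one uses the splitting $\rad P_{j_1}/S_{j_1} = M/S_{j_1} \oplus N/S_{j_1}$ and the fact that $\rad P_{j_1}/\Omega_A(\alpha A)$ has simple socle to force $\Omega_A(\alpha A) = \beta_1 A$. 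Then $\beta_1 A \oplus 0 \subseteq (\beta_1,\dots,\beta_m)A$ yields an element $z \in e_i A e_i$ with $\beta_1 z = \beta_1$ and $\beta_r z = 0$ for $r \geq 2$; since $\beta_2 \neq 0$ the element $z$ is non-invertible in the local algebra $e_i A e_i$, so $e_i - z$ is a unit annihilating $\beta_1$, a contradiction. This is the missing idea you need.
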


\begin{proof} By Lemma \ref{lem:9.1} we know $m:= |s^{-1}(i)|=|t^{-1}(i)|$, and we assume $m\geq 2$.  Let $\alpha_1, \ldots, \alpha_m$ be the
arrows in $Q$ starting at $i$, and let $j_r:= t(\alpha_r)$.

Since $S_i$ has period $3$,
there exists an exact sequence in $\mod A$
\[
  0 \rightarrow
  S_i \xrightarrow{d_3}
  P_i \xrightarrow{d_2}
  \bigoplus_{r=1}^m P_{j_r} \xrightarrow{d_1}
  P_i \xrightarrow{d_0}
  S_i \rightarrow
  0
\]
with $d_0$ the canonical epimorphism, $d_3$ the canonical monomorphisms, and where $d_1$ is defined as
$$d_1(x_1, \ldots, x_m):= \sum_{r=1}^m \alpha_ix_i
$$
for any $(x_1, \ldots, x_m) \in \bigoplus_{j=1}^r P_{j_r}$. 

Since $\bigoplus_{r=1}^m P_{j_r}$ is the injective hull of 
$\Ker d_1 \cong \Omega^2(S_i) \cong \Omega^{-1}(S_i)$, 
we deduce that the $m$ arrows ending at $i$ have starting
vertices $j_1, \ldots, j_m$.  

(1) We claim that the $j_r$ are pairwise distinct: If not, say $j_2=j_1$, then we have double arrows from $i$ to $j_1$ and also from $j_1$ to $i$. By assumption,
$|Q_0|\geq 3$ and $Q$ is connected, and therefore there is a vertex $k\neq i, j_1$ and some arrow between $k$ and one of $i$ or $j_1$. Then $A$ has a quotient algebra
$K\Delta$ where $\Delta$ is the  wild quiver of the form  
\[
  \xymatrix{
    \bullet
    & \bullet
    \ar@<.5ex>[l]
    \ar@<-.5ex>[l]
    \ar[r]
    & \bullet
}
,
\]
or its opposite quiver, which is a contradiction since $A$ is tame.

(2) We 
have $\Ker d_1 \cong \Omega^{-1}(S_i)\cong P_i/S_i$, and this is a cyclic $A$-module.
So it is of the form $(\beta_1, \ldots, \beta_m)A$ with $\beta_r\in P_{j_r}$. We know that the $j_r$ are distinct, that the arrows ending at
$i$ have starting vertices $j_r$, and that $P_i/S_i$ is isomorphic to $(\beta_1, \ldots, \beta_m)A$ with  $\beta_r$  in $P_{j_r}$. This implies that $\beta_r$ is not in 
$\rad^2(P_{j_r})$, so we can
take the $\beta_r$ as the arrows ending at $i$. 
Now we have
$$\sum_{r=1}^m \alpha_rx_r=0 \ \ \Leftrightarrow \ \ (x_1, \ldots, x_r) = (\beta_1, \ldots, \beta_m)a  \ (\mbox{some } a\in A).
$$
Suppose now that  one of the simple modules
$S_{j_1}, \dots, S_{j_m}$, say $S_{j_1}$,  
is not periodic.
We set $j = j_1$, $\alpha = \alpha_1$, and $\beta = \beta_1$.

(3) There is an exact sequence in $\mod A$
\[
  0 \rightarrow
  \Omega_A(\alpha A) \rightarrow
  P_j \xrightarrow{d}
  \alpha A \rightarrow
  0
\]
with 
$d(x) = \alpha x$ for $x \in P_j$,
and $\Omega_A(\alpha A) = \{ x \in P_j \,|\, \alpha x = 0\}$.
If $x\in \Omega_A(\alpha A)$ then $(x, 0, \ldots, 0)\in \Ker d_1$ and hence $(x, 0, \ldots, 0)= (\beta, \beta_2, \ldots, \beta_m)a$ for $a\in A$
so that $x=\beta a$ and therefore
$\Omega_A(\alpha A) \subseteq \beta A$.

\medskip

We will now show that $\Omega_A(\alpha A) = \beta A$. 
Note first that $\rad P_j/\Omega_A(\alpha A)$ has a simple socle (since $\alpha A$ has a simple socle).
Since $S_j$ is not periodic, the quotient $\rad P_j/S_j$  
is a direct sum of two non-zero modules.
That is,  there are
submodules $M$ and $N$ of $\rad P_j$ such that
$\rad P_j = M + N$ and $M \cap N = S_j$.

We know (by (1)) that $S_i$ is a direct summand of $\rad P_j/\rad^2P_j$ with multiplicity one, so
we may assume that $S_i$ is a direct summand
of $M / \rad M$.
Now $\beta = e_j\beta e_i\not\in\rad^2 P_j$ and hence $\beta A\subseteq M$. 

We have now $S_j \subseteq \Omega_A(\alpha A)\subseteq M \subseteq \rad P_j$
and since 
$\rad P_j/S_j = (M/S_j) \oplus (N/S_j)$,
we conclude that 
$\rad P_j/\Omega_A(\alpha A) = M/\Omega_A(\alpha A) \oplus N/S_j$. 
Now $N/S_j\neq 0$ and $\rad P_j/\Omega_A(\alpha A)$ has a simple socle, so it
follows that $M/\Omega_A(\alpha A) = 0$ and therefore 
$\Omega_A(\alpha A) = \beta A = M$. 

Now 
$\beta A = \beta A \oplus 0 \oplus \dots \oplus 0 \oplus 0$
is contained in $(\beta, \beta_2, \dots, \beta_m) A$.
In particular, there is $y \in A$ such that $\beta = \beta y$
and $\beta_r y = 0$ for any $r \in \{2,\dots, m\}$. 
Let $z = e_i y e_i$, then 
$\beta = \beta z$ and $\beta_r z = 0$.
Since $\beta_2 \neq 0$, we see that $z$ is a non-invertible element
of the local algebra $e_i A e_i$.
But then ${e_i}-z$ is an invertible element of
$e_i A e_i$ and $\beta({e_i}-z) = 0$,
which contradicts the fact that  $\beta \neq 0$.
Hence $S_{j_1}$ and similarly 
 all simple modules
$S_{j_2},\dots,S_{j_m}$ are periodic.
\end{proof}

\begin{proposition}
\label{prop:9.3}
Let $A = KQ/I$ be an algebra of generalized dihedral type,
with $|Q_0| \geq 3$,
and let $i$ be a vertex of $Q$.
Then the following statements are equivalent:
\begin{enumerate}[(i)]
 \item
  $S_i$ is a periodic module in $\mod A$.
 \item
  $|s^{-1}(i)| = 1$.
 \item
  $|t^{-1}(i)| = 1$.
 \item
  There are unique arrows 
  $
    \xymatrix{ i \ar@<.5ex>[r]^{\beta} & j \ar@<.5ex>[l]^{\alpha} }
  $
  adjacent to  the vertex $i$, and $\beta \alpha = 0$.
\end{enumerate}
\end{proposition}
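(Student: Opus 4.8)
The plan is to establish the chain of implications (i) $\Rightarrow$ (ii) $\Rightarrow$ (i), then (i) $\Leftrightarrow$ (iii), then (i) $\Rightarrow$ (iv), closing the loop with the obvious implication (iv) $\Rightarrow$ (ii). Throughout I use that, by Corollary~\ref{cor:3.2} and its proof, every periodic simple module in $\mod A$ has period exactly $3$, since a period-$1$ simple would lie in a stable tube of rank~$1$ and force $A$ to be local of finite representation type, which is impossible here.

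\textbf{(i) $\Rightarrow$ (ii).} This is the core step. Assume $S_i$ is periodic (hence of period $3$) but $|s^{-1}(i)| \geq 2$. Call a vertex $k$ \emph{periodic} if $S_k$ is periodic. By Lemma~\ref{lem:9.2}, every neighbour of a periodic vertex having at least two outgoing arrows is again periodic. On the other hand, a periodic vertex $k$ with $|s^{-1}(k)| = 1$ must, by Lemma~\ref{lem:9.1}(i) together with the connectedness of $Q$ and $|Q_0| \geq 3$, be a \emph{leaf}: it has exactly one arrow $k \to k'$ and one arrow $k' \to k$ for a single vertex $k' \neq k$, and no other adjacent arrows (the alternative, a single loop at $k$, would disconnect $k$ from the remaining $\geq 2$ vertices). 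Now one propagates periodicity from $i$: given any walk $i = v_0, v_1, \dots, v_n$ in the underlying graph of $Q$, let $v_l$ be the first vertex with $|s^{-1}(v_l)| = 1$, if such a vertex exists; then $l \geq 1$, the vertices $v_0, \dots, v_{l-1}$ all have $\geq 2$ outgoing arrows, so iterating Lemma~\ref{lem:9.2} shows $v_0, \dots, v_l$ are periodic, and $v_l$ is then a leaf whose only neighbour is $v_{l-1}$, forcing $n = l$; if no such $v_l$ exists, iterating Lemma~\ref{lem:9.2} directly shows all $v_j$, in particular $v_n$, are periodic. Either way $v_n$ is periodic, so every vertex of $Q$ is periodic, so every simple $A$-module is periodic, contradicting Corollary~\ref{cor:3.2}. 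Therefore $|s^{-1}(i)| = 1$.

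\textbf{(ii) $\Rightarrow$ (i), and (i) $\Leftrightarrow$ (iii).} If $|s^{-1}(i)| = 1$ and $S_i$ were non-periodic, then by Remark~\ref{rem:3.3} the module $\rad P_i / S_i$ is a direct sum of two non-zero modules, whence $\dim_K (\rad P_i / \rad^2 P_i) \geq 2$; but this dimension equals the number $|s^{-1}(i)|$ of arrows starting at $i$, a contradiction, so $S_i$ is periodic. This gives (i) $\Leftrightarrow$ (ii). For (i) $\Rightarrow$ (iii): when $S_i$ is periodic, Lemma~\ref{lem:9.1}(i) applied to $i$ and each vertex $j \neq i$, together with the fact that loops contribute equally to $s^{-1}(i)$ and $t^{-1}(i)$, yields $|s^{-1}(i)| = |t^{-1}(i)|$, so $|t^{-1}(i)| = 1$ by (i) $\Rightarrow$ (ii). Conversely, the opposite algebra $A^{\op}$ is again of generalized dihedral type --- conditions (1)--(3) are preserved under $A \leftrightarrow A^{\op}$, since the standard duality $D$ carries $\Gamma_A^s$ to $\Gamma_{A^{\op}}^s$ with $\tau$ replaced by $\tau^{-1}$, preserves the ranks of stable tubes and the shapes $\bZ \bA_\infty^\infty$ and $\bZ \widetilde{\bA}_n$, and intertwines $\Omega_{A^{\op}}$ with $\Omega_A^{-1}$, so that $\Omega_A$-fixed rank-$3$ tubes correspond to $\Omega_{A^{\op}}$-fixed ones. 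Since $S_i$ is periodic over $A$ if and only if it is periodic over $A^{\op}$, and $|t^{-1}(i)|$ computed in $Q_A$ equals $|s^{-1}(i)|$ computed in $Q_{A^{\op}}$, applying the already-proved equivalence (i) $\Leftrightarrow$ (ii) to $A^{\op}$ gives (iii) $\Rightarrow$ (i).

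\textbf{(i) $\Rightarrow$ (iv), and the main obstacle.} If $S_i$ is periodic then, by the equivalences above, $|s^{-1}(i)| = |t^{-1}(i)| = 1$; let $\beta$ be the unique arrow starting at $i$ and $j = t(\beta)$, so $j \neq i$ (a loop at $i$ would disconnect $i$ from the other $\geq 2$ vertices). By Lemma~\ref{lem:9.1}(i) the number of arrows $j \to i$ equals the number of arrows $i \to j$, namely $1$; hence there is a unique arrow $\alpha \colon j \to i$, and since $|t^{-1}(i)| = 1$ the arrows $\beta$ and $\alpha$ are exactly those adjacent to $i$. It remains to prove $\beta\alpha = 0$. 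For this one inspects the minimal projective resolution of the period-$3$ module $S_i$, which takes the form $\cdots \to P_i \to P_j \xrightarrow{\beta\cdot} P_i \to S_i \to 0$; thus $\Omega_A(S_i) = \rad P_i = \beta A$, with simple top $S_j$ and simple socle $S_i$, and $\Omega_A^2(S_i) = \{\, x \in P_j : \beta x = 0 \,\} \cong \Omega_A^{-1}(S_i) = P_i/S_i$. Since $S_i$ is simple it has dimension $1$, hence lies on the mouth of its rank-$3$ tube (a module of quasi-length $\geq 2$ in a stable tube has dimension $\geq 2$), and so do $\Omega_A(S_i)$ and $\Omega_A^2(S_i)$; one then has to identify $\Omega_A^2(S_i)$ as the submodule of $P_j$ generated by a length-one element which is a non-zero multiple of $\alpha$ modulo $\rad^2 P_j$, and conclude $\alpha \in \Omega_A^2(S_i)$, i.e.\ $\beta\alpha = 0$. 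The implication (iv) $\Rightarrow$ (ii) is immediate and closes the cycle. I expect the genuinely delicate point to be this last identification --- extracting sufficiently precise structural information about the mouth modules of the rank-$3$ tube, equivalently about $\Omega_A^2(S_i)$ as a submodule of $P_j$, from the hypotheses --- very likely requiring the explicit description of these tubes and the results on self-injective special biserial algebras recalled in Section~\ref{sec:bisalg}; the bookkeeping in the propagation step of (i) $\Rightarrow$ (ii), in particular the correct treatment of leaf vertices, is the other point that needs care.
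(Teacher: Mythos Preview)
Your argument is essentially correct and close in spirit to the paper's, with two points worth noting.

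For (i) $\Rightarrow$ (ii), the paper runs the propagation the other way: it starts from a non-periodic simple $S_k$ (given by Corollary~\ref{cor:3.2}) and observes, by the contrapositive of Lemma~\ref{lem:9.2}, that any periodic neighbour $t$ of a non-periodic vertex must fail the hypothesis of that lemma, hence $|s^{-1}(t)|=1$. Iterating along the connected quiver reaches every vertex. Your version --- assuming a periodic $S_i$ with $|s^{-1}(i)|\geq 2$ and pushing periodicity outward until every simple is periodic --- is the contrapositive of the same idea and works equally well. (In your ``leaf'' step, take a path without repeated vertices rather than an arbitrary walk, so that ``forcing $n=l$'' is literally true.)

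Your concern about the step $\beta\alpha=0$ in (i) $\Rightarrow$ (iv) is where you diverge from the paper. You anticipate needing the tube/mouth description from Section~\ref{sec:bisalg}, but the paper does not use this at all. It simply invokes the argument already carried out in the proof of Lemma~\ref{lem:9.2}: from the minimal resolution
\[
0 \to S_i \to P_i \xrightarrow{d_2} P_j \xrightarrow{\beta\cdot} P_i \to S_i \to 0
\]
one has $\Omega_A^2(S_i)=\ker(\beta\cdot)\cong P_i/S_i$, a cyclic module with simple top $S_i$; its generator in $P_j$ lies in $e_jAe_i\setminus e_j\rad^2\! A\,e_i$, and since $\alpha$ is the unique arrow ending at $i$ this generator can be taken to be $\alpha$. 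Hence $\beta\alpha=0$. No special-biserial input is needed. Your detour through $A^{\op}$ for (iii) $\Rightarrow$ (i) is also more than necessary --- the paper just says ``similarly'' --- but it is correct.
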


\begin{proof}
First we show that for any simple periodic module $S_i$ 
we have 
$|s^{-1}(i)| = |t^{-1}(i)|=1$.
By definition of an algebra of generalized dihedral type
and Corollary~\ref{cor:3.2},
$\mod A$ admits a non-periodic simple module $S_k$.
If $t$ is a vertex connected to $k$ by an arrow and if $S_t$ is periodic then $S_t$ does not satisfy
the assumption of 
Lemma \ref{lem:9.2}, therefore   $|s^{-1}(t)|=|t^{-1}(t)|=1$. If $S_t$ is not periodic we consider
similarly all vertices $j$ which are connected to $t$ by an arrow. If $S_j$ is periodic then 
we apply Lemma \ref{lem:9.2}, else we repeat the process. 
The quiver is connected and after finitely many steps we have reached all vertices, and the claim follows.
Hence (i) implies (ii) and (iii).

Assume now that $|s^{-1}(i)| = 1$.
Let $\beta$ be the arrow in $Q$ with source $i$ and $j = t(\beta)$.
Then $\rad P_i = \beta A$ which has simple top, and
hence the quotient $\rad P_i / S_i$ 
is indecomposable.
But then $S_i$ is a periodic module of period $3$, because
$A$ is of generalized dihedral type.
Consider an exact sequence in $\mod A$
\[
  0 \rightarrow
  S_i \xrightarrow{d_3}
  P_i \xrightarrow{d_2}
  P_{j} \xrightarrow{d_1}
  P_i \xrightarrow{d_0}
  S_i \rightarrow
  0
\]
which gives rise to a minimal projective resolution of $S_i$
in $\mod A$.
Then
$P_i/S_i = \Omega_A^{-1}(S_i) = \Omega_A^{2}(S_i)$
is a submodule of $P_j$ of the form $\alpha A$ 
for a unique arrow $\alpha$ in $Q$ from $j$ to $i$.
Therefore, we have in $Q$ only two arrows
\[
    \xymatrix{ i \ar@<.5ex>[r]^{\beta} & j \ar@<.5ex>[l]^{\alpha} }
\]
containing the vertex $i$.
Moreover, from the proof of
Lemma~\ref{lem:9.2},
we have $\beta\alpha=0$.
Hence, (ii) implies (i) and (iv).
Similarly, (iii) implies (i) and (iv).
Obviously, (iv) implies (ii) and (iii).
This finishes the proof.
\end{proof}

\begin{lemma}
\label{lem:9.4}
Let $A = KQ/I$ be a bound quiver algebra of generalized dihedral type
with $|Q_0| \geq 3$,
$i$ a vertex of $Q$ with $|s^{-1}(i)| = 1 = |t^{-1}(i)|$,
and
\[
    \xymatrix{ i \ar@<.5ex>[r]^{\beta} & j \ar@<.5ex>[l]^{\alpha} }
\]
be the unique arrows in $Q$ adjacent to  $i$.
Then $\alpha \beta$ does not occur in a minimal relation of $A$.
\end{lemma}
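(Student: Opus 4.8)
The plan is to reformulate the statement in terms of the second syzygy of $S_j$ and to reduce it to showing that no minimal relation based at $j$ ``turns around immediately'' through the periodic vertex $i$.

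\textbf{Local data around $i$ and $j$.} By Proposition~\ref{prop:9.3}, the hypothesis $|s^{-1}(i)|=1=|t^{-1}(i)|$ forces $S_i$ to be periodic of period $3$ and $\beta\alpha=0$ in $A$; moreover $|s^{-1}(j)|\ge 2$ (otherwise $\{i,j\}$ would be a connected component of $Q$, contradicting $|Q_0|\ge 3$), so Proposition~\ref{prop:9.3} also gives that $S_j$ is non-periodic. Reading off the minimal projective resolution of $S_i$ from the proof of Proposition~\ref{prop:9.3}, one has $\rad P_i=\beta A$, an isomorphism $\alpha A\cong\Omega_A^{-1}(S_i)\cong P_i/\soc P_i$ induced by $x\mapsto\alpha x$ (with kernel $\soc P_i=S_i$), and a short exact sequence $0\to\alpha A\to P_j\xrightarrow{\ \pi\ }\beta A\to 0$ with $\pi(y)=\beta y$ and $\ker\pi=\alpha A$. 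Since $\topp P_i=S_i\ne S_j=\soc P_i$, the Loewy length of $P_i$ is at least $3$; in particular $\beta\notin\soc P_i$, so $\alpha\beta\ne 0$ in $A$ and $\rad(\alpha A)=\alpha\beta A$, and this module is \emph{not} simple — were it simple, $P_i$ would be uniserial with composition factors $S_i,S_j,S_i$ (from the top), and then the bottom $S_i$ would be the image of the length-two path $\beta\alpha=0$, which is impossible. Hence $\alpha\beta A\not\subseteq\soc P_j$; keep this for the contradiction.

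\textbf{Reduction.} Assume, for a contradiction, that $\alpha\beta$ occurs in a minimal relation $\rho=\sum_l c_l w_l$ with pairwise distinct paths $w_l\colon j\to j$, $c_l\ne 0$, and $w_1=\alpha\beta$. Every $w_l$ that begins with $\alpha$ must continue with $\beta$, because $\beta$ is the only arrow leaving $i$; so, grouping terms, $\rho=\alpha\beta v+\rho_2$, where $v\in e_jAe_j$ has constant term $c_1\ne 0$ and is therefore a unit of the local algebra $e_jAe_j$, while $\rho_2$ is a combination of paths $j\to j$ not beginning with $\alpha$, so $\rho_2\in\sum_{a\in s^{-1}(j),\,a\ne\alpha}aA$. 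From $\rho=0$ in $A$ and $v$ a unit, $\rho_2A=\alpha\beta vA=\alpha\beta A=\rad(\alpha A)$. Translating $\rho$ into the second term of the minimal resolution of $S_j$, one finds that $\rho$ corresponds to an element $g$ of $\Omega_A^2(S_j)\subseteq\bigoplus_{a\in s^{-1}(j)}P_{t(a)}$ whose $\alpha$-component equals $\beta v$, a generator of $\rad P_i=\beta A$ (as $v$ is a unit and $\beta v\notin\rad^2 P_i$). Thus the lemma is equivalent to the inclusion
\[
  \mathrm{pr}_\alpha\bigl(\Omega_A^2(S_j)\bigr)\ \subseteq\ \rad^2 P_i ,
\]
where $\mathrm{pr}_\alpha$ denotes the projection of $\Omega_A^2(S_j)$ onto the $P_i$-summand; equivalently, the projection $\Omega_A^2(S_j)\to\rad P_i$ onto that summand is \emph{not} onto.

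\textbf{The obstacle.} Proving the displayed inclusion is the crux. Its meaning is that there is no surjection from the indecomposable module $\Omega_A^2(S_j)$, which lies in a non-periodic component of $\Gamma_A^s$ of type $\bZ\bA_\infty^\infty$ or $\bZ\widetilde{\bA}_n$ since $S_j$ is non-periodic, onto $\rad P_i=\Omega_A(S_i)$, which lies in a stable tube of rank $3$ fixed by $\Omega_A$ (condition (3)). I would attempt to finish by imitating the last paragraph of the proof of Lemma~\ref{lem:9.2}: write $\rho_2=-\alpha\beta v$ as $\rho_2=\sum_{a\ne\alpha}a\,y_a$, feed this into the identity $\alpha(\beta v)+\sum_{a\ne\alpha}a\,y_a=0$, and then localise at the primitive idempotent $e_i$, using that $e_iAe_i$ is local (so $e_i-z$ is invertible for every non-unit $z\in e_iAe_i$) to conclude $\alpha\beta=0$, against the Local-data step. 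The delicate point — and, I expect, the real content of the lemma — is to make this localisation argument go through \emph{without} already knowing that $A$ is special biserial: one needs enough control over which arrows $a\in s^{-1}(j)\setminus\{\alpha\}$ can contribute to $\rho_2$, equivalently over the branch of $\rad P_j$ containing $\alpha$, to force $\rho_2$ (hence $\alpha\beta A=\rho_2A$) into $\soc P_j$.
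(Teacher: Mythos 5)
Your setup is sound (the identification $\alpha A\cong P_i/\soc P_i$, the consequences $\beta\alpha=0$, $\alpha\beta\neq 0$, $\rad(\alpha A)=\alpha\beta A$ not simple, and the normalization of the minimal relation to $\alpha\beta v+\rho_2=0$ with $v$ a unit of $e_jAe_j$), apart from the garbled justification ``$\soc P_i=S_j$'' — for a symmetric algebra $\soc P_i\cong S_i$; what you need is $\topp(\rad P_i)=S_j\not\cong S_i\cong\soc P_i$. But the proof stops exactly where the real work begins, and you say so yourself: the inclusion $\mathrm{pr}_\alpha(\Omega_A^2(S_j))\subseteq\rad^2 P_i$ is never established, and the proposed imitation of the end of Lemma~\ref{lem:9.2} is only a hope. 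That is a genuine gap, not a routine verification.

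What is missing is a case distinction on the valency of $j$, which is how the paper closes the argument. If $|s^{-1}(j)|=2$, one uses that $S_j$ is non-periodic to write $\rad P_j=\alpha A+\gamma A$ with $\alpha A\cap\gamma A=S_j=\soc P_j$ (here $\gamma$ is the second arrow out of $j$), and then your identity $\alpha\beta v=-\rho_2\in\gamma A$ yields $\alpha\beta=\gamma\delta'$ for some $\delta'\in\rad A\setminus(\rad A)^2$, hence $\alpha\beta\in\alpha A\cap\gamma A=\soc P_j$; the paper then contradicts this via the symmetrizing form ($\varphi(\alpha\beta)=\varphi(\beta\alpha)=0$ would put the nonzero ideal $K\alpha\beta$ inside $\Ker\varphi$), though your observation that $\alpha\beta A$ is not simple would do equally well at this point. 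If $|s^{-1}(j)|\geq 3$, no manipulation of the relation suffices: the paper shows that a quotient of $A$ admits a Galois covering containing a full convex wild concealed subcategory of tree type $\tilde{\tilde{\bD}}_4$, contradicting tameness (via \cite{DS1}, \cite{DS2}), after first using Lemma~\ref{lem:9.1} to see that $\beta\gamma$, $\beta\sigma$, $\delta\alpha$ survive in the quotient. Your proposal contains neither the socle computation in the first case nor any wildness argument in the second, so as it stands the statement is not proved.
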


\begin{proof}
Assume that $\alpha \beta$ occurs in a minimal relation of $A$.
Since $\alpha \notin \soc(A)$, we have $\alpha(\rad A) \neq 0$,
and consequently $\alpha(\rad A)$ is not contained in $\alpha(\rad A)^2$.
Observe also that $\alpha(\rad A) = \alpha \beta A$.
Then $Q$ contains a subquiver
\[
    \xymatrix{ 
      i \ar@<.5ex>[r]^{\beta} & j \ar@<.5ex>[l]^{\alpha} 
        \ar@<.5ex>[r]^{\gamma} & k \ar@<.5ex>[l]^{\delta} 
    }
\]
such that $\alpha \beta$ and $\gamma \delta$ occur 
in the same minimal relation of $A$.
It follows from 
Proposition~\ref{prop:9.3}
that $S_i$ is a periodic module of period $3$ and
$\beta \alpha = 0$
(see also the proof of Lemma~\ref{lem:9.2}).
We also note that $S_j$ is not periodic because
$|s^{-1}(j)| \geq 2$.
This implies that the heart $\rad P_j/S_j$ is a direct sum
of two non-zero submodules.
We have two cases to consider.

\smallskip

(1)
Assume that $|s^{-1}(j)| = 2 = |t^{-1}(j)|$.
Then we have $\rad P_j = \alpha A + \gamma A$
and $\alpha A \cap \gamma A = S_j$.
On the other hand, we have a minimal relation in $A$ of the form
\[
  \alpha \beta + \alpha x + a \gamma \delta + \gamma y = 0
\]
for some $a \in K^*$ and $x \in e_i(\rad A)^2$, $y \in e_k(\rad A)^2$.
Moreover, $x = \beta z$ for some $z \in e_j (\rad A) e_j$.
Hence, we get
\[
  \alpha \beta (1 - z)+ \gamma (a \delta + y) = 0 .
\] 
Since $1 - z$ is an invertible element of $A$, we obtain
an equality
\[
  \alpha \beta = \gamma \delta' ,
\] 
where $\delta' \in e_k(\rad A) e_j \setminus e_k(\rad A)^2 e_j$.
This implies that $\alpha \beta$ generates $\soc(P_j)$,
because 
$\alpha A \cap \gamma A = S_j = \soc (P_j)$.
Let $\varphi : A \to K$ be a symmetrizing $K$-linear form.
Then 
$\varphi(\alpha \beta) = \varphi(\beta \alpha) = \varphi(0) = 0$.
Thus $K \alpha \beta$ is a non-zero ideal in $A$
which is contained in $\Ker \varphi$, a contradiction.
 
\smallskip

(2)
Assume that $|s^{-1}(j)| \geq 3$.
Then there is an arrow $\sigma$ in $Q$ with $s(\sigma) = j$,
different from $\alpha$ and $\gamma$.
We set $l = t(\sigma)$.
Observe that $l \notin \{i,k\}$,
because otherwise $A$ admits a quotient algebra isomorphic
to the path algebra of the wild quiver of the form
\[
  \xymatrix{
    \bullet
    & \bullet
    \ar@<.5ex>[l]
    \ar@<-.5ex>[l]
    \ar[r]
    & \bullet
  }
 .
\]
Recall that $S_i$ is a periodic module of period $3$.
Then it follows from 
Lemma~\ref{lem:9.1}
that the paths
$\beta \gamma$,
$\beta \sigma$,
$\delta \alpha$
do not occur in minimal relations of $A$,
and
$\beta \gamma$,
$\beta \sigma$,
$\delta \alpha$
are non-zero paths.
We also note that
$\delta \gamma \neq 0$ 
because $\delta \alpha \beta \neq 0$.
Similarly, $\alpha \beta \sigma \neq 0$
forces $\delta \sigma \neq 0$.
Let $B$ be the quotient algebra of $A$ 
by the ideal generated by all paths of length $2$
except
$\beta \gamma$,
$\beta \sigma$,
$\delta \alpha$,
$\delta \gamma$,
$\delta \sigma$.
Then there is a Galois covering 
$F : R \to R/G = B$, with a finitely generated free group $G$,
such that the locally bounded $K$-category $R$ admits a full
convex subcategory $\Lambda$ isomorphic to the bound quiver
algebra $C = K \Delta / L$, where $\Delta$ is the quiver
\[
  \xymatrix@C=1pc{
   1 \ar[rrd]^{\beta} && && 3 \ar[lld]_{\delta} \\
    && 0 \ar[lld]_{\alpha} 
     \ar[rrr]^{\sigma} \ar[rrd]^{\gamma} &&& 4
     \\
    2 && && 3 
  }
\]
and $L$ is the ideal in $K \Delta$ generated by $\beta \alpha$.
Then $C$ is a wild concealed algebra of the tree type
$\tilde{\tilde{\bD}}_4$
\[
  \xymatrix{
    &&& \bullet \\
    \bullet \ar@{-}[r] & \bullet \ar@{-}[r] 
      & \bullet \ar@{-}[ru] \ar@{-}[r] \ar@{-}[rd]
     & \bullet \\
    &&& \bullet 
  }
    \xymatrix@R=2.35pc{ \\ \\ .}
\]
Applying 
\cite[Proposition~2]{DS1}
and
\cite[Theorem]{DS2}
we conclude that $B$ is a wild algebra.
This is a contradiction, because $B$ is a quotient algebra
of the tame algebra $A$.
\end{proof}

\begin{proposition}
\label{prop:9.5}
Let $A = KQ/I$ be a bound quiver algebra of generalized dihedral type
with $|Q_0| \geq 3$.
Then the following conditions are satisfied.
\begin{enumerate}[(Q1)]
 \item
  For each vertex $i \in Q_0$, we have
  $|s^{-1}(i)| = |t^{-1}(i)|$ and this $1$ or $2$.
 \item
  For each vertex $i \in Q_0$ with
  $|s^{-1}(i)| = 2$, $\rad P_i/S_i = U \oplus V$
  with $U, V$ indecomposable modules.
\end{enumerate}
\end{proposition}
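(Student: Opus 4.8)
The plan is to derive both conditions from the structural results already obtained for algebras of generalized dihedral type, chiefly Proposition~\ref{prop:9.3} together with Corollary~\ref{cor:3.2} and Lemma~\ref{lem:9.2}. First I would recall that, by Corollary~\ref{cor:3.2}, $\mod A$ admits at least one non-periodic simple module, so not every vertex can satisfy $|s^{-1}(i)|=|t^{-1}(i)|=1$; on the other hand, a vertex $i$ with $|s^{-1}(i)|=1$ has $S_i$ periodic and, by Proposition~\ref{prop:9.3}, also $|t^{-1}(i)|=1$, and symmetrically if $|t^{-1}(i)|=1$. So for the equality $|s^{-1}(i)|=|t^{-1}(i)|$ it remains to treat the vertices where both numbers are at least $2$.

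For such a vertex $i$, the simple module $S_i$ is non-periodic by Proposition~\ref{prop:9.3}, hence $S_i$ lies in a component of type $\bZ\bA_\infty^\infty$ or $\bZ\widetilde{\bA}_n$; as noted in Remark~\ref{rem:3.3}, this forces $\rad P_i/S_i$ to be a direct sum of exactly two indecomposable modules, say $U$ and $V$. In particular $\topp(\rad P_i/S_i)$ has exactly two simple summands (counted with multiplicity), so $|s^{-1}(i)|=2$; and by the symmetric argument applied to $A^{\op}$ (which is again of generalized dihedral type, its simple modules being the $A^{\op}$-duals, with the same periodicity behaviour) one gets $|t^{-1}(i)|=2$. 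Combining the two cases gives (Q1), and the argument for a vertex with $|s^{-1}(i)|=2$ simultaneously gives (Q2) with $\rad P_i/S_i = U\oplus V$, $U,V$ indecomposable. Here I would be a little careful to justify that $\rad P_i/S_i$ (rather than merely $\rad P_i/\rad^2 P_i$) already forces $|s^{-1}(i)|=2$: the point is that $\rad P_i/S_i$ surjects onto $\rad P_i/\rad^2 P_i = \topp(\rad P_i)$, whose number of indecomposable summands equals $|s^{-1}(i)|$, and a module which is a direct sum of two indecomposables cannot surject onto a semisimple module with more than two summands; it also cannot have fewer, since $|s^{-1}(i)|\geq 1$ always and $|s^{-1}(i)|=1$ was excluded.

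The main obstacle I anticipate is the passage to $A^{\op}$: one must check that the class of algebras of generalized dihedral type is closed under taking opposite algebras, i.e.\ that the defining conditions (1), (2), (3) are self-dual. Conditions (1) and (2) are clear since $A^{\op}$ is symmetric, indecomposable, tame with $K_0(A^{\op})\cong K_0(A)$, and the stable Auslander--Reiten quiver of $A^{\op}$ is the opposite translation quiver of $\Gamma_A^s$, so it has the same shapes of components (stable tubes of rank $1$ and $3$, and non-periodic components of type $\bZ\bA_\infty^\infty$ or $\bZ\widetilde{\bA}_n$, with at least one of the latter). For condition (3) one uses that the standard duality $D:\mod A\to\mod A^{\op}$ intertwines $\Omega_A$ with $\Omega_{A^{\op}}^{-1}$ up to the usual shift, together with $\tau_A=\Omega_A^2$, to see that $\Omega_{A^{\op}}$ fixes each rank-$3$ stable tube of $\Gamma_{A^{\op}}^s$. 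Once that is in place, the rest is a short bookkeeping argument on tops of radicals as above. Alternatively, to avoid invoking $A^{\op}$, one may prove directly that $|t^{-1}(i)|=2$ for a vertex with $|s^{-1}(i)|=2$ by applying Lemma~\ref{lem:9.1}(i) vertex by vertex along the connected quiver $Q$: the equality $|s^{-1}(i)|=|t^{-1}(i)|$ was in fact already recorded in the first paragraph of the proof of Lemma~\ref{lem:9.2} (via Lemma~\ref{lem:9.1}) whenever $S_i$ is periodic, and for $S_i$ non-periodic one argues using that every neighbour $j$ of $i$ with $S_j$ periodic has $|s^{-1}(j)|=|t^{-1}(j)|=1$ by Proposition~\ref{prop:9.3}, and every neighbour with $S_j$ non-periodic again has balanced arrow counts, so by connectedness the total number of arrows into $i$ equals the total number out. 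I would present the $A^{\op}$ route as the cleaner one and keep the connectedness argument in reserve.
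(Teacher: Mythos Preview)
Your argument has a genuine gap at the key step where you pass from the decomposition $\rad P_i/S_i = U \oplus V$ (two indecomposable summands, via Remark~\ref{rem:3.3}) to the conclusion $|s^{-1}(i)| = 2$. The assertion that ``a module which is a direct sum of two indecomposables cannot surject onto a semisimple module with more than two summands'' is false: an indecomposable module need not have simple top, so $\topp(U)\oplus\topp(V)$ may well have three or more simple summands. Concretely, nothing you have said excludes the possibility that $|s^{-1}(i)|=3$, with $U$ having two arrows in its top and $V$ one. The Auslander--Reiten component of $P_i/S_i$ controls the number of indecomposable summands of the heart $\rad P_i/S_i$ (via the almost split sequence $0\to\rad P_i\to P_i\oplus \rad P_i/S_i\to P_i/S_i\to 0$), but it does not directly bound the number of simple summands of $\topp(\rad P_i)$, which is what $|s^{-1}(i)|$ measures.

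This is precisely the place where the paper brings in tameness. The paper's proof does not attempt to read off $|s^{-1}(i)|\le 2$ from the component shape; instead it first passes to the idempotent algebra $B=\varepsilon A\varepsilon$ obtained by deleting the $1$-vertices (so that every remaining vertex has at least two arrows in and out), and then shows that if some vertex of $B$ had three or more outgoing arrows, the radical-square-zero quotient $B/(\rad B)^2$ would admit, via a Galois covering, a wild hereditary convex subcategory of type $\tilde{\tilde{\mathbb{E}}}_7$, contradicting tameness of $A$. Only after (Q1) is secured does the paper invoke Remark~\ref{rem:3.3} to obtain (Q2). In your approach the logical order is reversed, and that reversal is exactly where the argument breaks. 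Your $A^{\op}$ route for $|t^{-1}(i)|=2$ and your alternative ``connectedness'' sketch both rest on the same unproved bound $|s^{-1}(i)|\le 2$, so neither rescues the proof.
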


\begin{proof}
Let $I_0 = \{ i \in Q_0 \,|\, |s^{-1}(i)| = 1\}$.
Consider the idempotent $\varepsilon = 1_A - \sum_{i \in I_0} e_i$
and the associated idempotent algebra $B = \varepsilon A \varepsilon$.
We note that $B$ is a tame algebra
(see \cite[Theorem]{DS2}),
because $A$ is a tame algebra.
Clearly, $B = A$ if $I_0$ is empty.
For each vertex $i \in I_0$, we have in $Q_0$ unique arrows
\[
    \xymatrix{ i \ar@<.5ex>[r]^{\beta_i} & i^* \ar@<.5ex>[l]^{\alpha_i} }
\]
containing the vertex $i$.
Moreover, by 
Proposition~\ref{prop:9.3},
the simple module $S_i$ is periodic.
Since $|Q_0| \geq 3$,
applying
Proposition~\ref{prop:9.3}
again,
we conclude that $|s^{-1}(i^*)| \geq 2$
and $|t^{-1}(i^*)| \geq 2$.
It follows also from
Lemma~\ref{lem:9.1}
that $\alpha_i \beta_i$ does not occur in a minimal relation of $A$.
Therefore, $B$ is the bound quiver algebra $K Q^* / I^*$,
where $Q^*$ is the quiver obtained from $Q$ by replacing all
$2$-cycles 
$\xymatrix{ i \ar@<.5ex>[r]^{\beta_i} & i^* \ar@<.5ex>[l]^{\alpha_i} }$,
for $i \in I_0$, by the loops $\eta_i$ at $i^*$ and keeping all
other arrows of $Q$, and $I^* = \varepsilon I \varepsilon$.
We note that for any vertex $k$ of $Q^*$ we have
$|s^{-1}(k)| \geq 2$ and $|t^{-1}(k)| \geq 2$.
We claim that
$|s^{-1}(k)| = 2 = |t^{-1}(k)|$
for any vertex $k$ of $Q^*$.
Suppose that 
$|s^{-1}(j)| \geq 3$ or $|t^{-1}(j)| \geq 3$
for a vertex $j$ of $Q^*$.
Without loss of generality, we may assume that $|s^{-1}(j)| \geq 3$.
Consider the quotient algebra $D = B/ (\rad B)^2$.
Then $D$ admits a Galois covering 
$F : R \to R/G = D$, with a finitely generated free group $G$,
such that the locally bounded $K$-category $R$ contains a full
convex subcategory $\Lambda$ isomorphic to the path
algebra $K \Delta$ 
of the wild quiver $\Delta$ 
of type $\tilde{\tilde{\bE}}_7$
\[
  \xymatrix@C=1.8pc{
     & \bullet \ar[ld] \ar[rd]
     && \bullet \ar[ld] \ar[d] \ar[rd]
     && \bullet \ar[ld] \ar[rd]
     && \bullet \ar[ld]
   \\
     \bullet && \bullet & \bullet & \bullet && \bullet
  }
  \xymatrix@R=2.35pc{ \\  .}
\]
Applying 
\cite[Proposition~2]{DS1}
and
\cite[Theorem]{DS2}
again,
we conclude that $B$ is a wild algebra, a contradiction.
Therefore, indeed 
$|s^{-1}(k)| = 2 = |t^{-1}(k)|$
for any vertex $k \in Q_0^*$.

This shows that $A$ satisfies (Q1).
But then it follows from
Proposition~\ref{prop:9.3}
and Remark~\ref{rem:3.3}
that $A$ satisfies also (Q2).
\end{proof}

\begin{lemma}\label{lem:9.6}The statements of Propositions \ref{prop:9.5} and 
\ref{prop:9.3} also hold when $|Q_0|=2$.
\end{lemma}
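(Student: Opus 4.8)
The plan is to rerun the arguments of Section~\ref{sec:proof1} for $Q_0 = \{1,2\}$, supplying the one replacement forced by the absence of a third vertex.

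First I would pin down the possible shapes of the Gabriel quiver $Q$. By Corollary~\ref{cor:3.2} some simple module, say $S_2$, is non-periodic, so by Remark~\ref{rem:3.3} the module $\rad P_2/S_2$ is a direct sum of two non-zero indecomposable modules; comparing tops gives $|s^{-1}(2)| \geq 2$. Conversely, if $|s^{-1}(i)| \geq 3$ for some $i$, then (there being only two vertices) the radical-square-zero algebra $A/\rad^2 A$ has a quotient algebra which is the radical-square-zero algebra of a wild quiver --- one containing the three-Kronecker quiver, or a double arrow carrying a pendant arrow --- contradicting the tameness of $A$; hence $1 \leq |s^{-1}(i)| \leq 2$, and likewise $1 \leq |t^{-1}(i)| \leq 2$. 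Since the Gabriel quiver of a symmetric algebra has as many arrows $i\to j$ as arrows $j\to i$ (namely $\Ext^1_A(S_i,S_j) \cong D\Ext^1_A(S_j,S_i)$; this is also contained in Lemma~\ref{lem:9.1} once the relevant simple is periodic of period $3$), we get $|s^{-1}(i)| = |t^{-1}(i)|$, which proves (Q1). Using connectedness, $Q$ is one of: (a) $1 \rightleftarrows 2$ with no loops; (b) $1\rightleftarrows 2$ with a loop at exactly one vertex; (c) $1\rightleftarrows 2$ with a loop at each vertex; (d) $1\rightrightarrows 2\rightrightarrows 1$ with no loops.

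Next I would read off (Q2) and the equivalences of Proposition~\ref{prop:9.3} at each vertex $i$. If $S_i$ is non-periodic, then Remark~\ref{rem:3.3} gives $\rad P_i/S_i = U\oplus V$ with $U,V$ indecomposable non-zero, which is (Q2) at $i$ and forces $|s^{-1}(i)| = 2$, so (i)--(iv) of Proposition~\ref{prop:9.3} all fail at $i$, consistently. If $S_i$ is periodic, then it cannot have period $1$ or $2$: in that case $\Omega_A^{-1}(S_i) \cong \Omega_A(S_i) = \rad P_i$ (period $2$) or $S_i = \rad P_i$ (period $1$), so $\rad P_i$ would have simple top $S_i$, the unique arrow at $i$ would be a loop, and $Q$ would be disconnected; thus, by Corollary~\ref{cor:3.2} and its proof, $S_i$ is periodic of period $3$. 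If moreover $|s^{-1}(i)| = 1$, then exactly the argument in the proof of Proposition~\ref{prop:9.3} applies: with $j$ the unique neighbour of $i$, the minimal projective resolution $0\to S_i\to P_i\to P_j\to P_i\to S_i\to 0$ produces unique arrows $i\xrightarrow{\beta}j$, $j\xrightarrow{\alpha}i$ with $\Omega_A^2(S_i) = \alpha A = \{x\in P_j : \beta x = 0\}$, hence $\beta\alpha = 0$; this is (i)--(iv), and (Q2) is vacuous at $i$. In particular, in shape (a) both simples would be periodic, contradicting Corollary~\ref{cor:3.2}, so shape (a) does not occur; and in shapes (b),(c) the vertex with $|s^{-1}| = 1$, if any, carries a periodic simple, the other a non-periodic one, as handled above.

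The one remaining point --- and the main obstacle --- is to show that a periodic simple cannot sit at a vertex $i$ with $|s^{-1}(i)| = 2$; after the above this reduces to excluding shape (d), $1\rightrightarrows 2\rightrightarrows 1$, together with a periodic simple $S_1$ (necessarily of period $3$) and $S_2$ non-periodic. This is the two-vertex analogue of Lemma~\ref{lem:9.2}, whose proof manufactures a third vertex and the wild quotient $\bullet\leftleftarrows\bullet\to\bullet$ --- a device unavailable here; note also that shape (d) is genuinely the Gabriel quiver of algebras of generalized dihedral type (Example~\ref{ex:7.5} with $n = 2$), but there both simples are non-periodic. The plan for this step: since $S_2$ is non-periodic, Remark~\ref{rem:3.3} yields $\rad P_2 = \gamma_1 A + \gamma_2 A$, with $\gamma_1,\gamma_2$ the two arrows $2\to 1$ and $\gamma_1 A\cap\gamma_2 A = \soc P_2 = S_2$. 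Since $S_1$ is periodic of period $3$, it lies in a rank-$3$ tube fixed by $\Omega_A$, and, having dimension $1$, it has regular length $1$ there, i.e.\ it is a mouth module; hence the mouth orbit is $\{S_1,\ \Omega_A(S_1),\ \Omega_A^2(S_1)\} = \{S_1,\ \rad P_1,\ P_1/\soc P_1\}$, all indecomposable. I would feed this into the syzygy bookkeeping of the proof of Lemma~\ref{lem:9.2}: from the minimal resolution $0\to S_1\to P_1\to P_2\oplus P_2\to P_1\to S_1\to 0$ one gets $\Omega_A^2(S_1) = \Ker(d_1) = (\beta_1,\beta_2)A \cong P_1/\soc P_1$ and $\Omega_A(\alpha_1 A)\subseteq \beta_1 A$ for an arrow $\alpha_1\colon 1\to 2$; after adjusting the projective presentation so that $\Omega_A(\alpha_1 A) = \beta_1 A$ coincides with one of the biserial summands $\gamma_r A$ of $\rad P_2$ --- possible using the biseriality at vertex $2$ and the indecomposability of $\rad P_1$ and $P_1/\soc P_1$ --- the final argument of the proof of Lemma~\ref{lem:9.2} produces a non-zero $\beta\in e_2 A e_1$ and a non-invertible $z\in e_1 A e_1$ with $\beta(e_1 - z) = 0$ while $e_1 - z$ is invertible in the local algebra $e_1 A e_1$, a contradiction. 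I expect this adjustment step --- matching the syzygy generator to a biserial summand of $\rad P_2$, which in Lemma~\ref{lem:9.2} was automatic since $S_i$ then occurred once in $\rad P_j/\rad^2 P_j$ --- to be the only genuinely delicate part; as a fallback, $\Omega_A^3(S_1)\cong S_1$ forces $[P_1] - [P_2] = [S_1]$ in $K_0(A)$, pinning the Cartan matrix of $A$ to $\left(\begin{smallmatrix} c+1 & c\\ c& c\end{smallmatrix}\right)$ with $c = \dim_K e_2Ae_2$, and the length constraints this imposes on $\rad P_2 = \gamma_1 A + \gamma_2 A$ and on the indecomposables $\rad P_1$, $P_1/\soc P_1$ can be played against one another to the same end.
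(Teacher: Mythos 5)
Your overall strategy (enumerate the possible two-vertex quivers, then settle the periodicity of each simple shape by shape) is the same as the paper's, and your treatment of the vertices with $|s^{-1}(i)|=1$ and of the non-periodic vertices is fine. But there are two genuine problems. First, your reduction of ``a periodic simple cannot sit at a vertex with $|s^{-1}(i)|=2$'' to excluding only shape (d) is wrong: in shape (c) (a loop at each vertex plus the $2$-cycle) both vertices have $|s^{-1}(i)|=2$, and nothing you have said rules out one of the two simples there being periodic. This case must be excluded separately, and it is where the paper does real work: from the period-$3$ resolution $0\to S_2\to P_2\to P_1\oplus P_2\to P_2\to S_2\to 0$ one gets $[P_1]=[P_2]-2[S_2]$, hence a non-singular Cartan matrix, which contradicts \cite[Lemma~VI.1.1 and Theorem~VI.8.2]{E5}. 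Your proposal is silent on this configuration.

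Second, in the one degree-$2$ case you do address (shape (d), the double-arrow quiver), your argument is not complete. The adaptation of Lemma~\ref{lem:9.2} hinges precisely on the step you flag as delicate (matching the syzygy generator to a biserial summand of $\rad P_2$ when $S_1$ occurs with multiplicity two in $\rad P_2/\rad^2 P_2$), and your fallback via the Cartan matrix $\left(\begin{smallmatrix}c+1&c\\ c&c\end{smallmatrix}\right)$ is left as ``length constraints played against one another''. The paper's argument here is different and short: with $\alpha_1A\cap\alpha_2A=\soc P_1$ one has $[\alpha_1A]+[\alpha_2A]=[P_1]=[P_2]-[S_2]=[(\alpha_1,\alpha_2)A]$, so the natural injection $(\alpha_1,\alpha_2)A\hookrightarrow \alpha_1A\oplus\alpha_2A$ has zero cokernel, forcing $P_2/\soc P_2\cong\Omega_A^{-1}(S_2)$ to decompose --- a contradiction. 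I would also caution that the auxiliary claim ``a symmetric algebra has as many arrows $i\to j$ as $j\to i$, since $\Ext^1_A(S_i,S_j)\cong D\Ext^1_A(S_j,S_i)$'' is false in general (symmetric Nakayama algebras on an oriented cycle are counterexamples); the degree equalities you need do follow, but from the heart decomposition at non-periodic vertices and from Lemma~\ref{lem:9.1} at periodic ones, not from that statement.
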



\begin{proof} 
Suppose $Q_0=\{ 1, 2\}$. 
It follows from Corollary~\ref{cor:3.2} that
at least one of the simple modules,
say $S_1$, is non-periodic, and then $(\rad P_1)/S_1 = U_1\oplus V_1$ with 
two non-zero summands. Hence there are at least two arrows starting at
$1$ and two arrows ending at $1$. That is, $Q$ contains one of 
the following quivers
\begin{enumerate}[(a)]
 \item
  \raisebox{.5ex}%
  {\xymatrix{ 1 \ar@(dl,ul)[]^{\alpha} \ar@/^1.5ex/[r]^{\beta} & 2 \ar@/^1.5ex/[l]^{\gamma}}}.
 \item
  \raisebox{.5ex}%
  {\xymatrix{ 
     1 \ar@/^1.95ex/[r]_{\alpha_2} 
        \ar@<+.95ex>@/^1.95ex/[r]^{\alpha_1} & 
     2 \ar@/^1.95ex/[l]_{\beta_2} 
        \ar@<+.95ex>@/^1.95ex/[l]^{\beta_1}
   }}.
This is then all of $Q$ since $A$ is tame.
\end{enumerate}

Note that since $A$ is tame, it cannot have a connected quiver with
two loops at some vertex.

\bigskip

Consider first the quiver (b). 
If also $S_2$ is non-periodic then Proposition~\ref{prop:9.5} 
follows directly, 
and Proposition~\ref{prop:9.3} is vacuously true. 
So assume for a contradiction that $S_2$ is
periodic, and then it has period three. 
We have
an exact sequence
\[
0 \to S_2 \to P_2 \to P_1 \oplus P_1 \to P_2 \to S_2\to 0
\]
and it follows that in the Grothendieck group $K_0(A)$ we have
\[
  [P_1] = [P_2] - [S_2].
\]
We may take the arrows $\alpha_1, \alpha_2$ so that
$\alpha_1A \cap \alpha_2A = S_1$, and then 
$[\alpha_1A] + [\alpha_2A] = [P_1]$. 

As well we know that $(\alpha_1, \alpha_2)A \cong P_2/S_2$ in $\mod A$.
We have an exact
sequence
\[
  0 \to (\alpha_1, \alpha_2)A \to \alpha_1A\oplus \alpha_2A \to C \to 0
  .
\]
The first term has composition factors $[P_2]-[S_2]$ and the second term 
has composition factors $[P_1]$ and these are equal. 
Hence $C=0$ and  $P_2/S_2$ is a direct sum, a contradiction. 
This shows that Proposition~\ref{prop:9.3} holds as well in this case.

\bigskip

Consider now the quiver (a). 
Assume $S_2$  is not periodic.
Then the quiver $Q$ has a loop at both vertices,
and we have  
Propositions \ref{prop:9.5} and \ref{prop:9.3}. 
So assume now that $S_2$ is periodic 
and we know that $|s^{-1}(2)| = |t^{-1}(2)|$ and this is $1$ or $2$. 
We are
done if we show that it is equal to $1$. 
Assume this is false, then we have an exact
sequence
\[
  0 \to S_2 \to P_2 \to P_1\oplus P_2 \to P_2 \to S_2 \to 0 ,
\]
and it follows that $[P_1] = [P_2]-2[S_2]$. 
Therefore the Cartan matrix is non-singular.  
This contradicts \cite[Lemma~VI.1.1 and Theorem~VI.8.2]{E5}.
\end{proof}

From now on until the end of this section we assume that 
$A = KQ/I$ is a bound quiver algebra 
of generalized dihedral type.
Then
$A$ satisfies the conditions (Q1) and (Q2) of Proposition~\ref{prop:9.5}.

\begin{notation}
\label{notation:9.7}
We say that a vertex $i\in Q_0$ is a $1$-vertex if $|s^{-1}(i)| =  1$
and a $2$-vertex otherwise.
We denote by  $I_0$  the set of all $1$-vertices of $Q$.
For any $2$-vertex $i$ there are  arrows $\alpha, \beta$ starting at $i$ such
that
$$\alpha A \cap \beta A = \soc (e_iA)$$
We fix a set of arrows which satisfy this.
\end{notation}

For such a choice of arrows,
if
$\alpha, \beta \in Q_1$ start at $i$ then
$\rad P_i/S_i = U_i \oplus V_i$, where
$U_i = \alpha A/S_i$ and $V_i = \beta A/S_i$.

For an arrow $\gamma \in Q_1$ and $i = t(\gamma)$,
we set
\[
   R_{\gamma} = \big\{ x \in e_i A \,|\, \gamma x = 0 \big\} .
\]
We note that $R_{\gamma}$ is isomorphic to
$\Omega_A(\gamma A)$, and we will always take this as an identification.
The following lemma from 
\cite[Lemma~VI.1.1]{E5}
provides another description of the hearts of
indecomposable projective modules associated to $2$-vertices.

\begin{lemma}
\label{lem:9.8}
Let $i$ be a vertex of $Q$ at which two arrows $\gamma$ and $\delta$
end.
Then 
$\rad P_i = R_{\gamma} + R_{\delta}$,
$R_{\gamma} \cap R_{\delta} = S_i$,
and hence
$(\rad P_i) / S_i = (R_{\gamma}/S_i) \oplus (R_{\delta}/S_i)$.
\end{lemma}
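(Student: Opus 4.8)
The plan is to establish the three assertions in turn, using only that $A$ is symmetric (with symmetrizing form $\varphi$, cf.\ Proposition~\ref{prop:5.1}(ii)), the conditions (Q1), (Q2) of Proposition~\ref{prop:9.5}, and their extension to the case $|Q_0|=2$ from Lemma~\ref{lem:9.6}; the argument is essentially that of \cite[Lemma~VI.1.1]{E5}.

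First, $R_\gamma,R_\delta\subseteq\rad P_i$: an element $x\in e_iA\setminus\rad P_i$ is invertible in the local algebra $e_iAe_i$, and since $\gamma,\delta$ are arrows of the Gabriel quiver they are nonzero, so $\gamma x\neq 0\neq\delta x$, whence $x\notin R_\gamma\cup R_\delta$. Next, $R_\gamma\cap R_\delta=S_i$: as $\gamma$ and $\delta$ are exactly the two arrows of $Q$ ending at $i$, every path of positive length ending at $i$ factors through $\gamma$ or through $\delta$, so $(\rad A)e_i=A\gamma+A\delta$; hence for $x\in e_iA$ we have $\gamma x=\delta x=0$ if and only if $(\rad A)x=0$, i.e.\ if and only if $x$ lies in the left socle of $A$. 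Since $A$ is symmetric its left and right socles coincide, and $\soc(A)\cap e_iA=\soc(e_iA)=S_i$; conversely the one-dimensional space $S_i=\soc(e_iA)=\soc(Ae_i)$ is left-annihilated by $\rad A$ and so lies in $R_\gamma\cap R_\delta$. Granting the third assertion, the direct sum decomposition $(\rad P_i)/S_i=(R_\gamma/S_i)\oplus(R_\delta/S_i)$ is then immediate.

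It remains to prove $\rad P_i=R_\gamma+R_\delta$. Let $\alpha,\beta$ be the two arrows of $Q$ starting at $i$, chosen as in Notation~\ref{notation:9.7}, so $\rad P_i=\alpha A+\beta A$ and $\alpha A\cap\beta A=S_i$. I would show that, after possibly swapping $\gamma$ and $\delta$, one has $\gamma\alpha=0$ and $\delta\beta=0$; then $\alpha A\subseteq R_\gamma$ and $\beta A\subseteq R_\delta$, giving $\rad P_i=\alpha A+\beta A\subseteq R_\gamma+R_\delta\subseteq\rad P_i$. Several easy cases reduce the problem: if $\gamma\alpha=\gamma\beta=0$ then $\gamma(\rad P_i)=0$, so $R_\gamma=\rad P_i$ and we are done (likewise for $\delta$); and if $\gamma$ and $\delta$ annihilate the same arrow out of $i$, say $\alpha$, then $\alpha A\subseteq R_\gamma\cap R_\delta=S_i$, forcing $t(\alpha)=i$ (a loop) and hence $\rad P_i=\beta A$, which contradicts $i$ being a $2$-vertex. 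The only remaining configurations are those in which not enough of the length-$2$ paths $\gamma\alpha,\gamma\beta,\delta\alpha,\delta\beta$ vanish.

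The crux is to exclude such configurations, and this is the step I expect to be the main obstacle. Here I would argue exactly as in the proofs of Lemma~\ref{lem:9.4} and Proposition~\ref{prop:9.5}: if, say, all four of $\gamma\alpha,\gamma\beta,\delta\alpha,\delta\beta$ were nonzero, then a suitable quotient $B$ of $A$ (obtained by killing all paths of length $2$ other than these four, together with a further quotient if needed) would admit a Galois covering by a locally bounded $K$-category containing a full convex subcategory isomorphic to a wild concealed algebra --- of type $\tilde{\tilde{\bD}}_4$ or $\tilde{\tilde{\bE}}_7$ --- so that $B$, and hence $A$, would be wild by \cite[Proposition~2]{DS1} and \cite[Theorem]{DS2}, contradicting tameness; the configurations with exactly one of the four paths vanishing are ruled out by the same device. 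Alternatively, one may finish by a dimension count: since $P_i/R_\gamma\cong\gamma A$ and $P_i/R_\delta\cong\delta A$, the asserted equality is, given the first two steps, equivalent to $\dim_K\gamma A+\dim_K\delta A=\dim_K P_i$, which follows once $\gamma A$ and $\delta A$ are identified with the summands of the hearts of $P_{s(\gamma)}$ and $P_{s(\delta)}$ singled out in Notation~\ref{notation:9.7}.
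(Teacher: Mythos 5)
Your first two steps are correct and are the standard argument; note that the paper itself gives no proof of this lemma, it quotes the statement from \cite[Lemma~VI.1.1]{E5}. The gap is in your third step, which is the entire content of the lemma. There are two problems. First, for the arrows fixed in Notation~\ref{notation:9.7} it is in general not true that $\gamma\alpha=0$ on the nose: the paper's Lemma~\ref{lem:9.10} only produces modified arrows $\alpha'$ with $\alpha-\alpha'\in(\rad A)^2$ and $\gamma\alpha'=0$, and that lemma rests on Lemma~\ref{lem:9.8} (via the Krull--Schmidt identification of $R_\gamma/S_i$ with a summand of the heart), so it cannot be fed back in. Second, and decisively, the wildness device cannot exclude the remaining configurations: by (Q1) the vertex $i$ has exactly two arrows in and two arrows out, so the full convex subcategory of a Galois cover of a radical-square-type quotient that one can build around $i$ has underlying graph the star with four edges, i.e.\ of type $\widetilde{\bD}_4$, which is tame (and imposing that some of the four length-two paths vanish only shrinks it). The wild types $\tilde{\tilde{\bD}}_4$ and $\tilde{\tilde{\bE}}_7$ in Lemma~\ref{lem:9.4}(2) and Proposition~\ref{prop:9.5} arise only from a vertex of valency at least five, which (Q1) forbids here. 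Your fallback dimension count $\dim_K\gamma A+\dim_K\delta A=\dim_K P_i$ is indeed equivalent to the claim, but the justification offered is circular: identifying $\gamma A/\soc(\gamma A)$ with a heart summand of $P_{s(\gamma)}$ is the condition of Notation~\ref{notation:9.7} at the \emph{source} of $\gamma$ and does not yield the equality at the target $i$.

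The missing idea is the annihilator duality of the symmetric algebra. The pairing $e_iA\times Ae_i\to K$, $(x,y)\mapsto\varphi(xy)$, is non-degenerate, and since $\Ker\varphi$ contains no non-zero one-sided ideal one checks that $R_\gamma=(A\gamma)^{\perp}$, $R_\delta=(A\delta)^{\perp}$ and $\rad(e_iA)=\soc(Ae_i)^{\perp}$. Hence $R_\gamma+R_\delta=(A\gamma\cap A\delta)^{\perp}$, and the assertion $\rad P_i=R_\gamma+R_\delta$ is equivalent to $A\gamma\cap A\delta=\soc(Ae_i)$, the left-module analogue of the intersection statement for the two arrows ending at $i$. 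That is where the real work lies, and it is settled using that the left-hand analogue of condition (Q2) also holds (the defining conditions of generalized dihedral type are left--right symmetric); this is the route of the cited \cite[Lemma~VI.1.1]{E5}. No vanishing of length-two paths and no wild subquivers enter the argument.
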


 If
we modify some arrows but keep the intersection
condition of Notation~\ref{notation:9.7} then the collection 
of modified arrows also satisfies
Lemma \ref{lem:9.8}.

As well we have $(\rad P_i) / S_i = \alpha A/S_i \oplus \beta A/S_i$, the direct
sum of two indecomposable modules. The Krull-Schmidt Theorem gives 
that $R_{\gamma}/S_i$ is isomorphic to one of $\alpha A/S_i$ or
$\beta A/S_i$. 


\begin{notation}\label{notation:9.9}\normalfont
(1) \ We define a map $f : Q_1 \cup I_0 \to Q_1 \cup I_0$ by
\[
  f(\gamma) = \left\{
   \begin{array}{cl}
   \alpha & \mbox{if $R_{\gamma}/S_i \cong (\alpha A)/S_i$,} \\
   i & \mbox{if $R_{\gamma} = S_i$},
   \end{array}
  \right.
\]
for an arrow $\gamma \in Q_1$, and for $i \in I_0$, we define
\[
  f(i) = \alpha,
\]
where $\alpha$ is a unique arrow in $Q_1$ with source $i$.

This is a permutation of $Q_1 \cup I_0$. We note that
if no double arrows start at vertex $i$ then 
$R_{\gamma}/S_i \cong (\alpha A)/S_i$ if and only if
$R_{\gamma} = \alpha'A$ where $\alpha - \alpha' \in (\rad A)^2$. 

If there are no double arrows then the map  $f$ is the same as the map
 $\pi$ in 
\cite[VI.1.2]{E5}.

(2) \
There is also the permutation of $Q_1$ which describes
the composition series of modules generated by arrows.
It is called $\pi^*$ in
\cite[VI.1.3]{E5},
and we will denote it by $g$.

We define the permutation $g : Q_1 \to Q_1$ as follows
\[
  g(\gamma) = \left\{
   \begin{array}{cl}
   \alpha & \mbox{if $t(\gamma) \in I_0$ and $\alpha \in Q_1$ with $s(\alpha) = t(\gamma)$},\\
   \delta & \mbox{if $t(\gamma) \notin I_0$ and $\delta \in Q_1 \setminus \{ f(\gamma)\}$ with $s(\delta) = t(\gamma)$}.
   \end{array}
  \right.
\]

(3) \ 
The permutation $f$ describes the action of $\Omega_A$
on modules in $\mod A$ generated by arrows.
In fact, it follows from 
\cite[Theorem~IV.4.2]{E5}
that any Auslander-Reiten sequence in $\mod A$ with the right term
$\alpha A$, for $\alpha \in Q_1$, has indecomposable middle term.
In our setting, these modules occur at mouths of stable tubes, and
stable tubes have rank $1$ or $3$, which means that the modules
generated by arrows are periodic of period at most three 
(with respect to $\Omega_A$).
Furthermore, by Lemma \ref{lem:9.8} we know that for each arrow $\alpha$, 
also $\Omega(\alpha A)$ is generated by an arrow (and hence also 
$\Omega^2(\alpha A)$ in the case when $\alpha A$ has period three). 

\bigskip
We summarize the possibilities
for the cycles of the permutation $f$ of $Q_1 \cup I_0$.
\begin{enumerate}[(i)]
  \item
If $\alpha$ is an arrow occuring in an  $f$-cycle of a vertex in $I_0$ 
that $\alpha A$ has period three.
This follows from Proposition \ref{prop:9.3}. 
\end{enumerate}
Suppose  $\alpha: i\to j$ is an arrow whose $f$-cycle consists of arrows.
\begin{enumerate}[(i)]
\addtocounter{enumi}{1}
  \item
If $\alpha A$ has period one then $\alpha$ is a loop fixed by $f$,

  \item
Suppose $\alpha A$ has period two. Then we
have an exact sequence
$$
0 \to \alpha A \to e_iA \to e_jA \to \alpha A \to 0
$$
and hence $\Omega(\alpha A)$ is generated by an arrow $j\to i$. 
In particular,
if $j\neq i$ this can only occur if there is an arrow $j\to i$. The $f$-cycle
is then either $(\alpha)$ or $(\alpha \ \beta)$, 

  \item
Suppose $\alpha A$ has period three. Then there
is an exact sequence
$$
0 \to \alpha A \to e_iA \to e_kA \to e_jA \to \alpha A \to 0
$$
and $\Omega(\alpha A)$ is generated by an arrow $\delta: j\to k$, and $\Omega^2(\alpha A)$ is generated by an arrow $\gamma: k\to i$. 
If the $f$-cycle contains a loop then the other two arrows
lie on a subquiver 
$\xymatrix{ a \ar@<.5ex>[r] & b \ar@<.5ex>[l] }$.
Otherwise the three arrows form a triangular subquiver with three
different vertices.
\end{enumerate}
\end{notation}


The next lemma is a variation of \cite[Lemma~VI.1.4.4]{E5}. 
Consider a 2-vertex $i$ of $Q$, there are either four  distinct
arrows adjacent to $i$, or else three when one of them is a loop.
This holds since $Q$ is connected, with at least three vertices.

\begin{lemma}
\label{lem:9.10}
Assume $i \in Q_0$ is a vertex at which two arrows
$\alpha$, $\beta$ start and two arrows $\gamma$, $\delta$ end,
and $f(\gamma) = \alpha$ and $f(\delta) = \beta$.
Then the following statements hold.
\begin{enumerate}[(i)]
 \item
  Suppose $\gamma, \delta$ are
not fixed by $f$. Then there are arrows  
  $\alpha'$ and $\beta'$ with 
$\alpha'A = R_{\gamma}$ and $\beta'A=R_{\delta}$, 
 such that 
   $\gamma \alpha' = 0$ and $\delta \beta' = 0$,
  and $\alpha' A \cap \beta' A \subseteq \soc (e_i A)$.
If $\alpha, \beta$ are not double arrows we may assume
 $\alpha-\alpha'\in (\rad A)^2$
and $\beta-\beta' \in (\rad A)^2$.   
 \item
  Suppose $f(\gamma) = \gamma$ so that $\gamma = \alpha$, and
   $\delta$ is not a loop.
  Then there are arrows 
  $\alpha'$ and $\beta'$ with $\alpha - \alpha' \in (\rad A)^2$
  and $\beta - \beta' \in (\rad A)^2$
  such that $(\alpha')^2$ lies in $\soc (e_i A)$, 
  $\delta \beta' = 0$, and $\alpha' A \cap \beta' A \subseteq  \soc (e_i A)$.
\end{enumerate}
\end{lemma}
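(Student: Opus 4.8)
The plan is to work inside the indecomposable projective $P_i = e_iA$ and use the two decompositions of its heart: one coming from arrows that start at $i$, namely $\rad P_i/S_i = \alpha A/S_i \oplus \beta A/S_i$ (by (Q2) together with Notation~\ref{notation:9.7}), and one coming from arrows that end at $i$, namely $\rad P_i/S_i = (R_\gamma/S_i) \oplus (R_\delta/S_i)$ (by Lemma~\ref{lem:9.8}). Since $f(\gamma)=\alpha$ and $f(\delta)=\beta$, the Krull--Schmidt theorem gives $R_\gamma/S_i \cong \alpha A/S_i$ and $R_\delta/S_i \cong \beta A/S_i$. The first task is to promote these isomorphisms to a statement about generators: $R_\gamma$ is an indecomposable submodule of $\rad P_i$ with $R_\gamma/S_i$ indecomposable, hence $R_\gamma$ has simple top, so $R_\gamma = \alpha' A$ for some element $\alpha' \in e_{t(\alpha)} (\rad A) e_i$ lying outside $(\rad A)^2 e_i$ (here I use that $\soc(e_iA)=S_i$, which holds since $A$ is symmetric with simple socles on projectives); likewise $R_\delta = \beta' A$. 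By construction $\gamma\alpha' = 0$ and $\delta\beta' = 0$, since $R_\gamma=\{x : \gamma x =0\}$ and similarly for $\delta$. If $\alpha,\beta$ are not double arrows — i.e.\ there is a unique arrow from $i$ to $t(\alpha)$ and a unique one from $i$ to $t(\beta)$ — then $\alpha'$ and $\alpha$ are both generators of the same submodule modulo $S_i$ supported on the same single arrow, so up to rescaling and adding socle one may arrange $\alpha-\alpha' \in (\rad A)^2$, and similarly $\beta-\beta'\in(\rad A)^2$; replacing $\alpha,\beta$ by $\alpha',\beta'$ preserves the intersection condition of Notation~\ref{notation:9.7} because the decomposition of $\rad P_i/S_i$ is unchanged. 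This is exactly the content of part (i): the last assertion $\alpha'A \cap \beta'A \subseteq \soc(e_iA)$ is precisely $R_\gamma \cap R_\delta = S_i$ from Lemma~\ref{lem:9.8}.

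For part (ii), the hypothesis $f(\gamma)=\gamma$ means $R_\gamma/S_i \cong (\gamma A)/S_i$ with $\gamma=\alpha$, so $\gamma A$ is a periodic module of period one; by Notation~\ref{notation:9.9}(ii) this forces $\gamma=\alpha$ to be a loop at $i$ fixed by $f$. The exact sequence $0 \to \alpha A \to e_iA \xrightarrow{\cdot\alpha} e_iA$ identifying $\alpha A \cong R_\alpha$ shows $\alpha A = R_\alpha = \{x : \alpha x = 0\}$, and since $\alpha A/S_i \cong R_\alpha/S_i$ has simple top we again get $\alpha A = \alpha' A$ for a suitable $\alpha'$ with $\alpha-\alpha'\in(\rad A)^2$ (there is no double arrow here because $Q$ is connected with $|Q_0|\geq 3$, as recorded just before the lemma, so the loop $\alpha$ is the only arrow from $i$ to $i$). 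The relation $\alpha\alpha' = 0$ then says $(\alpha')^2 \in \{x : \alpha' x = 0\}\cap \{x: x = \alpha' y\}$; combined with $\alpha'A = R_{\alpha'}$ having $R_{\alpha'}\cap R_\delta = S_i$ and the fact that $(\alpha')^2 \in \alpha' A$ with $\alpha'(\alpha')^2$ — I will argue $(\alpha')^2$ lands in the socle by tracking it through the two-sided decomposition of $\rad P_i$: it cannot have nonzero image in the $R_\delta$-component, and inside $R_\alpha = \alpha'A$ the element $\alpha'\cdot(\alpha')^2$ would have to vanish since $(\alpha')^2\in R_{\alpha'}$, so $(\alpha')^2 \in \soc(e_iA)$. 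For $\delta$, which is not a loop and satisfies $f(\delta)=\beta$, part (i)'s argument applied to the single arrow $\delta$ produces $\beta'$ with $R_\delta = \beta'A$, $\delta\beta'=0$, $\beta-\beta'\in(\rad A)^2$, and $\alpha'A\cap\beta'A = R_\alpha\cap R_\delta \subseteq S_i$, which is the stated conclusion.

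The step I expect to be the main obstacle is the careful bookkeeping of \emph{which} element modulo $(\rad A)^2$ one is allowed to choose: one must simultaneously achieve $\gamma\alpha' = 0$ (a socle-vanishing condition that pins down $\alpha'$ up to adding $\soc(e_iA)$) \emph{and} $\alpha-\alpha'\in(\rad A)^2$ (which pins down the leading term), and verify these are compatible, i.e.\ that $R_\gamma$ is actually generated by an element congruent to $\alpha$ modulo $(\rad A)^2$ rather than merely isomorphic to $\alpha A$ as a module — this is where the ``no double arrows'' hypothesis is essential, and where one invokes the structure of $\rad P_i/\rad^2 P_i$ given by (Q1). The rest is a transcription of \cite[Lemma~VI.1.4.4]{E5} to the present, slightly more general, setting (allowing loops in the triangulation quiver), and the symmetry of $A$ is used only to guarantee that each indecomposable projective has simple socle equal to its top, so that ``indecomposable submodule with indecomposable heart'' upgrades to ``cyclic submodule''.
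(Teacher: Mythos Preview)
Your approach to part~(i) is correct and is essentially the argument one extracts from \cite[Lemma~VI.1.4.4]{E5} (which the paper cites rather than proving): Lemma~\ref{lem:9.8} gives $R_\gamma\cap R_\delta=S_i$, the simple-top argument shows each $R_\gamma,R_\delta$ is cyclic on an element not in $(\rad A)^2$, and the ``no double arrows'' hypothesis pins the leading term.

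Part~(ii), however, has a real gap. You write that the exact sequence ``identifies $\alpha A\cong R_\alpha$'' and later that ``$\alpha'A=R_{\alpha'}$''. Neither is justified. From $f(\alpha)=\alpha$ you only know $R_\alpha/S_i\cong \alpha A/S_i$; by Lemma~\ref{lem:9.12} the relevant $\Ext^1$ is two-dimensional (there \emph{is} an arrow $i\to i$), so $R_\alpha$ and $\alpha A$ need not be isomorphic, let alone equal as submodules of $P_i$. Likewise, taking $\alpha'$ with $\alpha'A=R_\alpha$ gives $\alpha\alpha'=0$, but there is no reason that $\alpha'A=R_{\alpha'}$, which is what your socle argument uses when you assert $(\alpha')^2\in R_{\alpha'}$. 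Concretely: to get $(\alpha')^2\in\soc(e_iA)$ you must show $(\alpha')^3=0$ and $(\alpha')^2\beta'=0$, and your ``tracking through the decomposition'' only gives $\alpha(\alpha')^2=0$, which is automatic since $(\alpha')^2\in\alpha'A=R_\alpha$.

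What is missing is the freedom in choosing $\alpha'$: the lemma does \emph{not} assert $\alpha'A=R_\gamma$ in case~(ii), only that $(\alpha')^2\in\soc(e_iA)$, $\delta\beta'=0$, and $\alpha'A\cap\beta'A\subseteq\soc(e_iA)$. One must produce $\alpha'$ by an iterative or direct adjustment (as in \cite{E5}) rather than simply taking a generator of $R_\alpha$; the constraint $\gamma\alpha'=0$ is dropped precisely because it is incompatible in general with $(\alpha')^2\in\soc$. Your plan conflates the two cases by forcing $\alpha'A=R_\alpha$ in~(ii), and that is where it breaks.
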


\bigskip

We have also the following lemma from \cite[IV.1.4.5]{E5}
for loops fixed by $g$.

\begin{lemma}
\label{lem:9.11}
Assume $\alpha$ is a loop at $i$ in $Q_1$ fixed by $g$.
Then, for any choice of arrows $\gamma$ ending at $i$
and $\beta$ starting at $i$, one has $\gamma \alpha = 0$
and $\alpha \beta = 0$.
\end{lemma}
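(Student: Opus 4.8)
The plan is to first fix the local shape of the quiver around $i$, then establish the two relations $\alpha\beta=0$ and $\gamma\alpha=0$ separately, the second one by passing to the opposite algebra. For the local structure: since $\alpha$ is a loop fixed by $g$, it is not a border loop (a border loop $\delta$ has $g(\delta)=\bar\delta\neq\delta$), so $f(\alpha)\neq\alpha$ and hence, by Notation~\ref{notation:9.9}(ii), the module $\alpha A$ is not of period one. By condition (Q1) of Proposition~\ref{prop:9.5} (valid also for $|Q_0|=2$ by Lemma~\ref{lem:9.6}) the vertex $i$ is a $2$-vertex, so exactly two arrows start at $i$ and two end at $i$, one of the outgoing and one of the incoming arrows being $\alpha$. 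If $\alpha A$ had period two, then Notation~\ref{notation:9.9}(iii) would produce a loop at $i$ distinct from $\alpha$ (distinct since $f(\alpha)\neq\alpha$) generating $\Omega_A(\alpha A)$, and two loops at a vertex of the connected quiver of the tame algebra $A$ are impossible (cf.\ the proof of Lemma~\ref{lem:9.6}). Thus $\alpha A$ has period $3$, and by Notation~\ref{notation:9.9}(v) its $f$-orbit is a $3$-cycle containing the loop $\alpha$; so the other two arrows lie on a subquiver
\[
  \xymatrix{ i \ar@(dl,ul)[]^{\alpha} \ar@<.5ex>[r]^{\beta} & j \ar@<.5ex>[l]^{\gamma} }
\]
with $f(\alpha)=\beta$, $f(\beta)=\gamma$, $f(\gamma)=\alpha$. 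In particular $\alpha,\beta,\gamma$ are the only arrows incident with $i$, no multiple arrows meet $i$, and $\beta$ (respectively $\gamma$) is the unique non-loop arrow starting (respectively ending) at $i$; so it suffices to prove $\alpha\beta=0$ and $\gamma\alpha=0$.

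For $\alpha\beta=0$: since $f(\gamma)=\alpha$, $f(\alpha)=\beta$, and neither $\gamma$ nor the loop $\alpha$ is fixed by $f$, Lemma~\ref{lem:9.10}(i), applied at $i$ with the loop $\alpha$ in the role of both the outgoing arrow ``$\alpha$'' and the incoming arrow ``$\delta$'' occurring there, provides arrows $\alpha'$, $\beta'$ with $\alpha'-\alpha\in(\rad A)^2$, $\beta'-\beta\in(\rad A)^2$ (there are no multiple arrows at $i$) and $\gamma\alpha'=0$, $\alpha\beta'=0$. Then $\alpha\beta=-\alpha(\beta'-\beta)$, and expanding $\beta'-\beta\in e_i(\rad A)^2e_j$ as a $K$-combination of paths starting at $i$ — each of which begins with $\alpha$ or with $\beta$ — gives $\alpha(\beta'-\beta)\in\alpha^2A+(\alpha\beta)(\rad A)$. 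Hence $\alpha\beta\in\alpha^2A+(\alpha\beta)(\rad A)$, and Nakayama's lemma yields $\alpha\beta\in\alpha^2A$; a routine induction (at each step writing $\alpha\beta=\alpha^kt$ and splitting $t$ according to its first arrow) then gives $\alpha\beta\in\alpha^nA$ for all $n$, whence $\alpha\beta\in\bigcap_n(\rad A)^n=0$.

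For $\gamma\alpha=0$ I would argue by symmetry, passing to the opposite algebra $A^{\op}=KQ^{\op}/I^{\op}$. One checks routinely that $A^{\op}$ is again of generalized dihedral type: it is symmetric, indecomposable and tame, with $K_0(A^{\op})$ of the same rank, and the duality $D=\Hom_K(-,K)\colon\mod A\to\mod A^{\op}$ is an exact anti-equivalence interchanging $\Omega_A$ and $\Omega_{A^{\op}}^{-1}$, so it induces an isomorphism $\Gamma_A^s\cong\Gamma_{A^{\op}}^s$ that preserves the ranks of the stable tubes and the property that $\Omega$ fixes each rank-$3$ tube. Under $D$ the cyclic right $A^{\op}$-module $\alpha A^{\op}$ corresponds to $D(\alpha A)$, which is again periodic of period $3$; so in $A^{\op}$ the loop $\alpha$ at $i$ is still fixed by $g$, and its unique non-loop arrow starting at $i$ is now (the reversal of) $\gamma$. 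Applying the previous step to $A^{\op}$ gives $\alpha\gamma=0$ in $A^{\op}$, which is precisely $\gamma\alpha=0$ in $A$.

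I expect the main obstacle to be the final part of the ``$\alpha\beta=0$'' argument, namely absorbing the ambiguity of the generators of $R_\alpha$ and $R_\gamma$ modulo $(\rad A)^2$; this is exactly what the Nakayama iteration handles. The remaining ingredients are bookkeeping with Propositions~\ref{prop:9.3} and~\ref{prop:9.5}, Notation~\ref{notation:9.9} and Lemma~\ref{lem:9.10}. Alternatively, once one has noted that no multiple arrows meet $i$, the local picture at $i$ coincides with that of \cite[IV.1.4.5]{E5}, and the proof given there carries over.
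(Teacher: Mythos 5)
The paper offers no argument for this lemma at all --- it is quoted from \cite[IV.1.4.5]{E5} --- so your self-contained proof is necessarily a different route; also, your reading of the statement (that $\gamma$ and $\beta$ are the arrows \emph{other than} $\alpha$) is the intended one, since $\alpha^{m_\alpha}$ can generate the socle, cf.\ Example~\ref{ex:8.2}. Your reduction to the local picture (a $2$-vertex $i$ carrying the loop $\alpha$ and a $2$-cycle between $i$ and $j$, no multiple arrows, $f$-orbit $(\alpha\,\beta\,\gamma)$) is correct, and the proof of $\alpha\beta=0$ is sound: Lemma~\ref{lem:9.10}(i) does apply with the loop serving simultaneously as one of the outgoing arrows and as the incoming arrow $\delta$, and the Nakayama iteration $\alpha\beta\in\alpha^2A\Rightarrow\alpha\beta\in\alpha^nA$ for all $n$ correctly absorbs the ambiguity $\beta'-\beta\in(\rad A)^2\subseteq\alpha\rad A+\beta\rad A$.

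The one step that is not justified as written is the transfer of the hypothesis to $A^{\op}$. For a loop, being fixed by $g$ means exactly that $f(\alpha)\neq\alpha$, i.e.\ that the first syzygy of the module generated by $\alpha$ is \emph{not} generated by a loop at $i$; knowing only that $A\alpha\cong D(\alpha A)$ is periodic of period $3$ does not by itself exclude the degenerate possibility that $\Omega_{A^{\op}}(A\alpha)$ and $\Omega_{A^{\op}}^2(A\alpha)$ are again generated by loops at $i$ (three pairwise non-isomorphic extensions of $A\alpha/S_i$ by $S_i$ are a priori available, since the relevant $\Ext^1$ can be $2$-dimensional by the left-module analogue of Lemma~\ref{lem:9.12}). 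The needed fact is nevertheless provable from what you have already established: $\alpha\beta=0$ together with $\dim_K A\alpha=\dim_K\alpha A$ and $\dim_K Ae_i=\dim_K e_iA$ (both consequences of $A$ being symmetric) gives $\{x\in Ae_i: x\beta=0\}=A\alpha$ exactly, by a dimension count; the left-module version of Lemma~\ref{lem:9.8} then writes $\rad Ae_i$ as the sum of this module and of $\{x\in Ae_i : x\alpha=0\}$, and comparing tops ($\topp(\rad Ae_i)\cong S_i\oplus S_j$ while $\topp(A\alpha)\cong S_i$) forces the top of $\{x\in Ae_i : x\alpha=0\}$ to be $S_j$, so that this syzygy is generated by an arrow congruent to $\gamma$ and not by a loop; hence $\alpha$ is indeed fixed by $g$ for the opposite algebra. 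With this repair (and the routine check, which you indicate, that $A^{\op}$ is again of generalized dihedral type, so that Lemmas~\ref{lem:9.8} and~\ref{lem:9.10} apply to it), the opposite-algebra half goes through and the whole argument is a legitimate replacement for the citation of \cite{E5}.
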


In Lemma \ref{lem:9.10} we have $R_{\gamma} = \alpha'A$ and 
$\alpha'A/S_i \cong \alpha A/S_i= U_i$. One would like to know when this
necessarily implies that $\alpha'A \cong \alpha A$.

\begin{lemma}
\label{lem:9.12}
Let $\alpha$ be an arrow in $Q$ 
with $j = t(\alpha)$ a $2$-vertex, 
$i = s(\alpha)$, 
and $\alpha A / S_i = U_i$.
Then $\dim_K \Ext_A^1(U_i,S_i) = 2$
if $\Omega_A(\alpha A) = \gamma A$ for an arrow $\gamma$ in $Q$
from $j$ to $i$,
and $\dim_K \Ext_A^1(U_i,S_i) = 1$
otherwise.
\end{lemma}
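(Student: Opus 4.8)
The plan is to compute $\Ext_A^1(U_i, S_i)$ from a projective presentation of $U_i$. Since $j = t(\alpha)$ is a $2$-vertex and $U_i = \alpha A / S_i$, we have a minimal projective cover $P_j \twoheadrightarrow U_i$ with kernel $\Omega_A(U_i)$. Applying $\Hom_A(-, S_i)$ to a minimal presentation $P_1 \to P_j \to U_i \to 0$, the group $\Ext_A^1(U_i, S_i)$ is computed by the map $\Hom_A(P_j, S_i) \to \Hom_A(P_1, S_i)$; concretely, $\dim_K \Ext_A^1(U_i, S_i)$ equals the multiplicity of $S_i$ in the top of $\Omega_A(U_i)$, i.e. the number of arrows in $Q$ starting at $t(\Omega_A(U_i))$'s ``branches'' that land on $i$, minus any that factor through $P_j \to U_i$. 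More precisely, I would use that $\dim_K \Ext_A^1(M, S_i)$ is the multiplicity $[\topp \Omega_A(M) : S_i]$ whenever $M$ is indecomposable non-projective; this reduces the computation to understanding $\topp \Omega_A(U_i)$.

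First I would identify $\Omega_A(U_i)$. From the short exact sequence $0 \to S_i \to \alpha A \to U_i \to 0$ and the projective cover $0 \to \Omega_A(\alpha A) \to P_j \xrightarrow{d} \alpha A \to 0$ with $d(x) = \alpha x$, one gets that $\Omega_A(U_i)$ sits in an extension of $S_i$ by $\Omega_A(\alpha A) = R_\alpha$; in fact $\Omega_A(U_i)$ is the preimage in $P_j$ of $S_i \subseteq \alpha A$, so $\Omega_A(U_i) = \{x \in P_j : \alpha x \in S_i\}$, which is an extension $0 \to R_\alpha \to \Omega_A(U_i) \to S_i \to 0$. Then $\topp \Omega_A(U_i)$ is either $\topp R_\alpha$, or $\topp R_\alpha \oplus S_i$, depending on whether this extension is split at the top — equivalently, whether the generator of the $S_i$-quotient lifts to a generator of $\Omega_A(U_i)$ not lying in the radical. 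The multiplicity of $S_i$ in $\topp \Omega_A(U_i)$ is therefore $[\topp R_\alpha : S_i]$ plus a correction of $0$ or $1$. Using Lemma~\ref{lem:9.8} and the description of $R_\alpha$ via the permutation $f$: if $\Omega_A(\alpha A) = \gamma A$ for an arrow $\gamma \colon j \to i$, then $R_\alpha = \gamma A$ is generated by an arrow ending at $i$, so $\topp R_\alpha = S_j$ and $[\topp R_\alpha : S_i] = 0$ (note $i \ne j$ here since $\gamma\colon j\to i$ and... actually $i$ could equal $j$, but then the count is handled separately); the ``missing'' arrow at $i$ contributes, giving $\dim_K \Ext_A^1(U_i, S_i) = 2$. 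In the remaining cases $\Omega_A(\alpha A)$ is generated by an arrow not ending at $i$, or $R_\alpha = S_i$ (the period-one situation), and a parallel bookkeeping gives $\dim_K \Ext_A^1(U_i, S_i) = 1$.

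The cleanest route is probably to compute directly with the bound quiver: $\Ext_A^1(U_i, S_i)$ is spanned by the classes of the two arrows of $Q$ ending at $i$ (these give the two candidate nonsplit extensions $\rad P_i \supseteq R_\alpha, R_\delta$ pushed out), modulo the relations, and one counts how many of these extensions actually restrict nontrivially to $U_i$ versus the complementary summand $V_i$. Since $\rad P_i / S_i = U_i \oplus V_i$ with $U_i = \alpha A / S_i$, $V_i = \beta A / S_i$, and by the definition of $f$ the arrow $\gamma$ with $f(\gamma) = \alpha$ satisfies $R_\gamma / S_i \cong U_i$ while the other arrow $\delta$ with $f(\delta) = \beta$ satisfies $R_\delta / S_i \cong V_i$: the nonsplit self-extension of $S_i$ via $\gamma, \delta$ captures how $S_i$ sits in $P_i$. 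The key case distinction — whether $\Omega_A(\alpha A)$ is itself generated by an arrow $\gamma\colon j\to i$ — is exactly the distinction between the $f$-cycle of $\alpha$ being of ``period two type'' (cases (iii) of Notation~\ref{notation:9.9}, where $\Omega(\alpha A)$ is generated by an arrow $j \to i$) versus ``period three type'' or a loop (cases (ii), (iv)), where $\Omega(\alpha A)$ is generated by an arrow $j \to k$ with $k \ne i$, or is $S_i$ itself. In the period-two case both arrows ending at $i$ contribute to $\Ext^1(U_i, S_i)$ while in the other cases only one does.

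\textbf{Main obstacle.} The delicate point is controlling the ``$+1$ or $+0$'' correction term: showing that the extension $0 \to R_\alpha \to \Omega_A(U_i) \to S_i \to 0$ does or does not split at the level of radical tops, equivalently that the class of one particular arrow ending at $i$ really does not vanish in $\Ext_A^1(U_i, S_i)$ (rather than in $\Ext^1(V_i, S_i)$). This requires invoking Lemma~\ref{lem:9.10} to pin down representative arrows $\alpha', \beta'$ with $\alpha' A = R_\gamma$, $\beta' A = R_\delta$, $\gamma\alpha' = 0$, $\delta\beta' = 0$, and $\alpha' A \cap \beta' A \subseteq \soc(e_i A)$, and then carefully matching, via the Krull–Schmidt theorem, which of $R_\gamma/S_i$, $R_\delta/S_i$ is isomorphic to $U_i$. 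I would expect this matching argument — essentially unwinding the definition of $f$ together with Lemma~\ref{lem:9.8} — to be the crux; once the arrows are correctly labelled the dimension count is immediate. I would also need the small observation that when $i = j$ (so $\alpha$ is a loop, necessarily with $\alpha A$ of period three by Notation~\ref{notation:9.9}(iv), since a period-one or period-two loop would force $|Q_0| \le 2$ or contradict tameness via Lemma~\ref{lem:9.11}), the count still gives $1$, consistent with the ``otherwise'' clause.
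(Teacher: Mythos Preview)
Your overall strategy matches the paper's: obtain the short exact sequence
\[
  0 \to \Omega_A(\alpha A) \to \Omega_A(U_i) \to S_i \to 0,
\]
identify $\Ext_A^1(U_i,S_i) \cong \Hom_A(\Omega_A(U_i), S_i)$ via the projective cover $P_j \to U_i$ (using $\topp U_i = \topp P_j$), and then read off the multiplicity of $S_i$ in $\topp \Omega_A(U_i)$. However, the execution contains a concrete error and a misidentification of where the difficulty lies.

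The error: for an arrow $\gamma\colon j \to i$ one has $\gamma \in e_jAe_i$, so $\gamma A$ is a quotient of $e_iA$ and $\topp(\gamma A) \cong S_{t(\gamma)} = S_i$, not $S_j$. Hence in the case $\Omega_A(\alpha A) = \gamma A$ with $\gamma\colon j\to i$ you have $[\topp R_\alpha : S_i] = 1$, not $0$. With your own ``base plus correction of $0$ or $1$'' bookkeeping this already cannot reach $2$, and the sentence about a ``missing arrow at $i$'' contributing is a hand-wave that does not follow from what precedes it.

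The misdiagnosis: the $S_i$ coming from the quotient map $\Omega_A(U_i) \twoheadrightarrow S_i$ \emph{always} appears in $\topp \Omega_A(U_i)$, since any simple quotient of a module is a summand of its top. So there is no splitting question on that side, and your ``main obstacle'' evaporates in the $\dim = 1$ case (which is, incidentally, the only case used in Corollary~\ref{cor:9.13}). The genuinely nontrivial point is the other direction in the $\dim = 2$ case: one must check that the surjection $R_\alpha = \gamma A \twoheadrightarrow S_i$ extends to a map $\Omega_A(U_i)\to S_i$, equivalently that $\gamma \notin \rad \Omega_A(U_i)$. This holds because one may choose a lift $z \in \Omega_A(U_i)\setminus R_\alpha$ lying in $\sigma A$, where $\sigma = g(\alpha)$ is the other arrow out of $j$; then $z\,\rad A \subseteq \gamma A \cap \sigma A = \soc P_j \subseteq \gamma\,\rad A$, so $\rad\Omega_A(U_i) = \gamma\,\rad A$ and $\topp\Omega_A(U_i) \cong S_i \oplus S_i$. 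The paper states the outcome of this computation without elaboration; it is short, but it is the only place requiring care, and your proposal does not supply it. The third paragraph, trying to compute $\Ext_A^1(U_i,S_i)$ in terms of arrows ending at $i$, conflates $\Ext_A^1(U_i,S_i)$ with $\Ext_A^1(S_i,S_i)$ and does not lead anywhere.
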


\begin{proof}
There is a commutative diagram in $\mod A$ with exact rows
\[
 \xymatrix{
   0 \ar[r] & \Omega_A(\alpha A) \ar[d] \ar[r] 
     & P_j \ar[d]^{\id} \ar[r] & \alpha A \ar[d]^{p} \ar[r] & 0 
  \\
   0 \ar[r] & \Omega_A(U_i) \ar[r] & P_j \ar[r] & U_i \ar[r] & 0 
  }
\]
with $p$ the canonical epimorphism, and hence a short exact sequence
of the form
\[
   0 \rightarrow
   \Omega_A(\alpha A)  \rightarrow
   \Omega_A(U_i) \rightarrow 
   S_i \rightarrow 
   0. 
\]
We also note that $\Omega_A(\alpha A) \cong R_{\alpha}$
has a simple top and must be generated by an arrow $\rho$ starting at $j$. 
Applying 
$\Hom_A(-,S_i)$
to the lower exact sequence of the above diagram
we obtain an exact sequence of $K$-vector spaces
\[
   0 \rightarrow
   \Hom_A(U_i,S_i) \rightarrow
   \Hom_A(P_j,S_i) \rightarrow
   \Hom_A\big(\Omega_A(U_i),S_i\big) \rightarrow
   \Ext_A^1(U_i,S_i) \rightarrow
   0, 
\]
and hence an isomorphism of $K$-vector spaces
\[
   \Ext_A^1(U_i,S_i) \cong
   \Hom_A\big(\Omega_A(U_i),S_i\big) , 
\]
because 
$\topp(U_i) = \topp(P_j)$.
Further, we see that
$\dim_K \Hom_A(\Omega_A(U_i),S_i) = 1$
if the top of $\Omega_A(\alpha A)$ is not isomorphic to $S_i$, and
$\dim_K \Hom_A(\Omega_A(U_i),S_i) = 2$
if $S_i$ is the top of $\Omega_A(\alpha A)$.
But $S_i$ is the top of $\Omega_A(\alpha A)$
if and only if 
$\Omega_A(\alpha A) = \rho A$ for an arrow $\rho$ in $Q$
from $j$ to $i$.
This proves the claim.
\end{proof}

\begin{corollary}
\label{cor:9.13}
Let $\alpha$ be an arrow in $Q$ from $i$ to $j$
and suppose there is no arrow in $Q$ from $j$ to $i$.
Then $\alpha A \cong \alpha' A$
for any arrow $\alpha'$ such that $\alpha A/S_i \cong \alpha' A/S_i$.
\end{corollary}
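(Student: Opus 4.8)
The plan is to deduce Corollary~\ref{cor:9.13} directly from Lemma~\ref{lem:9.12}. Suppose $\alpha : i \to j$ is an arrow in $Q$, with no arrow in $Q$ from $j$ to $i$, and let $\alpha'$ be any arrow with $\alpha A/S_i \cong \alpha' A/S_i$. First I would observe that since $j = t(\alpha)$ must be a $2$-vertex (if $j$ were a $1$-vertex then by Proposition~\ref{prop:9.3}(iv) there would be an arrow from $j$ to $i$, contrary to hypothesis), Lemma~\ref{lem:9.12} applies with $U_i = \alpha A/S_i$. Since there is no arrow from $j$ to $i$, in particular $\Omega_A(\alpha A)$ cannot be of the form $\gamma A$ for an arrow $\gamma$ from $j$ to $i$, so Lemma~\ref{lem:9.12} gives $\dim_K \Ext_A^1(U_i, S_i) = 1$.

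Next I would invoke the standard description of extensions: the $1$-dimensional space $\Ext_A^1(U_i, S_i)$ classifies (up to scalar) a unique non-split extension of $U_i$ by $S_i$. Both $\alpha A$ and $\alpha' A$ are extensions of $U_i$ by $S_i$: indeed $\alpha A$ sits in $0 \to S_i \to \alpha A \to \alpha A/S_i \to 0$ with $\alpha A/S_i = U_i$, and likewise $\alpha' A$ sits in $0 \to S_i' \to \alpha' A \to \alpha' A/S_i \to 0$ where the socle of $\alpha' A$ is simple; here I need $S_i$ (not merely some simple) to be the socle, which holds because $\alpha A/S_i \cong \alpha' A/S_i$ forces $\alpha' A$ and $\alpha A$ to have the same composition factors, and $\alpha A$ has socle $S_i$ sitting below the uniserial-type quotient $U_i$ — more carefully, $\soc(\alpha' A)$ is the kernel of $\alpha' A \twoheadrightarrow \alpha' A/\soc$, and matching with $U_i$ one reads off that this quotient is $U_i$ and the socle is $S_i$. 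Both extensions are non-split (the modules $\alpha A$, $\alpha' A$ are indecomposable, generated by a single arrow, hence have simple top, so a splitting would contradict indecomposability). Therefore they represent the same class in $\mathbb{P}(\Ext_A^1(U_i, S_i))$, which is a single point, and hence $\alpha A \cong \alpha' A$ as $A$-modules.

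The main obstacle I expect is the bookkeeping in the previous paragraph: namely, being careful that the two short exact sequences one writes down for $\alpha A$ and $\alpha' A$ genuinely have the \emph{same} outer terms $S_i$ and $U_i$, so that they live in the \emph{same} $\Ext$ group, and that both are non-split so that the $1$-dimensionality really pins them to a single isomorphism class. One should note that $\alpha A$ and $\alpha' A$ have the same length (equal to $\dim_K U_i + 1$), that $\soc(\alpha A) = S_i$ because $\alpha A$ is a submodule of the symmetric projective $P_j$ and hence has the appropriate socle, and that the same holds for $\alpha' A$; the hypothesis $\alpha A/S_i \cong \alpha' A/S_i$ then forces the top quotients to agree. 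A cleaner alternative, if one prefers to avoid the projective-space argument, is to use that $\Omega_A$ acts bijectively on the (finitely many) isomorphism classes of modules generated by arrows at the mouths of stable tubes, together with Lemma~\ref{lem:9.8}: the condition $\dim_K \Ext_A^1(U_i,S_i)=1$ says the middle term of the almost split sequence ending at $U_i$ — equivalently the relevant projective cover data — admits only one indecomposable lift, which is exactly the statement $\alpha A \cong \alpha' A$. Either way the content is entirely Lemma~\ref{lem:9.12} plus the uniqueness of non-split self-extensions in a $1$-dimensional $\Ext$ space.
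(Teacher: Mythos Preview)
Your proof is correct and follows essentially the same route as the paper's: apply Lemma~\ref{lem:9.12} to get $\dim_K \Ext_A^1(U_i,S_i)=1$, then observe that both $\alpha A$ and $\alpha' A$ are indecomposable (hence non-split) extensions of $U_i$ by $S_i$ and therefore isomorphic. One small slip: when you justify $\soc(\alpha A)=S_i$, you write that $\alpha A$ is a submodule of $P_j$; in fact $\alpha A \subseteq e_i A = P_i$ (since $\alpha = e_i\alpha$), and it is $\soc(P_i)=S_i$ that gives the conclusion. Your added remark that $j$ must be a $2$-vertex (else Proposition~\ref{prop:9.3}(iv) would produce an arrow $j\to i$) is a detail the paper leaves implicit but is needed to invoke Lemma~\ref{lem:9.12}.
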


\begin{proof}
It follows from Lemma~\ref{lem:9.12}
that $\Ext_A^1(U_i,S_i)$ is one-dimensional,
where $U_i = \alpha A/ S_i$.
Hence any two indecomposable modules which are
extensions of $U_i$ by $S_i$ are isomorphic.
\end{proof}


Next, we want to show that $A$ is special biserial,
with the exception as described in Proposition~\ref{prop:9.15} . 
The following lemma
which is slightly more general, will be used several times for the proof.

For a vertex $i \in Q_0$, $u \in e_i A e_i$ is called a
\emph{normalized unit} if $u - e_i \in \rad e_i A e_i$.

\begin{lemma}\label{lemma:9.14}
 Assume 
$Q$ has a subquiver 
\[
  k \xrightarrow{\alpha} 
  j \xrightarrow{\gamma} 
  i \xrightarrow{\delta} l
 .
\]
Assume that
\begin{enumerate}[(i)]
 \item
 $\Omega_A(\delta A) = \gamma A$ and $\Omega_A(\alpha A) = \delta'A$,
in particular $\delta \gamma = 0$ and $\alpha\delta' = 0$.
 \item
 $\delta A \cong \delta' A$.
\end{enumerate}
Then there are normalized units  $u, v\in e_iAe_i$ such that
$\delta = \delta vu$ and $\delta' = \delta'uv$, and moreover
$(\delta'u)\gamma = 0$
and $\alpha(\delta'u) = 0$.
In particular, $\delta' - \delta'u \in (\rad A)^2$.
\end{lemma}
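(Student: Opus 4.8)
The strategy is to build the unit $u$ from the isomorphism $\delta A\cong\delta' A$, then get $v$ symmetrically, and finally correct $\delta'u$ by a ``higher-order'' term so that it is annihilated on both sides exactly as $\delta'$ is. First I would fix an isomorphism $\theta:\delta A\xrightarrow{\ \sim\ }\delta' A$ of right $A$-modules. Since $\delta'\in e_iAe_j\cdot(\text{stuff})$ — more precisely $\delta'\in e_l A e_i$ generates $\delta' A$ with $\topp(\delta' A)=S_l$, and likewise $\delta$ generates $\delta A$ with $\topp(\delta A)=S_l$ — the image $\theta(\delta)$ is a generator of $\delta' A$, hence $\theta(\delta)=\delta' u$ for some normalized unit $u\in e_iAe_i$ (normalized because $\theta$ preserves tops, so $\delta'u\notin(\rad A)\delta' A$, which forces $u\in e_iAe_i\setminus\rad e_iAe_i$, and $e_iAe_i$ is local so we may rescale to make $u-e_i\in\rad$). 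Applying $\theta^{-1}$ the same way gives a normalized unit $v$ with $\theta^{-1}(\delta')=\delta v$, and then $\delta=\theta^{-1}(\delta'u)=\theta^{-1}(\delta')u=\delta vu$; symmetrically $\delta'=\delta'uv$. That $u,v$ are honest two-sided inverses modulo the relevant annihilators is automatic from $\theta$ being an iso.

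Next I would transport the annihilation relations through $\theta$. The key point is that $\Omega_A(\delta A)=\gamma A$ means $R_\delta=\gamma A$, i.e. $\{x\in e_iA\mid \delta x=0\}=\gamma A$; similarly $\Omega_A(\alpha A)=\delta' A$ says $R_\alpha=\delta' A$, i.e. $\{x\in e_iA\mid \alpha x=0\}=\delta' A$. We want $(\delta'u)\gamma=0$ and $\alpha(\delta'u)=0$. The second is immediate: $\delta'u\in\delta' A=R_\alpha$, so $\alpha\delta'u=0$. For the first, $(\delta'u)\gamma$: note $\delta'u$ generates $\delta' A$, so there is a surjection $\delta A\cong\delta' A\twoheadrightarrow \delta' A$; chasing the defining exact sequence $0\to\gamma A\to e_jA\xrightarrow{x\mapsto \delta x}\delta A\to 0$ through the isomorphism identifying $\delta A$ with $\delta' A$ via $\delta\mapsto\delta'u$, the kernel of right multiplication $e_j A\to \delta' A$, $x\mapsto (\delta'u)x$, must again be $\gamma A$ (the syzygy is an invariant of the module, and $\delta A\cong\delta' A$). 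Hence $(\delta'u)\gamma=0$ since $\gamma\in\gamma A$.

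The last clause, $\delta'-\delta'u\in(\rad A)^2$, follows because $u-e_i\in\rad e_iAe_i$, so $\delta'u-\delta'=\delta'(u-e_i)$ with $\delta'\in\rad A$ (it is an arrow times possibly more, at any rate a path of length $\geq 1$) and $u-e_i\in\rad A$, giving a product of two elements of $\rad A$.

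\textbf{Main obstacle.} The delicate point is the very first one: extracting the \emph{normalized} unit $u$ with $\theta(\delta)=\delta'u$ and simultaneously $\delta=\delta vu$, and making sure $u$ (not merely its class modulo some annihilator) can be chosen in $e_iAe_i$ with $\delta=\delta vu$ holding \emph{on the nose} in $A$, not just after multiplying by $\gamma$ or applying $\alpha$. This is where one must use carefully that $\delta A$ and $\delta'A$ are \emph{uniserial} with the same composition length (which follows from the generalized-dihedral structure: $\Omega_A$ permutes the arrow-modules and $\delta A\cong\delta'A$ forces equal length), so that right multiplication $e_iAe_i\to \Hom_A(\delta A,\delta A)$ or the relevant Hom-space is understood, and the local algebra $e_iAe_i$ acts faithfully enough on $\delta A$ in the top degree to pin down $u$ up to the ambiguity that lands in $(\rad A)^2$. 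The symmetry between $\delta$ and $\delta'$ (they are swapped by going once around the $\Omega_A$-cycle, using (i) and (ii)) is what lets one close the loop and get both $\delta=\delta vu$ and $\delta'=\delta'uv$ rather than just one of them.
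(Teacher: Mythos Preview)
Your approach is correct and matches the paper's proof almost exactly: fix an isomorphism $\psi:\delta A\to\delta'A$, write $\psi(\delta)=\delta'u$ and $\psi^{-1}(\delta')=\delta v$ with $u,v\in e_iAe_i$, deduce $\delta=\delta vu$ and $\delta'=\delta'uv$ from $\psi^{-1}\psi=\id$, and get $(\delta'u)\gamma=\psi(\delta)\gamma=\psi(\delta\gamma)=0$ from $A$-linearity. The only substantive difference is in showing $u$ is a unit: the paper observes that $\delta(e_i-vu)=0$ forces $e_i-vu\in\Omega_A(\delta A)=\gamma A\subseteq\rad A$, while you argue directly via tops; your ``main obstacle'' worry is unfounded, since $\delta=\delta vu$ follows on the nose from $\psi^{-1}\psi=\id$ without any uniseriality or faithfulness considerations.
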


\begin{proof}
Let $\psi: \delta A \to \delta'A$ be an isomorphism
in $\mod A$. 
We have
\[\psi(\delta) = \delta'u, \ \ \psi^{-1}(\delta') = \delta v,\]
for some $u,v\in e_iAe_i$.
Then
\[
  \delta = \psi^{-1}\psi(\delta) = \psi^{-1}(\delta'u) = \psi^{-1}(\delta')u
= \delta vu,
\]
and similarly $\delta' = \delta'uv$.
Hence $\delta(e_i-vu) = 0$ and therefore
$(e_i-vu) \in \Omega(\delta A) = \gamma A$.
Write $e_i-vu = \gamma z$ for some $z\in A$. Hence $e_i-vu$ lies
in  the radical of $e_iAe_i$ and  is therefore nilpotent. This
implies that  $vu$ is a unit in
$e_iAe_i$ and so are $u$ and $v$. We may assume that $u, v$ are normalized, so that $u$ is of the form $e_i + u'$ with $u'$ in the radical. 
Clearly $\alpha(\delta'u) = 0$. As well
$0 = \psi(0) = \psi(\delta\gamma) = \psi(\delta)\gamma = (\delta'u)\gamma.$
Since $u$ is normalized, we have $\delta' - \delta'u \in (\rad A)^2$.
\end{proof}

\begin{proposition} \label{prop:9.15} 
The algebra $A$ is special biserial, except possibly 
that squares of loops may be non-zero in
the socle.
\end{proposition}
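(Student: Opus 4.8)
The plan is to produce a presentation $A = KQ/I$ for which the bound quiver $(Q,I)$ satisfies the two defining conditions of a special biserial algebra, condition (a) (every vertex is source and target of at most two arrows) being immediate from (Q1) of Proposition~\ref{prop:9.5}, together with Lemma~\ref{lem:9.6} when $|Q_0| = 2$. Thus the whole content lies in condition (b): after a suitable choice of arrows, for every arrow $\alpha$ there is at most one arrow $\beta$ with $\alpha\beta \notin I$ and at most one arrow $\gamma$ with $\gamma\alpha \notin I$, with the sole exception of loops $\alpha$ fixed by $f$ but not by $g$, for which $\alpha^2$ may be a non-zero element of the socle. All the adjustments will be changes of arrows by elements of $(\rad A)^2$, hence genuinely produce a new presentation with Gabriel quiver $Q$.

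First I would fix, for each $2$-vertex $i$, a pair of arrows starting at $i$ with $\alpha A \cap \beta A \subseteq \soc(e_iA)$ as in Notation~\ref{notation:9.7}, together with the associated permutations $f$ and $g$. Using Lemma~\ref{lem:9.10} I would replace these arrows by arrows differing from them by elements of $(\rad A)^2$ so that, at each $2$-vertex $i$, the ``forbidden'' length-two path beginning with an arrow $\gamma$ ending at $i$ becomes zero: $\gamma f(\gamma) = 0$ if $\gamma$ is not a loop fixed by $f$, and $\gamma^2 \in \soc(e_iA)$ when $\gamma$ is such a loop; the intersection condition $\alpha'A \cap \beta'A \subseteq \soc(e_iA)$ is preserved, so the only remaining candidate for a non-zero path of length two starting at $\gamma$ is $\gamma g(\gamma)$, and dually only one incoming arrow composes nontrivially with any given arrow. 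Loops fixed by $g$ are treated separately by Lemma~\ref{lem:9.11}, which kills all adjacent length-two paths on both sides, so they cause no difficulty. At the $1$-vertices Proposition~\ref{prop:9.3}(iv) already provides the unique pair $i \xrightarrow{\beta} j \xrightarrow{\alpha} i$ with $\beta\alpha = 0$, while Lemma~\ref{lem:9.4} shows that $\alpha\beta$ does not occur in a minimal relation; hence condition (b) holds at $1$-vertices with no further adjustment.

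The delicate point is the global coherence of these local adjustments: an arrow $\alpha$ is modified at $t(\alpha)$ according to Lemma~\ref{lem:9.10}, but it is also incident to $s(\alpha)$, where the relation $\gamma\alpha = 0$ coming from $\Omega_A(\alpha A) = \gamma A$ must survive. Here Lemma~\ref{lemma:9.14} is decisive: the required modification of $\alpha$ can be realized by right multiplication by a normalized unit of $e_{t(\alpha)}Ae_{t(\alpha)}$, which simultaneously preserves $\gamma\alpha = 0$ and changes $\alpha$ only by an element of $(\rad A)^2$; Corollary~\ref{cor:9.13}, via Lemma~\ref{lem:9.12}, supplies the isomorphism $\alpha A \cong \alpha'A$ needed to invoke Lemma~\ref{lemma:9.14} whenever there is no return arrow. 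I would therefore process the arrows $f$-cycle by $f$-cycle (equivalently, by induction on $|Q_0|$ using the idempotent reduction $\varepsilon A\varepsilon$ already exploited in the proof of Proposition~\ref{prop:9.5}, with Lemma~\ref{lem:9.6} as base), never needing to re-adjust an arrow that has already been fixed, and conclude that in the resulting presentation every path of length two vanishes except possibly $\alpha g(\alpha)$ for $\alpha \in Q_1$ and $\alpha^2$ for loops $\alpha$ fixed by $f$ but not $g$.

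I expect the main obstacle to be exactly this globally coherent normalization of arrows around the cycles of $Q$: one must verify that the successive replacements prescribed by Lemmas~\ref{lem:9.10} and~\ref{lemma:9.14} never conflict, both around the triangular (or loop-and-digon) $f$-cycles and when an $f$-cycle meets a $1$-vertex; the latter requires combining Lemma~\ref{lem:9.4} (no length-two path through a $1$-vertex occurs in a minimal relation, so such compositions are forced non-zero and need no adjustment) with Lemma~\ref{lem:9.1} (for periodic $S_i$ the numbers of arrows into and out of $i$ agree, so $f$ and $g$ are well defined there). Once a coherent choice of arrows is in place, conditions (a) and (b)---with the stated exception for squares of loops---follow by direct inspection of the resulting bound quiver.
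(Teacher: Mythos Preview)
Your strategy is the paper's: adjust arrows $f$-cycle by $f$-cycle using Lemmas~\ref{lem:9.10}, \ref{lem:9.11}, and \ref{lemma:9.14}, leaving only the socle square of border loops as the exception. Two points, however, are not covered.

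First, you cite Corollary~\ref{cor:9.13} to furnish the isomorphism $\delta A \cong \delta' A$ required by Lemma~\ref{lemma:9.14}, but that corollary applies only when there is no return arrow. This already fails for $f$-cycles of length two (where $\alpha : i \to j$ and $\beta : j \to i$ are mutual return arrows), and in general whenever the vertex $t(\delta)$ admits an arrow back to $s(\delta)$. The paper does not use Corollary~\ref{cor:9.13} here at all: since arrow modules lie on the mouths of stable tubes of rank $1$ or $3$ and $\Omega_A$ fixes every rank-$3$ tube, one has $\Omega_A^3(\gamma A) \cong \gamma A$ (respectively $\Omega_A^2$), and this periodicity gives the required isomorphism $\gamma' A \cong \gamma A$ directly, with no hypothesis on return arrows.

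Second, and more seriously, you do not isolate the Markov quiver. There every arrow is half of a double arrow, so the clause ``if $\alpha,\beta$ are not double arrows'' in Lemma~\ref{lem:9.10}(i) never applies and you cannot conclude that the replacement arrows differ from the original ones by elements of $(\rad A)^2$; your $f$-cycle procedure therefore does not get started. The paper treats this quiver separately: fixing any choice as in Notation~\ref{notation:9.7}, one uses Lemma~\ref{lem:9.8} to identify $R_{\alpha_i}$ and $R_{\beta_i}$ with the arrow submodules at the next vertex, proceeds cyclically, and closes up with Lemma~\ref{lemma:9.14} via the $\Omega$-period~$3$ of each arrow module. Your suggested induction on $|Q_0|$ via the idempotent algebra $\varepsilon A \varepsilon$ does not help, since for the Markov quiver $I_0 = \emptyset$ and $\varepsilon = 1$.
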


\begin{proof}
(I) \ We prove this first when $Q$ is not the Markov quiver 
(see Example~\ref{ex:4.4}).

\smallskip
 First we  show that for suitable choice of arrows, the condition
on paths of length two of the definition holds.
That is, we must show that for suitable choice, the product
of two arrows  along a cycle of $f$, 
in Notation~\ref{notation:9.9}, is zero.

(1) \ Assume $i\in I_0$, then the 
cycle of $f$ containing  $i$ clearly has
length three.
Then, by 
Proposition~\ref{prop:9.3},
we may assume that it is of the form
\[
   (i\,\gamma\,\delta)
\]
and $\gamma\delta = 0$.

\smallskip
From now, we need to consider only arrows adjacent to 2-vertices.

\smallskip

(2) Consider a fixed point $\alpha \in Q_1$ of $f$. Then $\alpha$ is a loop. 
We have
the setting as in part (ii) of 
Lemma~\ref{lem:9.10}, and 
we may assume that
$\alpha^2$ lies in the socle of $A$.
Then we can write
\[
   \alpha^2 = b\omega_i
\]
where $\omega_i$ generates the socle of $P_i = e_i A$ and $b \in K$.
Moreover, we see from this directly that 
$\Omega_A^2(\alpha A) \cong \alpha A$.

\smallskip

(3)\ 
Consider a loop $\gamma$ which is not fixed
by $f$. With the notation of Lemma~\ref{lem:9.10}(ii), 
one of $\alpha, \beta$ is equal to $\gamma$ and
$\alpha \neq \gamma$ and therefore $\beta = \gamma$.

Since $\gamma$ is fixed by $g$, we have by Lemma \ref{lem:9.11} that
$\gamma\alpha = 0$, and $\Omega(\gamma A) \cong \alpha A$. 
Let $j=t(\alpha)$. 
This is a 2-vertex $\neq i$
 and $\Omega(\alpha A)$ is generated
by an arrow which must end at $i$ since the period is $\leq 3$. It 
must be an arrow $\delta'$ with $\alpha \delta' = 0$ and 
$\delta'-\delta \in (\rad A)^2$. Now 
again by Lemma \ref{lem:9.11} we have even $\delta\gamma =0$
and $\Omega(\alpha A) \cong \Omega^{-1}(\gamma A) \cong  \delta A$.

Now we have the hypotheses of  Lemma \ref{lemma:9.14}  which shows that
we can assume also that $\alpha \delta = 0$.

\bigskip

We are left to consider cycles of $f$ on arrows which do not
contain any loops.

\smallskip

(4) \ Consider such a cycle of length three, say $(\gamma \ \alpha \ \delta)$.
This must then pass through
three distinct
vertices. By assumption, $Q$ is not the Markov quiver, so at most one of the
three arrows can be  part of a double arrow. 
So we may label such that 
$\alpha, \delta$ are not double arrows. 
Then we may apply part (i) of Lemma \ref{lem:9.10} which gives that
we may replace $\alpha$ (and $\beta$ with the labelling there)
 and assume $\gamma\alpha = 0$ and $\Omega(\gamma A) = \alpha A$.
Say $\alpha$ ends at $j$ so that $\delta$ starts at $j$. 
Then $j$ must be
a 2-vertex and $\delta$ is not part of a double arrow. 
So we may assume
$\alpha\delta = 0$ and $\Omega(\alpha A) = \delta A$. 

The period is three, so $\Omega(\delta A) \cong \gamma A$ and $\Omega(\delta A)
= \gamma' A$ for an arrow $\gamma'$ starting at $t(\delta)$. 
Now we use Lemma \ref{lemma:9.14} again. This shows that we may assume
$\delta\gamma =0$.

(5) \
Assume now that $\alpha$ is not a loop and $\alpha A$
has $\Omega_A$-period $2$. 
Then, by part (3) in Notation~\ref{notation:9.9}, 
we have 
$\Omega_A(\alpha A) = \beta A$ for an arrow $\beta$ 
with $\alpha\beta = 0$.
Then we must have 
$\Omega_A(\beta A) \cong \alpha A$ and also $\Omega(\beta A) = \alpha'A$ for
some arrow $\alpha': i\to j$. 

We apply the Lemma \ref{lemma:9.14} again, this
 gives that we may assume $\alpha\beta = 0$ and
$\beta\alpha = 0$. 

Let $\alpha$ be an arrow starting at a $2$-vertex, and let $n_{\alpha}$ be the size of
the $g$-orbit of $\alpha$. Then there is a maximal $m = m_{\alpha} \geq 1$ such that
    $B_{\alpha}: = (\alpha g(\alpha) \ldots g^{n_{\alpha}-1}(\alpha))^m \neq 0$.
This generates the socle of $\alpha A$. The parameters $n_{\alpha}$ and $m_{\alpha}$ are constant on $g$-orbits since $A$ is symmetric. 
Whenever $i\in Q_0$ is a 2-vertex and $\gamma, \delta$ are the arrows starting at $i$, there is a non-zero scalar
such that
$$B_{\gamma}  = c_{\delta}B_{\delta}
$$
The algebra is symmetric, and by a standard argument one can scale some arrows suitably and obtain that any such scalar
$c_{\delta}$ is equal to $1$.

\bigskip

(II) Now we prove the Proposition for an algebra $A$ of generalized dihedral type
 where the quiver is as follows
\[
  \xymatrix@R=3.pc@C=1.8pc{
    1
    \ar@<.35ex>[rr]^{\alpha_1}
    \ar@<-.35ex>[rr]_{\beta_1}
    && 2
    \ar@<.35ex>[ld]^{\alpha_2}
    \ar@<-.35ex>[ld]_{\beta_2}
    \\
    & 3
    \ar@<.35ex>[lu]^{\alpha_3}
    \ar@<-.35ex>[lu]_{\beta_3}
  }
\]

We choose arrows $\alpha_i, \beta_i$ 
such that $e_iA = \alpha_iA + \beta_iA$ with $\alpha_iA\cap \beta_iA = \soc (e_iA)$. 
For  this quiver, every module generated by an arrow must have
$\Omega$-period three. 

Then we use Lemma \ref{lem:9.8}, and get identifications,
that is 
$$\{ \alpha_iA/S_i, \beta_iA/S_i\} = \{ R_{\alpha_{i-1}}/S_i, R_{\beta_{i-1}}/S_i\}
$$
(taking $i$ modulo $3$).

(1) We start with $\alpha_1, \beta_1$. We may assume 
$R_{\alpha_1}/S_2 \cong \alpha_2A/S_2$ and
$R_{\beta_1}/S_2 \cong \beta_2A/S_2$. 
By Lemma \ref{lem:9.8} 
we may assume 
$\alpha_2$ generates 
$R_{\alpha_1}$ and 
$\beta_2$ generates $R_{\beta_1}$.

(2) The same reasoning gives that we may assume $R_{\alpha_2} = \alpha_3A$
and $R_{\beta_2} = \beta_3A$. 

So far we have $\Omega(\alpha_iA) = \alpha_{i+1}A$ and
$\Omega(\beta_iA) = \beta_{i+1}A$ for $i \in \{1, 2\}$. 
The period of $\alpha_iA$  is $3$, and therefore
$\Omega(\alpha_3A) \cong \alpha_1A$. 
As well $\Omega(\alpha_3A) = \alpha_1'A$ where $\alpha_1'$ is an arrow
from $1$ to $2$. 
Similarly,
$\beta_1A \cong \Omega(\beta_3A) = \beta_1'A$ for an arrow
$\beta_1'$ from $1$ to $2$. 

We apply Lemma \ref{lem:9.8} which shows that, 
without loss of generality,
$\alpha_1'=\alpha_1$ and $\beta_1' = \beta_1$. 

Now we can get socle relations, similarly as in the general case.
In total the relations are
\begin{align*}
 \alpha_1 \alpha_2 &= 0,
 &
 \beta_1 \beta_2 &= 0,
 &
  (\alpha_1 \beta_2 \alpha_3 \beta_1 \alpha_2 \beta_3)^m
    &=
  (\beta_1 \alpha_2 \beta_3 \alpha_1 \beta_2 \alpha_3)^m
,\\  
 \alpha_2 \alpha_3 &= 0,
 &
 \beta_2 \beta_3 &= 0,
 &
  (\alpha_2 \beta_3 \alpha_1 \beta_2 \alpha_3 \beta_1)^m
    &=
  (\beta_2 \alpha_3 \beta_1 \alpha_2 \beta_3 \alpha_1)^m
,\\  
 \alpha_3 \alpha_1 &= 0,
 &
 \beta_3 \beta_1 &= 0, 
 &
  (\alpha_3 \beta_1 \alpha_2 \beta_3 \alpha_1 \beta_2)^m
    &=
  (\beta_3 \alpha_1 \beta_2 \alpha_3 \beta_1 \alpha_2)^m
.
\end{align*}
Here $m\geq 1$, and this defines the weight function. 
\end{proof}

We will now construct a biserial weighted triangulation algebra such that the
idempotent algebra
associated to an appropriate set of 
2-triangle disks is isomorphic  to $A$.

\medskip

We fix $A$, and then $(Q, f)$ is fixed.
We denote by $\Delta = \Delta(Q, f)$ the family of all 2-cycles 
$\xymatrix{ a \ar@<.5ex>[r]^{\alpha} & b \ar@<.5ex>[l]^{\beta} }$
in $Q_1$ with $\alpha\beta = 0$ and $\beta\alpha = 0$.

We define now a triangulation quiver $(\tilde{Q},\tilde{f})$ with
$\tilde{Q} = (\tilde{Q}_0,\tilde{Q}_1,\tilde{s},\tilde{t})$. This is obtained from $(Q, f)$ as follows.
\begin{enumerate}[(I)]
 \item
  For each vertex $i \in I_0$ and the arrows
  $
  \xymatrix{ i \ar@<.5ex>[r]^{\gamma_i} & i^* \ar@<.5ex>[l]^{\delta_i} }
  $
  adjacent to  $i$, we create in $\tilde{Q}_1$ a loop $\eta_i$ at $i$,
  and set 
  $\tilde{f}(\eta_i) = \gamma_i$,
  $\tilde{f}(\gamma_i) = \delta_i$, and
  $\tilde{f}(\delta_i) = \eta_i$.
 \item
  We replace each $2$-cycle 
  $
    \xymatrix{ a \ar@<.5ex>[r]^{\alpha} & b \ar@<.5ex>[l]^{\beta} }
  $
  in $\Delta$ 
  by a $2$-triangle disk in $\tilde{Q}$ of the form
  \[
  \xymatrix{
  & 
    c
      \ar@<-.5ex>[dd]_{\varepsilon}
      \ar[rd]^{\sigma}
  \\ 
    a
      \ar[ru]^{\theta}
  &&
    b
      \ar[ld]^{\varrho}
  \\ 
  &
    d
      \ar[lu]^{\mu}
      \ar@<-.5ex>[uu]_{\xi}
  } 
  \]
  with
  $\tilde{f}(\theta) = \varepsilon$,
  $\tilde{f}(\varepsilon) = \mu$,
  $\tilde{f}(\mu) = \theta$,
  $\tilde{f}(\sigma) = \varrho$,
  $\tilde{f}(\varrho) = \xi$,
  $\tilde{f}(\xi) = \sigma$.
 \item
  We keep in $\tilde{Q}_1$ all arrows $\omega$ of $Q_1$
  which do not belong to $2$-cycles in $\Delta$
    and set
  $\tilde{f}(\omega) = f(\omega)$.
\end{enumerate}
We note that $(\tilde{Q},\tilde{f})$ 
is a triangulation quiver with
$|\tilde{Q}_0| = |Q_0| + 2|\Delta|$.
We denote by $\tilde{\Sigma}$ the family 
of all $2$-triangle disks in $(\tilde{Q},\tilde{f})$ 
created from  the $2$-cycles in $\Delta$.
Observe also that the border
$\partial(\tilde{Q},\tilde{f})$ 
of
$(\tilde{Q},\tilde{f})$ 
is given by the sources (targets) of all loops $\nu$
in $Q_1$ with $f(\nu) = \nu$.

We denote by $\tilde{g} : \tilde{Q}_1 \to \tilde{Q}_1$
the permutation induced by $\tilde{f}$,
and by $\cO(\tilde{g})$ the set of all $\tilde{g}$-orbits in $\tilde{Q}_1$.

Let $\tilde{\cO}(\gamma)$ be the $\tilde{g}$-orbit of an arrow $\gamma$ of
$\tilde{Q}$. These orbits are as follows:

\begin{enumerate}[(I)]
\addtocounter{enumi}{3}
 \item
  For each vertex $i \in I_0$, we have the loop $\eta_i$ with 
  $\tilde{g}(\eta_i) = \eta_i$.
 \item
For each $2$-triangle disk created from a cycle in $\Delta$ we have the orbit of size 2 consisting
of $\varepsilon$ and $\xi$,
as in the above subquiver. 

\item For each $2$-triangle disk in $(\tilde{Q},\tilde{f})$ 
  created by a $2$-cycle $(\alpha \, \beta)$ from $\Delta$,
  we have 
  $\tilde{g}^{-1}(\theta) = {g}^{-1}(\alpha)$,
  $\tilde{g}(\theta) = \sigma$,
  $\tilde{g}(\sigma) = g(\alpha)$,
  $\tilde{g}^{-1}(\varrho) = {g}^{-1}(\beta)$,
  $\tilde{g}(\varrho) = \mu$,
  $\tilde{g}(\mu) = g(\beta)$.
That is, we obtain for example the $\tilde{g}$-orbit of
$g^{-1}(\alpha)$ by replacing $\alpha$ with $\theta, \sigma$, and keeping
the rest, and similarly we replace $\beta$ by $\rho, \mu$
to obtain the $\tilde{g}$-orbit of $g^{-1}(\beta)$.
\item
If $\gamma \in Q_1$ and $\cO(\gamma)$ does not contain an arrow from a cycle in $\Delta$ then $\tilde{O}(\gamma) = \cO(\gamma)$.
\end{enumerate}

In particular,  we have
$|\cO(\tilde{g})| = |\cO(g)| + |I_0| + |\Delta|$,
where $\cO(g)$ is the set of all $g$-orbits in $Q_1$.
We also note that two arrows $\gamma$ and $\delta$
in $Q_1 \cap \tilde{Q}_1$ belong to the same $g$-orbit in $Q_1$
if and only if they belong to the same $\tilde{g}$-orbit in $\tilde{Q}_1$.
\medskip

We set $\tilde{n}_{\gamma} = |\tilde{\cO}(\gamma)|$.
For each arrow $\delta \in Q_1$, we
had already defined  $n_{\delta}$ to be the length of the $g$-orbit $\cO(\delta)$
of $\delta$ in $Q_1$.
Clearly, $\tilde{n}_{\delta}  \geq n_{\delta}$ 
for any arrow $\delta \in \tilde{Q}_1 \cap Q_1$, 
but in general we may have
$\tilde{n}_{\delta}  > n_{\delta}$. 

We shall define now a suitable weight function 
$\tilde{m}_{\bullet} : \cO(\tilde{g}) \to \bN^*$.

(1) For an arrow in each of the new orbits of $\tilde{g}$, 
we set $\tilde{m}=1$, that is
$\tilde{m}_{\eta_i} = 1$ and $\tilde{m}_{\varepsilon}=1=\tilde{m}_{\xi}$.

(2) Any other orbit of $\tilde{g}$ contains $\gamma \in Q_1$ and
$\tilde{\cO}(\gamma) \cap Q_1 = \cO(\gamma)$. 
Set $\tilde{m}_{\rho} = m_{\gamma}$ for any arrow
$\rho$ in this $\tilde{g}$-orbit.

The border $\partial(\tilde{Q}, \tilde{f})$  consists of 
the sources of all loops whose square is in the socle. 
From the presentation of $A$ we have that if $\nu$ is a loop with
$\nu^2$ is in the socle then  there is $\tilde{b}_{i} \in K$,
with $i = \tilde{s}(\nu)$, such that
\[
   \nu^2 =  \tilde{b}_{i} \big(\nu g(\nu) \dots g^{n_{\nu}-1}(\nu)\big)^{m_{\nu}}
.
\]
This define a border function
$\tilde{b}_{\bullet} : \partial(\tilde{Q},\tilde{f}) \to K$.

\bigskip

We have $(\tilde{Q}, \tilde{f})$ which is a triangulation quiver. 
We also
have $\tilde{m}_{\bullet}$ and $\tilde{b}_{\bullet}$. 
With these data, let $B = B(\tilde{Q}, \tilde{f}, \tilde{m}_{\bullet}, 
\tilde{b}_{\bullet})$ be the associated socle deformed 
biserial weighted triangulation  algebra, and 
$B (\tilde{Q},\tilde{f},\tilde{\Sigma},\tilde{m}_{\bullet},\tilde{b}_{\bullet})$
be its idempotent algebra with respect to the family $\tilde{\Sigma}$ of $2$-triangle disks
in $(\tilde{Q},\tilde{f})$.
Then it follows from the above that $A$
is isomorphic to
$B (\tilde{Q},\tilde{f},\tilde{\Sigma},\tilde{m}_{\bullet},\tilde{b}_{\bullet})$.
We also note that 
$B (\tilde{Q},\tilde{f},\tilde{\Sigma},\tilde{m}_{\bullet},\tilde{b}_{\bullet})$
is socle equivalent to the idempotent algebra
$B (\tilde{Q},\tilde{f},\tilde{\Sigma},\tilde{m}_{\bullet})$
of the biserial weighted triangulation algebra
$B (\tilde{Q},\tilde{f},\tilde{m}_{\bullet})$.
Moreover, by 
Theorem~\ref{th:4.1},
$(\tilde{Q},\tilde{f})$
is the triangulation quiver $(Q(S,\vv{T}),f)$
associated to a directed triangulated surface $(S,\vv{T})$.
This completes the proof that (i) implies (ii), and hence the proof of 
Theorem~\ref{th:main1}.

\section{Proof of Theorem \ref{th:main2}}\label{sec:proof2}

Let $A$ be an algebra.
For a module $M$ in $\mod A$,
we denote by $[M]$ the image of $M$ in the Grothendieck
group $K_0(A)$ of $A$. Hence, for two modules $M$, $N$
in $\mod A$, we have $[M] = [N]$ if and only if
$M$ and $N$ have the same simple composition factors
including the multiplicities.

\begin{proposition}
\label{prop:10.1}
Let $B = B(Q,f,m_{\bullet})$
be a biserial weighted triangulation algebra
and suppose that the Cartan matrix $C_B$ of $B$ is non-singular.
Then $B$ is an algebra of strict dihedral type.
\end{proposition}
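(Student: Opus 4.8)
The plan is to exploit the structural results already proved: Proposition~\ref{prop:5.1} tells us that $B = B(Q,f,m_{\bullet})$ is automatically a tame symmetric algebra of generalized dihedral type, so conditions~(1), (2), (3) in the definition of strict dihedral type are immediate, and by hypothesis condition~(6) (non-singularity of $C_B$) holds. Thus it remains only to verify conditions~(4) and (5): that $\ell(B) \in \{2,3\}$ and that $\Gamma_B^s$ has exactly $\ell(B)-1$ stable tubes of rank $3$. By Corollary~\ref{cor:main8} (equivalently, the final part of the proof of Proposition~\ref{prop:5.1}), the number of stable tubes of rank $3$ in $\Gamma_B^s$ equals the number of $f$-orbits of length $3$ in $(Q,f)$; and if $(Q,f) = (Q(S,\vv{T}),f)$, this is the number of (non-self-folded, or rather all) triangles in $T$. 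So the whole problem reduces to a combinatorial statement about triangulation quivers: \emph{if $C_B$ is non-singular, then $(Q,f)$ has at most $2$ vertices more than it has $f$-orbits of length $3$}, and more precisely the number of length-$3$ orbits is exactly $|Q_0|-1$ with $|Q_0| \le 3$.

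First I would compute the Cartan matrix $C_B$ explicitly in terms of the combinatorial data $(Q,f,m_{\bullet})$. From the proof of Proposition~\ref{prop:5.1}, each indecomposable projective $P_i = e_iB$ has a basis consisting of $e_i$, the proper initial subwords of $B_\alpha$ and $B_{\bar\alpha}$ (where $\alpha, \bar\alpha$ are the two arrows out of $i$), together with the top socle element. So the entry $\dim_K e_j B e_i$ can be read off as the number of subwords of $B_\alpha$ or $B_{\bar\alpha}$ ending at $j$. The key point is to determine $\det C_B$ as a function of the weights and the cycle structure of $g$, and to understand for which triangulation quivers it is non-zero. I expect that $\det C_B$ factors nicely — likely as a product over something like the $f$-orbits or over the boundary — and that non-vanishing forces the quiver to be very small: essentially the quivers of Examples~\ref{ex:8.2}, \ref{ex:8.3}, \ref{ex:8.4}, which have $2$ or $3$ vertices and (respectively) $0$, $0$, $0$ loops fixed by $f$ contributing... wait, rather $1$, $2$, $2$ length-$3$ orbits — with the bookkeeping $|Q_0|-1$ matching in each case. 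I would check these three families directly: for each, write down $C_B$, confirm $\det C_B \ne 0$ (for all weights, or identify the constraint), and confirm conditions~(4), (5).

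The cleanest route is probably to combine Theorem~\ref{th:main1}/\ref{th:main2} machinery with a result of the type ``$C_A$ singular whenever $K_0(A)$ has rank $\ge 4$'' — indeed Corollary~\ref{cor:main4} gives exactly this for algebras of generalized dihedral type, so since $B$ is of generalized dihedral type, non-singularity of $C_B$ already forces $\ell(B) = \operatorname{rank} K_0(B) \le 3$, disposing of the bulk of condition~(4). The remaining cases $\ell(B) \in \{2,3\}$ are then a finite check: enumerate the triangulation quivers with $2$ or $3$ vertices (using that $Q$ is $2$-regular and connected with at least $2$ vertices), for each compute $\det C_B$, discard those for which it always vanishes, and observe that the survivors all satisfy (5). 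I would also need to handle $\ell(B) = 1$: but $B$ has at least $2$ vertices by convention (the triangulation quiver has at least two vertices), so $\ell(B) \ge 2$ unless a loop is removed by Remark~\ref{rem:5.1} — one must check a single-vertex $B$ cannot occur, or if it does, that it is excluded by the non-singularity hypothesis or falls outside the scope.

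The main obstacle will be the explicit Cartan determinant computation and the case analysis of small triangulation quivers: there are several shapes (two vertices with a loop and a $2$-cycle; two vertices with a double arrow; three vertices with various loop/triangle configurations including self-folded triangles and the Markov quiver), and for each one must pin down precisely when $\det C_B = 0$. The Markov quiver (Example~\ref{ex:4.4}) is a likely trouble spot since it has $3$ vertices and $2$ length-$3$ orbits, seemingly consistent with (5), so one must verify its Cartan matrix is in fact \emph{singular} (it should be, as Markov-type algebras are prototypical non-singular... no — prototypical \emph{singular} Cartan examples among these), thereby correctly excluding it. Once the determinant is in hand, matching $\ell(B)-1$ with the number of length-$3$ $f$-orbits is a bijective bookkeeping argument using the correspondence between triangles and length-$3$ orbits noted in the proof of Corollary~\ref{cor:main8}, so that step should be routine.
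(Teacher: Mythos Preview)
Your proposal contains a genuine circularity. The ``cleanest route'' you suggest invokes Corollary~\ref{cor:main4} to force $\ell(B)\le 3$, but that corollary is listed in the introduction as a consequence of Theorems~\ref{th:main1} and~\ref{th:main2}, and the proof of Theorem~\ref{th:main2} (in Section~\ref{sec:proof2}) uses Proposition~\ref{prop:10.1} itself. So you cannot appeal to Corollary~\ref{cor:main4} here; Proposition~\ref{prop:10.1} is precisely where the bound on $\ell(B)$ must be established from scratch.

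Your fallback plan of computing $\det C_B$ explicitly and then doing a case analysis of small triangulation quivers is not circular, but it is not what the paper does, and you do not carry it out. The paper's argument is much more direct and avoids any determinant calculation. It exploits the periodic exact sequences for the arrow modules $\alpha B$ recorded in Notation~\ref{notation:9.9}: each such sequence yields a linear relation among the classes $[P_i]$ in $K_0(B)$. Since $C_B$ is non-singular, the $[P_i]$ are linearly independent, so every such relation forces coincidences among vertices. Concretely: a $2$-cycle $(\alpha\,\beta)$ of $f$ with $\alpha\beta=\beta\alpha=0$ gives $[P_i]=[P_j]$, a contradiction; two $3$-cycles of $f$ through the same $2$-vertex give, after summing the two exact sequences and using $[\alpha B]+[\bar\alpha B]=[P_i]$, an equality $\{[P_k],[P_s]\}=\{[P_j],[P_t]\}$ which pins the quiver down to one of Examples~\ref{ex:8.3} or~\ref{ex:8.4} (and in particular excludes the Markov quiver); and a mixture of a $1$-vertex cycle with a $3$-cycle similarly forces three vertices. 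The triangle-with-three-loops quiver of Example~\ref{ex:4.3} is then ruled out separately because its single $g$-orbit makes all $[P_i]$ equal. Condition~(5) then follows by counting the $3$-cycles of $f$ in each surviving case.

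In short: replace the appeal to Corollary~\ref{cor:main4} by the Grothendieck-group argument from the periodic exact sequences of Notation~\ref{notation:9.9}; that is the missing idea.
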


\begin{proof}
By Proposition~\ref{prop:5.1},
we only have to show that $|Q_0|=2$ or $3$ and that the number
of stable tubes of rank $3$ in $\Gamma_B^s$ is $|Q_0|-1$. 
In fact we will show the first part, the second part will follow.

We  apply Theorem \ref{th:main1}.
We know that $A$ is special biserial, and we 
have a permutation $f$ on $Q_1\cup I_0$  describing the zero relations of length two
and 1-vertices of $Q$. This permutation has cycles of length $\leq 3$, and every arrow belongs to a unique cycle. 
In the following, we exploit the exact sequences from 
Notation~\ref{notation:9.9}.

(1) Assume (for a contradiction) that  $f$ has a 2-cycle $(\alpha \ \beta)$ and $\alpha\beta=0$
and $\beta\alpha = 0$. 
Then $j= t(\alpha) \neq s(\alpha)=i$, for otherwise $A$ would be local. 
Then we have an exact sequence
\[
0 \to \alpha A \to P_i \to P_j \to \alpha A\to 0 .
\]
It follows that $[P_i] = [P_j]$ and $C_A$ is singular, a contradiction.
So no such cycle exists.

\bigskip

(2) Consider a $2$-vertex $i$. Suppose there are two $f$-cycles of length three passing through $i$, and let
$\alpha, \ba$ be the arrows starting at $i$. Then we have  exact sequences
\begin{gather*}
0\to \alpha A \to P_i \to P_k \to P_j\to \alpha A\to 0 ,\\
0\to \ba A \to P_i \to P_s  \to P_t \to \ba A\to 0 .
\end{gather*}
Here, the cycle of $\alpha$ passes through $i, j, k$ 
(where two of these vertices may be equal), 
and similarly the cycle of
$\ba$ passes through vertices $i, t, s$. 
Note that $[\alpha A ] + [\ba A] = [P_i]$. 
Hence, if we take the direct sum of these sequences,
we get an identity for composition factors of projective modules, 
namely
\[
  2[P_i] + [P_k] + [P_s] = [P_j] + [P_t] + 2[P_i] ,
\]
and $[P_k] + [P_s] = [P_j] + [P_t]$. Hence
\[
  \{ [P_k], [P_s]\} = \{ [P_j], [P_t] \}
  .
\]

If $k=s$ then also $j=t$.  
But then to have $C_A$ non-singular it follows that
$k=s=j=t$ and 
we get a contradiction with $Q$ being $2$-regular.
In particular, this shows that $Q$ is not the Markov quiver,
considered in Example~\ref{ex:4.4} .

Hence $k\neq s$ and then $j\neq t$. It follows that 
either $k=j$ and $s=t$, or $k=t$ and $s=j$.
In the first case, these $f$-cycles give a 2-regular subquiver with three vertices 
and two loops which is then all of $Q$, and $Q$ is the quiver with three vertices,
considered in Example~\ref{ex:8.3}. 
In the second case, these $f$-cycles give a quiver with three vertices and no loops,
which also is 2-regular and hence all of $Q$, and then 
$Q$  is the quiver with three vertices considered in Example~\ref{ex:8.4}.

\bigskip

(3) \ Assume $i$ is a 2-vertex where both $f$-cycles 
through $i$ contain a vertex in $I_0$, 
then obviously $Q$ has three vertices.
Suppose one of the $f$-cycles through $i$ contains a 1-vertex, say $j$,  
but not the other. 
Then we have exact sequences
\[
 0 \to \alpha A \to P_i \to P_j \to P_j\to \alpha A \to 0
\]
and
\[
 0\to \ba A \to P_i \to P_k \to P_{r}\to \ba A \to 0 .
\]
with $j = t(\alpha)$ and $r = t(\bar{\alpha})$.
Taking the direct sum gives now the identity for composition factors 
of projective modules
\[
 [P_j] + [P_k] = [P_j] + [P_{r}],
\]
and $[P_k] = [P_r]$, which implies $k=r$. That is, the cycle of $\ba$ has a loop and the quiver has three vertices.

\bigskip

Now assume at most one 3-cycle of $f$ passes through any 2-vertex. 
Then since $Q$ is connected, there can only be one 3-cycle 
and otherwise fixed points of $f$, and then $Q$ has at most three vertices.
We can have the quiver with two vertices and one or two loops. 
We note that
the quiver with three vertices and three loops fixed by $f$,
considered in Example~\ref{ex:4.3} ,
is not possible. If so, then $g$ consists of just one cycle which passes 
through each vertex twice, and it follows that
all projectives have the same composition factors, and $C_A$ is singular.

The second statement follows in each possible case, 
by counting the cycles of $f$ of length three.
\end{proof}

\begin{remark} Given conditions (1), (3) and (4) of the definition
of strict dihedral type, then condition (5) implies that (2) holds.
This is proved in \cite{E4}. The final argument in the above proof uses
that (2) implies (5). We note that the classification of algebras of
dihedral type is not sufficient to prove Theorem 2.
\end{remark}

We may complete now the proof of Theorem \ref{th:main2}.
Since every algebra of strict dihedral type is of generalized 
dihedral type and with non-singular Cartan matrix,
the implication 
(i) $\Rightarrow$ (ii) 
holds.
Assume now that $A$ is an algebra of generalized dihedral type
and the Cartan matrix $C_A$ is non-singular.
It follows from 
Theorem \ref{th:main1} 
that $A$ is socle equivalent to an algebra of the form $B(Q,f,m_{\bullet},\Sigma)$.
Then, applying
Proposition \ref{prop:7.3}, 
we conclude that $A$ is isomorphic to an algebra of the form
$B(Q,f,m_{\bullet},\Sigma,b_{\bullet})$.
Since the Cartan matrix $C_A$ is non-singular,
it follows from
Propositions \ref{prop:7.1} and \ref{prop:7.2}
that $\Sigma$ is empty.
Then $A$ is socle equivalent to $B = B(Q,f,m_{\bullet})$, by
Proposition \ref{prop:6.2}.
Moreover, the Cartan matrices $C_A$ and $C_B$ 
coincide, because $A$ and $B$ are socle equivalent
symmetric algebras.
In particular, $C_B$ is non-singular.
Applying Proposition~\ref{prop:10.1}
we conclude that $B$ is an algebra of strict dihedral type.
But then $A$ is an algebra of strict dihedral type,
again because $A$ is socle equivalent to $B$
(see also Proposition~\ref{prop:6.4} and Theorem~\ref{th:8.5}).

\section*{Acknowledgements}

The results of the paper were partially presented 
at the workshops 
``Brauer Graph Algebras'' (Stuttgart, March 2016)
and 
``Representation Theory of Quivers and Finite Dimensional Algebras''
(Oberwolfach, February 2017),
and the conference 
``Advances of Representation Theory of Algebras: Geometry and Homology'' 
(CIRM Marseille-Luminy, September 2017).


\begin{thebibliography}{99}

\bibitem{ASS}
  \textit{I.~Assem}, \textit{D.~Simson} and \textit{A.~Skowro\'nski},
  {Elements of the Representation Theory of Associative Algebras 1:
  Techniques of Representation Theory},
  {London Math. Soc. Student Texts \textbf{65}},
  Cambridge University Press, Cambridge 2006.

\bibitem{BES1}
\textit{J.~Bia\l kowski}, \textit{K.~Erdmann} and \textit{A.~Skowro\'nski},
Deformed preprojective algebras of generalized Dynkin type,
Trans. Amer. Math. Soc.  \textbf{359}  (2007), 2625--2650.
%
\bibitem{BES2}
\textit{J.~Bia\l kowski}, \textit{K.~Erdmann} and \textit{A.~Skowro\'nski},
Periodicity of self-injective algebras of polynomial growth,
J. Algebra \textbf{443} (2015), 200--269.
%
\bibitem{BS}
{J.~Bia\l kowski} and {A.~Skowro\'nski},
{On tame weakly symmetric algebras having only periodic modules},
Arch. Math. (Basel) \textbf{81} (2003), 142--154.
%
\bibitem{BoD}
\textit{V.~M.~Bondarenko} and \textit{Y.~A.~Drozd},
The representation type of finite groups,
Zap. Nauchn. Sem. Leningrad. Otdel. Mat. Inst. Steklov. 
\textbf{71}  (1977), 24--41.

\bibitem{Br}
\textit{R.~Brauer}, 
On $2$-blocks with dihedral defect groups,  
Symposia Mathematica \textbf{13},  
Academic Press, London 1974, 367--393. 

\bibitem{BR}
\textit{M.~C.~R.~Butler} and \textit{C.~M.~Ringel},
Auslander-Reiten sequences with few middle terms and applications to string algebras,
Comm. Algebra  \textbf{15}  (1987),  145--179.

\bibitem{Ca}
\textit{S.~C.~Carlson},
Topology of Surfaces,
Knots and Manifolds,
A first Undergraduate Course,
John Wiley \& Sons, Inc., New York, 2001.

\bibitem{CB1}
\textit{W.~Crawley-Boevey},
On tame algebras and bocses,
Proc. London Math. Soc.  \textbf{56}  (1988),  451--483.

\bibitem{CB2}
\textit{W.~Crawley-Boevey},
Tameness of biserial algebras,
Arch. Math. (Basel)  \textbf{65}  (1995),  399--407.

\bibitem{Do}
\textit{P.~W.~Donovan}, 
Dihedral defect groups, 
J. Algebra  \textbf{56}  (1979), 184--206.

\bibitem{DF}
\textit{P.~W.~Donovan} and \textit{M.-R.~Freislich}, 
The indecomposable modular representations of certain groups with dihedral Sylow subgroup, 
Math. Ann.  \textbf{238}  (1978), 207--216.

\bibitem{DS1}
\textit{P.~Dowbor} and \textit{A.~Skowro\'nski},
On Galois coverings of tame algebras,
Arch. Math. (Basel)  \textbf{44}  (1985),  522--529.
%
\bibitem{DS2}
\textit{P.~Dowbor} and \textit{A.~Skowro\'nski},
On the representation type of locally bounded categories, 
Tsukuba J. Math.  \textbf{10}  (1986),  63--72.
%
\bibitem{DS3}
\textit{P.~Dowbor} and \textit{A.~Skowro\'nski},
Galois coverings of representation-infinite algebras,
Comment. Math. Helv.  \textbf{62}  (1987),   311--337.
%
\bibitem{Dr}
\textit{Y.~A.~Drozd},
Tame and wild matrix problems,
in: Representation Theory II,
Lecture Notes in Mathematics \textbf{832},
Springer-Verlag (1980), 242--258.
%

\bibitem{E1}
\textit{K.~Erdmann},
Principal blocks of groups with dihedral Sylow 2-subgroups,
Comm. Algebra \textbf{5} (1977), 665--694.

\bibitem{E2}
\textit{K.~Erdmann},
Blocks whose defect groups are Klein four groups: a correction,
J. Algebra \textbf{76} (1982), 505--518. 

\bibitem{E3}
\textit{K.~Erdmann},
Algebras and dihedral defect groups,
Proc. London Math. Soc. \textbf{54} (1987), 88--114. 

\bibitem{E4}
\textit{K.~Erdmann},
On the local structure of tame blocks,
Ast\'erisque \textbf{181-182} (1990), 173--189.

\bibitem{E5}
\textit{K.~Erdmann},
Blocks of Tame Representation Type and Related Algebras,
Lecture Notes in Mathematics \textbf{1428}, Springer-Verlag, 1990.

\bibitem{EM}
\textit{K.~Erdmann} and \textit{G.~O.~Michler}, 
Blocks with dihedral defect groups in solvable groups,
Math. Z.  \textbf{154}  (1977), 143--151.

\bibitem{ESk1}
\textit{K.~Erdmann} and \textit{A.~Skowro\'nski},
On Auslander-Reiten components of blocks and self-injective biserial algebras,
Trans. Amer. Math. Soc.  \textbf{330}  (1992), 165--189.
%
\bibitem{ESk2}
\textit{K.~Erdmann} and \textit{A.~Skowro\'nski},
The stable Calabi-Yau dimension of tame symmetric algebras,
J. Math. Soc. Japan  \textbf{58}  (2006),  97--128. 
%
\bibitem{ESk3}
\textit{K.~Erdmann} and \textit{A.~Skowro\'nski},
Periodic algebras,
in: Trends in Representation Theory of Algebras and Related Topics,  
in: Europ. Math. Soc. Series Congress Reports, 
European Mathematical Society, Z\"urich, 2008, pp. 201--251. 
%
%
\bibitem{ESk4}
\textit{K.~Erdmann} and \textit{A.~Skowro\'nski},
Weighted surface algebras, 
J. Algebra \textbf{505} (2018), 490--558.
%
\bibitem{ESk6}
\textit{K.~Erdmann} and \textit{A.~Skowro\'nski},
From Brauer graph algebras to biserial weighted surface algebras,
Preprint 2017,
\texttt{http://arxiv.org/abs/1706.07693}.
%
\bibitem{ESk7}
\textit{K.~Erdmann} and \textit{A.~Skowro\'nski},
Algebras of generalized quaternion type,
Preprint 2017,
\texttt{http://arxiv.org/abs/1710.09640}.
%
\bibitem{ESn}
{K.~Erdmann} and {N.~Snashall},
{Preprojective algebras of Dynkin type, 
periodicity and the second Hochschild cohomology}, 
in: Algebras and Modules II, 
CMS Conference Proceedings \textbf{24}, 
Amer. Math. Soc. (1998), 183--193. 
%

\bibitem{Fu}
\textit{K.~R.~Fuller},
Biserial rings, 
in: Ring Theory,
Lecture Notes in Mathematics \textbf{734}, Springer-Verlag (1979), 64--90.

\bibitem{Ge}
\textit{C.~Geiss},
On degenerations of tame and wild algebras,
Arch. Math. (Basel)  \textbf{64}  (1995), 11--16.

%
\bibitem{Ha1}
\textit{D.~Happel},
Triangulated Categories in the Representation Theory of Finite-Dimensional Algebras,
London Math. Soc. Lect. Note Ser. \textbf{119}, Cambridge University Press, Cambridge 1988.
%
\bibitem{H}
\textit{A.~Hatcher},
Algebraic Topology,
Cambridge University Press, Cambridge 2002.

\bibitem{KC}
\textit{A.~Katok} and \textit{V.~Climenhaga},
Lectures on Surfaces.
(Almost) Everything You Wanted to Know About Them,
Student Math. Library \textbf{46},
Amer. Math. Soc.,
2008.
%
\bibitem{Ki}
\textit{L.~C.~Kinsey},
Topology of Surfaces,
Undergraduate Texts in Math.,
Springer-Verlag, 1993.
%
\bibitem{Kr}
\textit{H.~Kraft},
Geometric methods in representation theory,
in: Representations of Algebras,
Lecture Notes in Mathematics, \textbf{944},
Springer-Verlag (1982), 180--258.

\bibitem{La}
\textit{S.~Ladkani},
Algebras of quasi-quaternion type,
Preprint 2014, {\tt http://arxiv.org/abs/1404.6834}.

\bibitem{Lam}
\textit{P.~Lampe},
Diophantine equations via cluster transformations, 
J. Algebra  \textbf{462}  (2016),  320--337.

\bibitem{L}
\textit{P.~Landrock},
The principal block of finite groups with dihedral Sylow 2-subgroups, 
J. Algebra  \textbf{39}  (1976), 410--428.

\bibitem{M}
\textit{A.~Markoff}, 
Sur les formes quadratiques binaires ind\'efinies,
Math. Ann.  \textbf{15} (3-4)  (1879),  381--406.

\bibitem{PS}
\textit{Z.~Pogorza\l y} and \textit{A.~Skowro\'nski},
Selfinjective biserial standard algebras,
J. Algebra  \textbf{138}  (1991),   491--504.

\bibitem{Ri}
\textit{C.~M.~Ringel},
The indecomposable representations of the dihedral $2$-groups,
Math. Ann.  \textbf{214}  (1975), 19--34.



\bibitem{SS}
  \textit{D.~Simson} and \textit{A.~Skowro\'nski},
  Elements of the Representation Theory of Associative Algebras 3:
  Representation-Infinite Tilted Algebras,
  London Math. Soc. Student Texts \textbf{72},
  Cambridge University Press, Cambridge 2007.

\bibitem{Sk0}
\textit{A.~Skowro\'nski},
Group algebras of polynomial growth,
Manuscripta Math.  \textbf{59}  (1987),  499--516.
%
\bibitem{Sk1}
\textit{A.~Skowro\'nski},
Selfinjective algebras of polynomial growth,
Math. Ann.  \textbf{285}  (1989),   177--199.
%
\bibitem{Sk2}
\textit{A.~Skowro\'nski},
Selfinjective algebras: finite and tame type,
in: Trends in Representation Theory of Algebras and Related Topics,
Contemp. Math. \textbf{406}, Amer. Math. Soc. (2006),  169--238.

\bibitem{SW}
\textit{A.~Skowro\'nski} and \textit{J.~Waschb\"usch},
Representation-finite biserial algebras,
J. reine angew. Math. \textbf{345} (1983), 172--181.

\bibitem{SY}
\textit{A.~Skowro\'nski} and \textit{K.~Yamagata},
Frobenius Algebras I. Basic Representation Theory,
Eur. Math. Soc. Textbooks in Mathematics,
Eur. Math. Soc., Z\"urich 2011.

\bibitem{VFCB}
\textit{R.~Vila-Freyer} and \textit{W.~Crawley-Boevey},
The structure of biserial algebras,
J. London Math. Soc. \textbf{57} (1998), 41--54. 

\bibitem {WW}
\textit{B.~Wald} and \textit{J.~Waschb\"usch},
Tame biserial algebras,
J. Algebra  \textbf{95}  (1985),  480--500.


\end{thebibliography}
\end{document}